\def\R{\mathbb R}
\def\RP{\mathbb {RP}}
\def\N{\mathbb N}
\theoremstyle{plain}
\theoremstyle{plain}
\newtheorem{thm}{Theorem}[section]
\newtheorem{cor}[thm]{Corollary}
\newtheorem{lem}[thm]{Lemma}
\newtheorem{prop}[thm]{Proposition}
\newtheorem*{thmA}{Theorem A}
\newtheorem*{thmB}{Theorem B}
\newtheorem*{thmC}{Theorem C}
\newtheorem*{thmD}{Theorem D}
\newtheorem*{thmE}{Theorem E}
\newtheorem*{lemA1}{Lemma A.1}
\newtheorem*{corA2}{Corollary A.2}
\theoremstyle{definition}
\newtheorem{definition}[thm]{Definition}
\newtheorem{rmk}[thm]{Remark}
\newcommand{\norm}[1]{\left\Vert#1\right\Vert}  
\def\Xint#1{\mathchoice
   {\XXint\displaystyle\textstyle{#1}}%
   {\XXint\textstyle\scriptstyle{#1}}%
   {\XXint\scriptstyle\scriptscriptstyle{#1}}%
   {\XXint\scriptscriptstyle\scriptscriptstyle{#1}}%
   \!\int}
\def\XXint#1#2#3{{\setbox0=\hbox{$#1{#2#3}{\int}$}
     \vcenter{\hbox{$#2#3$}}\kern-.5\wd0}}
\def\dashint{\Xint-}
\begin{document}

\title[]{Riemannian metrics on the sphere with Zoll families of minimal hypersurfaces}

\author{Lucas Ambrozio, Fernando C. Marques and Andr\'e Neves}

\address{IMPA - Instituto Nacional de Matematica Pura e Aplicada, Rio de Janeiro, RJ, Brasil, 22460-320.}
\email{\texttt{l.ambrozio@impa.br}}

\address{Princeton University, Princeton, NJ, USA, 08544.}
\email{\texttt{coda@math.princeton.edu}}

\address{University of Chicago, Chicago, IL, USA, 60637.}
\email{\texttt{aneves@uchicago.edu}}

\thanks{The first author was funded by the Ambrose Monell Foundation while member of the IAS in 2018/2019. The second author is partly supported by NSF-DMS-2105557 and a Simons Investigator Grant. The third author is partly supported by NSF-DMS-2005468 and a Simons Investigator Grant.}

\begin{abstract}
In this paper we construct smooth Riemannian metrics on the sphere which admit  smooth Zoll families of minimal hypersurfaces.
This generalizes a theorem of  Guillemin  for the case of geodesics. The proof uses the Nash-Moser Inverse Function Theorem in the tame maps setting of Hamilton. This answers a question of Yau on   perturbations of minimal hypersurfaces   in  positive Ricci curvature. We also consider the case of  the projective space and characterize those metrics on the sphere with minimal equators.
\end{abstract}

\maketitle

\begin{tiny}
\setcounter{tocdepth}{1}
\tableofcontents
\thispagestyle{empty}
\end{tiny}

\section{Introduction}

In this paper we construct smooth Riemannian metrics on the sphere which admit $n$-parameter families of $(n-1)$-dimensional minimal hypersurfaces as the canonical family of equators. These are the intersections of $n$-dimensional linear hyperplanes of $\mathbb{R}^{n+1}$ with $S^n$ (the unit sphere) and are totally geodesic for the canonical metric $can$. They  form a family of embedded $(n-1)$-dimensional spheres smoothly parametrized by the projective space ${\mathbb{RP}}^n$. In addition there is a unique equator tangent  to any $(n-1)$-dimensional space in the tangent space of any point.  
We say that a Riemannian metric $g$ on  $S^n$  is in $\mathcal{Z}$  if there exists a smooth family $\{\Sigma_\sigma\}_{\sigma\in \RP^{n}}$ of smoothly embedded $(n-1)$-dimensional minimal spheres  in $(S^n,g)$  such that for each $x\in S^n$ and each $(n-1)$-dimensional space $\pi\subset T_xS^n$ there exists a unique $\sigma\in \RP^{n}$ such that $T_x\Sigma_\sigma=\pi$  ($\{\Sigma_\sigma\}_\sigma$ is called a Zoll family), and that $g$ is in $\mathcal{Z}'$ if moreover $area(\Sigma_\sigma,g)= area(S^{n-1},can)$ for all $\sigma \in \mathbb{RP}^n$.

\begin{thmA} \label{thmintromain} 
	Let $\dot{\rho}$ be a smooth odd function on the sphere $S^n$, $n\geq 3$. Then there exists a smooth one-parameter family of smooth functions $\rho(t)$ on $S^n$, $-\delta < t< \delta$, with $\rho(0)=0$ and $\rho'(0)=\dot{\rho}$
such that $e^{2\rho(t)}can\in \mathcal{Z}'$ for every $t$.  In fact there exists a neighborhood $W$ of the origin in the space of smooth odd functions and  a smooth  map 
$$
\lambda: W \rightarrow \mathcal{Z}'
$$
with $\lambda(0)=can$ and $D\lambda(0)\cdot \psi  =2\psi can.$
\end{thmA}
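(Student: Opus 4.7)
The plan is to set up a nonlinear operator whose vanishing characterizes $\mathcal{Z}'$-metrics conformal to $can$ and to apply the Nash--Moser inverse function theorem in Hamilton's tame category, using the antipodal symmetry to handle the degeneracy of the linearization on odd conformal factors. First I would construct, for each $\rho$ in a small $C^\infty$-neighborhood of $0$, a smooth family $\{\Sigma_\sigma(\rho)\}_{\sigma \in \mathbb{RP}^n}$ of minimal hypersurfaces in the perturbed metric $g_\rho = e^{2\rho}can$, with $\Sigma_\sigma(0) = \Sigma_\sigma^{can}$ the standard equator. Writing candidates as normal graphs $u_\sigma : \Sigma_\sigma^{can} \to \R$, the mean curvature equation becomes a nonlinear elliptic PDE whose linearization at the origin is the Jacobi operator $J = \Delta_{S^{n-1}} + (n-1)$ of an equator. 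Its $n$-dimensional kernel (first spherical harmonics on $S^{n-1}$) coincides with the infinitesimal motions inside the moduli $\mathbb{RP}^n$ of equators, so a Lyapunov--Schmidt reduction in which the kernel direction is parametrized by $\sigma$ itself yields a unique smooth family $\Sigma_\sigma(\rho)$ for small $\rho$, and the Zoll transversality property persists by openness, placing $g_\rho$ in $\mathcal{Z}$ automatically.

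Next I would define the area defect
\[
\mathcal{A}(\rho)(\sigma) := area(\Sigma_\sigma(\rho), g_\rho) - area(S^{n-1}, can),
\]
viewed as a map $\mathcal{A} : C^\infty(S^n) \to C^\infty(\mathbb{RP}^n)$, so that $g_\rho \in \mathcal{Z}'$ iff $\mathcal{A}(\rho) \equiv 0$. Since every $\Sigma_\sigma^{can}$ is already minimal at $\rho = 0$, only the conformal-factor variation contributes to the linearization, and one gets
\[
D\mathcal{A}(0)\cdot\psi\,(\sigma) = (n-1)\int_{\Sigma_\sigma^{can}} \psi\, d\,area_{can} = (n-1)\,R\psi(\sigma),
\]
the hypersurface Funk--Radon transform. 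Because each equator is antipodally invariant, $R$ annihilates all odd functions; restricted to $C^\infty_{even}(S^n) \cong C^\infty(\mathbb{RP}^n)$, $R$ acts diagonally on even spherical harmonics with nonzero multipliers of polynomial decay, hence is an isomorphism whose inverse is tame with a fixed finite loss of derivatives. Splitting $C^\infty(S^n) = C^\infty_{odd} \oplus C^\infty_{even}$ and regarding $\mathcal{A}$ as a map of pairs $(\psi, r)$, the partial derivative $\partial_r \mathcal{A}(0,0) = (n-1)R|_{even}$ therefore admits a tame right inverse, while $\partial_\psi \mathcal{A}(0,0) \equiv 0$. Hamilton's Nash--Moser implicit function theorem then produces a smooth tame map $r : W \to C^\infty_{even}(S^n)$ on a neighborhood $W \subset C^\infty_{odd}(S^n)$ of $0$ with $r(0) = 0$ and $\mathcal{A}(\psi + r(\psi)) \equiv 0$. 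Setting $\lambda(\psi) := e^{2(\psi + r(\psi))}can$ gives the desired map into $\mathcal{Z}'$, and the identity $D\lambda(0)\cdot\psi = 2\psi\,can$ follows because $Dr(0)\cdot\psi = -[\partial_r \mathcal{A}(0,0)]^{-1} \circ \partial_\psi \mathcal{A}(0,0)\psi = 0$. The one-parameter family assertion then follows from $\rho(t) := t\dot\rho + r(t\dot\rho)$.

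The principal obstacle will be verifying the tame estimates required by Nash--Moser. One must show that the solution map $\rho \mapsto u_\sigma(\rho)$ and hence $\mathcal{A}$ are smooth tame maps between the relevant Fréchet spaces (via standard elliptic regularity on compact manifolds together with smooth parameter dependence in $\sigma \in \mathbb{RP}^n$), and that the inversion of the Funk transform on even functions is tame with uniform finite loss of derivatives (via its explicit spectral multipliers in terms of Gegenbauer polynomials). Packaging these ingredients within Hamilton's framework of tame Fréchet spaces, so that all composition and inversion steps preserve tameness, constitutes the main technical content of the argument.
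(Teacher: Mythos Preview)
Your first step contains a genuine gap. You claim that for every small $\rho$ a Lyapunov--Schmidt reduction produces an $\mathbb{RP}^n$-family of \emph{minimal} hypersurfaces in $(S^n,e^{2\rho}can)$, because the Jacobi kernel ``coincides with the infinitesimal motions inside the moduli $\mathbb{RP}^n$.'' But absorbing the kernel into the parameter $\sigma$ only lets you fix a gauge $u_\sigma\perp\ker J_\sigma$ and solve the mean-curvature equation \emph{orthogonally} to the kernel; the cokernel component $P_\sigma H(\rho,u_\sigma)\in\ker J_\sigma\cong T_\sigma\mathbb{RP}^n$ survives as an obstruction and assembles into a one-form on $\mathbb{RP}^n$ depending on $\rho$. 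For generic $\rho$ this one-form does not vanish identically---think of the Morse--Bott picture, where a nondegenerate critical manifold breaks up under perturbation---and indeed Theorem~B of the paper shows that for nonconstant even $\rho$ on $S^3$ the metric $e^{2\rho}can$ is not in $\mathcal{Z}$ at all. So your map $\mathcal{A}(\rho)$ is not defined on a full neighborhood of $0$, and the assertion ``placing $g_\rho$ in $\mathcal{Z}$ automatically'' is false.

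The paper's resolution is exactly the ingredient you are missing: the variational constraint (Theorem~\ref{thmconstraint}) identifies the obstruction one-form with $d\mathcal{A}$, so that solving the \emph{projected} mean-curvature equation together with the constant-area condition is equivalent to full minimality (Proposition~\ref{propcriterion}). Accordingly the paper treats the pair $(\rho,\Phi)$ jointly via a single map $\Lambda=(\Lambda_1,\Lambda_2)$ encoding both constraints, and constructs an approximate right inverse for $D\Lambda$ modulo a quadratic error in $\Lambda$ itself (Theorem~\ref{thmift}). Your scheme can be salvaged by redefining $\Sigma_\sigma(\rho)$ to solve only the projected equation and then proving this constraint lemma, at which point it becomes the paper's argument in different packaging; you would still face the task of inverting the \emph{perturbed} Funk transform at all nearby $(\rho,\Phi)$, not just at the origin, which is the analysis of $\mathcal{F}(\rho,\Phi)\circ\mathcal{F}^*(\rho,\Phi)$ as a family of elliptic pseudodifferential operators of order $1-n$ carried out in Sections~\ref{integral.operator}--\ref{section.funk.2}.
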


The existence of metrics in the space $\mathcal{Z}$  (other than the constant curvature metrics) was proven by Zoll \cite{Zol} in the case of the two-sphere. These examples are rotationally symmetric. Riemannian metrics with all geodesics closed and of the same length are called Zoll metrics (\cite{Bes}).  

Our  Theorem A was proven by Guillemin \cite{Gui} for the two-sphere. Funk \cite{Fun} knew that  the function $\dot{\rho}$ must be odd for the deformation to exist. Funk used his transform that sends a function $\psi$ on the sphere to the function
$$
\gamma \mapsto \int_\gamma \psi ,
$$
where $\gamma$ denotes a closed geodesic in the round metric. A generalized version of this map will be key in our argument.

The proof of  Theorem A will use the Nash-Moser  Inverse Function Theorem  stated by Hamilton \cite{Ham}. We will make use of a
variational interpretation of the problem, differing from the dynamical approach for geodesics of Guillemin. Our proof also recovers Guillemin's result.

The methods used in the proof of Theorem A can be applied to the case of general metric variations (see Theorem E).

We also prove that metrics like in Theorem A do not exist on $\mathbb{RP}^3$:

\begin{thmB}\label{thmb} The  canonical metric in $S^3$ has an open neighborhood $\mathcal{W}$ in the smooth topology  so that if $\rho$ is an even function with   $e^{2\rho}can \in \mathcal{W} \cap \mathcal{Z}$,  then $\rho$ is a constant function.
\end{thmB}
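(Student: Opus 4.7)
First, I would observe that if $g=e^{2\rho}can\in\mathcal{Z}$ then the area function $\sigma\mapsto \operatorname{area}(\Sigma_\sigma,g)$ is constant on $\RP^3$: each $\Sigma_\sigma$ is minimal in $g$, so the first variation formula makes the derivative along $\sigma$ vanish, and $\RP^3$ is connected. The plan is then to reduce Theorem B to the classical injectivity of the Funk transform on even functions on $S^3$ and to upgrade this infinitesimal rigidity to the full nonlinear statement via a Nash-Moser argument in the same tame Fr\'echet framework used for Theorem A. Concretely, in a neighborhood of $\rho=0$ in $C^\infty(S^3)$, I would construct a smooth area map
$$\Phi(\rho)(\sigma) := \operatorname{area}\bigl(\Sigma_\sigma(\rho),\,e^{2\rho}can\bigr),$$
where $\Sigma_\sigma(\rho)$ is a smooth selection of minimal $2$-spheres in $(S^3,e^{2\rho}can)$ close to the canonical equator $E_\sigma$. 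This construction must contend with the three-dimensional Jacobi kernel of $E_\sigma$ in $can$ (the infinitesimal rotations), the same degeneracy underlying Theorem A: one fixes a gauge, for instance orthogonality of the normal component of $\Sigma_\sigma(\rho)-E_\sigma$ to the Jacobi kernel together with a compatible reparametrisation of $\sigma$, and then applies the tame implicit function theorem. The key property is that $e^{2\rho}can \in \mathcal{Z}$ forces $\Phi(\rho)-\overline{\Phi(\rho)}\equiv 0$ on $\RP^3$, where the bar denotes mean.

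Since each $E_\sigma$ is minimal in $can$, variations of the surface contribute only at second order, and a direct computation gives
$$D\Phi\big|_{\rho=0}(\psi)(\sigma) = 2\int_{E_\sigma}\psi\,dA_{can}=2\,F(\psi)(\sigma),$$
where $F$ is the Funk transform on $S^3$. On odd $\psi$ the operator $F$ vanishes identically by antipodal symmetry of the equators, consistent with the rich odd deformation theory of Theorem A. On \emph{even} $\psi$, however, $F$ is classically injective, and since $F(1)$ is the nonzero constant $\operatorname{area}(S^2,can)$, the equation $F(\psi)=\textrm{const}$ restricted to even $\psi$ forces $\psi$ to be constant. This is the linearized version of Theorem B.

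To pass from this linear rigidity to the full nonlinear conclusion, I would apply Hamilton's Nash-Moser inverse function theorem to the tame map $\rho\mapsto \Phi(\rho)-\overline{\Phi(\rho)}$ restricted to the closed subspace of smooth even functions. The hypotheses are tameness of $\Phi$, which must be checked through the construction in the previous step despite the Jacobi degeneracy, and the existence of a tame right inverse of $F|_{\mathrm{even}}$ modulo constants, which follows from the classical pseudodifferential inversion formula for the Funk transform on $S^n$. Nash-Moser then yields that, in a sufficiently small smooth neighborhood of $can$, the zero set of $\Phi-\overline{\Phi}$ within even conformal factors is exactly the one-parameter family of constants; since constant $\rho$ corresponds to a homothety of $can$, which is trivially in $\mathcal{Z}$, Theorem B follows. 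The main obstacle is analytic: defining $\Phi$ smoothly across the three-dimensional Jacobi kernel and obtaining the tame estimates required to apply Hamilton's theorem; the underlying geometric content is simply the classical rigidity of $can$ visible through injectivity of the Funk transform on even functions.
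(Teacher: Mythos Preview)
Your strategy differs substantially from the paper's. You propose to realize the area map $\rho\mapsto\Phi(\rho)$ as a smooth tame map on even conformal factors and invoke Nash--Moser to conclude it is a local diffeomorphism, forcing the zero set of $\Phi-\overline{\Phi}$ to be the constants. The paper instead avoids Nash--Moser entirely for Theorem~B: working only with White's $C^k$ perturbation theorem, it expands the area of each minimal sphere as $4\pi+2\int_{\Sigma_v}\beta(\rho)+(\text{quadratic})$, derives the quantitative bounds $|\mathcal{F}(\rho)+c(\rho)|+|\nabla\mathcal{F}(\rho)|\lesssim(\text{quadratic in }\rho)$ (Lemma~\ref{funk.estimate2}), integrates to obtain $\|\mathcal{F}(\rho)\|_{H^1}^2\lesssim\|\rho\|_{W^{2,4}}^4$, and then combines the injectivity estimate $\|\rho\|_{L^2}\lesssim\|\mathcal{F}(\rho)\|_{H^1}$ for even $\rho$ with Gagliardo--Nirenberg interpolation $\|\rho\|_{W^{2,4}}^2\lesssim|\rho|_{C^4}\|\rho\|_{L^2}$ to absorb and conclude $\rho=0$. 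Both routes rest on the same linear input---injectivity of the standard Funk transform on even functions---but the paper's is more elementary: it needs only the classical $\mathcal{F}=\mathcal{F}(0,0)$, finite regularity, and an explicit quadratic remainder, rather than a tame construction and invertibility of the \emph{generalized} Funk transform.

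Your outline has a genuine gap at the Nash--Moser step. You invoke a tame \emph{right} inverse of $F|_{\mathrm{even}}$ modulo constants, but a right inverse yields only local \emph{surjectivity} of $\Phi-\overline{\Phi}$, which says nothing about the size of its zero set. To deduce local \emph{injectivity} you need Hamilton's full inverse function theorem, which requires a tame family of \emph{two-sided} inverses of $D\Lambda(\rho)$ for all $\rho$ in a neighborhood, not just at $\rho=0$. Concretely, you must show that the generalized Funk transform $\psi\mapsto\int_{\Sigma_\sigma(\rho)}\psi\,dA_{e^{2\rho}can}$, restricted to even $\psi$ modulo constants, remains bijective onto $C^\infty_0(\RP^3)$ for every even $\rho$ near $0$, with the inverse varying tamely. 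This is true (one can perturb the invertibility of $\mathcal{F}^*\mathcal{F}$ on even functions from $\rho=0$, parallel to the paper's treatment of $\mathcal{F}\mathcal{F}^*$ in Section~\ref{adjoint.pdo.section}), but it is nontrivial and you did not flag it. The obstacle you do acknowledge---building $\Phi$ as a smooth tame map despite the Jacobi kernel---is also real: White's theorem gives only $C^k$ dependence for fixed $k$, so you would have to redo the implicit-function construction in the tame Fr\'echet category, essentially replicating the solution-map machinery of Section~3.
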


In the discussion of  three-manifolds with positive Ricci curvature in \cite{Yau} (page 128)  Yau posed the question of
whether there can be continuous families of minimal surfaces when the ambient has no symmetry. The next theorem proves that this is actually possible.

{\begin{thmC} For every $n\geq 3$, there exist metrics $e^{2\rho}can \in \mathcal{Z}$ on $S^n$ which are arbitrarily close to the canonical metric and have trivial isometry group.
\end{thmC}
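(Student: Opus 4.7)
The plan is to apply Theorem A with an odd function $\dot\rho$ chosen generically so that the resulting family $g_t = e^{2\rho(t)}can \in \mathcal{Z}'\subset \mathcal{Z}$ has trivial isometry group for small $t\neq 0$. A key simplification is that any isometry of a metric conformal to $can$ is automatically a conformal transformation of $(S^n, can)$, and for $n\geq 3$ Liouville's theorem implies the conformal group $G := \mathrm{Conf}(S^n, can)$ is a finite-dimensional Lie group whose maximal compact subgroup is $O(n+1) = \mathrm{Iso}(S^n, can)$. Writing $\phi^*can = e^{2\psi_\phi}can$ for $\phi \in G$, the isometry condition $\phi^*g_t = g_t$ reads $\rho(t)\circ\phi + \psi_\phi = \rho(t)$; since $\rho(t) = O(t)$, this forces $\psi_\phi = O(t)$, so $\phi$ must be $O(t)$-close to $O(n+1)$ in $G$.

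I would choose $\dot\rho$ from a residual subset of smooth odd functions such that the $O(n+1)$-stabilizer of $\dot\rho$ is trivial and, moreover, $\dot\rho - \dot\rho\circ h \notin H_1$ for every non-identity $h\in O(n+1)$, where $H_1$ denotes the $(n+1)$-dimensional space of degree-one spherical harmonics (the image of the linearization of $\phi\mapsto\psi_\phi$ at $e$). Such $\dot\rho$ form a dense $G_\delta$ in any $C^\infty$-neighborhood of $0$ by a Baire category argument: the closed subgroups of the compact group $O(n+1)$ have countably many conjugacy classes, and for each such class and each non-identity element, the offending set is a proper closed subspace of odd functions, with a sufficiently high-degree odd spherical harmonic providing the witness to properness.

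To conclude, suppose for contradiction $\phi_k \in \mathrm{Iso}(g_{t_k})\setminus\{e\}$ with $t_k \to 0^+$. By the above, a subsequence converges to some $\phi_\infty\in O(n+1)$. If $\phi_\infty\neq e$, then dividing the equation $\rho(t_k)\circ\phi_k+\psi_{\phi_k}=\rho(t_k)$ by $t_k$ and passing to a limit along a further subsequence yields $\dot\rho - \dot\rho\circ\phi_\infty = \Psi$ for some $\Psi\in H_1$ (a limit of $\psi_{\phi_k}/t_k$), contradicting the choice of $\dot\rho$. If $\phi_\infty = e$, write $\phi_k = \exp(X_k)$ with $X_k\to 0$ in the Lie algebra of $G$, decompose $X_k = K_k + C_k$ into Killing and pure-conformal components, and expand the isometry condition $\mathcal{L}_{X_k}g_{t_k} = O(|X_k|^2)$. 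A scaling argument gives $|C_k|/|X_k|\to 0$, so $Y := \lim X_k/|X_k| \in \mathfrak{o}(n+1)$; comparing the next order forces $Y\cdot \dot\rho = 0$, and then $\{\exp(sY)\}\subset O(n+1)$ is a one-parameter subgroup in $\mathrm{Stab}_{O(n+1)}(\dot\rho)$, again contradicting our choice.

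The main technical obstacle is the second case $\phi_\infty = e$: it requires a careful matching of the scales $t_k$, $|X_k|$, and $|C_k|$ in a finite-dimensional asymptotic analysis to extract the limiting equation $Y\cdot\dot\rho = 0$. Once both cases are handled, the resulting contradiction establishes the triviality of $\mathrm{Iso}(g_t)$ for $t>0$ small enough, and since the $g_t$ can be made arbitrarily close to $can$ by scaling $\dot\rho$, the theorem follows.
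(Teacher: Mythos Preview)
Your approach via the finite-dimensional conformal group is genuinely different from the paper's and, modulo the issue below, should work. The paper never invokes Liouville's theorem; instead it chooses $\dot\rho=f$ so that $r:=-2(n-1)(\Delta f+nf)$ is a Morse function with distinct critical values, distinct Hessian eigenvalues at each critical point, and nonvanishing third derivative along each Hessian eigenvector. Since $\frac{d}{dt}\big|_{t=0}R_{g_t}=r$, for small $t$ the scalar curvature $R_{g_t}$ inherits this critical structure; any isometry must permute its critical points, and the three genericity conditions successively force the isometry to fix each critical point, act by $\pm 1$ on each Hessian eigenvector there, and finally equal the identity. This scalar-curvature argument applies to arbitrary isometries without reference to conformal flatness and replaces your asymptotic matching by a finite combinatorial check.

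There is a concrete error in your second case. Writing $\phi_k=\exp(X_k)$ with $X_k=K_k+C_k$ and dividing the expanded isometry equation by $t_k|X_k|$, the conformal part contributes $f_{C_k}/(t_k|X_k|)$; since $|C_k|=O(t_k|X_k|)$ this term is bounded in the finite-dimensional space $H_1$ but need not vanish. The limiting relation is therefore $Y(\dot\rho)\in H_1$, not $Y(\dot\rho)=0$, so appealing to the trivial $O(n+1)$-stabilizer of $\dot\rho$ is insufficient. The fix is immediate once you observe that $S(\dot\rho):=\{h\in O(n+1):\dot\rho-\dot\rho\circ h\in H_1\}$ is a closed \emph{subgroup} (use $O(n+1)$-invariance of $H_1$): your hypothesis ``$\dot\rho-\dot\rho\circ h\notin H_1$ for all $h\neq e$'' is exactly $S(\dot\rho)=\{e\}$, and $Y(\dot\rho)\in H_1$ for some nonzero Killing $Y$ would place a one-parameter subgroup inside $S(\dot\rho)$. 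This also sharpens your Baire step, which as written quantifies over uncountably many $h$: it suffices first to kill the Lie algebra of $S(\dot\rho)$ (a finite-dimensional linear condition on $\dot\rho$), after which $S(\dot\rho)$ is finite, and then to rule out the genuinely countable set of conjugacy classes of finite-order elements in $O(n+1)$ (determined by their eigenvalue multisets).
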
}

Green showed that Zoll metrics on $\RP^2$ have constant curvature \cite{Green}.  The next theorem is a consequence of the classification of metrics on $S^n$ with minimal equators (Section \ref{proof.theorem.D}). This uses the method of Hangan \cite{Han1} who classified metrics on $\mathbb{R}^n$ with minimal hyperplanes.

\begin{thmD} \label{thmcorrespondenceA}
	There exist metrics on $\mathbb{RP}^3$ with minimal equators and discrete isometry group.
	\end{thmD}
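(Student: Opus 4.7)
The plan is to derive Theorem D from the classification of Riemannian metrics on $S^n$ with minimal equators carried out in Section \ref{proof.theorem.D}, adapting Hangan's method from the $\mathbb{R}^n$ setting. The scheme breaks into two pieces: (i) describe the antipodal-invariant metrics on $S^3$ all of whose equators are minimal and show that this family is infinite-dimensional; (ii) exhibit within this family a metric whose isometry group on $\mathbb{RP}^3$ is discrete.

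For (i), the starting point is that each canonical equator $\Sigma_\sigma$ is totally geodesic for $can$, so the linearization at $can$ of the map
$$h\mapsto \bigl(\text{mean curvature of } \Sigma_\sigma \text{ in } (S^n, can + h)\bigr)_{\sigma\in \RP^n}$$
is a linear operator from symmetric $2$-tensors on $S^n$ to a space of normal sections along each equator, parametrized by $\RP^n$. Following Hangan's strategy, I would recast this operator as an integral-geometric transform of Funk/Radon type (a tensorial generalization of the transform already appearing in the paper's discussion of Funk's work), analyze its kernel, and then pass from the infinitesimal to the nonlinear classification either by direct algebraic construction or via an implicit function theorem. The expected output is that metrics with minimal equators on $S^n$ form a family whose tangent space at $can$, modulo reparametrizations, is infinite-dimensional and contains a large subspace of antipodal-invariant tensors, yielding in turn an infinite-dimensional family of metrics on $\mathbb{RP}^3$ with minimal equators.

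For (ii), I would use a soft genericity argument. Any smooth metric on the compact manifold $\mathbb{RP}^3$ has a compact Lie group as isometry group, and nondiscreteness is equivalent to the existence of a nontrivial Killing vector field. For $g$ close to $can$, such a Killing field is close to a Killing field of $can$, and the condition ``$g$ admits a Killing field'' cuts out a closed subset whose tangent space at $can$ is the finite-dimensional image of the Lie derivative action of $\mathfrak{o}(4)$ on symmetric $2$-tensors. Because the tangent space of antipodal-invariant minimal-equator metrics at $can$ is infinite-dimensional by (i), it cannot be contained in this finite-dimensional subspace; picking a perturbation direction outside the symmetry orbit and applying the classification from (i) produces a metric on $\mathbb{RP}^3$ with minimal equators and no continuous isometry, hence discrete isometry group.

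The main obstacle is (i). While Hangan's argument on $\mathbb{R}^n$ exploits the affine structure of hyperplanes and a Radon-type inversion, on $S^n$ the parametrizing space $\RP^n$ of equators is compact and one has to set up and analyze the corresponding Funk-type transform on symmetric $2$-tensors with the right care: identify its kernel, ensure that the kernel contains genuinely non-conformal directions, and check compatibility with the antipodal action. Once the classification is in hand, the descent to $\mathbb{RP}^3$ and the genericity step (ii) are routine.
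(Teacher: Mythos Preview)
Your proposal has a genuine gap in part (i) that propagates to (ii). The family of metrics on $S^n$ with minimal equators is \emph{finite}-dimensional, not infinite-dimensional. The paper's classification (Theorem \ref{thmcorrespondenceAfull} and Proposition \ref{propfundamentalequation}) shows that the map $g\mapsto k_g=g/F_g$ (with $dV_g=F_g^{(n+1)/4}dV_{can}$) is an exact bijection between metrics with minimal equators and positive-definite Killing symmetric two-tensors of $(S^n,can)$; for $n=3$ this is a $20$-dimensional space. No Funk-type transform or implicit function theorem is needed: the condition ``all equators minimal'' reduces algebraically to the Killing-tensor equation $(\nabla^{can}k)^S=0$. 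So your expected output that the tangent space is infinite-dimensional is false, and the dimension-counting argument in (ii) collapses as stated.

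Your formulation of (ii) also mis-identifies the linearized locus of metrics admitting a Killing field; what you call ``the image of the Lie derivative action of $\mathfrak{o}(4)$'' is not the right tangent space (at $can$ every direction is tangent to some family with a Killing field, since $can$ itself has a large isometry group). The paper's approach is quite different and more structural: once the correspondence with Killing two-tensors is in hand, it uses that every embedded minimal sphere in $(S^3,g_{\overline{k}})$ is an equator, so isometries permute equators and by the Fundamental Theorem of Projective Geometry (Proposition \ref{propnaturalsymmetries}) must be of the form $x\mapsto Tx/|Tx|$ with $T\in GL(4,\mathbb{R})$. This reduces the question to an explicit $\mathfrak{sl}(4,\mathbb{R})$-action on Killing tensors, and the paper then writes down a concrete $\overline{k}$ using the quaternionic frame $X_i,Y_j$ on $S^3$ and checks by hand that $\overline{k}\cdot\mathfrak{t}=0$ forces $\mathfrak{t}=0$. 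A genericity argument along your lines could in principle be salvaged once you know the family is finite-dimensional and that isometries are projective, but you would still need those two structural inputs; without them the soft argument does not go through.
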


We also obtain metrics on $\mathbb{RP}^n$, for each $n\geq 3$, with minimal equators,   that do not have constant sectional curvature.

Theorem A can be generalized to include nonconformal variations:

\begin{thmE} 
	Let $h$ be a smooth symmetric two-tensor on the sphere $S^n$, $n\geq 2$, of the form 	\begin{equation} \label{eqsomeelementsofthekernel}
		h=fcan + \mathcal{L}_X(can) + \overline{h},
\end{equation}	 
where $f$ is a smooth odd function on $S^n$, $\mathcal{L}_X$ is the Lie derivative by a smooth vector field $X$ on $S^n$, and $\overline{h}$ is a transverse-traceless symmetric two-tensor on $(S^n,can)$. Then there exists a smooth one-parameter family of Riemannian metrics $g(t)$ on $S^n$, $-\delta < t< \delta$, with $g(0)=can$ and $g'(0)=h$,
	such that $g(t)\in \mathcal{Z}'$ for every $t$. In fact there exists a neighborhood $W$ of the origin in the space of smooth tensors $h$ as in \eqref{eqsomeelementsofthekernel} and a smooth map 
$$
\lambda: W \rightarrow \mathcal{Z}'
$$
with $\lambda(0)=can$ and $D\lambda(0)\cdot h =h.$ 
\end{thmE}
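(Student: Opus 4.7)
The plan is to run the Nash-Moser scheme already used to prove Theorem A, with the space of odd conformal factors replaced by the three-summand space $W$ of tensors $h = f\,can + \mathcal{L}_X(can) + \overline{h}$. In the proof of Theorem A one constructs a smooth tame map $\Phi$ from conformal factors to a target space that measures the failure of an induced family of candidate equators to form a Zoll family of minimal hypersurfaces of common area; the analytic crux is that its linearization at the origin admits a tame right inverse, built from an inverse of the generalized Funk transform modulo its kernel (the odd functions). For Theorem E, I would extend $\Phi$ to a smooth tame map on a neighborhood of $0$ in the whole space of symmetric two-tensors on $S^n$ and identify $W$ with the kernel of its linearization at $0$, so that Hamilton's Nash-Moser inverse function theorem produces a smooth $\lambda\colon W \to \mathcal{Z}'$ with $\lambda(0)=can$ and $D\lambda(0)=\mathrm{id}_{W}$.

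The three summands of $W$ are handled separately and then combined. The conformal summand $f\,can$ with $f$ odd is covered by Theorem A. The Lie derivative summand $\mathcal{L}_X(can)$ is pure gauge: if $\phi_t$ denotes the flow of $X$, then $\phi_t^\ast can \in \mathcal{Z}'$ for every $t$, since diffeomorphisms carry Zoll families of minimal hypersurfaces of common area to Zoll families of minimal hypersurfaces of common area, and $\tfrac{d}{dt}\big|_{t=0}\phi_t^\ast can = \mathcal{L}_X(can)$. The transverse-traceless summand $\overline{h}$ requires new input: one must verify that the linearized Zoll defect map vanishes on $\overline{h}$. Concretely, the first variation of area formula $\delta\,\mathrm{area}(\Sigma_\sigma) = \tfrac12 \int_{\Sigma_\sigma}\mathrm{tr}_{\Sigma_\sigma}\overline{h}$, combined with the trace-free and divergence-free conditions on $S^n$ and an integration by parts using that each round equator is totally geodesic, should yield that the tensor analogue of the Funk transform annihilates $\overline{h}$. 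Once these vanishing statements are established, I would assemble $\lambda$ by composing the conformal deformation given by Theorem A with a diffeomorphism correction generated by $X$ and a non-conformal correction produced by Nash-Moser.

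The main obstacle is establishing that the linearization of $\Phi$ admits a tame right inverse on the enlarged source. The conformal piece already required a tame inverse of the Funk transform on even functions modulo constants; in addition one now needs a tame inverse of the corresponding tensor Funk transform on a suitable complement of $W$ in the space of symmetric two-tensors on $S^n$, with derivative losses controlled uniformly so that Hamilton's iteration converges. Once these tame splittings and inverses are in place, the remaining verifications (smoothness and tameness of $\Phi$, closure of the image in $\mathcal{Z}'$, and the identity $D\lambda(0)\cdot h = h$) proceed exactly as in Theorem A with the enlarged source $W$ in place of the odd conformal factors.
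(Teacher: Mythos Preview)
Your overall architecture matches the paper's: extend the map $\Lambda$ from Section~\ref{problem.formulation} so that its first argument is a general metric $g$ rather than a conformal factor, verify that $\Lambda$, $V$, $Q$ remain smooth tame, apply Theorem~\ref{thmift}, and then identify $\operatorname{Ker}D\Lambda(can,0)$ with the space of pairs $(h,\phi)$ where $h$ has the form \eqref{eqsomeelementsofthekernel}. Your treatment of the three summands is also correct and coincides with the paper's Claims 1--3 in Proposition~\ref{propkerLambda2}: Lie derivatives lie in the kernel of the generalized Funk transform by gauge invariance and minimality of equators; transverse-traceless tensors lie in the kernel by the divergence computation you sketch (writing $\omega=\overline{h}(\nabla f,\cdot)$ with $f(x)=\langle x,v\rangle$ and integrating $\operatorname{div}\omega=0$ over a hemisphere); and the conformal piece reduces to the scalar Funk transform.

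Where your proposal diverges from the paper is in the construction of the right inverse, and here you have overcomplicated matters. You anticipate needing ``a tame inverse of the corresponding tensor Funk transform on a suitable complement of $W$,'' but no such object is required. The paper observes that for any function $\zeta$ one has $\mathcal{F}(g,\Phi)(\zeta g)=\tfrac{n-1}{2}\int_{\Sigma_\sigma(\Phi)}\zeta\,dA_g$, which is just the scalar Funk transform already analyzed. Hence the right inverse $\mathcal{R}(g,\Phi)$ is defined by sending $b\in C^\infty(\mathbb{RP}^n)$ to a \emph{conformal} tensor $\mathcal{F}^\circ(g,\Phi)(f)=\tfrac{2}{n-1}\bigl(\int_{\Sigma_x^*(\Phi)} f\,U\,dA\bigr)g$, and the composite $\mathcal{F}\circ\mathcal{F}^\circ$ is governed by the same integral kernel $K(g,\Phi)$ as in Section~\ref{section.funk.2}. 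All the tame estimates of Section~\ref{sectiontame} carry over unchanged. So the ``main obstacle'' you identify does not exist: the analytical machinery is identical to Theorem~A, and the only new work is the kernel computation. Correspondingly, your plan to ``assemble $\lambda$ by composing the conformal deformation given by Theorem~A with a diffeomorphism correction and a non-conformal correction'' is an unnecessary detour; a single application of Theorem~\ref{thmift} on the enlarged domain $Sym_2(S^n)\times C^\infty_{0,odd}(T_1S^n)$ produces $\Gamma$ directly.
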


For the case of the three-sphere, Theorem E (or Theorem A) provides examples of nonhomogeneous metrics for which the uniqueness theorem of  G\'alvez and Mira \cite{GalMir} applies: for such metrics any minimal sphere belongs to the Zoll family.

 For $n\geq 3$, Riemannian metrics with  Zoll families of totally geodesic hypersurfaces have constant sectional curvature.


\section{Graphical perturbations of equators}\label{prelim}

We will start by describing the families of spheres modeled after the family of equators.
Let $S^n$, $\mathbb{RP}^n$ be the sphere and projective space respectively. 
Given $v \in S^n$, let
\begin{equation*}
	\Sigma_{v} = \{p\in S^n:\, \langle p, v\rangle =0\}
\end{equation*}
denote the equator that is orthogonal to $v$. As sets,  $\Sigma_{-v}=\Sigma_v$. 
The assignment
\begin{equation*}
	\sigma = [v] \in {\mathbb{RP}}^n \mapsto \Sigma_{\sigma}= \Sigma_{v} \subset S^n
\end{equation*}
is a  bijection between ${\mathbb{RP}}^n$ and the set of equators of $S^n$.  \\
\indent The standard metric on $S^n$ is denoted by $can$. The unit tangent bundle of $S^n$ is
\begin{equation*}
	T_{1}S^n = \{ (p,v)\in S^n\times S^n:\, \langle p,v \rangle =0 \}.
\end{equation*}

\indent Consider the space
$
	C^{\infty}_{*,odd}(T_{1}S^n)
$
of smooth functions $\Phi$ defined on  $T_1S^n$ that are odd with respect to the second variable:
\begin{equation*}
	\Phi(p,-v) = - \Phi(p,v).
\end{equation*}
\indent Given $\Phi \in C^{\infty}_{*,odd}(T_{1}S^n)$, for each $v\in S^n$ we define the set
\begin{equation}\label{graph}
	\Sigma_{v}(\Phi) = \{\cos(\Phi(x,v))x+\sin(\Phi(x,v))v\in S^n:\, x\in \Sigma_v\}.
\end{equation}
Clearly, $\Sigma_v(0)=\Sigma_v$. We will also use $\Sigma_{v}(\Phi)$ to denote the map
\begin{equation*}
	\Sigma_v(\Phi) : x \in \Sigma_v \mapsto \cos(\Phi(x,v))x+\sin(\Phi(x,v))v \in S^n.
\end{equation*}
We think of $\Sigma_v(\Phi)$ as the normal graph over the equator $\Sigma_{v}$ of the function 
\begin{equation*}
	\Phi_v=\Phi(-,v) \in C^{\infty}(\Sigma_v).
\end{equation*}
Notice that $\Sigma_{-v}(\Phi)=\Sigma_v(\Phi)$ both as sets and as maps.
Hence we can assign without ambiguity to each point $\sigma=[v]\in {\mathbb{RP}}^n$ the hypersurface $\Sigma_{\sigma}(\Phi)=\Sigma_{v}(\Phi)$ of $S^n$.

Assume that $\Phi$ has sufficiently small $C^{1}$ norm so that all maps $\Sigma_v(\Phi)$ are smooth embeddings of the $(n-1)$-dimensional sphere into $S^n$.  We will require more conditions on $\Phi$ later as needed.

\begin{lem} \label{lembasicformulae}
The following formulas hold:
\begin{itemize}
	\item[$i)$] The tangent space of $\Sigma_v(\Phi)$ at the point $y=\Sigma_v(\Phi)(x)$ consists of the vectors
	\begin{equation*}
		\cos(\Phi(x,v))u + D\Phi_{(x,v)}\cdot (u,0)\, \Sigma^{\perp}_{v}(\Phi)(x), \quad u\in T_{x}\Sigma_v,
	\end{equation*}
		where
	\begin{equation*}
		\Sigma^{\perp}_v(\Phi)(x)= -\sin(\Phi(x,v))x + \cos(\Phi(x,v))v.
	\end{equation*}			
	\item[$ii)$] The unit normal of $\Sigma_v(\Phi)$ at the point $y=\Sigma_v(\Phi)(x)$ that points towards $v$ is the vector
	\begin{equation*}
	N_{v}(\Phi)(x) = \frac{\cos(\Phi(x,v))\Sigma^{\perp}_v(\Phi)(x) - \nabla^{\Sigma_v}\Phi_v(x)}{\sqrt{\cos(\Phi(x,v))^2+|\nabla^{\Sigma_v}\Phi_v|^2(x)}},
	\end{equation*}
	where $\nabla^{\Sigma_v}\Phi_v$ is the gradient of $\Phi_v$ in $(\Sigma_{v},can)$. In particular,
	\begin{equation*}
		N_{-v}(\Phi)(x) = - N_{v}(\Phi)(x) \quad \text{for all} \quad x\in \Sigma_{v}.
	\end{equation*}
	\item[$iii)$] The Jacobian determinant of the map $\Sigma_v(\Phi)$ at $x\in \Sigma_v$ is
	\begin{equation*}
		|Jac\, \Sigma_v(\Phi)|(x)= \cos^{n-2}(\Phi(x,v))\sqrt{\cos(\Phi(x,v))^2 + |\nabla^{\Sigma_v}\Phi_v|^{2}(x)}.
	\end{equation*}
\end{itemize}
\end{lem}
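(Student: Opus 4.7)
The plan is to prove all three items by direct computation from the explicit parametrization $x\mapsto \cos(\Phi(x,v))\,x + \sin(\Phi(x,v))\,v$, which is a smooth embedding once $\|\Phi\|_{C^1}$ is small enough to guarantee $\cos(\Phi)>0$. For (i), I fix $u\in T_x\Sigma_v$, take a curve $x(t)\subset\Sigma_v$ with $x(0)=x$ and $x'(0)=u$, and differentiate $\Sigma_v(\Phi)(x(t))$ at $t=0$. The product rule gives two terms: one is $\cos(\Phi(x,v))\,u$, and the two remaining pieces collect as $D\Phi_{(x,v)}\!\cdot(u,0)\bigl[-\sin(\Phi(x,v))x+\cos(\Phi(x,v))v\bigr]$, which by definition is $D\Phi_{(x,v)}\!\cdot(u,0)\,\Sigma_v^\perp(\Phi)(x)$.

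For (ii), the key geometric preliminary is that $\Sigma_v^\perp(\Phi)(x)$ is a unit vector in $T_yS^n$ that is also orthogonal to $T_x\Sigma_v\subset\R^{n+1}$, since vectors in $T_x\Sigma_v$ are perpendicular to both $x$ and $v$. With this in hand I make the ansatz $N=\alpha\,\Sigma_v^\perp(\Phi)(x)-\beta\,\nabla^{\Sigma_v}\Phi_v(x)$, use $D\Phi_{(x,v)}\!\cdot(u,0)=\langle\nabla^{\Sigma_v}\Phi_v(x),u\rangle$, and impose orthogonality against the tangent vector from (i). Thanks to the preliminary, all mixed terms drop and the condition reduces to $(\alpha-\cos(\Phi)\beta)\,D\Phi_{(x,v)}\!\cdot(u,0)=0$ for every $u$, forcing $\alpha=\cos(\Phi)\beta$. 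Normalizing to unit length and noting that cross terms vanish by the same orthogonality gives $|N|^2=\cos^2(\Phi)+|\nabla^{\Sigma_v}\Phi_v|^2$, yielding the stated formula. The inner product $\langle N,v\rangle=\cos^2(\Phi(x,v))>0$ selects the correct orientation, and the antisymmetry $N_{-v}(\Phi)=-N_v(\Phi)$ follows from the behavior of $\Sigma_v^\perp(\Phi)$ and $\nabla^{\Sigma_v}\Phi_v$ under $v\mapsto -v$ together with the oddness of $\Phi$ in the second slot.

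For (iii), I choose an orthonormal basis $\{u_1,\ldots,u_{n-1}\}$ of $T_x\Sigma_v$ and use (i) with the orthogonality $\langle u_i,\Sigma_v^\perp(\Phi)\rangle=0$ to compute the matrix of the pullback metric $(\Sigma_v(\Phi))^*can$ as $\cos^2(\Phi(x,v))\,I_{n-1}+\xi\xi^{T}$, where $\xi_i=D\Phi_{(x,v)}\!\cdot(u_i,0)$, so $|\xi|^2=|\nabla^{\Sigma_v}\Phi_v(x)|^2$. The matrix determinant lemma then gives
\[
\det\!\bigl(\cos^2(\Phi)I_{n-1}+\xi\xi^{T}\bigr)=\cos^{2(n-2)}(\Phi)\bigl(\cos^2(\Phi)+|\nabla^{\Sigma_v}\Phi_v|^2\bigr),
\]
and taking square roots produces the asserted Jacobian.

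The whole lemma is a routine computation with no serious conceptual obstacle. The only points requiring care are the sign and orientation bookkeeping in (ii), and the implicit smallness assumption on $\|\Phi\|_{C^1}$ ensuring $\cos(\Phi)>0$ everywhere, without which the stated formula for $N_v(\Phi)$ and the positivity of the Jacobian could degenerate.
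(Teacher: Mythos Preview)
Your proposal is correct and follows essentially the same approach as the paper: direct differentiation for (i), verification that the candidate vector is orthogonal to both the position vector $y$ and the tangent vectors for (ii), and computation of the Gram determinant via the rank-one perturbation formula for (iii). The only cosmetic difference is that you present (ii) as solving an ansatz $\alpha\,\Sigma_v^\perp-\beta\,\nabla^{\Sigma_v}\Phi_v$ while the paper directly checks the stated formula, but the underlying computations are identical.
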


\begin{proof}
Let $u\in T_x\Sigma_v$. Then
\begin{multline*}
D\Sigma_v(\Phi)_x\cdot u=\cos(\Phi(x,v))u\\
+D\Phi_{(x,v)}\cdot (u,0) \big(-\sin(\Phi(x,v))x + \cos(\Phi(x,v))v\big),
\end{multline*}
which proves $i)$.

Note that $\langle \Sigma^{\perp}_v(\Phi)(x),u\rangle =0$ for every $u\in T_x\Sigma_v$ and
$|\Sigma^{\perp}_v(\Phi)(x)|^2=1$. Hence
$$
|\cos(\Phi(x,v))\Sigma^{\perp}_v(\Phi)(x) - \nabla^{\Sigma_v}\Phi_v(x)|^2=\cos(\Phi(x,v))^2+|\nabla^{\Sigma_v}\Phi_v|^2(x).
$$

Now 
\begin{multline*}
\langle \cos(\Phi(x,v))\Sigma^{\perp}_v(\Phi)(x) - \nabla^{\Sigma_v}\Phi_v(x),\Sigma_v(\Phi)(x) \rangle \\
=\cos(\Phi(x,v)) \langle \Sigma^{\perp}_v(\Phi)(x),\cos(\Phi(x,v))x+\sin(\Phi(x,v))v \rangle=0.
\end{multline*}

For $u\in T_x\Sigma_v$,
\begin{multline*}
\langle \cos(\Phi(x,v))\Sigma^{\perp}_v(\Phi)(x) - \nabla^{\Sigma_v}\Phi_v(x),\\
\cos(\Phi(x,v))u
+D\Phi_{(x,v)}\cdot (u,0) \Sigma^{\perp}_v(\Phi)(x)\rangle\\
=\cos(\Phi(x,v))D\Phi_{(x,v)}\cdot (u,0)-\cos(\Phi(x,v))\langle \nabla^{\Sigma_v}\Phi_v(x), u\rangle=0,
\end{multline*}
finishing the proof of $ii)$.

Let $\{u_i\}\subset T_x\Sigma$ be an orthonormal basis.  Then
\begin{align*}
g_{ij} & = \langle \cos(\Phi(x,v))u_i + D\Phi_{(x,v)}(u_i,0)\Sigma^{\perp}_{v}(\Phi)(x),\\
& \quad \quad \cos(\Phi(x,v))u_j + D\Phi_{(x,v)}(u_j,0)\Sigma^{\perp}_{v}(\Phi)(x)\rangle \\
& =\cos(\Phi(x,v))^2\delta_{ij}+\langle \nabla^{\Sigma_v}\Phi_v(x), u_i\rangle \langle \nabla^{\Sigma_v}\Phi_v(x), u_j\rangle.
\end{align*}
Hence
\begin{equation*}
\det g_{ij}=\cos(\Phi(x,v))^{2(n-2)}\big(\cos(\Phi(x,v))^2+ |\nabla^{\Sigma_v}\Phi_v(x)|^2\big),
\end{equation*}
proving $iii)$.
\end{proof}

\subsection{Incidence sets}\label{section.incidence} Given the family of hypersurfaces $\{\Sigma_\sigma(\Phi)\}$, define its incidence set by
\begin{equation*}
	[F(\Phi)]= \{ (y,\sigma) \in S^{n}\times {\mathbb{RP}}^n:\, y\in \Sigma_\sigma(\Phi)\}
\end{equation*}
and its orientation-inducing incidence set by
\begin{equation*}
	F(\Phi)= \{ (y,v) \in S^{n}\times S^n:\, y\in \Sigma_{v}(\Phi)\}.
\end{equation*}
The 
 projection of $S^n\times S^n$ onto $S^n\times {\mathbb{RP}}^n$ induces a two-to-one map $F(\Phi) \rightarrow [F(\Phi)]$. 
Clearly, $F(0)=T_{1}S^n$. 

The set $F(\Phi)$ is the image of the smooth map
\begin{equation} \label{eqparamincidset}
	(x,v) \in T_1S^n \mapsto (\Sigma_v(\Phi)(x),v) \in S^n\times S^n.
\end{equation}
Since this is a $C^1$-perturbation of the inclusion of $T_1S^n$ into $S^n\times S^n$, we obtain

\begin{prop} \label{propgelfand1}
	The sets $F(\Phi)$ and $[F(\Phi)]$ are smooth, embedded, compact hypersurfaces of $S^n\times S^n$ and $S^n\times {\mathbb{RP}}^n$ respectively.
\end{prop}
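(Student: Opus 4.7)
The plan is to exhibit $F(\Phi)$ as the diffeomorphic image of the smooth compact manifold $T_1S^n$ under the parametrization \eqref{eqparamincidset}, and then to descend to $[F(\Phi)]$ via a free $\mathbb{Z}_2$-action.

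First I would study the map
$$
\Psi_\Phi : (x,v) \in T_1S^n \longmapsto (\Sigma_v(\Phi)(x),v) \in S^n\times S^n.
$$
When $\Phi=0$, $\Psi_0$ is simply the tautological inclusion $T_1S^n \hookrightarrow S^n\times S^n$, which is a smooth embedding whose image is a compact hypersurface of dimension $2n-1$ in the $2n$-manifold $S^n\times S^n$. Since $\Phi \mapsto \Psi_\Phi$ is continuous from $C^{1}$ to $C^{1}$ and $T_1 S^n$ is compact, a $C^1$-smallness assumption on $\Phi$ guarantees that $\Psi_\Phi$ is a $C^{1}$-small perturbation of $\Psi_0$. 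In particular, the differential of $\Psi_\Phi$ stays uniformly close to that of $\Psi_0$, and since the latter has full rank everywhere, $\Psi_\Phi$ is an immersion for all $\Phi$ of sufficiently small $C^1$-norm. Concretely, the second coordinate of $D\Psi_\Phi$ is the identity on the $v$-component, so any kernel vector must satisfy $w=0$, and then the first coordinate reduces to $D\Sigma_v(\Phi)_x \cdot u$, which equals $u$ at $\Phi=0$; by continuity it remains injective for small $\Phi$.

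Next I would check injectivity of $\Psi_\Phi$. If $\Psi_\Phi(x_1,v_1)=\Psi_\Phi(x_2,v_2)$, then comparing second coordinates gives $v_1=v_2=:v$, and comparing first coordinates gives $\Sigma_v(\Phi)(x_1)=\Sigma_v(\Phi)(x_2)$. By the standing hypothesis (stated in the paragraph preceding Lemma \ref{lembasicformulae}) that each $\Sigma_v(\Phi):\Sigma_v \to S^n$ is a smooth embedding, this forces $x_1=x_2$. An injective immersion from a compact manifold into a Hausdorff space is automatically a smooth embedding, so $\Psi_\Phi$ embeds $T_1S^n$ onto $F(\Phi)$, which is therefore a smooth compact embedded hypersurface of $S^n\times S^n$.

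Finally, I would obtain $[F(\Phi)]$ as the quotient of $F(\Phi)$ by the free $\mathbb{Z}_2$-action $(y,v)\mapsto (y,-v)$. Since $\Phi$ is odd in $v$, the identity $\Sigma_{-v}(\Phi)(x)=\Sigma_v(\Phi)(x)$ shows that $F(\Phi)$ is invariant under this involution, and the induced action on $F(\Phi)$ is free because the action on the $S^n$ factor is free. Hence $[F(\Phi)]=F(\Phi)/\mathbb{Z}_2$ inherits a smooth structure, and the projection $S^n\times S^n \to S^n\times \mathbb{RP}^n$ restricts to a smooth embedding of $[F(\Phi)]$ as a compact hypersurface of $S^n\times \mathbb{RP}^n$. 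The only place requiring a little care is the uniform-in-$(x,v)$ quantification in the immersion step, which is handled by the compactness of $T_1S^n$ together with the $C^1$-smallness hypothesis on $\Phi$; everything else is formal.
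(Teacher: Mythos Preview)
Your argument is correct and is precisely the approach the paper takes: the paper simply asserts that \eqref{eqparamincidset} is a $C^1$-perturbation of the inclusion $T_1S^n \hookrightarrow S^n\times S^n$ and leaves the rest implicit, whereas you have spelled out the immersion, injectivity, and quotient steps in full. Nothing needs to be changed.
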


 and

\begin{prop} \label{propgelfand2}
	The projections
	\begin{equation*}
		\pi_1:(p,v) \in F(\Phi) \mapsto p\in S^n, \quad \pi_2: (p,v)\in F(\Phi) \mapsto v\in S^n
	\end{equation*}
	and
	\begin{equation*}
		\pi_1: (p,\sigma) \in [F(\Phi)] \mapsto p\in S^n, \quad \pi_2: (p,\sigma)\in [F(\Phi)] \mapsto \sigma\in {\mathbb{RP}}^n
	\end{equation*}
	are smooth submersions.
\end{prop}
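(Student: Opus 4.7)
The plan is to pull both projections back along the parametrization $G_\Phi : T_1 S^n \to S^n \times S^n$, $(x,v) \mapsto (\Sigma_v(\Phi)(x), v)$, introduced in \eqref{eqparamincidset}. For $\Phi$ of sufficiently small $C^1$ norm, Proposition \ref{propgelfand1} already identifies $G_\Phi$ as a smooth embedding onto $F(\Phi)$ (injectivity uses that each $\Sigma_v(\Phi)$ is an embedding, and the second coordinate recovers $v$). Consequently, $\pi_i|_{F(\Phi)}$ is a submersion if and only if $\pi_i \circ G_\Phi : T_1S^n \to S^n$ is a submersion, and I would work entirely at the level of the compact manifold $T_1 S^n$.

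For $\pi_2$, the composition $\pi_2 \circ G_\Phi(x,v) = v$ is independent of $\Phi$ and is the standard bundle projection $T_1S^n \to S^n$, which is tautologically a submersion (an $S^{n-1}$-bundle). Concretely, using $T_{(x,v)}(T_1S^n) = \{(u,w) \in T_xS^n \oplus T_vS^n : \langle u,v\rangle + \langle x,w\rangle = 0\}$ and the linear independence of $x$ and $v$, every $w \in T_v S^n$ is attained by some $(u,w)$. For $\pi_1$, at $\Phi = 0$ we have $\pi_1 \circ G_0(x,v) = x$, which is the other bundle projection and therefore a submersion. For general small $\Phi$, $\pi_1 \circ G_\Phi(x,v) = \cos(\Phi(x,v))\,x + \sin(\Phi(x,v))\,v$ is a $C^1$-small perturbation of $(x,v)\mapsto x$ on the compact source $T_1S^n$; since pointwise surjectivity of the differential is an open condition in the $C^1$-topology on maps from a compact manifold to a finite-dimensional target, $\pi_1 \circ G_\Phi$ remains a submersion after possibly shrinking the allowed $C^1$ size of $\Phi$.

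For the projective version, the antipodal involution $(p,v)\mapsto (p,-v)$ on $S^n\times S^n$ is a free $\Z/2$-action that preserves $F(\Phi)$, by the identity $\Sigma_{-v}(\Phi) = \Sigma_v(\Phi)$ noted in Section \ref{prelim}, and the quotient is precisely $[F(\Phi)] \subset S^n \times \RP^n$. The natural map $F(\Phi) \to [F(\Phi)]$ is thus a local diffeomorphism, and the two projections on $[F(\Phi)]$ factor through it and through the local diffeomorphism $S^n \to S^n$ (identity) and the covering $S^n \to \RP^n$; both being local diffeomorphisms, the submersion property transfers automatically from the $F(\Phi)$ statement. The only genuine content of the proposition is therefore the persistence of submersion under $C^1$-perturbation, which is standard, so I anticipate no real obstacle beyond bookkeeping the smallness assumption on $\Phi$.
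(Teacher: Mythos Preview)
Your argument is correct and is exactly the approach the paper takes: the paper states Propositions \ref{propgelfand1} and \ref{propgelfand2} together as immediate consequences of the single observation that the parametrization \eqref{eqparamincidset} is a $C^1$-perturbation of the inclusion $T_1S^n \hookrightarrow S^n\times S^n$, without giving any further proof. You have simply written out the details (pulling back via $G_\Phi$, identifying $\pi_2\circ G_\Phi$ as the bundle projection, using openness of the submersion condition for $\pi_1\circ G_\Phi$, and passing to the $\Z/2$-quotient) that the paper leaves implicit.
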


For $p\in S^n$, we define the dual hypersurface
$$
	\Sigma^*_p(\Phi)=\{\sigma \in \mathbb{RP}^n: (p,\sigma) \in [F(\Phi)]\}.
	$$
	If  $\Phi=0$ these sets are the linear projective hyperplanes in ${\mathbb{RP}}^n$:
\begin{equation*}
	\Sigma^{*}_p=\Sigma^*_{p}(0)=\{ [v]\in {\mathbb{RP}}^n:\, \langle v,p\rangle =0\},
\end{equation*}
parametrized by points $p \in S^n$.

	Notice that
\begin{equation*}
	p \in \Sigma_{\sigma}(\Phi)  \quad \Leftrightarrow \quad \sigma \in \Sigma^{*}_{p}(\Phi).
	\end{equation*}
	We have 
	$
	\Sigma_{\sigma}(\Phi) = \pi_{1}(\pi_2^{-1}(\sigma))\subset S^n
$
and
$
	\Sigma^{*}_{p}(\Phi) := \pi_{2}(\pi_1^{-1}(p)) \subset {\mathbb{RP}}^n.
$

\subsection{Generalized Gauss map}\label{section.gauss.map} It will be useful to consider $\Sigma(\Phi)$ as the map 
\begin{equation*}
	\Sigma(\Phi): (x,v)\in T_{1}S^n \mapsto \Sigma_{v}(\Phi)(x) \in S^n,
\end{equation*}
and similarly the unit normal (with respect to $can$) as the map 
\begin{equation*}
	N(\Phi) : (x,v)\in T_{1}S^n \mapsto N_{v}(\Phi)(x) \in S^n.
\end{equation*}
The generalized Gauss map of the family $\{\Sigma_{\sigma}(\Phi)\}$ is the map
\begin{equation*}
	\mathcal{G}(\Phi) : (x,v) \in T_{1}S^n \mapsto (\Sigma(\Phi)(x,v),N(\Phi)(x,v)) \in T_{1}S^n.
\end{equation*}
The map $\mathcal{G}(0)$ is the identity. 

Assume that $\Phi$ has sufficiently small $C^2$ norm. Then Lemma \ref{lembasicformulae} implies:
\begin{prop} \label{propGaussmap}
	The generalized Gauss map $\mathcal{G}(\Phi)$ is a smooth diffeomorphism of $T_1S^n$.
	\end{prop}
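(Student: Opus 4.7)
The strategy is to observe that $\mathcal{G}(0)$ is the identity of $T_1S^n$ and to show that $\mathcal{G}(\Phi)$ is a $C^1$-small perturbation of the identity when $\|\Phi\|_{C^2}$ is small, so that a standard covering-space argument will promote ``local diffeomorphism'' to ``diffeomorphism.''

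First, I would verify the basic regularity of $\mathcal{G}(\Phi)$. The formula $\Sigma_v(\Phi)(x)=\cos(\Phi(x,v))x+\sin(\Phi(x,v))v$ shows that $\Sigma(\Phi)$ is smooth on $T_1S^n$, and Lemma~\ref{lembasicformulae}(ii) shows that $N(\Phi)$ is smooth provided the denominator $\sqrt{\cos^2\Phi+|\nabla^{\Sigma_v}\Phi_v|^2}$ does not vanish, which is ensured as soon as $|\Phi|<\pi/2$. The computation in the proof of Lemma~\ref{lembasicformulae}(ii) gives $\langle N_v(\Phi)(x),\Sigma_v(\Phi)(x)\rangle=0$, so $\mathcal{G}(\Phi)$ indeed takes values in $T_1S^n$. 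Evaluating at $\Phi=0$ yields $\Sigma_v(0)(x)=x$ and $N_v(0)(x)=v$, confirming $\mathcal{G}(0)=\mathrm{Id}_{T_1S^n}$.

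Next, since $\Sigma(\Phi)$ depends on $\Phi$ only through its pointwise value while $N(\Phi)$ depends on $\Phi$ and on the horizontal first derivatives $\nabla^{\Sigma_v}\Phi_v$, the assignment $\Phi\mapsto\mathcal{G}(\Phi)$ is continuous as a map from $C^2$ functions on $T_1S^n$ to $C^1$ self-maps of $T_1S^n$. Hence if $\|\Phi\|_{C^2}$ is small enough, $\mathcal{G}(\Phi)$ is $C^1$-close to the identity; in particular $d\mathcal{G}(\Phi)$ is everywhere invertible, so $\mathcal{G}(\Phi)$ is a local diffeomorphism by the inverse function theorem. The same applies uniformly to each member of the homotopy $t\in[0,1]\mapsto\mathcal{G}(t\Phi)$, since $\|t\Phi\|_{C^2}\leq\|\Phi\|_{C^2}$.

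Finally, I would globalize: $T_1S^n$ is compact and connected (for $n\geq 2$), so a local diffeomorphism $T_1S^n\to T_1S^n$ is automatically a finite-sheeted covering map. The number of sheets is a homotopy invariant of the family $\mathcal{G}(t\Phi)$, and at $t=0$ it equals one; therefore $\mathcal{G}(\Phi)$ is a one-sheeted covering, i.e.\ a diffeomorphism. I do not expect a substantive obstacle here: the only point requiring care is verifying that the entire path $\{\mathcal{G}(t\Phi)\}_{t\in[0,1]}$ consists of local diffeomorphisms, which is immediate from the uniform $C^2$ smallness of $t\Phi$.
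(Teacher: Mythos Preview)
Your proposal is correct and follows essentially the same approach as the paper: the paper's proof is the single sentence ``$\mathcal{G}(\Phi)$ is $C^{1}$ close to the identity map,'' relying implicitly on the standard fact that the diffeomorphism group of a compact manifold is $C^{1}$-open among smooth self-maps. Your covering-space/homotopy argument is one valid way to make that implicit step explicit, though one could also argue injectivity directly or simply cite the openness of $\mathrm{Diff}(T_1S^n)$.
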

	
	\begin{proof}
	$\mathcal{G}(\Phi)$ is $C^1$ close to the identity map.
	\end{proof}

	
\section{The area functional}

\indent We denote by $C^{\infty}_{even}(S^n)$ the set of smooth even functions $h$ on $S^n$ ($h(-v)=h(v)$) and by $C^{\infty}_{odd}(S^n)$ the set of smooth odd functions $h$ on $S^n$ ($h(-v)=-h(v)$).

 Let $\rho$ be a smooth function on $S^n$, and $\Phi\in C^\infty_{*,odd}(T_1S^n)$. Let $\mathcal{A}(\rho,\Phi)$ denote the map that assigns to each $\sigma\in {\mathbb{RP}}^n$ the area of the surface $\Sigma_{\sigma}(\Phi)$ computed with respect to the conformal metric $e^{2\rho}can$. Explicitly, for all $\sigma=[v]\in {\mathbb{RP}}^n$,
\begin{align} \label{eqformulaA}
	\mathcal{A}(\rho,\Phi)(\sigma) & = area(\Sigma_{\sigma}(\Phi),e^{2\rho}can)\nonumber  \\ 
	  & = \int_{\Sigma_v} e^{(n-1)\rho(\Sigma_v(\Phi)(x))}|Jac(\Sigma_v(\Phi))|(x)dA_{can}(x).
\end{align}
Hence $\mathcal{A}(\rho,\Phi)\in C^{\infty}({\mathbb{RP}}^n)$.
It will be convenient to think of $\mathcal{A}(\rho,\Phi)$ as a function in $C^{\infty}_{even}(S^n)$ as well, and the notation will change accordingly.

\subsection{The first variation and the Euler-Lagrange operator} From the  identity  \eqref{eqformulaA} we have
\begin{equation*}
	\mathcal{A}(\rho,\Phi)(v) = \int_{\Sigma_v} A(\rho)((x,v),\Phi(x,v),\nabla^{\Sigma_v}\Phi_v(x))dA_{can}(x)
\end{equation*} 
for the smooth function
$
	A(\rho): T_{1}S^n \times \mathbb{R} \times \mathbb{R}^{n+1} \rightarrow \mathbb{R}
$
given explicitly by
\begin{equation*}
 A(\rho)((x,v),U,L) = e^{(n-1)\rho(\cos(U)x+\sin(U)v)}\cos^{n-2}(U)\sqrt{\cos^{2}(U)+|L|^2}.
\end{equation*}

Hence, for every $\phi\in C^{\infty}_{*,odd}(T_1S^n)$ and $v\in S^n$, the identity 
\begin{equation} \label{eqD2A}
	\frac{d}{dt}_{\vert t=0}\mathcal{A}(\rho,\Phi+t\phi)(v) = \int_{\Sigma_v}\mathcal{H}(\rho,\Phi)(x,v)\phi(x,v)dA_{can}(x)
\end{equation}
holds for the map
\begin{multline} \label{eqH}
	\mathcal{H}(\rho,\Phi)(x,v) = -div_{(\Sigma_v,can)} \left(D_3A(\rho)((x,v),\Phi(x,v),\nabla^{\Sigma_v}\Phi_v(x))\right) 
	\\ +  D_2A(\rho)((x,v),\Phi(x,v),\nabla^{\Sigma_v}\Phi_v(x))),
\end{multline}
which we call the Euler-Lagrange operator. (In the first term on the right-hand side, $v$ is frozen and the divergence is the divergence, at the point $x$, of a vector field that is tangent to $\Sigma_v$). \\
\indent In view of \eqref{eqH}, $\mathcal{H}(\rho,\Phi)$ is a smooth function on $T_{1}S^n$ depending on up to one derivative of $\rho$ and up to two derivatives of $\Phi$. Note that  $\mathcal{H}(\rho,\Phi)\in C^\infty_{*,odd}(T_1S^n).$ 

 The first variation formula for the area says that
\begin{align*}
&\frac{d}{dt}_{\vert t=0}\mathcal{A}(\rho,\Phi+t\phi)(v) & \\
& \quad \quad =\int_{\Sigma_v(\Phi)} \hat{\mathcal{H}}(\rho,\Phi)(x,v) & \\
& \hspace{2cm} \cdot e^{2\rho(y)} \langle \frac{d}{dt}_{\vert t=0}\Sigma_v(\Phi+t\phi)(x), e^{-\rho(y)}N_v(\Phi)(x)\rangle dA_{e^{2\rho}can}(y) & \\
& \quad \quad =\int_{\Sigma_v} \hat{\mathcal{H}}(\rho,\Phi)(x,v) e^{n\rho(\Sigma_v(\Phi)(x))} \cos^{n-1}(\Phi(x,v)) \phi(x,v) dA_{can}(x), &
\end{align*}
where $\hat{\mathcal{H}}(\rho,\Phi)(x,v)$ is the mean curvature of $\Sigma_v(\Phi)$ in $(S^n,e^{2\rho}can)$ at $y=\Sigma_v(\Phi)(x)$. Hence
$$
\mathcal{H}(\rho,\Phi)(x,v) =\hat{\mathcal{H}}(\rho,\Phi)(x,v) e^{n\rho(\Sigma_v(\Phi)(x))} \cos^{n-1}(\Phi(x,v)).
$$

\indent The geometric meaning of the equation  
\begin{equation*}
	\mathcal{H}(\rho,\Phi) = 0
\end{equation*}
is therefore that it holds if and only if all hypersurfaces $\Sigma_{\sigma}(\Phi)$ are critical points of the $(n-1)$-dimensional area functional of $(S^n,e^{2\rho}can)$. In other words, $\mathcal{H}(\rho,\Phi)=0$ if and only if the family $\{\Sigma_\sigma(\Phi)\}_{\sigma\in {\mathbb{RP}}^n}$ consists of minimal hypersurfaces of $(S^n,e^{2\rho}can)$.

\subsection{The linearization of $\mathcal{H}(\rho,\Phi)$} 

\begin{prop} \label{propDH_1}
	The following formulas hold:
\begin{itemize} 
	\item[$i)$] For every $f\in C^{\infty}(S^n)$,
	\begin{eqnarray*}
			&&\frac{d}{dt}_{|t=0}\mathcal{H}(\rho+tf,\Phi) = (n-1)(f\circ \Sigma(\Phi))\mathcal{H}(\rho,\Phi)  \\ 
			&& \hspace{1cm}+ (n-1)(\cos\circ \Phi)^{n-1} \left\langle \nabla f \circ \Sigma(\Phi), N(\Phi) \right\rangle e^{(n-1)\rho\circ \Sigma(\Phi)}.
		\end{eqnarray*}
		\item[$ii)$] For every $\phi \in C^{\infty}_{*,odd}(T_1S^n)$ and  $v\in S^n$,
		$$
		\left(\frac{d}{dt}_{|t=0}\mathcal{H}(\rho,\Phi+t\phi)\right)_{v}=\mathcal{J}_v(\rho,\Phi) (\phi_v)
		$$
		 for some  symmetric second-order linear  partial differential operator $\mathcal{J}_v(\rho,\Phi): C^{\infty}(\Sigma_v) \mapsto  C^{\infty}(\Sigma_v)$.
		  Moreover, $\mathcal{J}_{-v}(\rho,\Phi)=\mathcal{J}_v(\rho,\Phi)$.
\end{itemize}
\end{prop}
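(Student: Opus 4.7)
For part (i), my plan is to exploit the geometric identity
$$
\mathcal{H}(\rho,\Phi)(x,v) = \hat{\mathcal{H}}(\rho,\Phi)(x,v)\,e^{n\rho(\Sigma_v(\Phi)(x))}\cos^{n-1}(\Phi(x,v))
$$
established above from the first variation formula, combined with the classical conformal transformation law for mean curvature: for $\tilde g=e^{2\rho}can$, a fixed hypersurface $\Sigma$ with $can$-unit normal $N$ satisfies $\hat H_{\tilde g}=e^{-\rho}\bigl(H_{can}+(n-1)\langle\nabla\rho,N\rangle\bigr)$. Differentiating along $\rho\mapsto \rho+tf$ at $t=0$ yields
$$
\tfrac{d}{dt}\Big|_{t=0}\hat{\mathcal{H}}(\rho+tf,\Phi)=-(f\circ\Sigma(\Phi))\hat{\mathcal{H}}(\rho,\Phi)+(n-1)e^{-\rho\circ\Sigma(\Phi)}\langle\nabla f\circ\Sigma(\Phi),N(\Phi)\rangle,
$$
and multiplying by $e^{n\rho\circ\Sigma(\Phi)}\cos^{n-1}(\Phi)$ while incorporating the contribution $n(f\circ\Sigma(\Phi))\mathcal{H}(\rho,\Phi)$ from differentiating $e^{n\rho\circ\Sigma(\Phi)}$ itself combines the two mean-curvature terms into $(n-1)(f\circ\Sigma(\Phi))\mathcal{H}(\rho,\Phi)$, exactly reproducing the claimed formula.

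For part (ii), the PDO structure can be read off by linearizing \eqref{eqH} directly in $\Phi$. Since the variable $L$ enters $A(\rho)$ only through $|L|^2$, the partial derivatives $D_3A$ and $D_{23}A$ evaluated at $L=\nabla^{\Sigma_v}\Phi_v$ remain tangent to $\Sigma_v$, so the surface divergence is well-defined. Applying the product rule and noting the cancellation of the two cross-terms $\pm\langle D_{23}A,\nabla^{\Sigma_v}\phi_v\rangle$ leaves the divergence-form expression
$$
\mathcal{J}_v(\rho,\Phi)(\phi_v)=-\mathrm{div}_{\Sigma_v}\bigl(D_{33}A\cdot\nabla^{\Sigma_v}\phi_v\bigr)+\bigl(D_{22}A-\mathrm{div}_{\Sigma_v}D_{23}A\bigr)\phi_v,
$$
a second-order linear operator on $C^\infty(\Sigma_v)$ whose principal part is manifestly symmetric because $D_{33}A$ is a symmetric endomorphism of $T_x\Sigma_v$. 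Alternatively and more abstractly, symmetry of $\mathcal{J}_v(\rho,\Phi)$ follows from Schwarz's theorem applied to $(s,t)\mapsto \mathcal{A}(\rho,\Phi+s\phi+t\psi)(v)$ together with \eqref{eqD2A}, giving
$$
\int_{\Sigma_v}\mathcal{J}_v(\rho,\Phi)(\phi_v)\psi_v\,dA_{can}=\int_{\Sigma_v}\mathcal{J}_v(\rho,\Phi)(\psi_v)\phi_v\,dA_{can}.
$$

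For the parity identity $\mathcal{J}_{-v}=\mathcal{J}_v$, I would exploit oddness: since $\mathcal{H}(\rho,\Phi+t\phi)\in C^\infty_{*,odd}(T_1S^n)$ for each $t$, differentiating $\mathcal{H}(\rho,\Phi+t\phi)(x,-v)=-\mathcal{H}(\rho,\Phi+t\phi)(x,v)$ in $t$ at $t=0$ gives $\mathcal{J}_{-v}(\rho,\Phi)(\phi_{-v})=-\mathcal{J}_v(\rho,\Phi)(\phi_v)$, and the relation $\phi_{-v}=-\phi_v$ supplied by $\phi\in C^\infty_{*,odd}(T_1S^n)$ then yields the equality. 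The main obstacle is bookkeeping rather than anything conceptual: in (i) one must reconcile the sign in the conformal-change formula with the convention that $N_v(\Phi)$ points toward $v$ (Lemma \ref{lembasicformulae}(ii)) and verify that the exponential factors telescope correctly, while in (ii) one must confirm that the tangential character of $D_3A$ and $D_{23}A$ is preserved under linearization so the surface divergence remains meaningful.
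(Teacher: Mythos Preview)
Your proposal is correct and follows essentially the same route as the paper. Part (i) is identical: the paper also differentiates the conformal mean-curvature law to obtain $\frac{d}{dt}|_{t=0}\hat{\mathcal H}(\rho+tf,\Phi)=-(f\circ\Sigma(\Phi))\hat{\mathcal H}(\rho,\Phi)+(n-1)e^{-\rho\circ\Sigma(\Phi)}N(\Phi)(f)$, then combines with the product rule on $\mathcal H=\hat{\mathcal H}\,e^{n\rho\circ\Sigma(\Phi)}\cos^{n-1}\Phi$.

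For part (ii) there are only cosmetic differences. The paper simply asserts that the second-order PDO structure ``follows from \eqref{eqH}'' without writing out your divergence-form expression, and then proves symmetry exactly via your Schwarz-theorem argument on $\mathcal A(\rho,\Phi+t\phi+s\psi)(v)$. Note incidentally that your explicit form $-\mathrm{div}_{\Sigma_v}(D_{33}A\,\nabla\phi_v)+(D_{22}A-\mathrm{div}_{\Sigma_v}D_{23}A)\phi_v$ is already fully self-adjoint on the closed manifold $\Sigma_v$, not merely in its principal part, so your first symmetry argument stands on its own. For the parity identity the paper invokes $\mathcal A(\rho,\Phi+t\phi+s\psi)(-v)=\mathcal A(\rho,\Phi+t\phi+s\psi)(v)$ rather than the oddness of $\mathcal H$; the two are equivalent, and your observation that any $\psi\in C^\infty(\Sigma_v)$ arises as $\phi_v$ for some odd $\phi$ is needed implicitly in either version.
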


\begin{proof}
From the formula of the conformal change of the mean curvature we have
$$
\hat{\mathcal{H}}(\rho+tf,\Phi)=e^{-tf \circ \Sigma(\Phi)} \big(\hat{\mathcal{H}}(\rho,\Phi)+(n-1)e^{-\rho \circ \Sigma(\Phi)}N(\Phi)(tf)\big).
$$
Hence
\begin{equation*}
\frac{d}{dt}_{|t=0}\hat{\mathcal{H}}(\rho+tf,\Phi) =-(f \circ \Sigma(\Phi)) \hat{\mathcal{H}}(\rho,\Phi)+(n-1)e^{-\rho \circ \Sigma(\Phi)}N(\Phi)(f).
\end{equation*}
Since $$
\mathcal{H}(\rho,\Phi)(x,v) =\hat{\mathcal{H}}(\rho,\Phi)(x,v) e^{n\rho(\Sigma_v(\Phi)(x))} \cos^{n-1}(\Phi(x,v)),
$$
we have
\begin{multline*}
 \frac{d}{dt}_{|t=0}\mathcal{H}(\rho+tf,\Phi) =\frac{d}{dt}_{|t=0}\hat{\mathcal{H}}(\rho+tf,\Phi) e^{n\rho(\Sigma_v(\Phi)(x))} \cos^{n-1}(\Phi(x,v))\\
+ nf(\Sigma_v(\Phi)(x)) \hat{\mathcal{H}}(\rho,\Phi)(x,v) e^{n\rho(\Sigma_v(\Phi)(x))} \cos^{n-1}(\Phi(x,v)).
\end{multline*}
Therefore
\begin{multline*}
 \frac{d}{dt}_{|t=0}\mathcal{H}(\rho+tf,\Phi) =(n -1) (f \circ \Sigma(\Phi)) \mathcal{H}(\rho,\Phi) \\
+(n-1)  N(\Phi)(f)e^{(n-1)\rho(\Sigma_v(\Phi)(x))} \cos^{n-1}(\Phi(x,v)).
\end{multline*}

The fact that $\mathcal{J}_v(\rho,\Phi)$ is a second-order linear differential operator follows from  equation (\ref{eqH}). 
Symmetry of $\mathcal{J}_v(\rho,\Phi)$ follows from equation (\ref{eqD2A}), since
\begin{equation*}
	\frac{\partial }{\partial t}_{\vert t=0}\frac{\partial}{\partial s}_{\vert s=0}\mathcal{A}(\rho,\Phi+t\phi+s\psi)(v) = \int_{\Sigma_v}\psi_v(x)\mathcal{J}_v(\rho,\Phi)\cdot \phi_v(x) dA_{can}(x)
\end{equation*}
and partial derivatives commute. (See also \cite{Whi}, Proposition 1.1). The identity $\mathcal{J}(\rho,\Phi)_{-v}=\mathcal{J}(\rho,\Phi)_v$
follows from 
$$
\mathcal{A}(\rho,\Phi+t\phi+s\psi)(-v)=\mathcal{A}(\rho,\Phi+t\phi+s\psi)(v).
$$
\end{proof}

\begin{rmk} \label{rmkJacobicanonico}
When $(\rho,\Phi)=(0,0)$, the operator $\mathcal{J}_v(0,0)$ coincides with the Jacobi operator of the equator $\Sigma_v$ as a minimal hypersurface of $(S^n,can)$, that is
\begin{equation*}
	 \mathcal{J}_v(0,0) = -\Delta_{(\Sigma_v,can)} - (n-1).
\end{equation*}
In fact the graphical perturbation $t\mapsto \Sigma_{v}(t\phi)$ of $\Sigma_v(0)$ has normal speed $\phi_v$ at $t=0$.  It is well-known that the kernel of the operator $\mathcal{J}(0,0)_v$ consists precisely of the linear functions 
\begin{equation*}
	x \in \Sigma_v \mapsto \langle x,u \rangle \in \mathbb{R} \quad \text{for all} \quad u\in T_{v}S^n.
\end{equation*}
	By  Fredholm alternative and regularity theory, for every 
	 $$\psi\in ker(\mathcal{J}_v(0,0))^{\perp}\cap C^{\infty}(\Sigma_v)$$ (i.e. $L^2$-orthogonal to the linear functions) there exists a unique $\phi\in ker(\mathcal{J}_v(0,0))^{\perp}\cap C^{\infty}(\Sigma_v)$ that solves the equation $\mathcal{J}_v(0,0)\phi=\psi$.
\end{rmk}

\indent Denote by $P_v : L^{2}(\Sigma_v,can)\rightarrow ker(\mathcal{J}_v(0,0))^{\perp}$ the $L^2$-orthogonal projection. The adjoint $P_v^*$ is  the inclusion map.  We denote by $C^{\infty}_{0,odd}(T_{1}S^n)$ the space of $\phi\in  C^{\infty}_{*,odd}(T_{1}S^n)$ such that $\phi_v \in ker(\mathcal{J}_v(0,0))^{\perp}$ for every $v\in S^n$.

Assume the $C^3$ norms of $\rho$ and $\Phi$ are sufficiently small.

\begin{prop}\label{propsoljacobiprojected}
The linear operator $\mathcal{J}_v(\rho,\Phi)$ is elliptic  Fredholm of index zero, and 
{
$$
\mathcal{P}_{v}(\rho,\Phi) : ker(\mathcal{J}_v(0,0))^{\perp}\cap C^{\infty}(\Sigma_v)\rightarrow  ker(\mathcal{J}_v(0,0))^{\perp}\cap C^{\infty}(\Sigma_v)
 $$
 given by $\mathcal{P}_{v}(\rho,\Phi)=P_v\circ \mathcal{J}_v(\rho,\Phi)\circ P_v^*$ 
is bijective.}
 Moreover, for any fixed $\alpha\in (0,1)$ {there is a positive constant $c=c(n,\alpha)$ so that } 
	\begin{equation} \label{eqgard0}
		\norm{\mathcal{P}_v(\rho,\Phi)\psi_v}_{C^{0,\alpha}(\Sigma_v)} \geq c \norm{\psi_v}_{C^{2,\alpha}(\Sigma_v)}
	\end{equation}
	for every $\psi\in C^{\infty}_{0,odd}(T_{1}S^n)$ and every $v\in S^n$. 
	\end{prop}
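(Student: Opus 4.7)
The plan is to view $\mathcal{J}_v(\rho,\Phi)$ as a small perturbation of the canonical Jacobi operator $\mathcal{J}_v(0,0)=-\Delta_{(\Sigma_v,can)}-(n-1)$ identified in Remark \ref{rmkJacobicanonico}, and to transfer the relevant properties by standard elliptic theory together with a Neumann-series argument, maintaining uniformity in $v\in S^n$.

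First I would establish ellipticity and the Fredholm index. Inspecting the formula \eqref{eqH} for $\mathcal{H}(\rho,\Phi)$, the linearization $\mathcal{J}_v(\rho,\Phi)$ in $\Phi$ is a second-order differential operator on $\Sigma_v$ whose coefficients are smooth functions of the values of $\rho, \nabla\rho, \Phi, \nabla\Phi, \nabla^2\Phi$ along $\Sigma_v$; in particular they depend continuously on the $C^3$-norms of $\rho$ and $\Phi$. Since at $(\rho,\Phi)=(0,0)$ the principal symbol coincides with that of $-\Delta_{(\Sigma_v,can)}$, for $\|\rho\|_{C^3}+\|\Phi\|_{C^3}$ sufficiently small $\mathcal{J}_v(\rho,\Phi)$ is uniformly elliptic on each $\Sigma_v$ with ellipticity constants bounded uniformly in $v\in S^n$. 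Combined with the symmetry established in Proposition \ref{propDH_1}$(ii)$ and the compactness of $\Sigma_v$, standard elliptic theory yields that $\mathcal{J}_v(\rho,\Phi)$ is self-adjoint Fredholm of index zero.

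Next I would handle bijectivity of $\mathcal{P}_v(\rho,\Phi)$ together with the uniform lower bound. By Remark \ref{rmkJacobicanonico}, the spectrum of $\mathcal{J}_v(0,0)$ on $\ker(\mathcal{J}_v(0,0))^{\perp}$ is bounded away from zero, and Schauder theory on the round $(n-1)$-sphere provides a constant $c_0=c_0(n,\alpha)>0$ with
\begin{equation*}
	\|\mathcal{P}_v(0,0)\psi_v\|_{C^{0,\alpha}(\Sigma_v)}\geq c_0\,\|\psi_v\|_{C^{2,\alpha}(\Sigma_v)}
\end{equation*}
for every $\psi\in C^{\infty}_{0,odd}(T_1S^n)$ and every $v\in S^n$; uniformity in $v$ is immediate from the transitive action of the isometry group of $(S^n,can)$ on equators, which conjugates the unperturbed operators into each other. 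The difference $\mathcal{J}_v(\rho,\Phi)-\mathcal{J}_v(0,0)$ is a second-order linear operator on $\Sigma_v$ whose coefficients vanish at $(\rho,\Phi)=(0,0)$ and are controlled in $C^{0,\alpha}(\Sigma_v)$ by $C(n,\alpha)(\|\rho\|_{C^3}+\|\Phi\|_{C^3})$, uniformly in $v$; hence the same smallness passes to $\mathcal{P}_v(\rho,\Phi)-\mathcal{P}_v(0,0)$ as an operator $C^{2,\alpha}\to C^{0,\alpha}$. A Neumann-series argument then yields invertibility of $\mathcal{P}_v(\rho,\Phi)$ and the estimate \eqref{eqgard0} with some $c=c(n,\alpha)>0$, once the $C^3$-norms are small enough.

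The main obstacle is ensuring uniformity of all constants in $v\in S^n$. This is resolved by exploiting the homogeneity of $(S^n,can)$, which permits identifying each equator with a fixed round $(n-1)$-sphere so that the baseline Schauder constants for $\mathcal{P}_v(0,0)$ are independent of $v$, while compactness of $S^n$ together with the smooth dependence of $v\mapsto \mathcal{J}_v(\rho,\Phi)$ propagates uniformity through the perturbative estimates.
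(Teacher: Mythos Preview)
Your proposal is correct and follows essentially the same route as the paper: view $\mathcal{J}_v(\rho,\Phi)$ as a $C^0$-small perturbation of $\mathcal{J}_v(0,0)=-\Delta_{(\Sigma_v,can)}-(n-1)$, derive ellipticity and (via symmetry) index zero, use the known invertibility and Schauder estimate for $\mathcal{P}_v(0,0)$ on $\ker(\mathcal{J}_v(0,0))^\perp$, and then absorb the perturbation by a triangle-inequality/Neumann-series argument. Your explicit remark that uniformity in $v$ comes from the transitive isometry action on equators is a helpful addition that the paper leaves implicit.
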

	
	\begin{proof}

The coefficients of the second-order part of  $\mathcal{J}_v(\rho,\Phi)$
 are close in $C^0$ norm to the coefficients of $\mathcal{J}_v(0,0)$  uniformly in $v\in S^n$.  This implies $\mathcal{J}_v(\rho,\Phi)$
 is elliptic Fredholm and by symmetry it has index zero.

	 When $(\rho,\Phi)=(0,0)$, by Fredholm alternative and regularity theory, 
	 $$
	 \mathcal{J}_v(0,0): ker(\mathcal{J}_v(0,0))^{\perp}\cap C^{2,\alpha}(\Sigma_v) \rightarrow ker(\mathcal{J}_v(0,0))^{\perp}\cap C^{0,\alpha}(\Sigma_v)
	 $$
	 is a continuous bijection. There is a constant $c>0$ depending only on $n$ and $\alpha$ such that
	 $$
	 c\norm{\psi}_{C^{2,\alpha}(\Sigma_v)}  \leq \norm{\mathcal{J}_v(0,0)\psi}_{C^{0,\alpha}(\Sigma_v)}
	 $$
	 for every $\psi \in ker(\mathcal{J}_v(0,0))^{\perp}\cap C^{2,\alpha}(\Sigma_v)$. We have
	  \begin{align*}
		c\norm{\psi}_{C^{2,\alpha}(\Sigma_v)} & \leq \norm{\mathcal{J}_v(0,0)\psi}_{C^{0,\alpha}(\Sigma_v)} \\
		& \leq \norm{\mathcal{P}_v(\rho,\Phi)\psi}_{C^{0,\alpha}(\Sigma_v)} + \norm{\mathcal{P}_v(\rho,\Phi)-\mathcal{P}_v(0,0)}\norm{\psi}_{C^{2,\alpha}(\Sigma_v)}.
		\end{align*}
		The assumptions on $\rho$ and $\Phi$ allow us to absorb the last term into the left-hand side  and prove \eqref{eqgard0} with a different constant.
		Elliptic regularity theory 
	implies that 
	$$
	 \mathcal{P}_{v}(\rho,\Phi): ker(\mathcal{J}_v(0,0))^{\perp}\cap C^{\infty}(\Sigma_v) \rightarrow ker(\mathcal{J}_v(0,0))^{\perp}\cap C^{\infty}(\Sigma_v)
	 $$
	 is a bijection.

	 \end{proof}

\subsection{The center and solution maps}  Let $\Omega_{even}^{1}(S^n)$ denote the set of smooth differential one-forms $\omega$ on $S^n$ ($\omega \in \Omega^1(S^n)$) that are even with respect to the antipodal map $A$, \textit{i.e.}  $A^*\omega=\omega$. The following definition is motivated by Remark \ref{rmkJacobicanonico}.
\begin{definition}
	The center map is the linear map
\begin{equation*}
	C : C^\infty_{*,odd}(T_1S^n) \rightarrow \Omega^{1}_{even}(S^n)
\end{equation*}
that assigns to each $\Psi$ the one-form
\begin{equation*}
	C(\Psi)_{v}(u) = \int_{\Sigma_v} \Psi(x,v)\langle x,u \rangle dA_{can}(x) \quad \text{for all} \quad u\in T_vS^n. 
\end{equation*}
\end{definition}

We call $C(\Psi)$ the center of $\Psi$.

\indent We have that
\begin{equation*}
	Ker \, C = \{\Psi\in C^{\infty}_{*,odd}(T_1S^n):\, C(\Psi)=0\} = C^{\infty}_{0,odd}(T_1S^n).
	\end{equation*}
	
	\indent The space $C^\infty_{0,odd}(T_{1}S^n)$ is complemented in $C^{\infty}_{*,odd}(T_{1}S^n)$ by a set of functions that have a simple description. 

\begin{prop} \label{propdecomposition}
	There is a direct sum decomposition
	\begin{equation*} 
	C_{*,odd}^{\infty}(T_1S^n) = C_{0,odd}^{\infty}(T_1S^n) \oplus j\Omega^{1}_{even}(S^n),
	\end{equation*}
	where $j : \Omega^{1}_{even}(S^n) \rightarrow C^{\infty}_{*,odd}(T_{1}S^n)$ is a right-inverse of the center map that satisfies the following property: for every $\phi\in C^{\infty}_{0,odd}(T_{1}S^n)$ and every $\omega\in \Omega^{1}_{even}(S^n)$,
	\begin{equation*}
		\int_{\Sigma_v}\phi_v(x)(j\omega)_v(x)dA_{can}(x) = 0 \quad \text{for all} \quad v\in S^n.
	\end{equation*}
\end{prop}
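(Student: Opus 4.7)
The plan is to construct $j$ explicitly so that each $(j\omega)_v$ lies in $\ker\mathcal{J}_v(0,0)$; the orthogonality property in the proposition is then automatic, since by definition $C^{\infty}_{0,odd}(T_1S^n)$ is the space of $\phi\in C^{\infty}_{*,odd}(T_1S^n)$ whose fiber $\phi_v$ is $L^2$-orthogonal to $\ker\mathcal{J}_v(0,0)$. Recall from Remark \ref{rmkJacobicanonico} that $\ker\mathcal{J}_v(0,0)$ is exactly the space of restrictions to $\Sigma_v$ of linear functions $x\mapsto\langle x,u\rangle$ with $u\in T_vS^n$. Given $\omega\in\Omega^{1}_{even}(S^n)$, let $\omega^\sharp$ denote its $can$-dual vector field on $S^n$, and set
\[
(j\omega)_v(x) := \frac{n}{|S^{n-1}|}\,\langle x,\omega^\sharp_v\rangle, \quad x\in\Sigma_v,\ v\in S^n.
\]

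Next I would verify that $j$ maps $\Omega^{1}_{even}(S^n)$ into $C^{\infty}_{*,odd}(T_1S^n)$ and satisfies $C\circ j=\mathrm{id}$. Smoothness is clear. For the oddness in $v$, the hypothesis $A^{*}\omega=\omega$, combined with $dA_v(u)=-u$ under the identification $T_vS^n=v^{\perp}=T_{-v}S^n$, forces $\omega^\sharp_{-v}=-\omega^\sharp_v$, so that $(j\omega)_{-v}=-(j\omega)_v$. The identity $C(j\omega)=\omega$ reduces to the standard moment computation
\[
\int_{\Sigma_v}\langle x,a\rangle\langle x,b\rangle\,dA_{can}(x)=\frac{|S^{n-1}|}{n}\langle a,b\rangle,\quad a,b\in v^{\perp},
\]
applied with $a=\omega^\sharp_v$ and $b=u$; this also pins down the normalization constant in the definition of $j$.

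With $j$ constructed, the direct sum decomposition is formal. For any $\Psi\in C^{\infty}_{*,odd}(T_1S^n)$, a parity check shows $C(\Psi)\in\Omega^{1}_{even}(S^n)$: using $\Sigma_{-v}=\Sigma_v$ and $\Psi(x,-v)=-\Psi(x,v)$ one gets $C(\Psi)_{-v}(u)=-C(\Psi)_v(u)$, and then $dA_v(u)=-u$ yields $A^{*}C(\Psi)=C(\Psi)$. Set $\omega:=C(\Psi)$ and $\phi:=\Psi-j\omega$; then $C(\phi)=\omega-\omega=0$, so $\phi\in\mathrm{Ker}\,C=C^{\infty}_{0,odd}(T_1S^n)$. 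This gives the decomposition $\Psi=\phi+j\omega$, and uniqueness is immediate, since applying $C$ to any such decomposition forces $\omega=C(\Psi)$. The orthogonality property of $j$ claimed in the statement then follows directly from the construction: $(j\omega)_v$ lies in $\ker\mathcal{J}_v(0,0)$, while $\phi_v$ lies in its $L^{2}$-orthogonal complement whenever $\phi\in C^{\infty}_{0,odd}(T_1S^n)$. I anticipate no substantive obstacle; the only point needing real care is the bookkeeping of parity conventions under $A^{*}$, and the only non-trivial computation is the moment identity on the equator.
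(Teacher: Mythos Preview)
Your proposal is correct and essentially identical to the paper's proof: the paper defines $j\omega(x,v)=\alpha_n\,\omega_v(x)$ with $\alpha_n$ determined by the same moment identity on $\Sigma_v$ (so $\alpha_n=n/|S^{n-1}|$), checks oddness in $v$ from $A^*\omega=\omega$, verifies $C\circ j=\mathrm{id}$, and obtains the decomposition and orthogonality exactly as you do.
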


\begin{proof}
\indent Let 
\begin{equation*}
	j: \Omega^{1}_{even}(S^n) \rightarrow C^\infty_{*,odd}(T_1S^n)
\end{equation*}
denote the linear map that assigns to each $\omega\in \Omega^{1}_{even}(S^n)$ the smooth function  $j\omega$ on $T_{1}S^n$ defined by
\begin{equation*}
	j\omega(x,v) = \alpha_n \omega_{v}(x) \quad \text{for every $(x,v)\in T_1S^n$},
\end{equation*}
where $\alpha_n$ is a positive dimensional constant defined so that 
\begin{equation*}
	\langle X,Y \rangle = \alpha_n\int_{\Sigma_v} \langle X,x\rangle \langle x, Y\rangle dA_{can}(x) \quad \text{for all} \quad X,Y\in T_{v}S^n.
\end{equation*}
We have
$$
j\omega(x,-v)=\alpha_n \omega_{-v} (x)=-\alpha_n \omega_{-v}(-x)=-\alpha_n(A^*\omega)_v (x),
$$
hence $j\omega$ is odd in the second variable because $\omega$ is even.

\indent By duality, a one-form $\omega$ on $S^n$ corresponds to a tangent vector field $X$ on $S^n$ such that $\omega_{v}(u)=\langle X(v), u\rangle$ for all $u\in T_vS^n$. For every $(x,v)\in T_1S^n$, we have $(j\omega)(x,v)=\alpha_n\omega_v(x)= \alpha_n\langle X(v), x\rangle$. Hence, for every $u\in T_vS^n$,
\begin{equation*}
	C(j\omega)_v(u) = \alpha_n\int_{\Sigma_v} \langle X(v), x\rangle\langle x,u\rangle dA_{can}(x) = \langle X(v), u\rangle = \omega_v(u),
\end{equation*}
that is
\begin{equation*}
	C(j\omega) = \omega \quad \text{for every} \quad \omega \in \Omega^{1}_{even}(S^n). \\
\end{equation*}
\indent  Moreover, for every $\phi \in C^{\infty}_{0,odd}(T_{1}S^n)$ and every $\omega\in \Omega ^1_{even}(S^n)$,
\begin{equation*}
	\int_{\Sigma_v}\phi_v(x)(j\omega)_v(x)dA_{can}(x) = \alpha_n C(\phi)_{v}(X(v))=0 \quad \text{for all} \quad v\in S^n.
\end{equation*}
\indent Since $j$ is a right-inverse of the centre map $C$, we can write 
$$
\phi=\big(\phi-j C(\phi)\big)+j C(\phi)
$$
for $\phi\in C^{\infty}_{*,odd}(T_1S^n)$ and  $C\big(\phi-j C(\phi)\big)=0$,
which induces a decomposition of $C^{\infty}_{*,odd}(T_{1}S^n)$ with the required properties. 
\end{proof}

\indent Notice that $\psi\in C^{\infty}_{0,odd}(T_{1}S^n)$ if and only if $\psi_v\in C^\infty(\Sigma_v)$ lies in $Ker(\mathcal{J}_v(0,0))^{\perp}$ for all $v\in S^n$. Hence, by Remark \ref{rmkJacobicanonico}, there exists a unique $\phi\in C^{\infty}_{0,odd}(T_{1}S^n)$ so that $\mathcal J_v(0,0)(\phi_v)=\psi_v$ for all $v\in S^n$. The smoothness of $\phi$ in the combined variable $(x,v)$ follows by elliptic theory and differentiation of the equation. 

\indent More generally, we consider the map
$$
\mathcal{P}(\rho,\Phi):C^{\infty}_{0,odd}(T_{1}S^n) \rightarrow C^{\infty}_{0,odd}(T_{1}S^n)$$
given by
\begin{equation}\label{eqP}
\mathcal{P}(\rho,\Phi)(\phi)= \frac{d}{dt}_{|t=0} (\mathcal{H}(\rho,\Phi+t\phi)-j C(\mathcal{H}(\rho,\Phi+t\phi))). 
\end{equation}
Proposition \ref{propsoljacobiprojected} implies
\begin{equation*}
	\mathcal{P}(\rho,\Phi)(\phi)(x,v)=\mathcal{P}_v(\rho,\Phi)(\phi_v)(x).
\end{equation*}
 The inverse of $\mathcal{P}(\rho,\Phi)$ will be denoted by  
\begin{equation} \label{eqS}
		\mathcal{S}(\rho,\Phi) : C^{\infty}_{0,odd}(T_{1}S^n) \rightarrow C^{\infty}_{0,odd}(T_{1}S^n),
	\end{equation}
and is called the solution map.
A priori we obtain that $\mathcal{S}(\rho,\Phi)(\psi)$ is smooth in $x$  and continuous in $v$.  Again by   implicit differentiation, using the uniqueness of the solution and elliptic theory, we get that  $\mathcal{S}(\rho,\Phi)(\psi)\in C^{\infty}_{0,odd}(T_{1}S^n)$.

\subsection{The variational constraint}\label{section.var.constraint}

 Given $(x,v)\in T_1S^n$, $u\in T_vS^n$, define
\begin{align} \label{eqeta2}
 \eta(\Phi)(x,v,u) = & -\langle x,u\rangle +D\Phi_{(x,v)}\cdot  (-\langle x,u\rangle v,u) \nonumber \\
 & -\tan\Phi(x,v)\langle \nabla^{\Sigma_v}\Phi_v(x),u\rangle. 
\end{align}
Note that $\eta(\Phi)(x,v,u)$ depends linearly on $u$ and
\begin{equation*}
\eta(\Phi)(x,-v,u)=\eta(\Phi)(x,v,u).
\end{equation*}

\begin{definition} The constraint map
\begin{equation*}
	\mathcal{K} : C^{\infty}_{*,odd}(T_1S^n)\times C^{\infty}_{*,odd}(T_1S^n) \rightarrow \Omega^1_{even}(S^n)
\end{equation*}
is defined by 
\begin{equation*}
	\mathcal{K}(\Phi,\Psi)_{v}(u) = \int_{\Sigma_v} \Psi(x,v)\eta(\Phi)(x,v,u)dA_{can}(x) \quad \text{for all} \quad u\in T_{v}S^n.
\end{equation*}
\end{definition}

The map $\mathcal{K}$ generalizes the center map. In fact by \eqref{eqeta2}, 
\begin{equation*}
	C(\Psi) = - \mathcal{K}(0,\Psi).
\end{equation*}
 
\begin{thm}[Variational constraint] \label{thmconstraint}
	We have
\begin{equation*}
	\mathcal{K}(\Phi,\mathcal{H}(\rho,\Phi)) = d\mathcal{A}(\rho,\Phi).
\end{equation*}
\end{thm}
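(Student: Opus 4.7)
The plan is to interpret $d\mathcal{A}(\rho,\Phi)_v(u)$ as a directional derivative $\frac{d}{dt}\big|_{t=0}\mathcal{A}(\rho,\Phi)(v(t))$ along a curve $v(t)\subset S^n$ with $v(0)=v$ and $v'(0)=u$, and then apply the geometric first variation of area, much as in the calculation leading to the identity $\mathcal{H}(\rho,\Phi) = \hat{\mathcal{H}}(\rho,\Phi)\,e^{n\rho(\Sigma_v(\Phi)(x))}\cos^{n-1}(\Phi(x,v))$. To turn the $v$-dependent deformation into a genuine variation of the single surface $\Sigma_v(\Phi)$, one selects a smooth lift $x(t)\in \Sigma_{v(t)}$ of each $x\in\Sigma_v$ by projecting $x$ onto $v(t)^{\perp}$ and normalizing; this gives $x'(0) = -\langle x,u\rangle v$. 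The curve $y(t):=\Sigma_{v(t)}(\Phi)(x(t))$ is then a smooth variation of $y(0)=\Sigma_v(\Phi)(x)$, and differentiating its defining formula at $t=0$ yields
\[
y'(0) = D\Phi_{(x,v)}\!\cdot\!(-\langle x,u\rangle v,u)\,\Sigma_v^{\perp}(\Phi)(x) - \cos(\Phi(x,v))\langle x,u\rangle v + \sin(\Phi(x,v))u.
\]

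The first variation of area in $(S^n,e^{2\rho}can)$ applied to the deformation $y(t)$ of $\Sigma_v(\Phi)$, combined with $dA_{e^{2\rho}can}=e^{(n-1)\rho}dA_{can}$, the Jacobian formula (iii) of Lemma \ref{lembasicformulae}, and the substitution $\hat{\mathcal{H}}(\rho,\Phi)\,e^{n\rho}=\mathcal{H}(\rho,\Phi)\cos^{-(n-1)}(\Phi)$, produces
\[
d\mathcal{A}(\rho,\Phi)_v(u) = \int_{\Sigma_v} \mathcal{H}(\rho,\Phi)(x,v)\,\frac{\sqrt{\cos^2(\Phi)+|\nabla^{\Sigma_v}\Phi_v|^2}}{\cos(\Phi)}\,\langle y'(0), N_v(\Phi)(x)\rangle\, dA_{can}(x).
\]

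The crux is then a short calculation of $\langle y'(0), N_v(\Phi)(x)\rangle$ using formula (ii) of Lemma \ref{lembasicformulae} together with the identities $\langle \Sigma^{\perp}_v(\Phi)(x), v\rangle = \cos(\Phi)$, $\langle \Sigma^{\perp}_v(\Phi)(x), u\rangle = -\sin(\Phi)\langle x,u\rangle$, and $\langle \Sigma^{\perp}_v(\Phi)(x), \nabla^{\Sigma_v}\Phi_v\rangle = 0$. The result is a quotient whose numerator simplifies to $\cos(\Phi)\bigl(D\Phi_{(x,v)}\!\cdot\!(-\langle x,u\rangle v,u) - \langle x,u\rangle\bigr) - \sin(\Phi)\langle \nabla^{\Sigma_v}\Phi_v, u\rangle$ and whose denominator is exactly the square-root appearing above. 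After cancellation of the square-root factors and one power of $\cos(\Phi)$, the integrand reduces to $\mathcal{H}(\rho,\Phi)\,\eta(\Phi)(x,v,u)$ with $\eta$ as in \eqref{eqeta2}, so the right-hand side is $\mathcal{K}(\Phi,\mathcal{H}(\rho,\Phi))_v(u)$ by definition of the constraint map.

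The main obstacle is the bookkeeping of these cancellations among the $\cos$, $\sin$, and $\sqrt{\cos^2+|\nabla^{\Sigma_v}\Phi_v|^2}$ factors coming from three separate sources — the Jacobian, the unit normal, and the conversion from $\hat{\mathcal{H}}$ to $\mathcal{H}$ — which must align precisely to reproduce $\eta(\Phi)$. One useful sanity check is that the tangential part of $y'(0)$ depends on the auxiliary choice of lift $x(t)$ yet drops out of $\langle y'(0),N_v(\Phi)\rangle$, so the identity is manifestly independent of this choice.
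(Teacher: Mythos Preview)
Your proof is correct and follows the same overall strategy as the paper: both compute the normal speed of the variation $t\mapsto\Sigma_{v(t)}(\Phi)$ of $\Sigma_v(\Phi)$ and identify it with $\eta(\Phi)(x,v,u)$. The implementation differs slightly. The paper re-expresses $\Sigma_{v(t)}(\Phi)$ as a normal graph $x\mapsto\cos\Gamma(t,x)x+\sin\Gamma(t,x)v$ over $\Sigma_v$ so that identity \eqref{eqD2A} applies directly with $\phi=\partial_t\Gamma|_{t=0}$, and then solves for $\partial_t\Gamma|_{t=0}$ (and an auxiliary tangential $x'(0)$) by differentiating the two scalar components of $y(t)$ in that form. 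You instead compute $\langle y'(0),N_v(\Phi)\rangle$ directly and pass through the Riemannian first variation formula combined with the relation $\mathcal{H}=\hat{\mathcal{H}}\,e^{n\rho}\cos^{n-1}\Phi$. Your route bypasses the auxiliary graph function $\Gamma$, at the price of carrying the Jacobian and $\hat{\mathcal{H}}/\mathcal{H}$ conversion factors through the computation; the paper in fact performs exactly your computation of $\langle y'(0),N_v(\Phi)\rangle$ later, in the proof of Proposition~\ref{propdualfunk}. The two arguments are of comparable length and substance.
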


\begin{proof}
To use identity (\ref{eqD2A}), we will write $\Sigma_{v(t)}(\Phi)$ (where $v(t)$ is a variation of $v$) as a graph over $\Sigma_v$
of the type $x\mapsto \cos h_t(x)x +\sin h_t(x) v$ and differentiate.

Suppose $|u|=1$, $u\in T_vS^n$. Then 
$$
v(t) = \frac{1}{\sqrt{1+t^2}} v + \frac{t}{\sqrt{1+t^2}} u \in S^n
$$
satisfies $v(0)=v$ and $v'(0) = u$. For $x\in \Sigma_v$, the curve
$$
\tilde{x}(t) = \frac{1}{\sqrt{1+t^2\langle x,u\rangle^2}} x - \frac{t \langle x,u\rangle}{\sqrt{1+t^2\langle x,u\rangle^2}}v\in S^n
$$
satisfies $\langle \tilde{x}(t),v(t)\rangle =0$  and so $\tilde{x}(t)\in \Sigma_{v(t)}$. Note that $\tilde{x}(0)=x$ and 
 $$
 \tilde{x}'(0) = -\langle x,u\rangle v.
 $$

Then
$$
y(t)= \cos\Phi(\tilde{x}(t),v(t))\, \tilde{x}(t) + \sin \Phi(\tilde{x}(t),v(t)) \, v(t) \in \Sigma_{v(t)}(\Phi),
$$
with $y(0)=y=\Sigma_v(\Phi)(x)$.

  Now we write
 $$
 y(t)=\cos\Gamma(t, x(t))x(t)+\sin \Gamma(t,x(t))v
 $$
 with $\langle x(t),v\rangle=0$, so $\Sigma_{v(t)}(\Phi)$ is the normal graph over $\Sigma_v$ of $\Gamma(t,\cdot)$.  Hence $\Gamma(0,x)=\Phi(x,v)$.
 
  We want to compute $\eta=\frac{\partial}{\partial t}_{|t=0}\Gamma(0,\cdot)$. Since
 \begin{align*}
 \sin \Gamma(t,x(t)) = & -\cos\Phi(\tilde{x}(t),v(t))\frac{t \langle x,u\rangle}{\sqrt{1+t^2\langle x,u\rangle^2}}\\
  & + \sin \Phi(\tilde{x}(t),v(t))\frac{1}{\sqrt{1+t^2}},
 \end{align*}
 it follows  by differentiating  that
 \begin{multline*}
 \cos \Phi(x,v)\big(\eta(x)+D_2\Gamma_{(0,x)}\cdot x'(0)\big)=-\cos \Phi(x,v)\langle x,u\rangle \\
  +\cos \Phi(x,v)D\Phi_{(x,v)}\cdot  (-\langle x,u\rangle v,u).
 \end{multline*}
 Hence
  \begin{equation*}
 \eta(x)+D_2\Gamma_{(0,x)}\cdot x'(0)=-\langle x,u\rangle +D\Phi_{(x,v)}\cdot  (-\langle x,u\rangle v,u).
 \end{equation*}
 Since $\Gamma(0,x)=\Phi(x,v)$, we have $D_2\Gamma_{(0,x)}\cdot x'(0)=D\Phi_{(x,v)}(x'(0),0)$.
 
  Now we differentiate
 \begin{align*}
 \cos\Gamma(t, x(t))x(t) = & \cos\Phi(\tilde{x}(t),v(t))  \frac{1}{\sqrt{1+t^2\langle x,u\rangle^2}} x\\
 & + \sin \Phi(\tilde{x}(t),v(t))\frac{t}{\sqrt{1+t^2}} u
 \end{align*}
 at $t=0$. Hence
 \begin{multline*}
 -\sin\Phi(x,v)\big(\eta(x)+D_2\Gamma_{(0,x)}\cdot x'(0)\big)x+\cos\Phi(x,v)x'(0)\\
 =-\sin\Phi(x,v)D\Phi_{(x,v)}\cdot  (-\langle x,u\rangle v,u)x+\sin\Phi(x,v)u.
 \end{multline*}
 
  Therefore
  \begin{equation*}
 \sin\Phi(x,v)\big(\langle x,u\rangle x-u\big)+\cos\Phi(x,v)x'(0)=0,
 \end{equation*}
 so
 \begin{equation*}
 x'(0)=\tan\Phi(x,v) \big(-\langle x,u\rangle x+u\big).
 \end{equation*}
 
  We conclude that
 \begin{align*}
  \eta(x) & = -\langle x,u\rangle +D\Phi_{(x,v)}\cdot  (-\langle x,u\rangle v,u)\\
  & \quad \, -\tan\Phi(x,v)D\Phi_{(x,v)}( -\langle x,u\rangle x+u,0)\\
  & =\eta(\Phi)(x,v,u).
 \end{align*}

The result follows from  identity (\ref{eqD2A}), since $\Sigma_{v(t)}(\Phi)$ is the normal graph over $\Sigma_v$ of $\Gamma(t,\cdot)$ and $\frac{\partial}{\partial t}\Gamma(0,\cdot)=\eta(\Phi)(\cdot, v, u)$:
$$
d\mathcal{A}(\rho,\Phi)_v(u)=\int_{\Sigma_v}\mathcal{H}(\rho,\Phi)(x,v)\eta(\Phi)(x,v,u)dA_{can}(x) .
$$

\end{proof}

We have:

\begin{prop} \label{propcriterion}  The following assertions are equivalent:
	\begin{itemize}
		\item[$i)$] The hypersurfaces $\Sigma_{\sigma}(\Phi)$ are minimal in $(S^n,e^{2\rho}can)$, \textit{i.e.}
		\begin{equation*}
			\mathcal{H}(\rho,\Phi)=0.
		\end{equation*}
		\item[$ii)$] The hypersurfaces $\Sigma_{\sigma}(\Phi)$ have the same area in $(S^n,e^{2\rho}can)$ and there exists $\omega\in \Omega^{1}_{even}(S^n)$ such that
		\begin{equation*}
			\mathcal{H}(\rho,\Phi)=j \omega.
		\end{equation*}
	\end{itemize}
\end{prop}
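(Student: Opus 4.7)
The plan is to derive both implications from the variational constraint $\mathcal{K}(\Phi, \mathcal{H}(\rho,\Phi)) = d\mathcal{A}(\rho,\Phi)$ of Theorem \ref{thmconstraint}. The direction $(i) \Rightarrow (ii)$ I expect to be immediate: if $\mathcal{H}(\rho,\Phi) = 0$, then by Theorem \ref{thmconstraint} the function $\mathcal{A}(\rho,\Phi)$ has zero differential, hence is constant on the connected manifold $\mathbb{RP}^n$, and one can take $\omega = 0$ in $\mathcal{H}(\rho,\Phi) = j\omega$.

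For the nontrivial direction $(ii) \Rightarrow (i)$, I would start from the assumptions: $\mathcal{A}(\rho,\Phi)$ is constant and $\mathcal{H}(\rho,\Phi) = j\omega$ for some $\omega \in \Omega^1_{even}(S^n)$. Then $d\mathcal{A}(\rho,\Phi) = 0$, so Theorem \ref{thmconstraint} reduces the problem to showing that $\mathcal{K}(\Phi, j\omega) = 0$ forces $\omega = 0$, which then yields $\mathcal{H}(\rho,\Phi) = j\omega = 0$. This reduction is the main structural content; everything else is a pointwise perturbation argument, which I expect to be the only real (and modest) obstacle.

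For that pointwise step, I would fix $v \in S^n$, write $\omega_v(u) = \langle X(v), u\rangle$ with $X(v) \in T_v S^n$, so that $j\omega(x,v) = \alpha_n \langle X(v), x\rangle$ and
\begin{equation*}
\mathcal{K}(\Phi, j\omega)_v(u) = \alpha_n \int_{\Sigma_v} \langle X(v), x\rangle\, \eta(\Phi)(x,v,u)\, dA_{can}(x) =: L_v(\Phi)\bigl(X(v)\bigr)(u),
\end{equation*}
defining a linear map $L_v(\Phi) : T_v S^n \to T_v^{*}S^n$ depending continuously on $\Phi$ in the $C^1$ topology. At $\Phi = 0$, the formula $\eta(0)(x,v,u) = -\langle x, u\rangle$ combined with the defining property of $\alpha_n$ (equivalently, the identity $C \circ j = \mathrm{Id}$ from Proposition \ref{propdecomposition}) shows that $L_v(0)$ is minus the musical isomorphism, uniformly in $v$. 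Under the standing smallness assumption on $\Phi$ and by compactness of $S^n$, each $L_v(\Phi)$ is then a uniform small perturbation of $L_v(0)$ and hence invertible for every $v \in S^n$. Consequently $\mathcal{K}(\Phi, j\omega) \equiv 0$ forces $X(v) = 0$ for every $v$, so $\omega = 0$, completing the proof.
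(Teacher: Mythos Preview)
Your proposal is correct and follows essentially the same approach as the paper. Both directions use Theorem~\ref{thmconstraint} in the same way, and for $(ii)\Rightarrow(i)$ both reduce to showing that $\mathcal{K}(\Phi,j\omega)=0$ forces $\omega=0$ via the smallness of $\eta(\Phi)(x,v,u)+\langle x,u\rangle$; the only cosmetic difference is that the paper evaluates at $u=X(v)$ to obtain the explicit inequality $|X(v)|^2\leq C\lVert\Phi\rVert_{C^1}|X(v)|^2$, whereas you phrase the same estimate as invertibility of a perturbed linear map $L_v(\Phi)$.
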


\begin{proof}
	If the hypersurfaces $\Sigma_{\sigma}(\Phi)$ are minimal in $(S^n,e^{2\rho}can)$, Theorem \ref{thmconstraint} implies $\mathcal{A}(\rho,\Phi)$ is constant.  Then i) implies ii) with $\omega=0$. 
	
	Suppose $ii)$. By the Variational Constraint Theorem \ref{thmconstraint},
	\begin{equation*}
		\mathcal{K}(\Phi,j \omega)=\mathcal{K}(\Phi,\mathcal{H}(\rho,\Phi))=d\mathcal{A}(\rho,\Phi)=0.
	\end{equation*}

	Hence, writing $j\omega(x,v)=\alpha_n\langle X(v),x\rangle$,
	\begin{multline*}
	\int_{\Sigma_v} -\alpha_n \langle X(v),x\rangle \langle x,u\rangle dA_{can}(x)=
	\int_{\Sigma_v} j\omega(x,v) \eta(\Phi)(x,v,u)dA_{can}(x)\\
	-\int_{\Sigma_v} \alpha_n \langle X(v),x\rangle \big(\eta(\Phi)(x,v,u)+\langle x,u\rangle \big)dA_{can}(x).
	\end{multline*}

	Choosing $u=X(v)$, we have
	\begin{equation*}
	\langle X(v),X(v)\rangle =\int_{\Sigma_v} \alpha_n \langle X(v),x\rangle \big(\eta(\Phi)(x,v,X(v))+\langle x,X(v)\rangle \big)dA_{can}(x).
	\end{equation*}
	Hence, by (\ref{eqeta2}), we have $|X(v)|^2\leq C\norm{\Phi}_{C^1}|X(v)|^2$. If $\norm{\Phi}_{C^1}$ is sufficiently small, $X=0$ and hence $\mathcal{H}(\rho,\Phi)=j \omega=0$.

\end{proof}


\section{The Funk transform}

 The Funk transform (or Funk-Radon transform) associated to the family $\{\Sigma_{\sigma}(\Phi)\}$ in $(S^n,e^{2\rho}can)$ is the linear map that sends each smooth function $f$ on $S^n$ to the function $\mathcal{F}(\rho,\Phi)(f)$ on ${\mathbb{RP}}^n$ defined by
\begin{multline} \label{eqdeffunk}
	 \mathcal{F}(\rho,\Phi)(f)(\sigma)  = \int_{\Sigma_{\sigma}(\Phi)} f(y)dA_{e^{2\rho}can}(y) \\
						    = \int_{\Sigma_{v}} f(\Sigma_v(\Phi)(x))e^{(n-1)\rho(\Sigma_{v}(\Phi)(x))}|Jac(\Sigma_v(\Phi))|(x)dA_{can}(x)
\end{multline}
for every $\sigma=[v]\in {\mathbb{RP}}^n$. The Funk transform is well-defined as a linear map
\begin{equation*}
	\mathcal{F}(\rho,\Phi) : C^{\infty}(S^n) \mapsto C^{\infty}({\mathbb{RP}}^n).
\end{equation*}

 \indent The transform $\mathcal{F}=\mathcal{F}(0,0)$ maps a smooth function $f$ on $S^n$ to the function 
\begin{equation*}
	\sigma \in {\mathbb{RP}}^n \mapsto \int_{\Sigma_\sigma} f(x)dA_{can}(x)
\end{equation*}
that computes the integral of the restriction of $f$ to equators of $S^n$. %
This functional was used by Funk \cite{Fun} and Guillemin \cite{Gui} in their works on deformations of Zoll metrics for $n=2$.

\indent Then:

\begin{prop} \label{propD_1A}
	\begin{equation*}
		\mathcal{F}(\rho,\Phi)(f)(\sigma) = \frac{1}{n-1}\frac{d}{dt}_{|t=0}\mathcal{A}(\rho+tf,\Phi)(\sigma) \quad \text{for all} \quad \sigma\in \mathbb{RP}^n.
	\end{equation*}
\end{prop}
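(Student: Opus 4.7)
The statement is an immediate consequence of the explicit formula \eqref{eqformulaA} for the area functional. The plan is simply to substitute $\rho + tf$ in place of $\rho$ in \eqref{eqformulaA} and differentiate at $t=0$, then compare the result with the defining formula \eqref{eqdeffunk} of the Funk transform.

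Concretely, for $\sigma=[v]\in\mathbb{RP}^n$, the formula \eqref{eqformulaA} gives
\begin{equation*}
\mathcal{A}(\rho+tf,\Phi)(\sigma) = \int_{\Sigma_v} e^{(n-1)(\rho+tf)(\Sigma_v(\Phi)(x))}\,|Jac(\Sigma_v(\Phi))|(x)\,dA_{can}(x).
\end{equation*}
The integrand depends smoothly on $t$, the domain $\Sigma_v$ and the Jacobian $|Jac(\Sigma_v(\Phi))|$ are independent of $t$, and the only $t$-dependence sits inside the exponential. Differentiating under the integral sign at $t=0$ pulls down a factor of $(n-1)f(\Sigma_v(\Phi)(x))$:
\begin{equation*}
\frac{d}{dt}\Big|_{t=0}\mathcal{A}(\rho+tf,\Phi)(\sigma) = (n-1)\int_{\Sigma_v} f(\Sigma_v(\Phi)(x))\,e^{(n-1)\rho(\Sigma_v(\Phi)(x))}\,|Jac(\Sigma_v(\Phi))|(x)\,dA_{can}(x).
\end{equation*}
The right-hand side is exactly $(n-1)\,\mathcal{F}(\rho,\Phi)(f)(\sigma)$, by the second line of \eqref{eqdeffunk}. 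Dividing by $n-1$ yields the claimed identity.

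There is no real obstacle here: the proof is a one-line differentiation since the Jacobian factor carries all of the geometric information and is unaffected by perturbations of $\rho$. The only minor justification needed is the interchange of the derivative with the integral, which is automatic from smoothness of the integrand and compactness of $\Sigma_v$.
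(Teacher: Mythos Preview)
Your proof is correct and follows exactly the same approach as the paper, which simply states that the identity follows from \eqref{eqformulaA} by differentiating. You have merely spelled out the one-line computation in detail.
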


\begin{proof}
	This follows from identity (\ref{eqformulaA}) by differentiating.
\end{proof}


\section{The Implicit Function Theorem}\label{section.nash.moser} In this paper we will use the Nash-Moser Inverse Function Theorem as stated by Hamilton \cite{Ham}. The following theorem is a  modification of the Implicit Function Theorem with quadratic error (\cite{Ham}, Part III, Theorem 3.3.1).  We refer the reader to that article for the definitions of the terms that appear in the statement. The proof of the next theorem also uses  \cite{Ham2}.

\begin{thm} \label{thmift}
Let $F$ and $H$ be tame Fr\'{e}chet spaces and let $\Lambda$ be a smooth tame map defined on an
open set containing the origin $U\subset F$,
$$
\Lambda:U \subset F \rightarrow H,
$$ 
with $\Lambda(0)=0$.
Suppose  there is a smooth tame map $V(f)h$ linear in $h$
$$
V:U\times H\rightarrow F,
$$
and a smooth tame map $Q(f)\{h,k\}$ bilinear in $h$ and $k$,
$$
Q:U\times H \times H \rightarrow H,
$$
such that for all $f\in U$ and all $h\in H$ we have
$$
D\Lambda(f)V(f)h = h+Q(f)\{\Lambda(f),h\}.
$$
(Notice that $V(f)$ is a right-inverse for $D\Lambda(f)$ for all $f\in \Lambda^{-1}(0)$).
Then there exists a neighborhood $W\subset U$ with $0\in W$ and a smooth tame map
$$
\Gamma:Ker D\Lambda(0)\cap W \rightarrow \Lambda^{-1}(0)
$$
such that 
\begin{equation*}
	\Gamma(0)=0 \quad \text{and} \quad D\Gamma(0)v=v \quad \text{for every} \quad v\in Ker(D\Lambda(0)).
\end{equation*}
\end{thm}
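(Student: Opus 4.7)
The strategy is to run a parametric Newton iteration in the Nash--Moser framework of Hamilton, with the parameter ranging over $K := \mathrm{Ker}\,D\Lambda(0)$, and to read off the map $\Gamma$ from the resulting family of solutions.

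The starting observation is that evaluating the given identity at $f=0$, where $\Lambda(0)=0$, yields $D\Lambda(0)V(0)=\mathrm{Id}_H$. Hence $V(0)$ is a smooth tame right inverse of $D\Lambda(0)$, and there is a smooth tame topological direct sum
\[
F \;=\; K \oplus V(0)(H),
\]
with associated tame projection $P_K = \mathrm{Id}_F - V(0)\circ D\Lambda(0)$. In particular, for $v \in K$ small, Taylor expansion gives $\Lambda(v) = \Lambda(0) + D\Lambda(0)v + O(v^2) = O(v^2)$ in every tame seminorm, so the defect is already quadratically small in $v$.

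For each such $v$ I would consider the Newton iteration $f_0 = v$, $f_{n+1} = f_n - V(f_n)\Lambda(f_n)$. The quadratic error identity, together with a second-order Taylor expansion of $\Lambda$ on the segment from $f_n$ to $f_{n+1}$, produces the usual Newton estimate $\|\Lambda(f_{n+1})\|\lesssim \|\Lambda(f_n)\|^2$ in tame seminorms. After inserting the standard Nash--Moser smoothing of Hamilton's Part III to compensate for the loss of derivatives, the iterates $f_n$ converge to a limit $\Gamma(v) \in F$ solving $\Lambda(\Gamma(v)) = 0$. When $v = 0$ the iteration is stationary at $0$, so $\Gamma(0) = 0$; and since the quadratic convergence of $\Lambda(f_n)$ forces $f_n = v + O(v^2)$ uniformly in $n$, passing to the limit gives $\Gamma(v) = v + O(v^2)$, so that $D\Gamma(0)w = w$ for every $w \in K$.

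The main obstacle is upgrading this pointwise construction to a genuinely smooth tame map $v \mapsto \Gamma(v)$. Hamilton's Theorem 3.3.1 carries out the Nash--Moser convergence for a fixed starting point, whereas here one needs the iterates $f_n(v)$, together with all of their derivatives in the parameter $v\in K$, to obey tame estimates that are uniform in $n$, so that smoothness and tameness persist in the limit. This is the role of the supplementary estimates from \cite{Ham2}: they furnish the parametric smoothing lemmas needed to guarantee that the iteration defines a smooth tame map on a neighborhood $W\subset K$ of the origin. Once this parametric Nash--Moser conclusion is secured, the identities $\Gamma(0)=0$ and $D\Gamma(0) = \mathrm{Id}_K$ from the previous paragraph finish the proof.
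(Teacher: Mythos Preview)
Your overall instinct---run the Newton step $f\mapsto f-V(f)\Lambda(f)$ starting at $v\in K$ and invoke \cite{Ham2} for the smooth dependence on $v$---is the right one, and is in fact what the paper does; but the paper packages it differently, and your description of what \cite{Ham2} supplies is inaccurate. The paper does not run a parametric iteration directly. Instead it introduces the auxiliary map
\[
G(f,h)=\bigl(f-V(f)\Lambda(f),\,h-\Lambda(f)\bigr)
\]
on $U\times H$, whose fixed points are exactly $\Lambda^{-1}(0)\times H$, and computes that the approximate-inverse identity forces
\[
DG(x)\cdot(G(x)-x)+\widetilde Q(x)\{G(x)-x,G(x)-x\}=0
\]
for an explicit smooth tame bilinear $\widetilde Q$. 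This is precisely the \emph{near-projection} condition of \cite{Ham2}, Section~2, and the black-box result there yields a genuine smooth tame projection $P$ with $\mathrm{Fix}(P)=\mathrm{Fix}(G)$ on a neighborhood of the origin; one then sets $\Gamma(v)=\pi_F\bigl(P(v,0)\bigr)$ and reads off $\Gamma(0)=0$ and $D\Gamma(0)=\mathrm{Id}_K$ from the corresponding properties of $P$ recorded in \cite{Ham2}.

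So the content of \cite{Ham2} being used is not ``parametric smoothing lemmas'' but the near-projection theorem itself, which already encapsulates the full Nash--Moser iteration together with smooth tame dependence on the input. Your sketch leaves as a gap exactly the step that this theorem handles: you acknowledge that upgrading pointwise convergence to a smooth tame family is ``the main obstacle,'' but then gesture at \cite{Ham2} without naming the relevant structure. The missing idea is to recognize that the single Newton step, viewed as a self-map of $U\times H$, satisfies the quadratic near-projection identity, so that \cite{Ham2} applies directly and no separate parametric analysis is needed.
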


\begin{proof}
	Similarly to the proof of the aforementioned Theorem 3.3.1 in \cite{Ham}, consider the modified map
	\begin{equation*}
		G : (U\subset F) \times H \rightarrow F\times H
	\end{equation*}
	given by
	\begin{equation*}
		G(f,h)= (f - V(f)\Lambda(f),h-\Lambda(f))
	\end{equation*}
	(\cite{Ham},  Part III, Lemma 3.3.2). By construction, the map $G$ is a smooth tame map such that $G(f,h)=(f,h)$ if and only if $\Lambda(f)=0$. In other words, 
	\begin{equation*}
		\Lambda^{-1}(0)\cap U = \pi_F(Fix(G)),
	\end{equation*}
	where $\pi_F : F\times H \rightarrow F$ is the standard projection and $Fix(G)$ denotes the set of fixed points of $G$. 

	We compute
\begin{align*}
DG(f,h)\cdot (g,k) & = (g-V(f)[D\Lambda(f)\cdot g] - [DV(f)\cdot g]\Lambda(f),\\
& \quad \quad \quad k-D\Lambda(f)\cdot g).
\end{align*}

Hence
\begin{align*}
& DG(f,h)\cdot (G(f,h)-(f,h))=DG(f,h)\cdot (-V(f)\Lambda(f),-\Lambda(f)) &\\
& \quad  =(-V(f)\Lambda(f)+V(f)[D\Lambda(f)\cdot V(f)\Lambda(f)] &\\
& \quad \quad \, + [DV(f)\cdot V(f)\Lambda(f)]\Lambda(f), -\Lambda(f)+D\Lambda(f)\cdot V(f)\Lambda(f))& \\
& \quad =(-V(f)\Lambda(f)+V(f)[\Lambda(f)+Q(f)\{\Lambda(f),\Lambda(f)\}] &\\
& \quad \quad \,  + [DV(f)\cdot V(f)\Lambda(f)]\Lambda(f), -\Lambda(f)+\Lambda(f)+Q(f)\{\Lambda(f),\Lambda(f)\}) & \\
& \quad =(V(f)Q(f)\{\Lambda(f),\Lambda(f)\}+ [DV(f)\cdot V(f)\Lambda(f)]\Lambda(f), &\\
&\quad \quad \quad \quad  Q(f)\{\Lambda(f),\Lambda(f)\}).
\end{align*}
		
\indent Therefore, for every $x=(f,g)\in (U\subset F) \times H$,  
	\begin{equation*}
		DG(x)\cdot (G(x) - x) + \widetilde{Q}(x)\cdot \{G(x)-x,G(x)-x\} = 0 
	\end{equation*}
	where 
	\begin{equation*}
		 \widetilde{Q} : (U\subset F) \times H \times (F\times H) \times (F\times H) \rightarrow F\times H
	\end{equation*}
	is the map given by
	\begin{multline*}
		\widetilde{Q}(f,h)\cdot\{(\widetilde{a},\widetilde{c}),(a,c)\} {=} \\
		-\left([DV(f)\cdot\{\widetilde{a}\}]\cdot \{c\} + V(f)Q(f)\cdot\{\widetilde{c},c\}, Q(f)\cdot\{ \widetilde{c},c \} \right).
	\end{multline*}
	The map $\widetilde{Q}$ is a smooth tame map that is bilinear in $(\widetilde{a},\widetilde{c})$ and $(a,c)$.
	
	\indent Therefore the map $G$ is a near-projection in the sense of Hamilton (\cite{Ham2}, Section 2).  As a consequence (\cite{Ham2}), there exists a smooth tame map
	\begin{equation*}
		P : \widetilde{W} \subset (F\times H) \rightarrow  F\times H
	\end{equation*}		
	defined on an open neighborhood of the origin ${\widetilde{W}}\subset U\times H$ that is a projection:
	\begin{equation*}
		P\circ P=P,
	\end{equation*}
	and that moreover has the same fixed point set as $G$ in $\widetilde{W}$: $Fix(P)=Fix(G) \cap \widetilde{W}$. \\
	\indent The value of the map $P$ at a point $x$ is defined explicitly by an inductive algorithm (\cite{Ham2}), from which we can extract some  information.  For example, $P(0)=0$ follows as a consequence of the fact that $G(0)=0$ (\cite{Ham2}, page 26).  Furthermore, whenever $DG(0)\cdot u = u$ (so that the pair $(0,u)$ is a fixed point of the tangent map $TG$) we have $DP(0)\cdot u=u$ as well (\cite{Ham2}, Section 2.5, page 38). \\
	\indent Let $W=\{f:(f,0)\in \widetilde{W}\} \subset F$ and  
	\begin{equation*}
		\Gamma : Ker(D\Lambda(0))\cap W \rightarrow F
	\end{equation*}
	be defined by
	\begin{equation*}
		\Gamma(f) = \pi_F (P(f,0)).
	\end{equation*}		
 (Notice that $Ker(D\Lambda(0))$ is a tame direct summand of $F$, and therefore a tame Fr\'echet space itself by \cite{Ham}, Part II, Definition 1.3.1 and Corollary 1.3.3. Take $L:Ker(D\Lambda(0))\rightarrow F$ given by $L(f)=f$, $M:F\rightarrow Ker(D\Lambda(0))$ given by $M(f)=f-V(0)D\Lambda(0)f$ and check that $M\circ L=Id$). \\
	\indent The map $\Gamma$ is a smooth tame map (as a composition of smooth tame maps,  \cite{Ham}, Part II, Theorem 2.1.6) and its image lies in $\Lambda^{-1}(0)$, because $P(f,0)\in Fix(P)=Fix(G)\cap \widetilde{W}$. Since by definition of the near-projection $G$ we have $G(0,0)=(0,0)$ and $DG(0,0)\cdot (v,0) = (v,0)$ for every $v\in Ker(D\Lambda(0))$, by construction of $P$ we have $P(0,0)=(0,0)$ and $DP(0,0)\cdot (v,0)=(v,0)$. Therefore $\Gamma(0)=0$ and
	\begin{equation*}
		D\Gamma(0)\cdot v = \pi_F(v,0)=v \quad \text{for all} \quad v\in Ker(D\Lambda(0)),
	\end{equation*}
	as we wanted to prove.
\end{proof}

\indent The following corollary shows how Theorem \ref{thmift} can be useful to prove Theorem A.

\begin{cor} \label{corift}
	For every $v\in ker (D\Lambda(0))$ there exists a smooth one-parameter family $u_t$, $t\in (-\delta,\delta)$, such that $u_0=0$, $\dot{u}_0=v$ and
	\begin{equation*}
		u_t \in \Lambda^{-1}(0) \quad \text{for all} \quad t\in (-\delta,\delta).
	\end{equation*}
\end{cor}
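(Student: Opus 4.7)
The plan is to deduce the corollary as a direct consequence of Theorem \ref{thmift} by composing the map $\Gamma$ provided there with the linear path $t\mapsto tv$ in the kernel of $D\Lambda(0)$. Concretely, Theorem \ref{thmift} gives a smooth tame map
$$
\Gamma:Ker(D\Lambda(0))\cap W \rightarrow \Lambda^{-1}(0)
$$
defined on an open neighborhood $W$ of the origin in $F$ (restricted to the kernel), with $\Gamma(0)=0$ and $D\Gamma(0)v=v$ for every $v\in Ker(D\Lambda(0))$. Given any such $v$, I would choose $\delta>0$ small enough that $tv\in W$ for all $t\in(-\delta,\delta)$, which is possible since $W$ is open and contains the origin.

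Next I would define the desired curve by $u_t=\Gamma(tv)$. The required properties then follow immediately from the properties of $\Gamma$: $u_0=\Gamma(0)=0$; the chain rule together with $D\Gamma(0)v=v$ yields
$$
\dot{u}_0=\frac{d}{dt}\bigg|_{t=0}\Gamma(tv)=D\Gamma(0)\cdot v=v;
$$
and $u_t=\Gamma(tv)\in \Lambda^{-1}(0)$ for every $t\in(-\delta,\delta)$ because the image of $\Gamma$ lies in $\Lambda^{-1}(0)$. Smoothness of $t\mapsto u_t$ follows from the smoothness of $\Gamma$ as a tame map composed with the smooth linear path $t\mapsto tv$.

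There is no real obstacle here beyond the verification that $tv$ stays inside the domain $W$ for small $t$, which is automatic. The entire content of the corollary has been absorbed into the statement of Theorem \ref{thmift}; the corollary simply records the fact that the tangent directions $v\in Ker(D\Lambda(0))$ are genuine tangent vectors of curves in $\Lambda^{-1}(0)$, which is the form in which the Nash--Moser result will be used to produce the one-parameter families of metrics in Theorem A and Theorem E.
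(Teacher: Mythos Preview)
Your proof is correct and follows exactly the same approach as the paper: define $u_t=\Gamma(tv)$ with $\delta>0$ chosen so that $tv\in W$ for all $t\in(-\delta,\delta)$, and read off the required properties from $\Gamma(0)=0$, $D\Gamma(0)v=v$, and the fact that $\Gamma$ takes values in $\Lambda^{-1}(0)$.
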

\begin{proof}
	Take $u_t = \Gamma(tv)$, choosing $\delta>0$ such  that $tv\in W$ for all $t\in (-\delta,\delta)$.
\end{proof}

\subsection{The  suitable map}\label{problem.formulation}

 Let
\begin{equation*}
	F = C^{\infty}(S^n)\times C^{\infty}_{0,odd}(T_{1}S^n) \quad \text{and} \quad H =  C^{\infty}_{0}({\mathbb{RP}}^n)\times C^{\infty}_{0,odd}(T_{1}S^n),
\end{equation*}
where $C^{\infty}_{0}({\mathbb{RP}}^n)$ is the set of zero average smooth functions on ${\mathbb{RP}}^n$. \\

\indent Assume $(\rho,\Phi) \in U$, where $U$ is a neighborhood of the origin so that a right-inverse $\mathcal{R}(\rho,\Phi)$  of the Funk transform can be constructed. We will show later that $U$ can be taken to be a $C^{3n+4}$ neighborhood of the origin.\\

\indent Define
\begin{equation*}\label{definition.Lambda}
	\Lambda=(\Lambda_1,\Lambda_2) :  (U\subset F) \rightarrow H
\end{equation*}
by setting
\begin{equation}\label{definition.Lambda1}
	\Lambda_1(\rho,\Phi) = \mathcal{A}(\rho,\Phi) - \dashint_{{\mathbb{RP}}^n} \mathcal{A}(\rho,\Phi)(\sigma)dA_{can}(\sigma)
\end{equation}
and
\begin{equation}\label{definition.Lambda2}
	\Lambda_{2}(\rho,\Phi) = \mathcal{H}(\rho,\Phi) - j C(\mathcal{H}(\rho,\Phi)).
\end{equation}
\indent As a consequence of Proposition \ref{propcriterion},
\begin{equation*}
	\Lambda(\rho,\Phi)=0 \quad \Leftrightarrow \quad \mathcal{H}(\rho,\Phi)=0.
\end{equation*}
 Moreover, $\Lambda(0,0)=(0,0)$ since the  equators are minimal hypersurfaces in $(S^n,can)$. 

\subsection{The approximate right-inverse and the quadratic error}\label{sectionhypotheses} 

To apply Theorem \ref{thmift} to the map $\Lambda$ we will find  a right-inverse for $D\Lambda$ modulo a quadratic error. \\
\indent From \eqref{definition.Lambda1} and Proposition \ref{propD_1A},
\begin{align}\label{d1lambda1}
	& D_1\Lambda_1(\rho,\Phi)\cdot f & \nonumber\\
	&\quad = (n-1)\left( \mathcal{F}(\rho,\Phi)(f)-\dashint_{{\mathbb{RP}}^n}\mathcal{F}(\rho,\Phi)(f)(\sigma)dV_{can}(\sigma) \right), &\nonumber \\
	& D_2\Lambda_1(\rho,\Phi)\cdot \phi & \nonumber \\
	& \quad = D_{2}\mathcal{A}(\rho,\Phi)\cdot \phi-\dashint_{\RP^n}(D_{2}\mathcal{A}(\rho,\Phi)\cdot \phi)(\sigma)dV_{can}(\sigma), &
\end{align}
where, by the definition of the Euler-Lagrange operator $\mathcal{H}(\rho,\Phi)$,
\begin{equation*}
	(D_{2}\mathcal{A}(\rho,\Phi)\cdot \phi)([v]) = \int_{\Sigma_v} \mathcal{H}(\rho,\Phi)(x,v)\phi(x,v) dA_{can}(x).
\end{equation*}
From the second part of Proposition \ref{propdecomposition}, for all $\phi\in  C^{\infty}_{0,odd}(T_{1}S^n)$ and $v\in S^n$ we have
\begin{equation*}
	(D_{2}\mathcal{A}(\rho,\Phi)\cdot \phi)([v]) = \int_{\Sigma_v} \Lambda_2(\rho,\Phi)(x,v)\phi(x,v) dA_{can}(x).
\end{equation*}
Likewise, from \eqref{definition.Lambda2} and \eqref{eqP} we have 
\begin{equation}\label{d1lambda2}
	D_1\Lambda_2(\rho,\Phi)\cdot f
	= D_1\mathcal{H}(\rho,\Phi)\cdot f - j C(D_1\mathcal{H}(\rho,\Phi)\cdot f)
	\end{equation}
and
\begin{equation}\label{d2lambda2}
	D_2\Lambda_2(\rho,\Phi)\cdot \phi = \mathcal P(\rho,\Phi)(\phi).
\end{equation}

  Let $\mathcal S(\rho,\Phi)$ be the solution map  as in \eqref{eqS}. We define
 \begin{equation*}
	V=(V_1,V_2) :  (U\subset F)\times H \rightarrow F
\end{equation*}
by
\begin{equation*}
	V_1(\rho,\Phi)\cdot (b,\psi)=\frac{1}{n-1}\mathcal{R}(\rho,\Phi)(b)
\end{equation*}
and, setting $f=V_1(\rho,\Phi)\cdot (b,\psi)$, 
\begin{equation*}
	V_2(\rho,\Phi)\cdot (b,\psi) = \mathcal{S}(\rho,\Phi)(\psi - (D_1\mathcal{H}(\rho,\Phi)\cdot f - j C(D_1\mathcal{H}(\rho,\Phi)\cdot f)).
\end{equation*}
By construction, $V(\rho,\Phi)$ is a linear map for all $(\rho,\Phi)\in U$.

\indent We now show that $V$ is a right-inverse for $D\Lambda$ modulo a quadratic error.
 Consider the  map
 \begin{equation*}\label{definition.Q}
	Q=(Q_1,Q_2) :  (U\subset F)\times H\times H \rightarrow H,
\end{equation*}
bilinear in $(\widetilde{b},\widetilde{\psi}),(b,\psi)$, defined by
\begin{multline*}
	Q_1(\rho,\Phi)\cdot \{(\widetilde{b},\widetilde{\psi}),(b,\psi)\}([v]) \\ 
	  = \int_{\Sigma_{v}}\widetilde{\psi}(x,v)(V_2(\rho,\Phi)\cdot (b,\psi))(x,v)  dA_{can}(x) \quad \quad \quad \quad \quad \quad \quad \quad \\
	   - \dashint_{{\mathbb{RP}}^n}\left(\int_{\Sigma_{v}}\widetilde{\psi}(x,v)(V_2(\rho,\Phi)\cdot (b,\psi))(x,v)  dA_{can}(x)\right) dV_{can}(\sigma)
\end{multline*}
and
\begin{equation*}
	Q_2(\rho,\Phi)\cdot \{(\widetilde{b},\widetilde{\psi}),(b,\psi)\}=0.
\end{equation*} 

We have, with $V_i=V_i(\rho,\Phi)(b,\psi)$, 
\begin{align*}
& D\Lambda_1(\rho,\Phi)\cdot V(\rho,\Phi)(b,\psi) =D_1\Lambda_1(\rho,\Phi)\cdot V_1  +D_2\Lambda_1(\rho,\Phi)\cdot V_2 & \\
& \quad \quad =(n-1)\left( \mathcal{F}(\rho,\Phi)(V_1)-\dashint_{{\mathbb{RP}}^n}\mathcal{F}(\rho,\Phi)(V_1)(\sigma)dV_{can}(\sigma) \right) & \\
& \quad \quad \quad +D_{2}\mathcal{A}(\rho,\Phi)\cdot V_2-\dashint_{\RP^n}(D_{2}\mathcal{A}(\rho,\Phi)\cdot V_2)(\sigma)dV_{can}(\sigma) & \\
& \quad \quad =b &\\
& \quad \quad \quad +\int_{\Sigma_v} \Lambda_2(\rho,\Phi)(x,v)V_2(x,v) dA_{can}(x) &\\
& \quad \quad \quad -\dashint_{{\mathbb{RP}}^n}\left(\int_{\Sigma_v} \Lambda_2(\rho,\Phi)(x,v)V_2(x,v) dA_{can}(x)\right)dV_{can}(\sigma) & \\
& \quad \quad =b+Q_1(\rho,\Phi)(\Lambda(\rho,\Phi),(b,\psi)). &
\end{align*}

And
\begin{align*}
&D\Lambda_2(\rho,\Phi)\cdot V(\rho,\Phi)(b,\psi) =D_1\Lambda_2(\rho,\Phi)\cdot V_1  +D_2\Lambda_2(\rho,\Phi)\cdot V_2 & \\
&\quad \quad = D_1\mathcal{H}(\rho,\Phi)\cdot V_1 - j C(D_1\mathcal{H}(\rho,\Phi)\cdot V_1)+\mathcal{P}(\rho,\Phi)(V_2) &\\
&\quad \quad =D_1\mathcal{H}(\rho,\Phi)\cdot V_1 - j C(D_1\mathcal{H}(\rho,\Phi)\cdot V_1) & \\
& \quad\quad\quad +\psi - (D_1\mathcal{H}(\rho,\Phi)\cdot V_1 - j C(D_1\mathcal{H}(\rho,\Phi)\cdot V_1)) & \\
& \quad\quad =\psi &\\
& \quad\quad =\psi+Q_2(\rho,\Phi)(\Lambda(\rho,\Phi),(b,\psi)). &
\end{align*}

Therefore we deduce that
\begin{equation*}
	D\Lambda(\rho,\Phi)\cdot V(\rho,\Phi)(b,\psi) = (b,\psi) \\
	+ Q(\rho,\Phi)\cdot\{\Lambda(\rho,\Phi),(b,\psi)\}
\end{equation*}
holds for all functions $(\rho,\Phi)\in U$ and $(b,\psi)\in H$, which is what we wanted.

The tameness and smoothness properties of all maps and spaces involved will be proven later.


\section{On some integral operators}\label{integral.operator}

Let $T_1\RP^n$ denote the unit tangent bundle of $(\RP^n,can)$. Set $$\Omega= T_1\RP^n\times[-\pi/4,\pi/4]/\sim \quad\mbox{where }((\sigma,\theta),t)\sim ((\sigma,-\theta),-t) $$ 
 and let $$\Omega_0=\{[((\sigma,\theta),0)]\in \Omega:(\sigma,\theta)\in T_1\RP^n\}\subset \Omega.$$ 
 If $\Delta$ is the diagonal of $\RP^n\times\RP^n$, we glue $\Omega$ to $\RP^n\times\RP^n\setminus \Delta$ by identifying $[((\sigma,\theta),t)]$ with $(\sigma,exp_{\sigma}(t\theta))$. The resulting smooth compact manifold  is denoted by  $B(\RP^n\times\RP^n)$.

Since $\RP^n\times\RP^n\setminus \Delta$  is an  open and dense set in $B(\RP^n\times\RP^n)$,  smooth functions on $B(\RP^n\times\RP^n)$ are uniquely determined by their restrictions to $\RP^n\times\RP^n\setminus \Delta$.

The distance in $S^n$ or $\RP^n$ computed with respect to the canonical metric is denoted by $d$. Associated to any $k\in C^{\infty}(B(\RP^n\times\RP^n))$ there exists a uniquely defined kernel
\begin{equation*}
K\in C^{\infty}(\RP^n\times\RP^n\setminus \Delta), \quad K(\sigma,\tau)= \frac{k(\sigma,\tau)}{\eta(d(\sigma,\tau))},
\end{equation*}
where $\eta:\mathbb{R}\rightarrow \mathbb{R}$ is a nondecreasing smooth function with  $\eta(t)=t$ for $t\leq \frac{\pi}{5}$ and $\eta(t)=\frac{\pi}{4}$ for $t\geq \frac{2\pi}{5}$.
We will consider the operator that associates to every $f\in C^{\infty}(\RP^n)$ the function
$$L(k)(f)(\sigma)=\int_{\RP^n}K(\sigma,\tau)f(\tau)dV_{can}(\tau),\quad \sigma\in \RP^n.$$
Since for each $\sigma\in \RP^n$ the function $\tau\mapsto K(\sigma,\tau)$ is in $L^1(\RP^n)$,  the function $L(k)(f)$ is well-defined and bounded.

Given a compact manifold $M$ and $s\in \mathbb{R}$, $H^s(M)$ denotes the Sobolev space (see Section 1.3 of \cite{Gilkey}) with norm denoted by $||f||_s$. 

\begin{thm}\label{psdo.main.thm} For every $k\in C^{\infty}(B(\RP^n\times\RP^n))$, $L(k)$ is a pseudo-differential operator of order $1-n$.
 Thus
$$L(k):C^{\infty}(\RP^n)\rightarrow C^{\infty}(\RP^n)$$
and there are  bounded extensions to  Sobolev spaces
$$L(k):H^s(\RP^n)\rightarrow H^{s+n-1}(\RP^n), \quad  s\in \mathbb{R}.$$
Moreover,   there is some positive constant $c_n$ so that 
\begin{equation}\label{bound.pso}
||L(k)(f)||_{n-1}\leq c_n||k||_{C^{3n}}||f||_0\quad\mbox{ for all } f\in H^0(\RP^n).
\end{equation}
Similarly, for $q\in \mathbb{N}$ there is some positive constant $c_{n,q}$ such that
\begin{equation}\label{bound.pso2}
||L(k)(f)||_{n-1+q}\leq c_{n,q}||k||_{C^{3n+q}}||f||_q\quad\mbox{ for all } f\in H^q(\RP^n).
\end{equation}

Finally, suppose that for some $k_0\in C^{\infty}(B(\RP^n\times\RP^n))$ $L(k_0)$ is elliptic  and
 $L(k_0):H^0(\RP^n) \rightarrow H^{n-1}(\RP^n)$ is invertible. Then there exist constants  $c>0$ and $\eta_0>0$ so that  for all  $$k\in C^{\infty}(B(\RP^n\times\RP^n))\quad\mbox{with}\quad ||k-k_0||_{3n}<\eta_0,$$  $L(k)$ is  elliptic, $L(k):H^0(\RP^n) \rightarrow H^{n-1}(\RP^n)$ is invertible, and 
\begin{equation}\label{bound.pso.inverse}
||L(k)^{-1}(g)||_0   \leq c  ||g||_{n-1}\quad\mbox{ for all } g\in H^{n-1}(\RP^n).
\end{equation}
   Also $L(k):H^q(\RP^n) \rightarrow H^{n-1+q}(\RP^n)$ is invertible for $q\geq 0$, and there exist positive constants
   $\eta_{0,q}, c'_{n,q}, c''_{n,q}$,  such that if 
$||k-k_0||_{3n+q}<\eta_{0,q}$, then 
\begin{equation}\label{bound.pso.inverse.q}
||L(k)^{-1}(g)||_q    \leq c'_{n,q} ||g||_{n-1+q}\quad\mbox{ for all } g\in H^{n-1+q}(\RP^n).
\end{equation}
 If $||k'-k_0||_{3n+q}<\eta_{0,q}$, then
\begin{equation}\label{bound.pso.inverse.continuity}
||(L(k)^{-1}-L(k')^{-1})(g)||_q \leq c''_{n,q} ||k-k'||_{C^{3n+q}} ||g||_{n-1+q}
\end{equation}
for all  $g\in H^{n-1+q}(\RP^n)$.
\end{thm}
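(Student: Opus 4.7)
The plan is to exploit the blow-up description of the kernel to reduce the analysis of $L(k)$ to standard pseudo-differential calculus. First, choose a finite cover of $\mathbb{RP}^n$ by normal coordinate charts with a subordinate partition of unity, together with a smooth cutoff $\chi$ supported where $d(\sigma,\tau)$ is much smaller than the injectivity radius. Write $L(k)=L_{\mathrm{near}}(k)+L_{\mathrm{far}}(k)$, where $L_{\mathrm{far}}(k)$ has the smooth kernel $(1-\chi)(\sigma,\tau)K(\sigma,\tau)$. The operator $L_{\mathrm{far}}(k)$ is smoothing of order $-\infty$, with Sobolev operator norms controlled linearly by finitely many $C^r$-seminorms of $k$, so it is harmless for all the bounds below.

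For $L_{\mathrm{near}}(k)$ I work in a normal coordinate chart around a point $\sigma_0$ with coordinate $v=\exp_{\sigma_0}^{-1}(\tau)$. The smoothness of $k$ on $B(\mathbb{RP}^n\times \mathbb{RP}^n)$ translates, via the polar identification $v=t\theta$, into the statement that $\tilde k(\sigma,v):=k(\sigma,v/|v|,|v|)$ is smooth in $\sigma$ and a smooth function of the polar coordinates of $v$. Since $d(\sigma_0,\exp_{\sigma_0}(t\theta))=|t|$ for small $t$, the kernel of $L_{\mathrm{near}}(k)$ has the form $\tilde k(\sigma,v)/|v|$ times a smooth density factor. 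Expanding $\tilde k$ in powers of $|v|$ at zero, the leading term is $a_0(\sigma,v/|v|)/|v|$ and the subsequent terms are progressively smoother; modulo an operator with smooth kernel, $L_{\mathrm{near}}(k)$ is a classical conormal distribution of order $-(n-1)$ along the diagonal. Using that the Fourier transform of a function on $\mathbb{R}^n$ homogeneous of degree $-1$ is homogeneous of degree $1-n$ (applied spherical-harmonic by spherical-harmonic to $a_0$), this produces a classical pseudo-differential symbol of order $1-n$ depending linearly on $\tilde k$.

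With $L(k)$ identified as a pseudo-differential operator of order $1-n$, the mapping properties $L(k):H^s\to H^{s+n-1}$ are the standard continuity statement. To obtain \eqref{bound.pso} and \eqref{bound.pso2} I track the construction to get symbol estimates
\[
|\partial_\xi^\alpha\partial_\sigma^\beta p(\sigma,\xi)|\leq C_{\alpha,\beta}\,\|k\|_{C^{|\alpha|+|\beta|+N_0}}(1+|\xi|)^{1-n-|\alpha|}
\]
for some integer $N_0=N_0(n)$ that counts the Taylor expansions, changes of variables and integrations by parts used above. Applying Calder\'on--Vaillancourt (or a standard $L^2$-continuity argument), which requires only finitely many symbol seminorms, one gets $H^0\to H^{n-1}$ continuity with operator norm at most $c_n\|k\|_{C^{3n}}$; the exponent $3n$ is chosen to accommodate these finitely many seminorms. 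The same argument with $q$ extra derivatives gives \eqref{bound.pso2}.

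For the final part, the linearity $k\mapsto L(k)$ yields $L(k)-L(k_0)=L(k-k_0)$, so by \eqref{bound.pso} the $H^0\to H^{n-1}$ norm of $L(k-k_0)$ is at most $c_n\|k-k_0\|_{C^{3n}}$. Writing $L(k)=L(k_0)\bigl(I+L(k_0)^{-1}L(k-k_0)\bigr)$, and choosing $\eta_0$ small enough that $c_n\eta_0\|L(k_0)^{-1}\|_{H^{n-1}\to H^0}<\tfrac12$, a Neumann series gives invertibility of $L(k)$ together with \eqref{bound.pso.inverse}; ellipticity of $L(k)$ is automatic since principal symbols depend continuously on $k$. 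The higher-$q$ invertibility \eqref{bound.pso.inverse.q} is the same argument on $H^q\to H^{n-1+q}$. Finally, the resolvent identity
\[
L(k)^{-1}-L(k')^{-1}=L(k)^{-1}\bigl(L(k')-L(k)\bigr)L(k')^{-1}=L(k)^{-1}L(k'-k)L(k')^{-1},
\]
combined with the uniform bounds on both inverses and \eqref{bound.pso2} applied to $k-k'$, yields \eqref{bound.pso.inverse.continuity}. The principal obstacle is the careful unwinding of the blow-up structure in the second paragraph: producing a symbol with seminorm bounds depending linearly and explicitly on finitely many $C^r$-seminorms of $k$ is the place where all the quantitative information for the rest of the theorem is generated.
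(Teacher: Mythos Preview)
Your approach is essentially the same as the paper's: the near/far split, the passage to normal coordinates using the blow-up description of $k$, the identification of the local symbol via Fourier transform with seminorm bounds linear in $\|k\|_{C^r}$, and the perturbation/resolvent arguments for the invertibility statements all match. The paper carries out the symbol computation by quoting a lemma of Kiyohara (giving $|\partial_\xi^\alpha\partial_x^\beta a|\le C\|k\|_{C^{n+|\alpha|+|\beta|}}(1+|\xi|)^{1-n-|\alpha|}$) and then invokes an $L^2$-continuity lemma from Gilkey that needs only $|\beta|\le 2n$, which is exactly where the exponent $3n$ comes from; your sketch of this step is correct in spirit but leaves that bookkeeping implicit.

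One point where the paper does something slightly different from you: for the assertion that $L(k):H^q\to H^{n-1+q}$ is invertible for \emph{every} $q\ge 0$ under the single hypothesis $\|k-k_0\|_{C^{3n}}<\eta_0$, the paper does not rerun a Neumann series (which would require the stronger $C^{3n+q}$ smallness). Instead it observes that the Fredholm index of an elliptic $\Psi$DO is independent of $q$, so once $L(k)$ is invertible on $H^0$ it has index zero on every $H^q$; injectivity on $H^q$ is inherited from $H^0$ since $H^q\subset H^0$, and surjectivity follows. Your Neumann-series route still yields the quantitative bounds \eqref{bound.pso.inverse.q} and \eqref{bound.pso.inverse.continuity} under the $C^{3n+q}$ hypothesis, which is all that is used downstream, but it does not quite establish the invertibility clause as stated. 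Adding the one-line index argument closes that gap.
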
 

\begin{proof}	\indent Let $\chi$ be a smooth cut-off function that vanishes outside $[0,\pi/4)$ and is equal to one in $[0,\pi/5)$.\\
	\indent Write the kernel $K$ as the sum of
	\begin{equation*}
		K_1(\sigma,\tau)=\chi(d(\sigma,\tau))K(\sigma,\tau) \quad \text{and} \quad K_2(\sigma,\tau)=(1-\chi(d(\sigma,\tau)))K(\sigma,\tau).
	\end{equation*}	
	Since $K_2$ vanishes near $\Delta$ and is therefore smooth on all of ${\mathbb{RP}}^n\times {\mathbb{RP}}^n$, the integral operator associated to $K_2$ is a smoothing operator. By differentiating we get that the operator  
	\begin{equation*}
		L_2(f)(\sigma) = \int_{{\mathbb{RP}}^n}K_2(\sigma,\tau)f(\tau)dV_{can}(\tau)
	\end{equation*}	
	satisfies
	\begin{equation*}
||L_2(f)||_{n-1}\leq c_n'||k||_{C^{n-1}}||f||_0 
\end{equation*}
for $f\in H^0(\RP^n)$.
	Thus, in order to show that $L(k)$ is a pseudo-differential operator satisfying \eqref{bound.pso}, it suffices to consider the operator $L_1$  defined by 
	\begin{equation*}
		L_1(f)(\sigma) = \int_{{\mathbb{RP}}^n}K_1(\sigma,\tau)f(\tau)dV_{can}(\tau).
	\end{equation*}
	
	Let $\{\eta_i\}_i$ be a partition of unity subordinated to a finite covering of $\mathbb{RP}^n$ by geodesic balls of some radius 
	so that each pair of balls is either disjoint or contained in a ball of radius $\pi/25$. We have $L_1=\sum_{i,j}\eta_iL_1\eta_j$.
	For those $i,j$ corresponding to disjoint balls we have that
	$$
	(\eta_iL_1\eta_j)(f)(\sigma)=\int_{\mathbb{RP}^n}\eta_i(\sigma) K_1(\sigma,\tau)\eta_j(\tau)f(\tau)dV_{can}(\tau)
	$$
	is a smoothing operator with kernel  $(\sigma,\tau) \mapsto \eta_i(\sigma) K_1(\sigma,\tau)\eta_j(\tau)$. As in the case of $L_2$ one has
	\begin{equation*}
||(\eta_iL_1\eta_j)(f)||_{n-1}\leq c_n''||k||_{C^{n-1}}||f||_0 
\end{equation*}
for $f\in H^0(\RP^n)$ for some positive $c_n''$.

For those $i,j$ corresponding to intersecting balls we have that there is a geodesic ball of radius $\pi/25$ containing 
the supports of $\eta_i,\eta_j$. Hence we consider the following situation. 
	
	\indent Let $\phi: \mathbb{R}^n \rightarrow {\mathbb{RP}}^n$ be the exponential map based on a chosen but otherwise arbitrary point $\tilde{\sigma}$ and {set $U\subset \R^n$ and $V\subset \R^n$ to be, respectively, the open balls of radius $\pi/14$ and $\pi/25$ centered at the origin. The map $\phi$ restricted to $U$ is a diffeomorphism onto its image.} 
	Given $\eta,\eta'\in C^\infty_c(\phi(V))$, a calculation in polar coordinates in this coordinate system  {shows that for every $h\in C_c^{\infty}(V)$ we have}:
	\begin{equation*}
		(\eta'L_1\eta)(h\circ \phi^{-1})(\sigma) = \eta'(\sigma)\int_{\mathbb{R}^n}K_1(\sigma,\phi(y))\frac{\sin(|y|)^{n-1}}{|y|^{n-1}}\eta(\phi(y))h(y)dy.
	\end{equation*}
	
	\noindent \textbf{Claim}. The function 
	$
		u:U\times (\mathbb{R}^n\setminus\{0\}) \rightarrow  \mathbb{R}
	$
	given by
	 \begin{equation*}\label{kernel.definition}
	 u(x,w)=|w|\eta(\phi(x-w))K_1(\phi(x),\phi(x-w))\left(\frac{\sin|x-w|}{|x-w|}\right)^{n-1}
	 \end{equation*}
	if $|w|<2\pi/5$ and  otherwise vanishing   is smooth. Moreover, the map
	\begin{equation*}
		(x,\theta,t)\in U\times S^{n-1}\times (0,+\infty) \mapsto u(x,t\theta) \in \mathbb{R}
	\end{equation*}
	has a smooth extension to $U\times S^{n-1}\times [0,+\infty)$. \\
	
	Suppose $|w|<2\pi/5$. Then $|x-w|<2\pi/5+\pi/14<\pi/2$. Since $\phi$ is injective in the open ball of radius $\pi/2$ centered at the origin, $\phi(x)=\phi(x-w)$ if and only if $w=0$. Hence
	$$
	(x,w) \mapsto K_1(\phi(x),\phi(x-w))
	$$
	is smooth on $U\times (B_{2\pi/5}(0)\setminus \{0\})$. To prove smoothness on $U \times \mathbb{R}^n\setminus \{0\}$, since $\sin(t)/t=F(t^2)$ for some smooth function $F$, it is enough to show that the expression defining $u(x,w)$ for $|w|<2\pi/5$ vanishes in a neighborhood of $|w|=2\pi/5$.  If $|w|>2\pi/5-\delta$, then $|x-w| \geq |w|-|x|\geq 2\pi/5-\delta -\pi/14>\pi/25$
	if $\delta$ is sufficiently small.  Hence $\eta(\phi(x-w))=0$ so $u(x,w)=0$.

Let $(x,w)\in U \times B_{2\pi/5}(0)$. Then $d(\phi(x),\phi(x-w))<|w|<\pi/2,$ and we can consider the vector $w'(x,w)\in T_{\phi(x)}{\mathbb{RP}}^n$ uniquely defined by the identities
	\begin{equation*}
		\phi(x-w)=\exp_{\phi(x)}(w'(x,w))\mbox{ and }|w'(x,w)|=d(\phi(x),\phi(x-w)).
	\end{equation*}
	The function 
	$$
	(x,\theta,t) \in U\times S^{n-1} \times (-2\pi/5,2\pi/5) \mapsto w'(x,\theta,t)=w'(x,t\theta)
	$$
	is smooth and satisfies $w'(x,\theta,0)=0$.  Hence there exists a smooth map $(x,\theta,t)\mapsto \tilde{w}(x,\theta,t)$
	such that $w'(x,\theta,t)=t\tilde{w}(x,\theta,t)$. Since $d\phi_x$ is an isomorphism, we have that $\tilde{w}(x,\theta,0)\neq 0$ for every $(x,\theta)\in U\times S^{n-1}$. For $t>0$, $|w'(x,\theta,t)|=t|\tilde{w}(x,\theta,t)|$. Hence the map
	$$
	(x,\theta,t) \in U\times S^{n-1} \times (0,2\pi/5) \mapsto \tilde{t}(x,\theta,t)=|w'(x,\theta,t)|
	$$
	extends smoothly to $U\times S^{n-1} \times [0,2\pi/5)$. Since
	\begin{align*}
	u(x,t\theta) & = tK(\phi(x),\phi(x-t\theta))\eta(\phi(x-t\theta))\left(F(|x-t\theta|^2)\right)^{n-1}\\
	& = t\frac{k(\phi(x),\phi(x-t\theta))}{d(\phi(x),\phi(x-t\theta))}\eta(\phi(x-t\theta))\left(F(|x-t\theta|^2)\right)^{n-1}\\
	& = \frac{k(\phi(x),\frac{w'(x,t\theta)}{|w'(x,t\theta)|}, |w'(x,t\theta)|)}{|\tilde{w}(x,\theta,t)|}\eta(\phi(x-t\theta))\left(F(|x-t\theta|^2)\right)^{n-1}\\
	& = \frac{k(\phi(x),\frac{\tilde{w}(x,\theta,t)}{|\tilde{w}(x,\theta,t)|}, \tilde{t}(x,\theta,t))}{|\tilde{w}(x,\theta,t)|}\eta(\phi(x-t\theta))\left(F(|x-t\theta|^2)\right)^{n-1}
	\end{align*}
	for $0<t<\pi/5$,  we can extend smoothly to $t\in [0,\pi/5)$. This finishes the proof of the claim (see also proof of Lemma 2.1 of \cite{Kiy}). 
	
	It follows from the claim that we can apply Lemma 2.3 of \cite{Kiy} to the function $u$ above and conclude that the function
	\begin{equation}\label{symbol.local}
		a(x,\xi) = \chi(4|x|)\int_{\mathbb{R}^n} \frac{u(x,w)}{|w|}e^{-i\langle \xi, w \rangle}dw
	\end{equation}
	satisfies the inequalities  
\begin{equation}\label{symbol.difference1}
|\partial^{\alpha}_{\xi}\partial_x^\beta a(x,\xi)|\leq C||k||_{C^{n+|\alpha|+|\beta|}} (1+|\xi|)^{1-n-|\alpha|},
\end{equation}
for every multi-index $\alpha, \beta \in \N_0^n$ and  every $(x,\xi)\in U\times \R^n\setminus\{0\}$, where $C$ depends only on $|\alpha|,|\beta|$, and $n$ (and the chosen $\chi$). {The specific bound  $C||k||_{C^{n+|\alpha|+|\beta|}}$ in \eqref{symbol.difference1} follows from inspecting the proof.}

Hence $a$ is a symbol of order $1-n$ (see \cite[Section 1.2]{Gilkey} for definition) with $x$ support in $U$.

The Fourier transform (see \cite{Gilkey}) of a function $f$  in the Schwartz space is  defined as
	 $${\bf F}({f})(\xi)=c\cdot \int_{\R^n}f(x)e^{-i\langle \xi, x \rangle}dx,$$
	 for some positive dimensional constant $c$ (which might depend on the line).
	 
	  For $\sigma=\phi(x)$, $x\in V$, we have
	 \begin{multline*}
		(\eta'L_1\eta)(h\circ \phi^{-1})(\phi(x))   \\
		= \eta'(\phi(x))\int_{\mathbb{R}^n}K_1(\phi(x),\phi(y))\frac{\sin(|y|)^{n-1}}{|y|^{n-1}}\eta(\phi(y))h(y)dy\\
		=\eta'(\phi(x))\int_{\mathbb{R}^n}\frac{u(x,x-y)}{|x-y|}h(y)dy. \hspace{4.9cm}
	\end{multline*}
	
	 By using  Fourier's Inversion Formula and Fubini we have  that for all $h\in C_{c}^{\infty}(V)$,
	  \begin{multline*}
		(\eta'L_1\eta)(h\circ \phi^{-1})(\phi(x))\\
		=c\cdot \eta'(\phi(x))\int_{\mathbb{R}^n}\frac{u(x,x-y)}{|x-y|}\left(\int_{\mathbb{R}^n} e^{i\langle \xi, y \rangle}{\bf F}(h)(\xi)d\xi \right)dy \\
		=c\cdot \int_{\mathbb{R}^n} \eta'(\phi(x)) a(x,\xi) e^{i\langle \xi, x \rangle} {\bf F}(h)(\xi)d\xi. \hspace{3.7cm}
		\end{multline*}

Hence 	$\eta'L_1\eta$ is a pseudodifferential operator of order $1-n$ localized in $\phi(V)$, and hence a pseudodifferential operator of order $1-n$ in $\mathbb{RP}^n$.  Moreover, an inspection of the proof of Lemma 1.2.1 of \cite{Gilkey} shows that, if for some constant $C$ we have $$|\partial_x^\beta a(x,\xi)|\leq C(1+|\xi|)^{1-n}$$ for all $(x,\xi)$ and all multi-index $\beta$ with $|\beta|\leq 2n$, then there is a {dimensional constant $d_n$ so that  
$$||(\eta'L_1\eta)(f)||_{n-1}\leq d_nC||f||_0\quad\mbox{for all }f\in H^0(\RP^n).$$}
Thus we deduce \eqref{bound.pso} from the estimate above combined with \eqref{symbol.difference1}. Similarly for \eqref{bound.pso2}.

This shows that $L(k)$ is a pseudo-differential operator of order $1-n$ on $\mathbb{RP}^n$ satisfying \eqref{bound.pso}.
		Hence $L(k)$ maps smooth functions into smooth functions.
  The operator $L(k)$ admits bounded extensions from the Sobolev spaces $H^{s}(\RP^n)$ to $H^{s+n-1}(\RP^n)$ for all $s\in \R$, as proven in  Lemma 1.3.4  of \cite{Gilkey}. 
  
  Let us now suppose that $L(k_0)$ is elliptic and  $L(k_0):H^0(\RP^n) \rightarrow H^{n-1}(\RP^n)$ is invertible.  Then $L_1(k_0)$ is elliptic because $L_2(k_0)$ is smoothing.  Let $\eta_i(\phi(x))a_0^{(i)}(x,\xi)$ denote the symbol of $\eta_i L_1(k_0)\eta_i$ which is computed as in \eqref{symbol.local}.  The estimate \eqref{symbol.difference1} applied to $L(k-k_0)$ implies that for all $(x,\xi)\in U\times \R^n\setminus\{0\}$
$$
|a^{(i)}(x,\xi)-a_0^{(i)}(x,\xi)|\leq C||k-k_0||_{C^{n}} (1+|\xi|)^{1-n}.
$$
Hence, assuming that $||k-k_0||_{C^{n}}<\delta$ for some small $\delta$, the ellipticity of $L_1(k_0)$ implies the ellipticity of $L_1(k)$ and hence the ellipticity of $L(k)$.

The set of invertible operators is open in the operator norm, hence $L(k):H^0(\RP^n) \rightarrow H^{n-1}(\RP^n)$ is invertible if $\delta$ is sufficiently small. 
Since $L(k_0)$ is invertible, there is a positive constant $c$ such that
$$||L(k_0)(f)||_{n-1}\geq 2c||f||_0\quad\mbox{for all }f\in H^0(\RP^n).$$
By \eqref{bound.pso}, if $||k-k_0||_{C^{3n}}<c/c_n$ then
\begin{multline*}
2c||f||_0\leq ||L(k_0)(f)||_{n-1}\leq ||L(k-k_0)(f)||_{n-1}+||L(k)(f)||_{n-1}\\
\leq c||f||_0+||L(k)(f)||_{n-1}.
\end{multline*}
 This shows \eqref{bound.pso.inverse}.
 
 Since $L(k):H^0(\mathbb{RP}^n)\rightarrow H^{n-1}(\mathbb{RP}^n)$ is invertible, its index as a Fredholm operator vanishes.
 But the index of $L(k):H^q(\mathbb{RP}^n)\rightarrow H^{n-1+q}(\mathbb{RP}^n)$ does not depend on $q$ (Lemma 1.4.5 of \cite{Gilkey}). The fact that $L(k)$ is injective on $H^q(\mathbb{RP}^n)$, $q\geq 0$, implies  the invertibility of $L(k):H^q(\mathbb{RP}^n)\rightarrow H^{n-1+q}(\mathbb{RP}^n)$. The inequality \eqref{bound.pso.inverse.q} follows as \eqref{bound.pso.inverse}. The inequality
 \eqref{bound.pso.inverse.continuity} follows by using the identity  $$L(k)^{-1}-L(k')^{-1}=L(k)^{-1}\circ L(k'-k)\circ L(k')^{-1},$$  finishing the proof of the theorem.

\end{proof}


\section{The dual of the Funk transform}\label{section.funk.2}

\subsection{The dual family}\label{the.dual.family}  The generalized Gauss map $\mathcal{G}(\Phi)$ defined in Section \ref{section.gauss.map} is a diffeomorphism and so we can consider
\begin{equation*}
	\mathcal{G}^{-1}(\Phi) : (q,w) \in T_{1}S^n \mapsto (\Upsilon_{q}(\Phi)(w),\Xi_{q}(\Phi)(w)) \in T_{1}S^n.
\end{equation*}
Since $N_{-v}(\Phi)(p)=-N_{v}(\Phi)(p)$ for every $(p,v)\in T_{1}S^n$,
\begin{equation} \label{eqsymmetriesinverseG}
	\Upsilon_{q}(\Phi)(-w)=\Upsilon_{q}(\Phi)(w)\quad \text{and} \quad \Xi_{q}(\Phi)(-w)=-\Xi_{q}(\Phi)(w)
\end{equation}
for every $(q,w)\in T_{1}S^n$. \\
\indent Clearly, $[\Xi_{q}(0)(w)]=[w]\in {\mathbb{RP}}^n$ for every $w\in \Sigma_q$, or equivalently $[w]\in \Sigma^{*}_{q}$. More generally:

\begin{prop} \label{propgelfand3}
	For $q\in S^n$, the map
	\begin{equation*}
		[\Xi_{q}(\Phi)] : [w] \in \Sigma^*_{q} \mapsto [\Xi_{q}(\Phi)(w)] \in {\mathbb{RP}}^n
	\end{equation*}
	is a well-defined smooth embedding whose image is the dual hypersurface $\Sigma^{*}_q(\Phi)$.
\end{prop}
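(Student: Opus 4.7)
The plan is to reduce everything to a direct application of the fact that $\mathcal{G}(\Phi):T_1S^n\to T_1S^n$ is a diffeomorphism (Proposition~\ref{propGaussmap}) together with the symmetries \eqref{eqsymmetriesinverseG}.

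First I would verify that the map is well defined and smooth. Since $\Xi_q(\Phi)(-w)=-\Xi_q(\Phi)(w)$ by \eqref{eqsymmetriesinverseG}, the projectivization $[\Xi_q(\Phi)(w)]\in {\mathbb{RP}}^n$ depends only on $[w]\in \Sigma_q^*=\Sigma_q/\{\pm 1\}$. Smoothness then follows because $\Xi_q(\Phi)\colon \Sigma_q\to S^n$ is the composition of the fiber inclusion $\Sigma_q\hookrightarrow T_1S^n$, $w\mapsto (q,w)$, with $\mathcal{G}(\Phi)^{-1}$ and with $\pi_2\colon T_1S^n\to S^n$, and the antipodal quotients $\Sigma_q\to \Sigma_q^*$ and $S^n\to {\mathbb{RP}}^n$ are smooth local diffeomorphisms.

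Next I would identify the image. Unwinding definitions, $\sigma=[v]\in \Sigma_q^*(\Phi)$ iff $q\in \Sigma_v(\Phi)$, iff there exists $x\in \Sigma_v$ with $\Sigma_v(\Phi)(x)=q$. Setting $w=N_v(\Phi)(x)$, this is equivalent to $\mathcal{G}(\Phi)(x,v)=(q,w)$ for some $(q,w)\in T_1S^n$, i.e.\ $v=\Xi_q(\Phi)(w)$ for some $w\in \Sigma_q$. Hence the image of $[\Xi_q(\Phi)]$ is exactly $\Sigma_q^*(\Phi)$.

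For injectivity: suppose $[\Xi_q(\Phi)(w_1)]=[\Xi_q(\Phi)(w_2)]$. Using the second identity in \eqref{eqsymmetriesinverseG} to replace $w_2$ by $-w_2$ if necessary, we may assume $\Xi_q(\Phi)(w_1)=\Xi_q(\Phi)(w_2)=v$. Write $(x_i,v)=\mathcal{G}(\Phi)^{-1}(q,w_i)$; then $\Sigma_v(\Phi)(x_i)=q$ for $i=1,2$, and since $\Sigma_v(\Phi)$ is an embedding, $x_1=x_2$, whence $w_i=N_v(\Phi)(x_i)$ gives $w_1=w_2$. So $[\Xi_q(\Phi)]$ is injective.

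Finally, I would check that $[\Xi_q(\Phi)]$ is an immersion. Since the antipodal projection $S^n\to {\mathbb{RP}}^n$ is a local diffeomorphism, it is enough to show that $\Xi_q(\Phi)\colon \Sigma_q\to S^n$ is an immersion. Given a tangent vector $\dot w\in T_w\Sigma_q$, the vector $(0,\dot w)$ is tangent to the fiber $\{q\}\times\Sigma_q\subset T_1S^n$. Applying $d\mathcal{G}(\Phi)^{-1}_{(q,w)}$, which is an isomorphism, yields $(\dot x,\dot v)$ with $\dot v=d\Xi_q(\Phi)_w\cdot \dot w$. If $\dot v=0$, then pushing $(\dot x,0)$ forward by $d\mathcal{G}(\Phi)_{(x,v)}$ must recover $(0,\dot w)$; the first component gives $d\Sigma_v(\Phi)_x\cdot \dot x=0$, so $\dot x=0$ because $\Sigma_v(\Phi)$ is an immersion. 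Then $(\dot x,\dot v)=0$ forces $(0,\dot w)=0$, i.e.\ $\dot w=0$. Thus $[\Xi_q(\Phi)]$ is a smooth injective immersion from the compact manifold $\Sigma_q^*$, hence a smooth embedding, and its image is $\Sigma_q^*(\Phi)$.

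The only mildly subtle point is the immersion argument, and the main thing to get right is the bookkeeping: using $\mathcal{G}(\Phi)^{-1}$ to transfer a tangent vector to $\{q\}\times\Sigma_q$ into $T_{(x,v)}T_1S^n$ and then using that $\Sigma_v(\Phi)$ itself is an embedding to conclude injectivity of the differential.
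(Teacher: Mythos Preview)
Your proof is correct. The paper's own argument for the embedding part is shorter: it observes that since $\mathcal{G}(\Phi)$ is $C^1$ close to the identity, so is $\mathcal{G}^{-1}(\Phi)$, and hence $[\Xi_q(\Phi)]$ is a $C^1$ perturbation of the inclusion $\Sigma_q^*\hookrightarrow \mathbb{RP}^n$, therefore automatically an embedding. You instead verify injectivity and the immersion property directly, using only that $\mathcal{G}(\Phi)$ is a diffeomorphism and that each $\Sigma_v(\Phi)$ is an embedding. Your route is slightly longer but more self-contained and marginally more general (it does not rely on the $C^1$-closeness to the identity, only on the qualitative consequences already in force); the paper's route is quicker given the standing smallness assumption on $\Phi$. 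The identification of the image and the well-definedness are handled identically in both proofs.
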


\begin{proof}
	The map is well-defined as a consequence of \eqref{eqsymmetriesinverseG}.  Let $[w] \in \Sigma^*_{q}$.
	Define $(p,v)=\mathcal{G}^{-1}(\Phi)(q,w)=(\Upsilon_{q}(\Phi)(w),\Xi_{q}(\Phi)(w))$. Then
	$$
	(q,w)=\mathcal{G}(\Phi)(p,v) = (\Sigma(\Phi)(p,v),N(\Phi)(p,v)).
	$$
	Therefore $q\in \Sigma_v(\Phi)$, and hence $[v]\in \Sigma^*_q(\Phi)$. This shows $[\Xi_{q}(\Phi)(w)] \in \Sigma^*_q(\Phi).$
	Similarly one can prove that any element of $\Sigma^*_q(\Phi)$ is of the form $[\Xi_{q}(\Phi)(w)]$ for some $[w] \in \Sigma^*_{q}$.
	
	Since $\mathcal{G}(\Phi)$ is $C^1$ close to the identity map, we have that 
	$\mathcal{G}^{-1}(\Phi)$ is also $C^1$ close to the identity. Hence $[\Xi_{q}(\Phi)]$ is a $C^1$ perturbation of the inclusion map and therefore it is a smooth embedding.

\end{proof}

Similarly the map 
	\begin{equation*}
		\Xi_{q}(\Phi) : w \in \Sigma_q \mapsto \Xi_{q}(\Phi)(w) \in S^n
	\end{equation*}
	is an embedding of the $(n-1)$-dimensional sphere $\Sigma_q$ into $S^n$. Let 
		\begin{equation*}
		N^{*}_q(\Phi) : w\in \Sigma_q  \mapsto N^{*}_q(\Phi)(w) \in T_{\Xi_{q}(\Phi)(w)}S^n
	\end{equation*}
	be the unique unit normal vector field along $\Xi_{q}(\Phi)$ with $\langle N^{*}_q(\Phi)(w),q\rangle > 0$ for all $(q,w)\in T_1S^n$.
	The map $N^*(\Phi)(q,w)=N^{*}_{q}(\Phi)(w)$ is a smooth function of $(q,w)\in T_1S^n$ satisfying  $N^*(\Phi)(q,-w)=N^*(\Phi)(q,w)$.  And $N^{*}_q(0)(w)=q$ for all $w\in \Sigma_q$.
	
\subsection{Intersections of hypersurfaces}\label{intersections}

Since the map $\mathcal G(\Phi)$ is a diffeomorphism,   any two hypersurfaces $\Sigma_\sigma(\Phi)$ and $\Sigma_{\tau}(\Phi)$, $\sigma\neq \tau$,  intersect transversely.  For the purposes of proving tame estimates we need to have explicit parametrizations of the intersections  $\Sigma_\sigma(\Phi)\cap \Sigma_{\tau}(\Phi)$.

For each $v\in S^n$ we first construct $I_v(\Phi)\in C^{\infty}(S^n)$ so that $I_v(\Phi)$ coincides with the {sine of the} signed distance function to $\Sigma_v(\Phi)$ in its neighborhood and such that $I_v(\Phi)^{-1}(0)=\Sigma_v(\Phi)$. The construction needs to depend smoothly on $\Phi$, and some care is required.

Fix $\zeta\in C_{c}^{\infty}(\R)$ an even cutoff function that is one near zero and zero outside the interval $[-\pi/5,\pi/5]$. Consider the smooth map $$D(\Phi):S^n\times S^n\rightarrow S^n$$ so that if $v\in S^n$, $x\in \Sigma_v$, and $|t|\leq 2\pi/5$ then
\begin{equation*}
D(\Phi)(\exp_x(tv),v)=\cos(t)\Sigma_v(\zeta(t)\Phi)(x)+\sin(t)N_v(\zeta(t)\Phi)(x).
\end{equation*}
Notice that $D(\Phi)(\cdot,v)$ is the identity map outside a neighborhood of $\Sigma_v$ (if $|t|\geq \pi/5$) and that 
$D(0)(p,v)=p$ for all $(p,v)\in S^n\times S^n$. We have $D(\Phi)(x,v)=\Sigma_v(\Phi)(x)$ if $x\in \Sigma_v$ (case $t=0$).  Also
\begin{align*}
D(\Phi)(\exp_x(tv),-v) & =D(\Phi)(\exp_x((-t)(-v)),-v)\\
& =D(\Phi)(\exp_x(tv),v)
\end{align*}
for $|t|\leq 2\pi/5$. Hence $D(\Phi)(p,-v)=D(\Phi)(p,v)$ for all $(p,v) \in S^n \times S^n$.

The map $(p,v)\mapsto (D(\Phi)(p,v),v)$ is a diffeomorphism of $S^n \times S^n$ since it is $C^1$ close to the identity map. Hence for every $v\in S^n$, the map ${p}\mapsto D(\Phi)({p},v)$ is a diffeomorphism of $S^n$. {Denote its} inverse by ${p}\mapsto D^{-1}(\Phi)({p},v)$.  We have
$D^{-1}(\Phi)(p,-v)=D^{-1}(\Phi)(p,v)$. 

If $\Phi$ is $C^l$ close to $\Phi'$, then $D(\Phi)$ is $C^{l-1}$ close to $D(\Phi')$.
Consequently, if $\Phi$ is $C^l$ close to $\Phi'$, $l\geq 2$, then $D^{-1}(\Phi)$ is $C^{l-1}$ close to $D^{-1}(\Phi')$.

The function $I_v(\Phi)\in C^{\infty}(S^n)$ is then defined as 
\begin{equation}\label{Iv.map}
I_v(\Phi)({p})=\langle D^{-1}(\Phi)({p},v),v \rangle.
\end{equation}
The map satisfies $I_v(\Phi)=-I_{-v}(\Phi)$ for all $v\in S^n$.  When $\Phi=0$, we simply have $I_v(0)(p)=\langle p,v\rangle$.
If $\Phi$ is $C^l$ close to $\Phi'$, then $I(\Phi)$ is $C^{l-1}$ close to $I(\Phi')$.

Then
$
I_v(\Phi)(p)=0 \Leftrightarrow  D^{-1}(\Phi)({p},v)\in \Sigma_v
\Leftrightarrow p \in \Sigma_v(\Phi).
$
Suppose $y=\cos(t)\Sigma_v(\Phi)(x)+\sin(t)N_v(\Phi)(x)$ with sufficiently small $t$. Then
$y=D(\Phi)(\exp_x(tv),v)$. Therefore
$$
I_v(\Phi)(y)=\langle \exp_x(tv),v\rangle =\sin(t).
$$
 We have proved that $I_v(\Phi)$ has the desired properties.

\indent The $(n-1)$-dimensional spheres $\Sigma_{\sigma}$  and $\Sigma_{{\tau}}$ intersect transversely for all $\sigma\neq \tau \in \RP^n$ and, for all $(v,\theta)\in T_1S^n$, $0< |t|<\pi/2$,
$$\Sigma_{[v]}\cap \Sigma_{[\exp_{v}(t\theta)]}=\{x\in S^n: \langle x,v \rangle =\langle x,\theta \rangle=0\}.$$
Hence the family
\begin{equation}\label{parametrized.circles}
S_q= \Sigma_{\sigma}\cap \Sigma_{\tau}\quad\text{whenever} \quad q=(\sigma,\tau)\in \RP^n\times \RP^n\setminus \Delta,
\end{equation}
extends smoothly to a family $\{S_q\}_{q\in B(\RP^n\times \RP^n)}$ of $(n-2)$-dimensional spheres of $S^n$.

\begin{lem} \label{lemtransversal}
There exists a smooth map
$$J(\Phi):B(\RP^n\times \RP^n)\times S^n\rightarrow S^n$$
that is $C^1$ close to $(q,x)\mapsto x$, with $J(0)(q,x)=x$ and   such that
\begin{itemize}
 \item[$i)$] $x\mapsto J(\Phi)(q)(x)$ is a diffeomorhism for all $q\in B(\RP^n\times \RP^n)$; 
\item[$ii)$] for all $q=(\sigma,\tau)\in \RP^n\times \RP^n\setminus \Delta$ we have
\begin{equation}\label{inversa.mapa.Z}
J(\Phi)(q)(S_q)= \Sigma_{\sigma}(\Phi)\cap \Sigma_{\tau}(\Phi).
\end{equation}

\end{itemize}

 If $\Phi$ is $C^l$ close to $\Phi'$, $l\geq 3$, $J(\Phi)$ is $C^{l-2}$ close to $J(\Phi')$.

\end{lem}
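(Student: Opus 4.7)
The plan is to reduce the construction of $J(\Phi)$ to the implicit function theorem applied to a pair of defining functions for the intersection $\Sigma_\sigma(\Phi)\cap\Sigma_\tau(\Phi)$, organised so that these defining functions extend smoothly across the blowup boundary $\Omega_0$. Near any $q_0\in B(\RP^n\times\RP^n)$, choose a smooth local lift $q\mapsto(v(q),\theta(q),t(q))\in S^n\times T_{v(q)}S^n\times[-\pi/4,\pi/4]$ compatible with the blowup gluing, so that $[v]=\sigma$ and, when $t\neq 0$, $w:=\cos(t)v+\sin(t)\theta$ is a lift of $\tau$. Set
\[
F_1(\Phi)(q,y)=I_v(\Phi)(y),\qquad F_2(\Phi)(q,y)=\frac{I_w(\Phi)(y)-\cos(t)\,I_v(\Phi)(y)}{\sin(t)}\quad(t\neq 0),
\]
and extend $F_2$ to $t=0$ by $F_2(\Phi)(q,y)\big|_{t=0}=\frac{d}{ds}\big|_{s=0}I_{\cos(s)v+\sin(s)\theta}(\Phi)(y)$. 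Writing the numerator as $f(t)-\cos(t)f(0)=t\,\tilde h(t)$ with $\tilde h(0)=f'(0)$ and dividing by the nonvanishing smooth factor $\sin(t)/t$ shows $F_2$ is smooth in all variables. When $\Phi=0$, $F_1(0)=\langle\cdot,v\rangle$ and $F_2(0)=\langle\cdot,\theta\rangle$, independent of $t$, with common zero set exactly $S_q$. The identifications $v\leftrightarrow-v$ and $((\sigma,\theta),t)\sim((\sigma,-\theta),-t)$ change $(F_1,F_2)$ only by an overall sign, so the vanishing conditions descend to globally well-defined objects on $B(\RP^n\times\RP^n)$.

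For $x$ in a tubular neighborhood of $S_q$, parametrize nearby points by $y(s)=\Pi(x+s_1v+s_2\theta)$, where $\Pi:\R^{n+1}\setminus\{0\}\to S^n$ is the radial projection, and consider
\[
G(\Phi,q,x,s)=\bigl(F_1(\Phi)(q,y(s))-F_1(0)(q,x),\;F_2(\Phi)(q,y(s))-F_2(0)(q,x)\bigr).
\]
At $x\in S_q$ and $(\Phi,s)=(0,0)$, the Jacobian $\partial_s G$ is the $2\times2$ identity because $v,\theta$ are orthonormal and tangent to $S^n$ at $x$; by compactness of $B(\RP^n\times\RP^n)$ it stays close to the identity on a fixed neighborhood. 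The implicit function theorem yields a unique smooth $s(\Phi,q,x)$ with $s(0,q,x)=0$. Choosing a smooth cutoff $\chi$ equal to one on a smaller tubular neighborhood of $S_q$ and zero outside a larger one, define
\[
J(\Phi)(q)(x)=\Pi\bigl(x+\chi(x,q)\,(s_1(\Phi,q,x)\,v+s_2(\Phi,q,x)\,\theta)\bigr)
\]
on the larger neighborhood and $J(\Phi)(q)(x)=x$ elsewhere. Smoothness and $s(0,q,x)=0$ force $J(\Phi)(q)$ to be $C^1$ close to the identity for $\Phi$ small, hence a diffeomorphism of $S^n$, giving $(i)$. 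For $x\in S_q$ one has $F_i(0)(q,x)=0$, so $F_i(\Phi)(q,J(\Phi)(q)(x))=0$, placing $J(\Phi)(q)(x)$ in $\Sigma_v(\Phi)\cap\Sigma_w(\Phi)=\Sigma_\sigma(\Phi)\cap\Sigma_\tau(\Phi)$, which is $(ii)$.

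For the stability estimate, recall that $I(\Phi)$ is $C^{l-1}$ close to $I(\Phi')$ when $\Phi$ is $C^l$ close to $\Phi'$. The same closeness passes to $G$, and the implicit-function solution $s(\Phi,\cdot)$ costs one further derivative, giving $J(\Phi)$ that is $C^{l-2}$ close to $J(\Phi')$. The main obstacle is precisely the smooth extension of $F_2$ across $\Omega_0=\{t=0\}$ together with the verification that the local-in-$q$ construction assembles into a smooth global map on $B(\RP^n\times\RP^n)$; this is the whole point of replacing $\RP^n\times\RP^n$ by its blowup along the diagonal, and it is where the geometric content of the lemma resides.
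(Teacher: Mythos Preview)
Your argument is correct and shares the paper's central insight: the pair of defining functions $(F_1,F_2)=(I_v(\Phi),\ (I_w(\Phi)-\cos t\,I_v(\Phi))/\sin t)$ extends smoothly across the blowup boundary --- your $F_2$ is precisely the paper's function $Q(\Phi)$. Where you invoke the implicit function theorem and a cutoff to build $J(\Phi)$ near $S_q$, the paper instead writes down an explicit global map
\[
Z(\Phi)(q)(x)=x-\langle x,v\rangle v-\langle x,\theta\rangle\theta+F_1\,v+F_2\,\theta,
\]
normalizes to the sphere, and sets $J(\Phi)=\bar Z(\Phi)^{-1}$. This avoids cutoffs and chart-patching and makes the later tame estimates (in the proof of Lemma~\ref{lemFF*inversetame}) cleaner, since $J(\Phi)$ is then the inverse of an explicitly described smooth family of diffeomorphisms. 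Two small points on your version: you only check the inclusion $J(\Phi)(q)(S_q)\subset\Sigma_\sigma(\Phi)\cap\Sigma_\tau(\Phi)$, so add a sentence that equality follows because $J(\Phi)(q)$ is a diffeomorphism and both sides are $(n-2)$-spheres; and in the stability count the second derivative is actually lost in forming $F_2$ (Hadamard's lemma costs one derivative of $I(\Phi)$), not in the implicit function step, though your final conclusion of $C^{l-2}$ closeness is correct.
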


\begin{proof}
	
{ {For $u\in S^n$ with $u\neq  \pm v$, set
$$\theta(v,u)=\frac{u-\langle u,v\rangle v}{\sqrt{1-\langle u,v\rangle^2}}\in S^n.$$
The vector is uniquely determined by the property that $ \langle \theta(v,u), v\rangle =0$ and $\exp_v(d(u,v)\theta(u,v))=u$. 
Consider the map
$$ Z(\Phi):(\RP^n\times\RP^n\setminus \Delta) \times S^n \rightarrow \R^{n+1}$$
where
\begin{multline}\label{mapa.Z}
Z(\Phi)([v],[u])(x)=x-\langle v, x\rangle v-\langle \theta(v,u), x\rangle\theta(v,u) \\
+I_v(\Phi)(x)v+\frac{(I_{u}(\Phi)(x)-\langle u,v\rangle I_v(\Phi)(x))}{\sqrt{1-\langle u,v\rangle^2}}\theta(v,u).
\end{multline}
The map is well-defined because 
\begin{equation*}
	I_{-v}(\Phi)=-I_v(\Phi),\quad \theta(v,-u)=-\theta(v,u), \quad\text{and}\quad\theta(-v,u)=\theta(v,u).
\end{equation*}
\indent We now argue that $Z(\Phi)$ can be extended to a smooth map defined on  $B(\RP^n\times\RP^n)\times S^n$.} Choose a sufficiently small  $\delta>0$. The smooth map on $T_1S^n\times[-\delta,\delta]\times S^n$ given by
$$(v,\theta,t,x)\mapsto I_{\exp_v(t\theta)}(\Phi)(x)-\cos(t)I_v(\Phi)(x)$$
vanishes on $T_1S^n\times\{0\}\times S^n$. Therefore there exists a smooth map{
$$Q(\Phi):T_1S^n\times[-\delta,\delta]\times S^n\rightarrow \R $$}
such that
\begin{equation*}\label{relative.function}{
I_{\exp_v(t\theta)}(\Phi)(x)=\cos(t)I_v(\Phi)(x)+\sin(t)Q(\Phi)(v,\theta,t,x).}
\end{equation*}
{This identity and the fact that $$\theta(v,\exp_v(t\theta))=\frac{t}{|t|}\theta\quad\text{for all} \quad (v,\theta)\in T_1S^n,\quad  0<|t|<\delta,$$ imply 
\begin{multline}\label{mapa.Z2}
Z(\Phi)([v],[\exp_v(t\theta)])(x)=
x-\langle v, x\rangle v-\langle \theta, x\rangle\theta\\
+I_v(\Phi)(x)v+Q(\Phi)(v,\theta,t,x)\theta.
\end{multline}
From this expression we see that $Z(\Phi)$ can be extended smoothly to $B(\RP^n\times\RP^n)\times S^n$. 
 If $\Phi$ is $C^l$ close to $\Phi'$, then $Q(\Phi)$ is $C^{l-2}$ close to $Q(\Phi')$. Hence $Z(\Phi)$  is $C^{l-2}$ close to $Z(\Phi')$.

When $\Phi=0$  we have $Z(0)(q)(x)=x$ for all $(q,x)\in B(\RP^n\times\RP^n)\times S^n$. Hence we obtain that  $Z(\Phi)(q)(x)\neq 0$ for all $(q,x)\in B(\RP^n\times\RP^n)\times S^n$. As a result, the map
\begin{equation*}
\bar Z(\Phi):B(\RP^n\times \RP^n)\times S^n\rightarrow S^n, \quad (q,x)\mapsto {Z(\Phi)(q)(x)}/{|Z(\Phi)(q)(x)|}
\end{equation*}
is well-defined. Since $\bar Z(\Phi)$ is $C^1$ close to $\bar Z(0)$, the map
 $x\mapsto \bar Z(\Phi)(q)(x)$ is a diffeomorphism of $S^n$ for every $q\in B(\RP^n\times\RP^n)$. Thus we obtain a smooth map
 $$J(\Phi):B(\RP^n\times \RP^n)\times S^n\rightarrow S^n, \quad (q,y)\mapsto\bar Z(\Phi)(q)^{-1}(y).$$
  If $\Phi$ is $C^l$ close to $\Phi'$, $l\geq 3$, $J(\Phi)$ is $C^{l-2}$ close to $J(\Phi')$.

 We are left to prove \eqref{inversa.mapa.Z}. If $q=([v],[u])\in \RP^n\times \RP^n\setminus \Delta$, we have that
 $$y\in J(\Phi)(q)(\Sigma_{[v]}\cap \Sigma_{[u]})$$
 if and only if  $\bar Z(\Phi)(q)(y)\in \Sigma_{[v]}\cap \Sigma_{[u]}$, which is equivalent to 
 $$\langle Z(\Phi)(q)(y),v\rangle=0\quad\text{and}\quad\langle Z(\Phi)(q)(y),\theta(v,u)\rangle =0.$$
 From \eqref{mapa.Z} we see that the first identity occurs if and only if $I_v(\Phi)(y)=0$ and the second identity occurs if and only if  $I_u(\Phi)(y)=\langle u,v\rangle I_v(\Phi)(y)$. Hence this is equivalent to $y\in \Sigma_{[v]}(\Phi)\cap\Sigma_{[u]}(\Phi)$.
}
}

\end{proof}

\subsection{The dual of the Funk transform} \label{section.funkdual} { Recall the incidence set defined in Section \ref{section.incidence},
\begin{equation*}
	[F(\Phi)] = \{(p,\sigma)\in S^n\times {\mathbb{RP}}^n : p\in \Sigma_\sigma(\Phi)\},
\end{equation*}
and the projections $\pi_1$ and $\pi_2$ of points in $[F(\Phi)]$ onto the first and second coordinates respectively. We endow $[F(\Phi)]$ with the induced metric $can$ from the product $(S^n\times {\mathbb{RP}}^n,can\times can)$. Thus $\pi_1$ maps $\pi^{-1}_2(\sigma)$ isometrically into $\Sigma_{\sigma}(\Phi)$, and $\pi_{2}$ maps $\pi_{1}^{-1}(p)$ isometrically into $\Sigma^*_{p}(\Phi)$.  Similarly for the orientation-inducing set $F(\Phi)$.

For $(p,v)\in F(\Phi)$, we denote by ${\bf N}^*(\Phi)(p,v)$ the unit normal to $\pi_1^{-1}(p)\subset S^n$ at $v\in S^n$ with 
$\langle {\bf N}^*(\Phi)(p,v),p\rangle>0$. Writing $(p,v)= (q,\Xi_q(\Phi)(w))$, we have that
${\bf N}^*(\Phi)(p,v)=N^*_q(\Phi)(w)$ for the $N^*$ defined earlier. 
It satisfies $\textbf{N}^*(\Phi)(p,-v)=\textbf{N}^*(\Phi)(p,v)$ and constitutes a smooth map ${\bf N^*}(\Phi):F(\Phi)\rightarrow S^n$.

The dual of the Funk transform is the operator $\mathcal F^*(\rho,\Phi)$ such that
$$\int_{\RP^n} \mathcal F(\rho,\Phi)(f)(\sigma)g(\sigma)dV_{can}(\sigma)= \int_{S^n} f(x)\mathcal F^*(\rho,\Phi)(g)(x)dV_{can}(x)$$
 for all $f\in C^\infty(S^n)$, $g\in C^{\infty}(\RP^n)$

\begin{prop} \label{propdualfunk}
	For every $g\in C^{\infty}({\mathbb{RP}}^n)$ and every $p\in S^n$,
	\begin{equation} \label{eqdualfunk}
		\mathcal{F}^{*}(\rho,\Phi)(g)(p) = e^{(n-1)\rho(p)}\int_{\Sigma^{*}_p(\Phi)} g(\tau)U(\Phi)(p,\tau)dA_{can}(\tau),
	\end{equation}
	where $U(\Phi) \in C^{\infty}([F(\Phi)])$ is 
	\begin{equation*}
		U(\Phi)(p,[v]) = \frac{\sqrt{\cos(\Phi(x,v))^{2} + |\nabla^{\Sigma_v}\Phi_v|^2(x)}}{| \eta(\Phi)(x,v,{\bf N}^*(\Phi)(p,v))|\cos(\Phi(x,v))}
	\end{equation*}
	for  $p=\Sigma_{v}(\Phi)(x)$.
\end{prop}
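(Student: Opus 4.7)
The plan is to realize the pairing $\int_{\RP^n}\mathcal{F}(\rho,\Phi)(f)(\sigma)g(\sigma)\,dV_{can}(\sigma)$ as an integral over the incidence manifold $[F(\Phi)]$ and re-slice it via the other projection. Applying the coarea formula to $\pi_2:[F(\Phi)]\to\RP^n$ (using that $\pi_1$ identifies $\pi_2^{-1}(\sigma)$ isometrically with $\Sigma_\sigma(\Phi)\subset S^n$), the pairing equals
\[
\int_{[F(\Phi)]}(f\circ\pi_1)(g\circ\pi_2)\,e^{(n-1)\rho\circ\pi_1}\,|J\pi_2|\,dA_{[F(\Phi)]}.
\]
Re-slicing via $\pi_1$ (with $\pi_2$ identifying $\pi_1^{-1}(p)$ isometrically with $\Sigma^*_p(\Phi)\subset\RP^n$) then yields
\[
\int_{S^n}f(p)\,e^{(n-1)\rho(p)}\int_{\Sigma^*_p(\Phi)}g(\tau)\,\frac{|J\pi_2|(p,\tau)}{|J\pi_1|(p,\tau)}\,dA_{can}(\tau)\,dV_{can}(p).
\]
Comparing with the statement reduces the proof to showing $U(\Phi)=|J\pi_2|/|J\pi_1|$ pointwise on $[F(\Phi)]$.

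Fix $(p,v)\in F(\Phi)$ with $p=\Sigma_v(\Phi)(x)$. The $\pi_2$- and $\pi_1$-vertical subspaces of $T_{(p,v)}F(\Phi)$ are $T_p\Sigma_v(\Phi)\times\{0\}$ and $\{0\}\times T_v\pi_1^{-1}(p)$, respectively. For $u\in T_vS^n$, the unique $\pi_2$-horizontal tangent to $F(\Phi)$ projecting to $u$ has first component $\alpha(u)\,N_v(\Phi)(x)$, where $\alpha(u)\in\R$ is the component along $N_v(\Phi)(x)$ of the velocity of $\Sigma_v(\Phi)(x)$ as $v$ moves in direction $u$ along $F(\Phi)$. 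The key identification, extracted from the computation in the proof of Theorem \ref{thmconstraint}, is that writing the moving equator $\Sigma_{v(t)}(\Phi)$ (with $v'(0)=u$) as a graph over $\Sigma_v$ via $\Gamma(t,\cdot)$ with $\partial_t\Gamma(0,x)=\eta(\Phi)(x,v,u)$, the velocity of this graph at fixed $x\in\Sigma_v$ is $\eta(\Phi)(x,v,u)\,\Sigma_v^\perp(\Phi)(x)$. Projecting onto $N_v(\Phi)(x)$ via Lemma \ref{lembasicformulae}(ii) gives
\[
\alpha(u)=\frac{\cos\Phi(x,v)}{\sqrt{\cos^2\Phi(x,v)+|\nabla^{\Sigma_v}\Phi_v|^2(x)}}\,\eta(\Phi)(x,v,u).
\]
Consequently $T_v\pi_1^{-1}(p)=\ker\eta(\Phi)(x,v,\cdot)$, and $\textbf{N}^*(\Phi)(p,v)$ spans the orthogonal complement of this kernel in $T_vS^n$.

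Set $\alpha_*=\alpha(\textbf{N}^*(\Phi)(p,v))$. With $\{e_1,\ldots,e_{n-1}\}$ an orthonormal basis of $T_v\pi_1^{-1}(p)$, an orthogonal basis of the $\pi_2$-horizontal subspace is $\{(0,e_j)\}_{j=1}^{n-1}\cup\{(\alpha_*N_v(\Phi)(x),\textbf{N}^*(\Phi)(p,v))\}$ (using $\alpha(e_j)=0$), from which computing the determinant of $d\pi_2$ in orthonormal bases yields $|J\pi_2|=1/\sqrt{1+\alpha_*^2}$. Dually, with $\{E_1,\ldots,E_{n-1}\}$ orthonormal in $T_p\Sigma_v(\Phi)$, an orthogonal basis of the $\pi_1$-horizontal subspace is $\{(E_i,0)\}_{i=1}^{n-1}\cup\{(N_v(\Phi)(x),\textbf{N}^*(\Phi)(p,v)/\alpha_*)\}$, giving $|J\pi_1|=|\alpha_*|/\sqrt{1+\alpha_*^2}$. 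Therefore $|J\pi_2|/|J\pi_1|=1/|\alpha_*|$, which by the displayed formula for $\alpha_*$ is exactly the stated $U(\Phi)(p,[v])$. The main obstacle is the identification of $\alpha$ with a scalar multiple of $\eta(\Phi)$: this ties the Jacobian of the incidence projection to the variational quantity behind Theorem \ref{thmconstraint}, after which the result follows by routine linear algebra.
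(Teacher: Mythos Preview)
Your proof is correct and follows essentially the same route as the paper: both arguments use the double coarea formula on $[F(\Phi)]$ to identify $U(\Phi)=|Jac(\pi_2)|/|Jac(\pi_1)|$, and then compute this ratio by isolating the one ``off-diagonal'' direction $(\alpha_* N_v(\Phi)(x),\mathbf{N}^*(\Phi)(p,v))$ (the paper's vector $V$) orthogonal to both fiber tangent spaces. Your computation of $\alpha(u)$ via the fixed-$x$ graph velocity $\eta(\Phi)(x,v,u)\,\Sigma_v^\perp(\Phi)(x)$ is a mild streamlining of the paper's direct expansion of $\langle y'(0),N_v(\Phi)(x)\rangle$ (carried out in the proof of Proposition~\ref{map.is.K}), but the content is the same since the two velocities differ by a vector tangent to $\Sigma_v(\Phi)$.
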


\begin{proof}

\indent According to the co-area formula for Riemannian submersions  (\cite{Chavel}, Chapter III), 
\begin{align*}
	\int_{[F(\Phi)]} F(x,\tau) dV_{can} & = \int_{{\mathbb{RP}}^n}\left( \int_{\Sigma_\tau(\Phi)} \frac{F(x,\tau)}{|Jac(\pi_2)|(x,\tau)} dA_{can}(x)\right) dV_{can}(\tau)  \\
	 & = \int_{S^n}\left(\int_{\Sigma^{*}_x(\Phi)} \frac{F(x,\tau)}{|Jac(\pi_1)|(x,\tau)} dA_{can}({\tau}) \right)dV_{can}({x})
\end{align*}
for every $F\in C^{\infty}([F(\Phi)])$. Here, $|Jac(\pi_i)| = \sqrt{det[D\pi_i\circ (D\pi_{i})^{*}]}$. \\

\indent Thus, for every $f\in C^{\infty}(S^n)$ and every $g\in C^{\infty}({\mathbb{RP}}^n)$,
\begin{multline*}
	\int_{{\mathbb{RP}}^n} \left(\int_{\Sigma_\tau(\Phi)} f(x)e^{(n-1)\rho(x)}dA_{can}(x)\right) g(\tau)dV_{can}(\tau) \\
	= \int_{[F(\Phi)]} e^{(n-1)\rho(x)}f(x)g(\tau) |Jac(\pi_2)|(x,\tau) dV_{can}(x,\tau) \\
	=\int_{S^n} f(x)\left(e^{(n-1)\rho(x)}\int_{\Sigma^{*}_x(\Phi)} g(\tau)\frac{|Jac(\pi_2)|(x,\tau)}{|Jac(\pi_1)|(x,\tau)} dA_{can}(\tau)\right)dV_{can}(x).
\end{multline*}

\indent By the definition of $\mathcal{F}^*(\rho,\Phi)(g)$, this formula proves \eqref{eqdualfunk} with
\begin{equation*}
	U(\Phi)(p,\sigma) = \frac{|Jac(\pi_2)|(p,\sigma)}{|Jac(\pi_1)|(p,\sigma)} \quad \text{for all} \quad (p,\sigma) \in [F(\Phi)].
\end{equation*}
\indent  The tangent space of the $(2n-1)$-dimensional manifold $[F(\Phi)]$ at the point $(p,\sigma)$ admits an orthogonal decomposition
\begin{equation*}
	T_{(p,\sigma)}\pi_2^{-1}(\sigma)\oplus T_{(p,\sigma)}\pi_1^{-1}({p})\oplus span\{V(p,\sigma)\}
\end{equation*}
where both components of $V(p,\sigma)=(V_1(p,\sigma),V_2(p,\sigma))$ are non-zero,
\begin{equation*}
	T_{(p,\sigma)}\pi_2^{-1}(\sigma) = T_{p}\Sigma_\sigma(\Phi)\times \{0\} \quad \text{and} \quad 
	T_{(p,\sigma)}\pi_1^{-1}({p}) = \{0\}\times T_\sigma\Sigma^*_p(\Phi).
\end{equation*}

\indent By definition of $|Jac(\pi_i)|$, we compute
\begin{equation*}
	|Jac(\pi_i)|(p,\sigma) = \frac{|V_i(p,\sigma)|}{\sqrt{|V_1(p,\sigma)|^2+|V_2(p,\sigma)|^2}}
\end{equation*}
so that 
\begin{equation*}
	\frac{|Jac(\pi_2)|(p,\sigma)}{|Jac(\pi_1)|(p,\sigma)} = \frac{|V_{2}(p,\sigma)|}{|V_{1}(p,\sigma)|}.
\end{equation*}

\indent It remains to compute $V_1$ and $V_2$.  Let $\sigma=[v]$ and $p=y=\Sigma_v(\Phi)(x)$. Using the notation of Section \ref{section.var.constraint}, we choose $u\in T_vS^n$, $|u|=1$, and define $v(t),y(t)$. We can choose $u={\bf N}^*(\Phi)(p,v)$. Notice that the projection $[u]$  of $u$ on $T_1\mathbb{RP}^n$  is orthogonal to $T_{[v]}\Sigma_p^*(\Phi)$. Since $y(t)\in \Sigma_{v(t)}(\Phi)$, we have that $(y(t),[v(t)]) \in [F(\Phi)]$ and $(y(0),[v(0)])=(p,\sigma)$. We have $v'(0)=u$ and
\begin{align*}
y'(0) & =\cos\Phi(x,v) x'(0)-\sin\Phi(x,v) (\eta(x) +D_2\Psi(0,x)\cdot x'(0))x\\
&\quad +\cos\Phi(x,v) (\eta(x) +D_2\Psi(0,x)\cdot x'(0)) v\\
&=\sin\Phi(x,v)u-\sin\Phi(x,v)(D\Phi_{(x,v)} (-\langle x,u\rangle v,u))x\\
&\quad +\cos\Phi(x,v) (-\langle x,u\rangle + D\Phi_{(x,v)} (-\langle x,u\rangle v,u)) v.\\
\end{align*}

We have
\begin{align*}
\langle y'(0), N_v(\Phi)(x)\rangle &=Z\langle y'(0), \cos\Phi (-\sin\Phi \,x+\cos\Phi \,v)-\nabla^{\Sigma_v}\Phi_v(x)\rangle \\
&=Z\big(-\cos\Phi \langle u,x\rangle-\sin \Phi \langle u, \nabla^{\Sigma_v}\Phi_v(x)\rangle +\cos\Phi  D\Phi  \big)\\
&=Z \, \cos\Phi \, \eta(\Phi)(x,v,u)
\end{align*}
by  (\ref{eqeta2}), where $\Phi=\Phi(x,v)$,  $Z=(\cos(\Phi)^2 + |\nabla^{\Sigma_v}\Phi_v|^2(x))^{-\frac12}$ and $D\Phi=D\Phi_{(x,v)} (-\langle x,u\rangle v,u)$.

Let $\{e_i\}$ be an orthonormal basis of $T_y\Sigma_v(\Phi)=T_{(p,\sigma)}\pi_2^{-1}(\sigma)$.
 We choose $k_i$ such that
$(y'(0)-\sum k_ie_i,[u])$ is orthogonal to $T_y\Sigma_v(\Phi) \oplus T_{[v]}\Sigma_p^*(\Phi)$, or equivalently such that
$y'(0)-\sum k_ie_i$ is orthogonal to $T_y\Sigma_v(\Phi)$. Then $V_1=y'(0)-\sum k_ie_i$ and $V_2=[u]$. 

We have
\begin{equation*}
|V_1| =| \langle V_1, N_v(\Phi)(x)\rangle |=|\langle y'(0),N_v(\Phi)(x)\rangle|
\end{equation*}
and $|V_2|=1$.

Hence
\begin{equation*}
U(\Phi)(p,[v]) = \frac{1}{|V_1|}=\frac{\sqrt{\cos(\Phi(x,v))^2 + |\nabla^{\Sigma_v}\Phi_v|^2(x)}}{| \eta(\Phi)(x,v,{\bf N}^*(\Phi)(p,v))|\cos\Phi(x,v)}.
\end{equation*}

\end{proof}

}

\subsection{The operator $\mathcal F\circ\mathcal F^*$ as an integral operator}
For $(y,v) \in F(\Phi)$, writing $(y,v)=(\Sigma_v(\Phi)(x),v)$, we define ${\bf N}(\Phi)(y,v)=N_v(\Phi)(x)$. This defines a smooth map
${\bf N}(\Phi):F(\Phi)\rightarrow S^n$.  Hence  the generalized Gauss map can be seen as a map
 \begin{equation*}
	{\bf G}(\Phi): (y,v) \in F(\Phi) \mapsto (y,{\bf N}(\Phi)(y,v)) \in T_{1}S^n.
\end{equation*}
Notice that $\mathcal{G}(\Phi)={\bf G}(\Phi)\circ T(\Phi)$, where $T(\Phi)$ is the map in \eqref{eqparamincidset}. Thus the map ${\bf G}(\Phi)$ must be a diffeomorphism as well.

\indent Consider the smooth function $K(\rho,\Phi)$ defined on $\RP^n\times\RP^n\setminus \Delta$ by
\begin{equation} \label{eqkernel}
		K(\rho,\Phi)([v],[u])=
		 \int_{\scriptstyle{\Sigma_{v}(\Phi)\cap \Sigma_{u}(\Phi)}} {\textstyle\frac{e^{2(n-1)\rho(y)}}{\sqrt{1 - \langle {\bf N}(\Phi)(y,v), {\bf N}(\Phi)(y,u)\rangle^2}}} dA_{can}(y).
\end{equation}
The function is well-defined and smooth on $\RP^n\times\RP^n\setminus \Delta$ because ${\bf G}(\Phi)$ is a diffeomorphism.

\begin{prop} \label{proppropertiesK}
	The  function $$k(\rho,\Phi)(\sigma,\tau)=\eta(d(\sigma,\tau))K(\rho,\Phi)(\sigma,\tau)$$ extends to a smooth function $k(\rho,\Phi)\in C^{\infty}(B(\RP^n\times\RP^n))$, where $\eta$ is as in Section \ref{integral.operator}. If  $(\rho,\Phi)$ is $C^l$ close to $(\rho',\Phi')$,  for $l\geq 3$, 
	 then  $k(\rho,\Phi)$ is $C^{l-3}$ close to $k(\rho',\Phi')$.
\end{prop}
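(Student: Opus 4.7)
The plan is to use the parametrization from Lemma \ref{lemtransversal} to pull all intersections $\Sigma_\sigma(\Phi)\cap\Sigma_\tau(\Phi)$ back to the smooth family $\{S_q\}_{q\in B(\RP^n\times\RP^n)}$, and then show that multiplication by $\eta(d(\sigma,\tau))$ exactly cancels the singular behavior of $1/\sqrt{1-\langle \mathbf{N},\mathbf{N}\rangle^2}$ at the diagonal when viewed in blow-up coordinates. Concretely, for $q=(\sigma,\tau)\in \RP^n\times\RP^n\setminus\Delta$, change variables $y=J(\Phi)(q)(x)$ with $x\in S_q$ to rewrite
\begin{equation*}
k(\rho,\Phi)(q)= \eta(d(\sigma,\tau))\int_{S_q} \frac{e^{2(n-1)\rho(J(\Phi)(q)(x))}\,|\mathrm{Jac}_{S_q} J(\Phi)(q)|(x)}{\sqrt{1-\langle \mathbf{N}(\Phi)(y,v),\mathbf{N}(\Phi)(y,u)\rangle^2}}\, dA_{can}(x).
\end{equation*}
Away from the diagonal, all factors are smooth in $q$, $S_q$ varies smoothly, and the integral clearly defines a smooth function. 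So the heart of the matter is smoothness across the boundary $\Omega_0\subset B(\RP^n\times\RP^n)$.

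Near $\Omega_0$, work in the blow-up coordinates $((v,\theta),t)\in T_1 S^n\times[-\pi/4,\pi/4]$ with $\sigma=[v]$, $\tau=[\exp_v(t\theta)]$. One checks that, for every $t$ in this range, the set $S_q$ equals $\{x\in S^n:\langle x,v\rangle=0,\,\langle x,\theta\rangle=0\}$, so the family $\{S_q\}_q$ is uniformly parametrized by $(v,\theta)$ alone on this chart, and $\eta(d(\sigma,\tau))=|t|$ when $|t|\leq \pi/5$. Define
\begin{equation*}
f(\rho,\Phi)(v,\theta,t,x)=\langle \mathbf{N}(\Phi)(y,v),\mathbf{N}(\Phi)(y,\exp_v(t\theta))\rangle,\qquad y=J(\Phi)(q)(x).
\end{equation*}
At $t=0$ one has $f=1$, and since $f$ realizes a local maximum there by Cauchy--Schwarz, $\partial_t f|_{t=0}=0$. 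Consequently $1-f^2$ vanishes to second order in $t$, and I will show
\begin{equation*}
1-f(\rho,\Phi)^2 = t^2\, g(\rho,\Phi)(v,\theta,t,x)
\end{equation*}
where $g$ is smooth in all its arguments and $g(0,0)\equiv 1>0$ at $t=0$ (the case $\Phi=0$ gives $f=\cos t$ and $g=(\sin t/t)^2$). By shrinking the neighborhood of the origin in $(\rho,\Phi)$, $g$ remains positive, so
\begin{equation*}
\frac{\eta(d(\sigma,\tau))}{\sqrt{1-f^2}}=\frac{|t|}{|t|\sqrt{g}}=\frac{1}{\sqrt{g}}
\end{equation*}
extends smoothly across $t=0$. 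The identification $((v,\theta),t)\sim((v,-\theta),-t)$ is compatible because $\exp_v(t\theta)$, and hence $f$ and $g$, are invariant under this symmetry.

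With the singular factor tamed, the full integrand extends to a smooth function on (a neighborhood of $\Omega_0$ in) $B(\RP^n\times\RP^n)\times S^n$, so $k(\rho,\Phi)$ is smooth on $B(\RP^n\times\RP^n)$. For the continuity statement, I tally derivative losses: $J(\Phi)$ costs two derivatives of $\Phi$ (Lemma \ref{lemtransversal}), and $\mathbf{N}(\Phi)$ costs one derivative (through $\nabla^{\Sigma_v}\Phi_v$ in Lemma \ref{lembasicformulae}~$ii)$); the factorization $1-f^2=t^2 g$ does not cost additional regularity since it is performed via division in a single variable after the smoothness is established. Integration in $x\in S_q$ preserves regularity in $q$, so if $(\rho,\Phi)$ is $C^l$-close to $(\rho',\Phi')$ with $l\geq 3$, then $k(\rho,\Phi)$ is $C^{l-3}$-close to $k(\rho',\Phi')$.

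The main obstacle is producing the clean factorization $1-f^2=t^2 g$ jointly in all variables $((v,\theta),t,x)$ and in $(\rho,\Phi)$: one must verify that the Hadamard-type division by $t^2$ produces a function that is smooth in the blow-up sense and compatible with the quotient identification, and that $g$ stays bounded away from zero under small perturbations of $(\rho,\Phi)$. This requires a careful Taylor expansion of $\langle \mathbf{N}(\Phi)(y,v),\mathbf{N}(\Phi)(y,\exp_v(t\theta))\rangle$ in $t$, using the explicit formula from Lemma \ref{lembasicformulae}~$ii)$, and the observation that $\mathbf{N}(\Phi)(y,u)|_{u=v}=\mathbf{N}(\Phi)(y,v)$ renders the first order term in $t$ parallel to $\mathbf{N}(\Phi)(y,v)$ and hence killed by the Cauchy--Schwarz critical-point condition.
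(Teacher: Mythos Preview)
Your strategy matches the paper's: pull the intersection back to $S_q$ via $J(\Phi)$ and show that the factor $\eta(d(\sigma,\tau))$ precisely tames $1/\sqrt{1-\langle \mathbf N,\mathbf N'\rangle^2}$ in blow-up coordinates. The paper's execution differs in one detail: rather than working with the scalar $f=\langle \mathbf N,\mathbf N'\rangle$ and your Cauchy--Schwarz maximum argument, it writes $\sqrt{1-\langle \mathbf N,\mathbf N'\rangle^2}=\tfrac12|\mathbf N-\mathbf N'|\sqrt{4-|\mathbf N-\mathbf N'|^2}$, extends $\mathbf N(\Phi)(y,v)$ to $\nabla_{S^n}I_v(\Phi)(y)$ (defined for all $(y,v)$), and performs a \emph{single} Hadamard division on the vector $h(v,\theta,t,y)=\nabla I_v(\Phi)(y)-\nabla I_{\exp_v(t\theta)}(\Phi)(y)=t\,\lambda$, checking directly that $\lambda(0)(v,\theta,0,x)=-\theta\neq 0$ for $x\in S_q$.

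The gap in your write-up is the assertion that ``the factorization $1-f^2=t^2 g$ does not cost additional regularity.'' For the smoothness assertion this is fine (Hadamard on $C^\infty$ data yields $C^\infty$), but for the $C^{l-3}$ closeness it is not: each division by $t$ via the integral remainder $\int_0^1\partial_t(\cdots)\,ds$ costs one derivative in finite regularity, so extracting $t^2$ from the scalar $1-f^2$ costs two. Since your $f$ is $C^{l-2}$ in $(v,\theta,t,x)$ (the one derivative in $\mathbf N(\Phi)$ is absorbed when composing with the $C^{l-2}$ map $J(\Phi)$), this would leave $g\in C^{l-4}$ and hence only $C^{l-4}$ closeness for $k(\rho,\Phi)$. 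The fix is to factor the vector difference once: with $\mu(v,\theta,t,x)=\mathbf N(\Phi)(y,v)-\mathbf N(\Phi)(y,\exp_v(t\theta))=t\,\nu$ (one Hadamard, so $\nu\in C^{l-3}$), use $|\mathbf N-\mathbf N'|^2=2(1-f)$ to get $1-f^2=\tfrac12|\mu|^2(1+f)=\tfrac{t^2}{2}|\nu|^2(1+f)$, whence $g=\tfrac12|\nu|^2(1+f)\in C^{l-3}$ and $g>0$ near $t=0$ because $|\nu|\to 1$ at $(\rho,\Phi,t)=(0,0,0)$. This is the paper's computation in different packaging, and it recovers the stated $C^{l-3}$ bound.
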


\begin{proof}

We  will refer to the families of spheres $\{S_q\}_{q\in B(\RP^n\times \RP^n)}$ defined in \eqref{parametrized.circles} and to the map $J(\Phi)$ defined in Section \ref{lemtransversal}.  We  take as representatives of $([v],[u])\in \RP^n\times \RP^n\setminus \Delta$ a pair of points $v,u$  in $S^n$ such that $d(u,v)\leq\pi/2$.

 Using \eqref{inversa.mapa.Z} we have that, for all $(\sigma,\tau)\in \RP^n\times \RP^n\setminus \Delta$, 
\begin{align} \label{eqauxdistK}
	k(\rho,\Phi)(\sigma,\tau)& =\eta(d(\sigma,\tau))K(\rho,\Phi)(\sigma,\tau) \nonumber \\
	& = \int_{S_{(\sigma,\tau)}} V(\Phi)(\sigma,\tau)(x)e^{2(n-1)\rho(\sigma,\tau)(x)}dA_{can}(x),
\end{align}
where $\rho(\sigma,\tau)=\rho\circ J(\Phi)(\sigma,\tau)$ and $V(\Phi)(
\sigma,\tau)(x)$ is given by
\begin{equation}\label{V.map}
	{\frac{\eta(d(u,v))}{|{\bf N}(\Phi)(y,u)-{\bf N}(\Phi)(y,v)|}\frac{2|Jac\,J(\Phi)(
\sigma,\tau)_{|S_{(\sigma,\tau)}}(x)| }{\sqrt{4-|{\bf N}(\Phi)(y,v)-{\bf N}(\Phi)(y,u)|^2}}}
\end{equation}
with $\sigma=[v]$, $\tau=[u]$, and $y= J(\Phi)(\sigma,\tau)(x)$. The domain of $V(\Phi)$ is the open set in 
$$\mathcal B=\{(x,q)\in S^n\times B(\RP^n\times \RP^n):x\in S_q\}$$
consisting of those elements $(x,q)$ where $q\in \RP^n\times\RP^n\setminus\Delta$.

We wish to extend $V(\Phi)$ to a smooth function on $\mathcal B$, because if that is the case we have from \eqref{eqauxdistK} that indeed $k(\rho,\Phi)$ extends to a smooth function on $B(\RP^n\times\RP^n)$.

We have that
$$
I_v(\Phi)(y)=\sin d_{\Sigma_v(\Phi)}(y)
$$
for $y$ in some $\delta$-neighborhood of $\Sigma_v(\Phi)$, where $d_{\Sigma_v(\Phi)}$ is the signed distance to the hypersurface $\Sigma_v(\Phi)$. Then
$$
\nabla_{S^n}I_v(\Phi)(y)=\big(\cos d_{\Sigma_v(\Phi)}(y)\big)\nabla_{S^n}d_{\Sigma_v(\Phi)}(y).
$$
Hence, if $y\in \Sigma_v(\Phi)$ (equivalently, if $(y,v)\in F(\Phi)$) then
$$
{\bf N}(\Phi)(y,v)=\nabla_{S^n}I_v(\Phi)(y).
$$

Therefore, choosing $(\sigma,\tau)=([v],[\exp_v(t\theta)])=q$,
\begin{multline}\label{Nphi}
	\frac{2|Jac\,J(\Phi)(
\sigma,\tau)_{|S_{(\sigma,\tau)}}(x)| }{\sqrt{4-|{\bf N}(\Phi)(y,v)-{\bf N}(\Phi)(y,u)|^2}}\\
=	\frac{2|Jac\,J(\Phi)(q)_{|S_{q}}(x)| }{\sqrt{4-|\nabla_{S^n}I_v(\Phi)(y)-\nabla_{S^n}I_{\exp_v(t\theta)}(\Phi)(y)|^2}}
\end{multline}
with $y= J(\Phi)(q)(x)$ and $t>0$ extends smoothly to  $\mathcal B$.

Now define $$h(\Phi)(v,\theta,t,y)=\nabla_{S^n}I_v(\Phi)(y)-\nabla_{S^n}I_{\exp_v(t\theta)}(\Phi)(y)$$ for $(v,\theta,t,y)\in T_1S^n\times \mathbb{R} \times S^n$. Notice that $$h(\Phi)(v,\theta,0,y)=0,$$
hence there exists  a smooth function $\lambda(\Phi)(v,\theta,t,y)$ such that
$$
h(\Phi)(v,\theta,t,y)=t \lambda(\Phi)(v,\theta,t,y).
$$

Therefore, setting $(\sigma,\tau)=([v],[\exp_v(t\theta)])=q$, for sufficiently small $t>0$ 
\begin{eqnarray}\label{lambdaphi}
	\frac{\eta(d(u,v))}{|{\bf N}(\Phi)(y,u)-{\bf N}(\Phi)(y,v)|}=\frac{t}{|h(\Phi)(v,\theta,t,y)|}=\frac{1}{|\lambda(\Phi)(v,\theta,t,y)|}
\end{eqnarray}
with $y= J(\Phi)(q)(x)$ extends smoothly to  $\mathcal B$ provided
$$
\lambda(\Phi)(v,\theta,0,y)\neq 0
$$
for $x\in S_q$, $q=([(v,\theta)],0)$.

We will compute $\lambda(0)(v,\theta,t,y)$. Since we have  $I_v(0)(y)=\langle y,v\rangle$, $\nabla_{S^n}I_v(0)(y)= v-\langle v,y\rangle y$ and $\exp_v(t\theta)=(\cos t) v + (\sin t) \theta$,  we obtain
\begin{align*}
h(0)(v,\theta,t,y)& =v-\langle v,y\rangle y\\
&\quad -\big((\cos t) v + (\sin t) \theta-\langle (\cos t) v + (\sin t) \theta,y\rangle y\big)\\
&= (1-\cos t)v-(\sin t)\theta  -\langle  (1-\cos t)v-(\sin t)\theta ,y\rangle y.
\end{align*}
Therefore
$
\lambda(0)(v,\theta,0,y)=-\theta +\langle \theta,y\rangle y.
$
If $y=J(0)(q)(x)=x\in S_q$ with $q=([(v,\theta)],0)$, then $\langle y,v\rangle=\langle y, \theta\rangle =0$ so
$$
\lambda(0)(v,\theta,0,y)=-\theta.
$$
Then we can assume
$
|\lambda(\Phi)(v,\theta,0,y)|>\frac12
$
for $y= J(\Phi)(q)(x)$ and $x\in S_q$. This proves the existence of the smooth extension. The statement about closeness
in $C^l$ norms follows from the proof.

\end{proof}

{Together with Proposition \ref{proppropertiesK}, the next proposition identifies the operators $\mathcal{F}(\rho,\Phi)\circ \mathcal{F}^{*}(\rho,\Phi)$ as integral operators associated to kernels $k(\rho,\Phi)$ like those considered in Section \ref{integral.operator}.}

\begin{prop}\label{map.is.K}
For every $f\in C^{\infty}({\mathbb{RP}}^n)$, we have	
\begin{equation*}
		(\mathcal{F}(\rho,\Phi)\circ \mathcal{F}^{*}(\rho,\Phi))(f)(\sigma) =\int_{{\mathbb{RP}}^n}K(\rho,\Phi)(\sigma,\tau)f(\tau)dV_{can}(\tau).
	\end{equation*}
	\end{prop}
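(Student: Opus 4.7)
The plan is to unfold the composition using the definition of $\mathcal F(\rho,\Phi)$ together with the explicit formula for $\mathcal F^{*}(\rho,\Phi)$ from Proposition \ref{propdualfunk}, then to interchange the order of integration by means of the coarea formula for Riemannian submersions, and finally to identify the resulting kernel with the one defined in \eqref{eqkernel}.

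To begin, I would substitute to obtain
\begin{equation*}
(\mathcal{F}(\rho,\Phi)\circ \mathcal{F}^{*}(\rho,\Phi))(f)(\sigma) = \int_{\Sigma_\sigma(\Phi)} e^{2(n-1)\rho(y)}\int_{\Sigma^{*}_y(\Phi)} f(\tau)\, U(\Phi)(y,\tau)\, dA_{can}(\tau)\, dA_{can}(y).
\end{equation*}
This iterated integral sits naturally on the $2(n-1)$-dimensional submanifold
\begin{equation*}
I_\sigma = \pi_1^{-1}(\Sigma_\sigma(\Phi)) \subset [F(\Phi)],
\end{equation*}
seen via the fibration $\pi_1|_{I_\sigma}:I_\sigma \to \Sigma_\sigma(\Phi)$, whose fiber over $y$ is $\Sigma^{*}_y(\Phi)$. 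The complementary restriction $\pi_2|_{I_\sigma}:I_\sigma \to \mathbb{RP}^n$ is also a submersion, with fiber over $\tau \neq \sigma$ equal to $\Sigma_\sigma(\Phi)\cap \Sigma_\tau(\Phi)$ (cf. Proposition \ref{propgelfand2}). Applying the coarea formula to both projections yields
\begin{equation*}
(\mathcal F\circ \mathcal F^{*})(f)(\sigma) = \int_{\mathbb{RP}^n} f(\tau) \int_{\Sigma_\sigma(\Phi)\cap\Sigma_\tau(\Phi)} e^{2(n-1)\rho(y)}\, U(\Phi)(y,\tau)\, \frac{|Jac(\pi_1|_{I_\sigma})|(y,\tau)}{|Jac(\pi_2|_{I_\sigma})|(y,\tau)}\, dA_{can}(y)\, dV_{can}(\tau).
\end{equation*}

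Comparison with \eqref{eqkernel} then reduces the theorem to the pointwise identity
\begin{equation*}
U(\Phi)(y,\tau)\cdot \frac{|Jac(\pi_1|_{I_\sigma})|(y,\tau)}{|Jac(\pi_2|_{I_\sigma})|(y,\tau)} = \frac{1}{\sqrt{1 - \langle \mathbf N(\Phi)(y,v),\mathbf N(\Phi)(y,u)\rangle^2}}, \qquad \sigma=[v],\ \tau=[u].
\end{equation*}
I would verify this by choosing, at a point $(y,\tau)\in I_\sigma$, an orthonormal frame adapted to the splitting of $T_{(y,\tau)}I_\sigma$ into the two fiber directions and a one-dimensional complement, just as in the analysis of $V_1$ and $V_2$ in the proof of Proposition \ref{propdualfunk}. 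The factor $U(\Phi)(y,\tau)$ encodes the ratio of the unrestricted Jacobians $|Jac(\pi_2)|/|Jac(\pi_1)|$ on $[F(\Phi)]$; restricting both projections to the codimension-one submanifold $I_\sigma$ multiplies this ratio by $1/\sin\theta$, where $\theta$ is the angle between the two unit normals $\mathbf N(\Phi)(y,v)$ and $\mathbf N(\Phi)(y,u)$, i.e. the transverse angle at which $\Sigma_\sigma(\Phi)$ meets $\Sigma_\tau(\Phi)$ along $\Sigma_\sigma(\Phi)\cap\Sigma_\tau(\Phi)$. Since $\sin\theta = \sqrt{1-\langle \mathbf N(\Phi)(y,v),\mathbf N(\Phi)(y,u)\rangle^2}$, this produces exactly the claimed factor.

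The principal obstacle is the linear-algebra step in the previous paragraph: carefully tracking how the tangent space splittings on $[F(\Phi)]$ restrict to $I_\sigma$, and checking that the correction factor is precisely the sine of the transverse intersection angle. The Fubini-coarea interchange itself is routine once the two fibration structures on $I_\sigma$ are in place.
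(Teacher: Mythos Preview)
Your approach is essentially the paper's: your $I_\sigma$ is the paper's $[B_\sigma(\Phi)]$, and the paper carries out exactly the adapted-frame Jacobian computation you outline, obtaining $U(\Phi)\cdot|Jac(Q_1)|/|Jac(Q_2)|=\bigl(1-\langle\mathbf N(\Phi)(y,v),\mathbf N(\Phi)(y,u)\rangle^2\bigr)^{-1/2}$ via the explicit formula $|Jac(Q_1)|/|Jac(Q_2)|=|\langle W_1,K\rangle|$ for a suitable vector $W_1$ in the one-dimensional complement. One point you glide over and the paper handles carefully: $\pi_2|_{I_\sigma}$ is \emph{not} a submersion at $\tau=\sigma$ (the fiber there is $\Sigma_\sigma(\Phi)$, of dimension $n-1$ rather than $n-2$), so the second coarea step needs a limiting argument---excise $Q_2^{-1}(B_\delta(\sigma))$, apply coarea on the complement, and let $\delta\to 0$ using that $K(\rho,\Phi)(\sigma,\cdot)$ is integrable.
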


\begin{proof}

By \eqref{eqdeffunk} and \eqref{eqdualfunk},
\begin{multline*}
	\mathcal{F}(\rho,\Phi)[\mathcal{F}^{*}(\rho,\Phi)(f)](\sigma) = \int_{\Sigma_\sigma(\Phi)}e^{(n-1)\rho(y)}\mathcal{F}^{*}(\rho,\Phi)(f)(y) dA_{can}(y) \\
	= \int_{\Sigma_\sigma(\Phi)}\left(\int_{\Sigma^{*}_y(\Phi)}e^{2(n-1)\rho(y)}f(\tau)U(\Phi)(y,\tau) dA_{can}(\tau)\right) dA_{can}(y)
\end{multline*}
where  $U(\Phi)\in C^{\infty}([F(\Phi)])$ is as in Proposition \ref{propdualfunk}. \\

\indent Fix a point $\sigma\in {\mathbb{RP}}^n$. Consider the set  $[B_\sigma(\Phi)] \subset [F(\Phi)]$ defined by
\begin{equation} \label{eqB1}
		[B_\sigma(\Phi)]= \{(y,\tau)\in S^n\times \mathbb{RP}^n:\, \tau\in \Sigma^*_{y}(\Phi), y\in \Sigma_\sigma(\Phi)\}.
\end{equation}
\noindent Equivalently, 
\begin{equation*}
		[B_\sigma(\Phi)] = \{(y,\tau)\in S^{n}\times \mathbb{RP}^n:\, y\in \Sigma_{\tau}(\Phi)\cap \Sigma_{\sigma}(\Phi)\}.
\end{equation*}

\indent Let $I_v(\Phi)$ {denote the smooth function on $S^n$ that was defined in \eqref{Iv.map}} with $\sigma=[v]$.  Then
\begin{equation*}
	[B_\sigma(\Phi)] =\{(y,\tau)\in [F(\Phi)]:\, I_v(\Phi)(y)=0\}.
\end{equation*}
For $(y,\tau)\in [B_\sigma(\Phi)] $,  write $y=\Sigma_v(\Phi)(x)$.  Since $\pi_1:[F(\Phi)]\rightarrow S^n$ is a submersion,
there exists $X\in T_{(y,\tau)}[F(\Phi)]$ with $(d\pi_1)_{(y,\tau)}(X)=N_v(\Phi)(x)$. Then $d(I_v(\Phi)\circ \pi_1)_{(y,\tau)}(X)=1.$
Hence $0$ is a regular value of the map $(y,\tau)\in [F(\Phi)] \mapsto I_v(\Phi)(y) \in \mathbb{R}$. Thus $[B_\sigma(\Phi)]$ is a smooth embedded hypersurface of $[F(\Phi)]$, hence  of dimension $2n-2$.

From \eqref{eqB1}, we see that the projection  $Q_1=(\pi_1)_{[B_\sigma(\Phi)]}:[B_\sigma(\Phi)] \rightarrow \Sigma_\sigma(\Phi)$ of  $[B_\sigma(\Phi)]$ onto the first factor  is a  surjective submersion  since $Q_1(y,\sigma)=y$ for $y\in \Sigma_\sigma(\Phi)$.  %
By the co-area formula
\begin{multline*} 
	\int_{\Sigma_v(\Phi)}\int_{\Sigma^*_{y}(\Phi)}e^{2(n-1)\rho(y)}f(\tau)U(\Phi)(y,\tau)dA_{can}(\tau)dV_{can}(y) =  
	\\ = \int_{[B_\sigma(\Phi)]} e^{2(n-1)\rho(y)}f(\tau)U(\Phi)(y,\tau)|Jac(Q_1)|(\sigma,y,\tau)dV_{can}(y,\tau),
\end{multline*}
where we write the Jacobian of $Q_1$ with an extra entry so to remind us that this map depends on $\sigma$ (which is fixed in the argument). \\

Let $(y,\tau)\in [B_\sigma(\Phi)]$, $y=\Sigma_v(\Phi)(x)$.  Then
\begin{align*}
T_{(y,\tau)}[B_\sigma(\Phi)] & = \{X\in T_{(y,\tau)}[F(\Phi)]:dI_v(\Phi)d\pi_1X=0\}\\
&=\{X\in T_{(y,\tau)}[F(\Phi)]:d\pi_1X\in T_y\Sigma_v(\Phi)\}.
\end{align*}
As in Section \ref{section.funkdual},
it admits an orthogonal decomposition
\begin{equation*}
	\big((T_{y}\Sigma_\tau(\Phi) \cap T_{y}\Sigma_\sigma(\Phi)) \times \{0\}\big) \oplus \big( \{0\}\times T_\tau\Sigma^*_y(\Phi)\big)\oplus span\{W(y,\tau)\}
\end{equation*}
for $\tau\neq \sigma$, with $W(y,\tau)=(W_1(y,\tau),W_2(y,\tau))$.

Let $\{e_i\}_i$ be an orthonormal basis of $T_{y}\Sigma_\tau(\Phi)$ with  $\langle e_i,{\bf N}(\Phi)(y,v)\rangle =0$ for $i \geq 2$.
Let $\{f_j\}_j$ be an orthonormal basis of $T_\tau\Sigma^*_y(\Phi)$. We write
$$
W(y,\tau)=V(y,\tau)  -\sum_ia_ie_i -\sum b_jf_j.
$$

Recall that 
$$
V_1(y,\tau)= Z \cos\Phi \eta(\Phi)(\tilde x,w,{\bf N}^*(\Phi)(y,w)) N(\Phi)(\tilde{x},w)
$$
and
$$
V_2(y,\tau)=[{\bf N}^*(\Phi)(y,w)],
$$
where $\tau=[w]$, $y=\Sigma_w(\Phi)(\tilde x)$, $\Phi=\Phi(\tilde x,w),$ and $Z=(\cos(\Phi)^2+|\nabla^{\Sigma_w}\Phi_w|^2(\tilde x))^{-\frac12}$.

Then $W(y,\tau)=V(y,\tau)-a_1e_1$ by orthogonality, and
$$
e_1 = \frac{{\bf N}(\Phi)(y,v)-\langle {\bf N}(\Phi)(y,v),{\bf N}(\Phi)(y,w)\rangle {\bf N}(\Phi)(y,w)}
{|{\bf N}(\Phi)(y,v)-\langle {\bf N}(\Phi)(y,v),{\bf N}(\Phi)(y,w)\rangle {\bf N}(\Phi)(y,w)|}.
$$
Hence $W_2(y,\tau)=V_2(y,\tau)$.

We need $\langle W_1(y,\tau), {\bf N}(\Phi)(y,v)\rangle =0$, hence
\begin{equation*}
\langle V_1(y,\tau)-a_1e_1, {\bf N}(\Phi)(y,v)\rangle =0.
\end{equation*}
Therefore
\begin{equation*}
a_1=\lambda\frac{Z \cos\Phi \eta(\Phi)(\tilde x,w,{\bf N}^*(\Phi)(y,w)) \langle {\bf N}(\Phi)(y,w), {\bf N}(\Phi)(y,v)\rangle} {1-\langle {\bf N}(\Phi)(y,v),{\bf N}(\Phi)(y,w)\rangle^2},
\end{equation*}
where $\lambda =|{\bf N}(\Phi)(y,v)-\langle {\bf N}(\Phi)(y,v),{\bf N}(\Phi)(y,w)\rangle {\bf N}(\Phi)(y,w)|$. Hence
\begin{equation*}
a_1=\frac{Z \cos\Phi \eta(\Phi)(\tilde x,w,{\bf N}^*(\Phi)(y,w)) \langle {\bf N}(\Phi)(y,w), {\bf N}(\Phi)(y,v)\rangle}
 {\sqrt{1-\langle {\bf N}(\Phi)(y,v),{\bf N}(\Phi)(y,w)\rangle^2}}.
\end{equation*}

To compute the Jacobian of $Q_1$, 
we choose the orthonormal basis
\begin{equation*}
\{e_i\}_{i\geq 2},   K=\frac{{\bf N}(\Phi)(y,w)-\langle {\bf N}(\Phi)(y,w),{\bf N}(\Phi)(y,v)\rangle {\bf N}(\Phi)(y,v)}{\sqrt{1-\langle {\bf N}(\Phi)(y,w),{\bf N}(\Phi)(y,v)\rangle^2 }}
\end{equation*}
of $T_y\Sigma_v(\Phi)$. Then
\begin{equation*}
|Jac(Q_1)|(\sigma,y,\tau)=\frac{|\langle W_1,K\rangle |}{|W|}.
\end{equation*}

We want to compute also  the Jacobian of $Q_2=(\pi_2)_{[B_\sigma(\Phi)]}:[B_\sigma(\Phi)] \rightarrow \mathbb{RP}^n$.
We decompose $T_\tau\mathbb{RP}^n=T_\tau\Sigma^*_y(\Phi) \oplus span\{[{\bf N}^*(\Phi)(y,w)]\}$. Hence
\begin{equation*}
Jac(Q_2)(\sigma,y,\tau)=\left| \langle \frac{W_2}{|W|}, [{\bf N}^*(\Phi)(y,w)] \rangle \right| =\frac{1}{|W|}.
\end{equation*}

Hence
\begin{align*}
\frac{|Jac(Q_1)|(\sigma,y,\tau)}{|Jac(Q_2)|(\sigma,y,\tau)} & =|\langle W_1,K\rangle|\\
&=|\langle V_1-a_1e_1,K\rangle|\\
&=\frac{|Z \cos\Phi \eta(\Phi)(\tilde x,w,{\bf N}^*(\Phi)(y,w))|}{\sqrt{1-\langle {\bf N}(\Phi)(y,w),{\bf N}(\Phi)(y,v)\rangle^2 }}.
\end{align*}

Since the integrand is a smooth function, and $Q_2^{-1}(\sigma)=\Sigma_\sigma(\Phi)\times \{\sigma\}$ has dimension $n-1$, we have that
\begin{multline*}
 \int_{[B_\sigma(\Phi)]} e^{2(n-1)\rho(y)}f(\tau)U(\Phi)(y,\tau)|Jac(Q_1)|(\sigma,y,\tau)dV_{can}(y,\tau)  \\
   =\lim_{\delta \rightarrow 0} \int_{[B_\sigma(\Phi)]\setminus Q_2^{-1}(B_\delta(\sigma))} e^{2(n-1)\rho(y)}f(\tau)U(\Phi)(y,\tau) \\
   \cdot |Jac(Q_1)|(\sigma,y,\tau)dV_{can}(y,\tau). 
\end{multline*}

Now by the co-area formula
\begin{align*}
&\int_{[B_\sigma(\Phi)]\setminus Q_2^{-1}(B_\delta(\sigma))} e^{2(n-1)\rho(y)}f(\tau)U(\Phi)(y,\tau) & \\
&\hspace{4cm} \cdot |Jac(Q_1)|(\sigma,y,\tau)dV_{can}(y,\tau) & \\
&\quad =\int_{\mathbb{RP}^n\setminus B_\delta(\sigma)} \big(\int_{\Sigma_\sigma(\Phi)\cap \Sigma_\tau(\Phi)} 
 \frac{e^{2(n-1)\rho(y)}f(\tau)}{\sqrt{1-\langle N(\Phi)(y,w),N(\Phi)(y,v)\rangle^2 }}\\
 &\hspace{8cm} dA_{can}(y)\big)dV_{can}(\tau) & \\
 &\quad =\int_{\mathbb{RP}^n\setminus B_\delta(\sigma)}f(\tau)K(\rho,\Phi)(\sigma,\tau) dV_{can}(\tau). &
\end{align*}

Since $|K(\rho,\Phi)(\sigma,\tau)|\leq C d(\sigma,\tau)^{-1}$,  
$\tau \mapsto f(\tau)K(\rho,\Phi)(\sigma,\tau) $ is integrable and
\begin{multline*}
\lim_{\delta \rightarrow 0}\int_{\mathbb{RP}^n\setminus B_\delta(\sigma)}f(\tau)K(\rho,\Phi)(\sigma,\tau) dV_{can}(\tau) \\
=\int_{\mathbb{RP}^n}f(\tau)K(\rho,\Phi)(\sigma,\tau) dV_{can}(\tau),
\end{multline*}
which proves the proposition.

\end{proof}

\subsection{The operator $\mathcal{F}\circ \mathcal{F}^{*}$ as a pseudo-differential operator}\label{adjoint.pdo.section} 
Consider the kernel $k(\rho,\Phi)$ given by \eqref{eqkernel} and  Proposition \ref{proppropertiesK}. From Proposition \ref{map.is.K} we have that 
$$\mathcal{F}(\rho,\Phi)\circ \mathcal{F}^*(\rho,\Phi)(f)=L(k(\rho,\Phi))(f) \quad\mbox{for all } f\in C^{\infty}(\RP^n),$$
where $L(k)$ is as in Section \ref{integral.operator}. and so we obtain from Theorem \ref{psdo.main.thm}  that the operator
	\begin{equation*}
		\mathcal{F}(\rho,\Phi)\circ \mathcal{F}^*(\rho,\Phi) : C^{\infty}({\mathbb{RP}}^n) \rightarrow C^{\infty}({\mathbb{RP}}^n)
	\end{equation*}
	 is a pseudo-differential operator of order $1-n$.

The standard Funk transform $\mathcal{F}=\mathcal{F}(0,0)$ is such that $$L(k(0,0))=\mathcal{F}\circ \mathcal{F}^*$$ is an invertible elliptic pseudo-differential operator of order $1-n$, see Corollary  \ref{Apcorollary}. Let $\eta_0$ be the constant determined by Theorem \ref{psdo.main.thm} with $k_0=k(0,0)$. 

Assume that $(\rho,\Phi)$ has sufficiently small $C^{3n+3}$ norm. Hence we can suppose 
by Proposition \ref{proppropertiesK} that 
$$||k(\rho,\Phi)-k(0,0)||_{C^{3n}}<\eta_0/2.$$
Therefore $\mathcal{F}(\rho,\Phi)\circ \mathcal{F}^*(\rho,\Phi):H^0(\mathbb{RP}^n) \rightarrow H^{n-1}(\mathbb{RP}^n)$ is an invertible  elliptic  pseudo-differential operator of order $1-n$. Thus with these assumptions  we have proved:
\begin{thm}\label{pdo.elliptic.mainthm}
The  operator
$$\mathcal{F}(\rho,\Phi)\circ \mathcal{F}^*(\rho,\Phi):H^0(\mathbb{RP}^n) \rightarrow H^{n-1}(\mathbb{RP}^n)$$ is an invertible elliptic pseudo-differential operator of order $1-n$.
The operator
\begin{equation*} \label{eqR}
	\mathcal{R}(\rho,\Phi) = \mathcal{F}^{*}(\rho,\Phi)\circ (\mathcal{F}(\rho,\Phi)\circ \mathcal{F}^*(\rho,\Phi))^{-1} : C^{\infty}({\mathbb{RP}}^n) \rightarrow C^{\infty}(S^n)
\end{equation*}
is well-defined, and it is a right-inverse for the Funk transform $\mathcal{F}(\rho,\Phi)$:
\begin{equation*}
	\mathcal{F}(\rho,\Phi)\circ \mathcal{R}(\rho,\Phi) = Id. 
\end{equation*}
\end{thm}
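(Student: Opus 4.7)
The plan is to assemble the pieces that have already been put in place in Sections~\ref{integral.operator} and~\ref{section.funk.2}. First, Proposition~\ref{map.is.K} identifies
\[
	\mathcal{F}(\rho,\Phi)\circ \mathcal{F}^*(\rho,\Phi)(f) = L(k(\rho,\Phi))(f) \quad \text{for all} \quad f\in C^{\infty}(\mathbb{RP}^n),
\]
with kernel $k(\rho,\Phi)\in C^{\infty}(B(\mathbb{RP}^n\times\mathbb{RP}^n))$ provided by Proposition~\ref{proppropertiesK}. The invertible elliptic base case is Corollary~\ref{Apcorollary}: for $(\rho,\Phi)=(0,0)$, the operator $L(k(0,0))=\mathcal{F}\circ\mathcal{F}^*$ is an invertible elliptic pseudo-differential operator of order $1-n$.

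Next I would invoke the perturbative part of Theorem~\ref{psdo.main.thm} with $k_0=k(0,0)$. Let $\eta_0>0$ be the constant produced by that theorem. By the closeness estimate in Proposition~\ref{proppropertiesK}, if $(\rho,\Phi)$ has sufficiently small $C^{3n+3}$ norm, then
\[
	\|k(\rho,\Phi)-k(0,0)\|_{C^{3n}}<\eta_0/2.
\]
Theorem~\ref{psdo.main.thm} then yields that $L(k(\rho,\Phi))$ is elliptic of order $1-n$ and that
\[
	\mathcal{F}(\rho,\Phi)\circ\mathcal{F}^*(\rho,\Phi):H^{0}(\mathbb{RP}^n)\to H^{n-1}(\mathbb{RP}^n)
\]
is invertible with bounds on the inverse. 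This proves the first assertion.

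For the second assertion, I would use elliptic regularity for pseudo-differential operators: since the operator is elliptic and its bounded extension is invertible at the $H^{0}\to H^{n-1}$ level, the invertibility persists in all Sobolev regularity (this is the content of the estimates \eqref{bound.pso.inverse.q} in Theorem~\ref{psdo.main.thm}, which one applies for each $q\geq 0$). Hence $(\mathcal{F}(\rho,\Phi)\circ\mathcal{F}^*(\rho,\Phi))^{-1}$ preserves $C^{\infty}(\mathbb{RP}^n)=\bigcap_{q\geq 0}H^{q}(\mathbb{RP}^n)$. Composing with $\mathcal{F}^*(\rho,\Phi):C^{\infty}(\mathbb{RP}^n)\to C^{\infty}(S^n)$ (which is smooth by Proposition~\ref{propdualfunk}), one obtains the desired smooth right-inverse
\[
	\mathcal{R}(\rho,\Phi):C^{\infty}(\mathbb{RP}^n)\to C^{\infty}(S^n).
\]
Finally, the identity $\mathcal{F}(\rho,\Phi)\circ\mathcal{R}(\rho,\Phi)=Id$ follows immediately from the definition of $\mathcal{R}(\rho,\Phi)$ and the invertibility of $\mathcal{F}(\rho,\Phi)\circ\mathcal{F}^*(\rho,\Phi)$.

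The only real subtlety, which is already absorbed into the statement of Theorem~\ref{psdo.main.thm}, is quantifying how much regularity of $(\rho,\Phi)$ is needed so that the perturbation lemma for the kernel applies; here the $C^{3n+3}$ smallness is forced by the $C^{3n}$ closeness in Theorem~\ref{psdo.main.thm} together with the loss of three derivatives in Proposition~\ref{proppropertiesK}. Beyond this bookkeeping, the argument is a direct assembly of the preceding propositions, so I do not expect a serious obstacle.
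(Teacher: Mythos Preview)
Your proposal is correct and follows essentially the same route as the paper: the argument in Section~\ref{adjoint.pdo.section} preceding the theorem statement identifies $\mathcal{F}(\rho,\Phi)\circ\mathcal{F}^*(\rho,\Phi)$ with $L(k(\rho,\Phi))$ via Proposition~\ref{map.is.K}, invokes Corollary~\ref{Apcorollary} for the base case, and then applies the perturbative part of Theorem~\ref{psdo.main.thm} using the $C^{3n}$ closeness of $k(\rho,\Phi)$ to $k(0,0)$ guaranteed by Proposition~\ref{proppropertiesK} under a $C^{3n+3}$ smallness assumption. Your added remarks on why the inverse preserves $C^\infty(\mathbb{RP}^n)$ and why $\mathcal{R}(\rho,\Phi)$ is therefore well-defined on smooth functions are a welcome clarification of a step the paper leaves implicit.
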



\section{Checking tameness}\label{sectiontame}   This section is based on \cite{Ham} and we refer the reader to that paper for the appropriate definitions.

\subsection{Tame estimates for $L(k)$} We start deriving the estimates for the integral operators $L(k)$ defined in Section \ref{integral.operator}. 

\indent Let $V(\RP^n)$ be the space of Killing vector fields  of $(\RP^n,can)$. Given $X\in V(\RP^n)$ and $k\in C^{\infty}(B(\RP^n\times\RP^n))$ there exists $X^B(k)\in  C^{\infty}(B(\RP^n\times\RP^n))$ such that for all $(\sigma,\tau)\in\RP^n\times\RP^n\setminus \Delta $
\begin{equation}\label{commute.formula}
X^B(k)(\sigma,\tau)=\langle X(\tau),\nabla_{\tau} k(\sigma,\tau)\rangle+\langle X(\sigma),\nabla_{\sigma} k(\sigma,\tau)\rangle.
\end{equation}
Indeed, if $X$ is generated by isometries $\{R_s\}_{s}\in PSO(n+1)$ with $R_0=Id$ we define $\phi_s:B(\RP^n\times\RP^n)\rightarrow B(\RP^n\times\RP^n)
$ so that 
\begin{equation*}
	\phi_s(\sigma,\tau)=(R_s(\sigma),R_s(\tau)) \quad \text{if} \quad (\sigma,\tau)\in \RP^n\times\RP^n\setminus\Delta
\end{equation*} and
\begin{equation*}
	\phi_s[(\sigma,\theta),t]=[(R_s(\sigma),{dR_s}_{|\sigma}(\theta)),t)] \quad \text{if} \quad [(\sigma,\theta),t]\in\Omega.
\end{equation*}
The maps $\{\phi_s\}_{s}$ are well-defined smooth diffeomorphisms and hence generate a smooth vector field $X^B$ of $B(\RP^n\times\RP^n)$ so that the derivative $X^B(k)\in  C^{\infty}(B(\RP^n\times\RP^n))$ satisfies \eqref{commute.formula}.

By differentiating, we have that
\begin{align*}
X(L(k)(f))(\sigma)& =\frac{d}{ds}_{|s=0}L(k)(f)(R_s\sigma)\\
& =\frac{d}{ds}_{|s=0}\int_{\mathbb{RP}^n}K(\rho,\Phi)(R_s\sigma,\tau)f(\tau)dV_{can}(\tau)\\
& =\frac{d}{ds}_{|s=0}\int_{\mathbb{RP}^n}\frac{k(\rho,\Phi)(R_s\sigma,R_s\tau)}{\eta(d(\sigma,\tau))}f(R_s\tau)dV_{can}(\tau)\\
& =\int_{\mathbb{RP}^n}\frac{X^B(k)(\sigma,\tau)f(\tau)+k(\sigma,\tau)X(f)(\tau)}{\eta(d(\sigma,\tau))}dV_{can}(\tau).
\end{align*}
Thus
\begin{equation}\label{commute.formula2}
X(L(k)(f))=L(X^B(k))(f)+L(k)(X(f)).
\end{equation}

We are going to use that
\begin{equation}\label{derivative.formula}
||Df||_{q}\leq d_{n,q}\sum_{l=1}^ N||X_l(f)||_q
\end{equation}
for all integers $q\geq 0$, where $\{X_1,\ldots,X_N\}$ is a chosen basis of $V(\RP^n)$ and $d_{n,q}>0$.
	 
The proofs of   Section II.2.2 \cite{Ham} give that for all $f\in  C^{\infty}(\RP^n)$, $k\in C^{\infty}(B(\RP^n\times\RP^n))$, and $p,q,j,l\in \N_0$ with $p\geq j, q\geq l$, we have
\begin{equation}\label{interpolation}
||k||_{C^p}||f||_q\leq c_{n,p,q,j,l}(||k||_{C^{p-j}}||f||_{q+j}+||k||_{C^{p+l}}||f||_{q-l})
\end{equation}
where $c_{n,p,q,j,l}$ is a positive constant depending only on $n,p,q,j,l\in \N_0$.

\begin{prop}\label{bounds.pdo.higher}
For all  $k\in C^{\infty}(B(\RP^n\times\RP^n))$, $f\in  C^{\infty}(\RP^n)$, and $q\in\N_0$ we have
$$||L(k)(f)||_{n-1+q}\leq c_{n,q}(||k||_{C^{3n}}||f||_q+||k||_{C^{3n+q}}||f||_0),$$
where $c_{n,q}$ is a positive constant depending only on $n$ and $q$.
\end{prop}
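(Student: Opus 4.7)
The plan is to prove this by induction on $q$, combining the commutation identity \eqref{commute.formula2}, the base case estimate \eqref{bound.pso}, the Killing-vector characterization of derivatives \eqref{derivative.formula}, and the interpolation inequality \eqref{interpolation}.

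The base case $q=0$ is exactly \eqref{bound.pso} (with the convention that $\|f\|_0 = \|f\|_0$ on both sides), so nothing more is needed there. For the inductive step, suppose the stated estimate holds for $q-1$. From \eqref{derivative.formula} and the relation $\|g\|_{s+1} \lesssim \|g\|_s + \|Dg\|_s$ between the Sobolev norm and the norm of the gradient, one obtains
$$\|L(k)(f)\|_{n-1+q} \leq C_n\Bigl(\|L(k)(f)\|_{n-1+q-1} + \sum_{l=1}^{N}\|X_l(L(k)(f))\|_{n-1+q-1}\Bigr).$$
Now substitute the commutation formula \eqref{commute.formula2}:
$$X_l(L(k)(f)) = L(X_l^B(k))(f) + L(k)(X_l(f)),$$
so each term $\|X_l(L(k)(f))\|_{n-1+q-1}$ is bounded by $\|L(X_l^B(k))(f)\|_{n-1+q-1} + \|L(k)(X_l(f))\|_{n-1+q-1}$.

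At this point I would apply the inductive hypothesis to each of the three resulting pieces. This yields a finite linear combination of terms of the form $\|k\|_{C^{3n+a}}\|f\|_{b}$ with $a+b \leq q$; in particular one has the endpoints $\|k\|_{C^{3n}}\|f\|_q$ and $\|k\|_{C^{3n+q}}\|f\|_0$ together with mixed intermediate products such as $\|k\|_{C^{3n+1}}\|f\|_{q-1}$ and $\|k\|_{C^{3n+q-1}}\|f\|_1$. The factor $\|X_l^B(k)\|_{C^r} \lesssim \|k\|_{C^{r+1}}$ and $\|X_l(f)\|_s \lesssim \|f\|_{s+1}$ account for the extra derivative in each slot.

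To close the induction I would then invoke the interpolation inequality \eqref{interpolation}: choosing $p = 3n+a$, $q = b$, $j = a$, $l = q-a$, it gives directly
$$\|k\|_{C^{3n+a}}\|f\|_{b} \;\leq\; c\bigl(\|k\|_{C^{3n}}\|f\|_q + \|k\|_{C^{3n+q}}\|f\|_0\bigr)$$
for every $0 \leq a \leq q$, which is precisely what is needed to collapse all intermediate terms onto the two endpoint products on the right-hand side of the target estimate. Combining the constants produces a constant $c_{n,q}$ depending only on $n$ and $q$, completing the inductive step.

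The conceptually nontrivial part is the passage from \eqref{bound.pso} (a single estimate of product type, nontame in appearance) to the tame form: the commutator identity \eqref{commute.formula2} plays the role of a Leibniz rule that lets one distribute derivatives between the kernel $k$ and the function $f$, and the interpolation inequality \eqref{interpolation} (the logarithmic convexity of Sobolev/H\"older norms on a compact manifold) is what turns all the mixed products into the two endpoint products. The remainder is bookkeeping of constants; no genuine analytical difficulty beyond these two ingredients is expected.
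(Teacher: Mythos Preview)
Your proposal is correct and follows essentially the same induction as the paper: commute derivatives through $L(k)$ via \eqref{commute.formula2}, apply the inductive hypothesis to $L(X^B(k))(f)$ and $L(k)(X(f))$, then collapse the mixed products with \eqref{interpolation} and conclude via \eqref{derivative.formula}. One cosmetic point: in your interpolation step the symbol $q$ is overloaded (induction index vs.\ the parameter in \eqref{interpolation}), and the choice $l=q-a$ only applies when $a+b=q$; the remaining terms with $a+b<q$ are handled trivially by monotonicity of the norms.
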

\begin{proof}

We argue by induction on $q\in\N_0$. The case $q=0$ corresponds to inequality \eqref{bound.pso}.

For each $X\in V(\RP^n)$, using \eqref{commute.formula2} and the induction hypothesis (applied to $X^B(k)$ and $X(f)$) we obtain
\begin{align*}
||X(L(k)(f))||_{n-1+q} & \leq  ||L(X^B(k))(f))||_{n-1+q} +  ||L(k)(X(f))||_{n-1+q}\\
& \leq c_{n,q} ( ||X^B(k)||_{C^{3n}}||f||_q+||X^B(k)||_{C^{3n+q}}||f||_0 \\
& \quad \quad \quad +||k||_{C^{3n}}||X(f)||_{q}+||k||_{C^{3n+q}}||X(f)||_0)\\
& \leq c ( ||k||_{C^{3n+1}}||f||_q+||k||_{C^{3n+q+1}}||f||_0 \\
& \quad \quad \quad +||k||_{C^{3n}}||f||_{q+1}+||k||_{C^{3n+q}}||f||_1).
\end{align*}
By the interpolation inequalities \eqref{interpolation}
$$||k||_{C^{3n+1}}||f||_q+ ||k||_{C^{3n+q}}||f||_1\leq c ( ||k||_{C^{3n}}||f||_{q+1}+||k||_{C^{3n+q+1}}||f||_0).$$
Hence
$$||X(L(k)(f))||_{n-1+q}\leq c(  ||k||_{C^{3n}}||f||_{q+1}+||k||_{C^{3n+q+1}}||f||_0).$$
The desired result follows now from \eqref{derivative.formula}.

\end{proof}

\begin{prop}\label{inverse.tame}
Suppose $k_0\in C^{\infty}(B(\RP^n\times\RP^n))$ is such that $L(k_0)$ is elliptic and $L(k_0):H^0(\mathbb{RP}^n)\rightarrow H^{n-1}(\mathbb{RP}^n)$ is invertible.

There exists $\eta_1>0$ such that for all functions $f\in C^{\infty}(\RP^n)$ and $k\in C^{\infty}(B(\RP^n\times\RP^n))$ with $||k-k_0||_{C^{3n+1}}<\eta_1$ we have
$$||f||_q\leq c_{q}(k_0)(||L(k)(f)||_{n-1+q}+||k||_{C^{3n+q}}||L(k)(f)||_{n-1}),$$
where $c_q(k_0)$ is a positive constant depending only on $q$ and $k_0$.
\end{prop}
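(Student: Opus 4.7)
We argue by induction on $q\geq 0$. The base case $q=0$ is inequality \eqref{bound.pso.inverse} from Theorem \ref{psdo.main.thm}: choosing $\eta_1\leq \eta_0$ guarantees that $L(k)$ is invertible with norm bound $||f||_0\leq c(k_0)||L(k)(f)||_{n-1}$ depending only on $k_0$.

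For the inductive step, the plan is to pass Killing-field derivatives through $L(k)$ using the commutation formula \eqref{commute.formula2}, namely $L(k)(X_l f)=X_l(L(k)(f))-L(X_l^B k)(f)$, where $\{X_1,\dots,X_N\}$ is a basis of Killing vector fields on $\RP^n$. From \eqref{derivative.formula} and the standard equivalence between Sobolev norms and derivatives, $||f||_q\leq c(||f||_{q-1}+\sum_l ||X_l f||_{q-1})$, so it suffices to estimate $||f||_{q-1}$ and $||X_l f||_{q-1}$ via the inductive hypothesis. Applied to $f$ directly, the hypothesis controls $||f||_{q-1}$ in the required form after the monotonicity bounds $||L(k)(f)||_{n-2+q}\leq ||L(k)(f)||_{n-1+q}$ and $||k||_{C^{3n+q-1}}\leq ||k||_{C^{3n+q}}$. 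Applied to $X_l f$, it demands estimates for $||L(k)(X_l f)||_{n-2+q}$ and $||L(k)(X_l f)||_{n-1}$; via the commutator identity, the piece $X_l(L(k)(f))$ is controlled by $||L(k)(f)||_{n-1+q}$ and $||L(k)(f)||_n$ respectively (losing one derivative), while the error $L(X_l^B k)(f)$ is controlled using Proposition \ref{bounds.pdo.higher} applied to the kernel $X_l^B k$, together with the straightforward bound $||X_l^B k||_{C^p}\leq c||k||_{C^{p+1}}$.

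Summing these bounds gives the desired tame terms $||L(k)(f)||_{n-1+q}$ and $||k||_{C^{3n+q}}||L(k)(f)||_{n-1}$, together with lower-order residues $||k||_{C^{3n+1}}||f||_{q-1}$ and $||k||_{C^{3n+q}}||f||_0$, plus a single non-tame cross-term $||k||_{C^{3n+q-1}}||L(k)(f)||_n$ coming from the factor $||k||_{C^{3n+q-1}}$ multiplying $||L(k)(X_l f)||_{n-1}$. The key step, and the main obstacle, is to eliminate this cross-term by means of the multiplicative interpolation inequality \eqref{interpolation}. Choosing there $p=3n+q-1$, $q_{\mathrm{there}}=n$, $j=q-1$ and $l=1$ (which requires $n\geq 1$) produces
\begin{equation*}
||k||_{C^{3n+q-1}}||L(k)(f)||_n \leq c\bigl( ||k||_{C^{3n}}||L(k)(f)||_{n-1+q} + ||k||_{C^{3n+q}}||L(k)(f)||_{n-1} \bigr),
\end{equation*}
and since $||k||_{C^{3n}}\leq c(k_0)$ by closeness to $k_0$, the right-hand side has the required tame shape. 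The residues $||k||_{C^{3n+1}}||f||_{q-1}$ and $||k||_{C^{3n+q}}||f||_0$ are absorbed using $||k||_{C^{3n+1}}\leq c(k_0)$ together with the inductive hypothesis at level $q-1$ and the base case $q=0$, respectively. Collecting everything yields the stated inequality with a constant $c_q(k_0)$ depending only on $q$ and $k_0$.
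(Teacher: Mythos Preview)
Your proof is correct and follows essentially the same approach as the paper: induction on $q$, the Killing-field commutation identity \eqref{commute.formula2}, Proposition \ref{bounds.pdo.higher} for the $L(X_l^B k)(f)$ error, and the interpolation inequality \eqref{interpolation} to kill the non-tame cross-term. The only cosmetic difference is index-shifting: the paper applies the inductive hypothesis at level $q$ to $X(f)$ to deduce the estimate at level $q+1$, whereas you apply it at level $q-1$ to $X_l f$ to reach level $q$; the resulting cross-terms ($||k||_{C^{3n+q}}||L(k)(f)||_n$ versus your $||k||_{C^{3n+q-1}}||L(k)(f)||_n$) are handled by the same interpolation with correspondingly shifted parameters.
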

	 
\begin{proof}
We argue  by induction.  The case $q=0$ was proven in \eqref{bound.pso.inverse}. 

For each $X\in V(\RP^n)$, the induction hypothesis (applied to  $X(f)$) and   \eqref{commute.formula2} give
\begin{align*}
||X(f)||_q & \leq c_q(k_0)(||L(k)(X(f))||_{n-1+q} +||k||_{C^{3n+q}}||L(k)(X(f))||_{n-1})\\
&\leq c(||L(k)(f)||_{n+q}+||k||_{C^{3n+q}}||L(k)(f)||_{n})\\
&\quad \quad +c_q(k_0)(||L(X^B(k))(f)||_{n-1+q} \\
& \quad \quad \quad  \quad \quad \quad  +||k||_{C^{3n+q}}||L(X^B(k))(f)||_{n-1}).
\end{align*}

By assumption, $||k||_{C^{3n+1}}$ is bounded by a constant depending only on  $k_0$. Thus, from Proposition \ref{bounds.pdo.higher}, the induction hypothesis, and \eqref{bound.pso.inverse}, we can estimate
\begin{align*}
||L(X^B(k))(f)||_{n-1+q} & \leq c_{n,q}(||X^B(k)||_{C^{3n}}||f||_q+||X^B(k)||_{C^{3n+q}}||f||_0)\\
&\leq c(||k||_{C^{3n+1}}||f||_q+||k||_{C^{3n+q+1}}||f||_0)\\
&\leq c ( ||f||_{q}+||k||_{C^{3n+q+1}}||f||_0)\\
&\leq c( ||L(k)(f)||_{n-1+q}+||k||_{C^{3n+q+1}}||L(k)(f)||_{n-1})
\end{align*}
and, likewise,
$||L(X^B(k))(f)||_{n-1}
\leq c ||L(k)(f)||_{n-1}$.

Hence
\begin{multline*}
||X(f)||_q\leq c ( ||L(k)(f)||_{n+q}+||k||_{C^{3n+q}}||L(k)(f)||_{n}\\
+||k||_{C^{3n+q+1}}||L(k)(f)||_{n-1}).
\end{multline*}
By the interpolation inequality and the uniform bound on $||k||_{C^{3n}}$,
 $$||k||_{C^{3n+q}}||L(k)(f)||_{n}\leq c ( ||L(k)(f)||_{n+q}+||k||_{C^{3n+q+1}}||L(k)(f)||_{n-1}).$$ Therefore,
$$||X(f)||_q \leq c (||L(k)(f)||_{n+q}+||k||_{C^{3n+q+1}}||L(k)(f)||_{n-1}).$$
The desired result follows at once from \eqref{derivative.formula}.

	 \end{proof}
	 
	 \subsection{Tame estimates for $\Lambda$ and its right-inverse}

The spaces of smooth functions and smooth one-forms, or more generally smooth sections of a vector  bundle,  on  compact manifolds 
are tame Fr\'echet spaces with their standard gradings, by \cite{Ham}, Part II, Theorem 1.3.6 and Corollary 1.3.9.  We will denote by $C^\infty(X,Y)$ the space of smooth maps from $X$ to $Y$.

\begin{lem}\label{Citame}
	The spaces $C^{\infty}_{*,odd}(T_{1}S^n)\subset C^{\infty}(T_{1}S^n)$ and $\Omega^1_{even}(S^n)\subset \Omega^1(S^n)$ are tame Fr\'echet spaces with the gradings induced by the inclusions. The maps
	\begin{equation*}
		C : C^{\infty}_{*,odd}(T_{1}S^n) \rightarrow \Omega^{1}_{even}(S^n)
	\end{equation*}
	and 
	\begin{equation*}
		j : \Omega^{1}_{even}(S^n) \rightarrow C^{\infty}_{*,odd}(T_{1}S^n)
	\end{equation*} 
	are  tame linear maps.
	     
\end{lem}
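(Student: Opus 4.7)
The plan is to realise both subspaces as tame direct summands of the ambient spaces of smooth sections, which are tame by \cite{Ham}, Part II, Theorem 1.3.6 and Corollary 1.3.9. The antipodal involution $A(v)=-v$ on $S^n$ lifts to the involution $\iota(x,v)=(x,-v)$ on $T_1S^n$, and I would introduce the symmetrising projections
\begin{equation*}
P_{odd}\Phi=\tfrac{1}{2}(\Phi-\iota^*\Phi), \qquad P_{even}\omega=\tfrac{1}{2}(\omega+A^*\omega).
\end{equation*}
These are continuous linear endomorphisms whose images are exactly $C^\infty_{*,odd}(T_1S^n)$ and $\Omega^1_{even}(S^n)$, and they are bounded by $1$ in every $C^k$ seminorm since pullback by an isometry preserves each seminorm. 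Hence they are tame of degree zero; together with the obvious tame inclusions they display each subspace as a tame direct summand, so Corollary 1.3.3 of \cite{Ham} Part II yields tameness of the induced gradings.

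For the map $j$, I would rewrite the defining formula $j\omega(x,v)=\alpha_n\omega_v(x)$ intrinsically as $j\omega=\alpha_n(\pi_2^*\omega)(e)$, where $\pi_2:T_1S^n\to S^n$ is projection onto the second factor and $e(x,v)=x$ is the tautological section of $\pi_2^*TS^n\to T_1S^n$. This displays $j$ as a zeroth-order vector bundle operation, namely pullback by the smooth submersion $\pi_2$ followed by pointwise contraction against a fixed smooth section. Such operations satisfy $\|j\omega\|_{C^k}\leq c_k\|\omega\|_{C^k}$ in every seminorm, so $j$ is tame of degree zero, and Proposition \ref{propdecomposition} already records that its image lies in $C^\infty_{*,odd}(T_1S^n)$.

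The substantive step, and the one I expect to be the main obstacle, is the tameness of the center map $C$. My plan is to rewrite $C$ as fibre integration along $\pi_2$: letting $e^\flat(x,v)\in T_v^*S^n$ be the covector $u\mapsto\langle x,u\rangle$, a smooth section of $\pi_2^*T^*S^n$, one has $C(\Psi)=\alpha_n^{-1}\pi_{2*}(\Psi\cdot e^\flat)$, where $\pi_{2*}$ denotes integration along the compact fibres $\pi_2^{-1}(v)\cong S^{n-1}$ equipped with the induced round measure. The technical core of the argument is to verify that this fibre integration is a tame linear map of degree zero on spaces of smooth sections. I would do this in bundle-adapted local charts trivialising $\pi_2$, in which the operator reduces to ordinary integration in the fibre variable against a smooth weight; differentiating under the integral and using compactness of $S^{n-1}$ produces the estimate $\|\pi_{2*}F\|_{C^k(S^n)}\leq c_k\|F\|_{C^k(T_1S^n)}$. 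The standard tame estimate for multiplication by a fixed smooth section handles the factor $e^\flat$, and the composition rule for tame maps (\cite{Ham}, Part II, Theorem 2.1.6) finishes the argument; oddness of $\Psi$ in $v$ together with the pointwise identity $e^\flat(x,-v)(u)=e^\flat(x,v)(u)$ guarantees that $C(\Psi)$ lands in $\Omega^1_{even}(S^n)$.
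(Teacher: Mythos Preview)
Your approach is essentially the same as the paper's: both realise the subspaces as tame direct summands via the antipodal symmetrising projections and invoke \cite{Ham}, Part II, Lemma 1.3.3, and both establish tameness of $C$ and $j$ by observing that they satisfy degree-zero estimates. The paper dismisses this last point in a single sentence (``follows from the definition''), whereas you unpack it via fibre integration and bundle contraction; your more explicit treatment is correct and arguably clearer, though the extra machinery is not strictly needed. One trivial slip: in your formula $C(\Psi)=\alpha_n^{-1}\pi_{2*}(\Psi\cdot e^\flat)$ the factor $\alpha_n^{-1}$ should not be there---the definition of $C$ carries no such constant---but this has no bearing on tameness.
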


\begin{proof}
The space $\Omega^{1}_{even}(S^n)$ is a tame direct summand of $\Omega^{1}(S^n)$ as the map
$$
\omega \in \Omega^{1}(S^n) \mapsto \frac{\omega+A^*\omega}{2} \in \Omega^{1}_{even}(S^n)
$$
(with $A$ the antipodal map) is a tame linear map that is a left-inverse of the inclusion $\Omega^{1}_{even}(S^n) \rightarrow \Omega^{1}(S^n) $. Similarly, $C^{\infty}_{*,odd}(T_{1}S^n)$ is a tame direct summand of $C^{\infty}(T_{1}S^n)$.
 Therefore these spaces are also tame  by \cite{Ham}, Part II, Lemma 1.3.3. 
 
 It follows from the definition that both $C$ and $j$ satisfy a tame estimate of degree and base equal to zero, so they
 are tame linear maps.
\end{proof}

\begin{lem} \label{lemFHtame}
	The spaces 
	\begin{equation*}
	F = C^{\infty}(S^n)\times C^{\infty}_{0,odd}(T_{1}S^n) \quad \text{and} \quad H =  C^{\infty}_{0}({\mathbb{RP}}^n)\times C^{\infty}_{0,odd}(T_{1}S^n)
\end{equation*}
 are tame Fr\'echet spaces with their standard gradings. Let $U\subset F$ be a $C^{3n+4}$-neighborhood of the origin. The maps
	\begin{equation*}
		\mathcal{A}: (\rho,\Phi) \in U\subset F \mapsto \mathcal{A}(\rho,\Phi) \in C^{\infty}({\mathbb{RP}}^n) \quad {and}
	\end{equation*}
	\begin{equation*}
		\mathcal{H}: {(\rho,\Phi) \in U}  \subset F \mapsto {\mathcal{H}(\rho,\Phi)\in  C^{\infty}_{*,odd}(T_1S^n)}
	\end{equation*}
     are smooth tame maps. Thus the map 
     \begin{equation*}
     	\Lambda:(U\subset F)\rightarrow H
     \end{equation*}  
     defined in Section \ref{problem.formulation} is a smooth tame map.
\end{lem}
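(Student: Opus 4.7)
The first step is to establish that $F$ and $H$ are tame Fr\'echet spaces. By Lemma \ref{Citame}, $C^{\infty}_{*,odd}(T_1S^n)$ is tame Fr\'echet, and the map $\phi \mapsto \phi - jC(\phi)$ furnished by Proposition \ref{propdecomposition} is a tame linear projection of $C^{\infty}_{*,odd}(T_1S^n)$ onto $C^{\infty}_{0,odd}(T_1S^n) = \ker C$, so the latter is a tame direct summand and hence a tame Fr\'echet space by \cite{Ham}, Part II, Lemma 1.3.3. Analogously, $C^{\infty}_0({\mathbb{RP}}^n)$ is a tame direct summand of $C^{\infty}({\mathbb{RP}}^n)$ via the tame projection $f \mapsto f - \dashint_{{\mathbb{RP}}^n} f\, dV_{can}$. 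Finite products of tame Fr\'echet spaces are tame, so $F$ and $H$ are tame Fr\'echet.

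Next I would show $\mathcal{A}$ is smooth tame. From \eqref{eqformulaA} we have
\begin{equation*}
\mathcal{A}(\rho,\Phi)(v) = \int_{\Sigma_v} A(\rho)\bigl((x,v),\Phi(x,v),\nabla^{\Sigma_v}\Phi_v(x)\bigr)\, dA_{can}(x),
\end{equation*}
which I would decompose as a composition of four pieces, each smooth tame: (i) the linear differential operator $\Phi \mapsto (\Phi, \nabla\Phi)$, tame because differentiation is tame; (ii) the map $(\rho,\Phi) \mapsto \rho \circ \Sigma(\Phi)$, smooth tame as a composition of sections by \cite{Ham}, Part II, Theorem 2.2.6, provided $\Phi$ has sufficiently small $C^1$ norm so that $\Sigma_v(\Phi)$ is an embedding; (iii) pointwise application of the fixed smooth function $A$, smooth tame by the same theorem; (iv) fiber integration along the submersion $\pi_2 : T_1S^n \to {\mathbb{RP}}^n$, a smooth tame linear map. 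Composition of smooth tame maps is smooth tame by \cite{Ham}, Part II, Theorem 2.1.6.

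The third step handles $\mathcal{H}$ in the same fashion, using the explicit formula \eqref{eqH}. The only new ingredient is that one more tangential derivative appears inside the divergence, so $\mathcal{H}(\rho,\Phi)$ depends on up to two derivatives of $\Phi$ and one derivative of $\rho$ through a fixed smooth function composed with $\Sigma(\Phi)$; the same four-step composition argument shows $\mathcal{H} : U \to C^{\infty}_{*,odd}(T_1S^n)$ is smooth tame, the oddness in the second variable being guaranteed by $\Sigma_{-v}(\Phi) = \Sigma_v(\Phi)$ and $N_{-v}(\Phi) = -N_v(\Phi)$ from Lemma \ref{lembasicformulae}.

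Finally, $\Lambda = (\Lambda_1, \Lambda_2)$ is obtained from $(\mathcal{A}, \mathcal{H})$ by the tame linear operations of subtracting the average (definition \eqref{definition.Lambda1}) and projecting off $j\Omega^1_{even}(S^n)$ via $\mathcal{H} \mapsto \mathcal{H} - jC(\mathcal{H})$ (definition \eqref{definition.Lambda2}, tame by Lemma \ref{Citame}), so $\Lambda$ is smooth tame into $H$. The main technical point is to apply Hamilton's nonlinear composition theorem (\cite{Ham}, Part II, Theorem 2.2.6) to $(\rho,\Phi) \mapsto \rho \circ \Sigma(\Phi)$ with the correct derivative bookkeeping; once this is in place the rest is purely mechanical. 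The strong $C^{3n+4}$ hypothesis is not required for the tameness of $\mathcal{A}$ and $\mathcal{H}$ alone (a small $C^2$ bound suffices), but is imposed here for uniformity with the later construction of the right-inverse $V(\rho,\Phi)$, whose Funk-inverse factor demands the bound from Theorem \ref{pdo.elliptic.mainthm} through Proposition \ref{proppropertiesK}.
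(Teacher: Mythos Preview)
Your proposal is correct and follows essentially the same route as the paper: exhibit $C^\infty_{0,odd}(T_1S^n)$ and $C^\infty_0(\mathbb{RP}^n)$ as tame direct summands via the projections $\Psi\mapsto\Psi-jC(\Psi)$ and $f\mapsto f-\dashint f$, then decompose $\mathcal{A}$ (and similarly $\mathcal{H}$) as a nonlinear first-order differential operator in $\Phi$, a composition $(\rho,\Phi)\mapsto\rho\circ\Sigma(\Phi)$, a pointwise smooth nonlinearity, and a fiber integration. One small point: the composition result you invoke for step~(ii) is \cite{Ham}, Part~II, Theorem~2.3.3 (not 2.2.6), and the nonlinear differential operator in step~(i)/(iii) is handled by \cite{Ham}, Part~II, Corollary~2.2.7; otherwise your argument matches the paper's.
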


\begin{proof}
Since $F$ and $H$ are Cartesian products, by \cite{Ham}, Part II, Lemma 1.3.4, it is enough to argue that their factors are tame Fr\'echet spaces. 

	The maps $L:C^{\infty}_0({\mathbb{RP}}^n) \rightarrow C^{\infty}({\mathbb{RP}}^n)$ and $M:C^{\infty}({\mathbb{RP}}^n)\rightarrow C^{\infty}_0({\mathbb{RP}}^n)$  given by $L(f)=f$ and $M(f)=f-\dashint_{{\mathbb{RP}}^n}f(\sigma)dV_{can}(\sigma)$
	are tame linear maps such that $ML$ is the identity. Hence $C^{\infty}_0({\mathbb{RP}}^n)$ is a tame direct summand of 
	$C^{\infty}({\mathbb{RP}}^n)$ and hence  is tame.
	
	Since
	$C: C^{\infty}_{*,odd}(T_1S^n) \rightarrow \Omega^1_{even}(S^n)$
	and
	$j:\Omega^1_{even}(S^n)\rightarrow C^{\infty}_{*,odd}(T_1S^n) $
	are tame linear maps,  the projection
	\begin{equation*}
	\Psi \in C^{\infty}_{*,odd}(T_1S^n) \mapsto \Psi-j C(\Psi) \in C^{\infty}_{0,odd}(T_{1}S^n)
\end{equation*}
is a tame linear map that is a left-inverse for the inclusion  $C^{\infty}_{0,odd}(T_{1}S^n) \rightarrow C^{\infty}_{*,odd}(T_1S^n)$.
Therefore $C^{\infty}_{0,odd}(T_{1}S^n)$ is a tame direct summand of $C^{\infty}_{*,odd}(T_{1}S^n)$, and hence  is tame.

	  Notice that (using the identification $C^{\infty}({\mathbb{RP}}^n)\simeq C^{\infty}_{even}(S^n)$)
\begin{equation*} 
	\mathcal{A}(\rho,\Phi)(v) = \int_{\Sigma_v} \tilde{A}(\rho,\Phi)(x,v)dA_{can}(x) \quad \text{for all} \quad v\in S^n
\end{equation*} 
where
\begin{equation*}
	\tilde{A}:U\subset C^{\infty}(S^n) \times C^{\infty}_{*,odd}(S^n) \rightarrow C^{\infty}(T_{1}S^n)
\end{equation*}
is given in \eqref{eqformulaA}. Hence $\mathcal{A}$ is the composition of $\tilde{A}$ and a tame  linear map. Explicitly,
 \begin{multline*}
 \tilde{A}(\rho,\Phi)(x,v) = e^{(n-1)\rho(\cos(\Phi(x,v))x+\sin(\Phi(x,v))v)}\cos^{n-2}(\Phi(x,v))\\
  \cdot \sqrt{\cos^{2}(\Phi(x,v))+|\nabla^{\Sigma_v}\Phi_v(x)|^2}.
 \end{multline*}
 The map  that sends $\Phi$ to
 $$
 (x,v) \mapsto \cos^{n-2}(\Phi(x,v)) \sqrt{\cos^{2}(\Phi(x,v))+|\nabla^{\Sigma_v}\Phi_v(x)|^2}
 $$
 is a nonlinear differential operator of degree 1, hence smooth tame by \cite{Ham}, Part II, 2.2.7. The map that sends $\Phi$
  to
  $$
 (x,v) \mapsto \cos(\Phi(x,v))x+\sin(\Phi(x,v))v
 $$
 in  $C^{\infty}(T_{1}S^n,S^n) \subset C^{\infty}(T_{1}S^n,\mathbb{R}^{n+1})$
 is also smooth tame (by \cite{Ham}, Part II, Corollary 2.3.2,  $C^{\infty}(T_{1}S^n,S^n)$ is a tame manifold).
 The composition map
 $$
 \mathcal{C}: C^\infty(S^n) \times C^\infty(T_{1}S^n,S^n) \rightarrow C^\infty(T_{1}S^n),
 $$
 $\mathcal{C}(h,\Gamma)=h\circ \Gamma$, is smooth tame by \cite{Ham}, Part II, 2.3.3. Therefore the map that sends 
 $(\rho,\Phi)$ to
 $$
 (x,v)\mapsto e^{(n-1)\rho(\cos(\Phi(x,v))x+\sin(\Phi(x,v))v)}
 $$
 is smooth tame. This proves that  $\tilde{A}$ is smooth tame which implies $\mathcal{A}$ is smooth tame.

The fact $\mathcal{H}$ is a smooth tame map follows similarly as for $\tilde{A}$ using (\ref{eqH}). 
Hence Lemma \ref{Citame} implies $\Lambda$ is smooth tame.
	
\end{proof}

\begin{lem} \label{lemFandF*tame}
{The map
	\begin{equation*}
		\mathcal{F}^{*} : ((\rho,\Phi),g) \in (U\subset F)\times C^{\infty}({\mathbb{RP}}^n) \mapsto \mathcal{F}^*(\rho,\Phi)(g) \in C^{\infty}(S^n)
	\end{equation*}
	is a smooth tame}  {map.} Similarly,
	\begin{equation*}
		\mathcal{F} : ((\rho,\Phi),f) \in (U\subset F)\times C^{\infty}(S^n) \mapsto \mathcal{F}(\rho,\Phi)(f) \in C^{\infty}({\mathbb{RP}}^n)
	\end{equation*}
	is smooth tame.
\end{lem}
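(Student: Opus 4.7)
My plan is to mimic the strategy used in Lemma \ref{lemFHtame}, decomposing $\mathcal{F}$ and $\mathcal{F}^{*}$ into smooth tame building blocks coming from \cite{Ham}, Part II.

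For $\mathcal{F}$: From \eqref{eqdeffunk} and Lemma \ref{lembasicformulae}(iii) I would write
\[
\mathcal{F}(\rho,\Phi)(f)([v]) = \int_{\Sigma_v} T(\rho,\Phi,f)(x,v)\, dA_{can}(x),
\]
where $T(\rho,\Phi,f)(x,v)=f(\Sigma_v(\Phi)(x))\, e^{(n-1)\rho(\Sigma_v(\Phi)(x))}\, |Jac(\Sigma_v(\Phi))|(x)$. To verify that $T:U\times C^{\infty}(S^n)\to C^{\infty}(T_1S^n)$ is smooth tame I would combine: the smooth tame map $\Phi\mapsto \Sigma(\Phi)\in C^{\infty}(T_1S^n,S^n)$ already used in the proof of Lemma \ref{lemFHtame}; the smooth tameness of composition (\cite{Ham}, Part II, 2.3.3) applied to $(f,\Sigma(\Phi))\mapsto f\circ \Sigma(\Phi)$ and $(\rho,\Sigma(\Phi))\mapsto \rho\circ \Sigma(\Phi)$; the fact that $\Phi\mapsto |Jac(\Sigma_v(\Phi))|$ is a first-order nonlinear differential operator in $\Phi$, hence smooth tame (\cite{Ham}, Part II, 2.2.7); and smooth tameness of pointwise multiplication and of the exponential. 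Fiber integration is then a tame linear map $C^{\infty}(T_1S^n)\to C^{\infty}_{even}(S^n)\simeq C^{\infty}(\mathbb{RP}^n)$, giving the smooth tameness of $\mathcal{F}$.

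For $\mathcal{F}^{*}$: Here the domain of integration $\Sigma^{*}_p(\Phi)$ in \eqref{eqdualfunk} depends on $\Phi$, so I would first pass to a fixed domain using the parametrization from Proposition \ref{propgelfand3}: for each $q\in S^n$, the map $[\Xi_q(\Phi)]:\Sigma^{*}_q\to \mathbb{RP}^n$ is a diffeomorphism onto $\Sigma^{*}_q(\Phi)$. Performing this change of variables gives
\[
\mathcal{F}^{*}(\rho,\Phi)(g)(q)=e^{(n-1)\rho(q)}\int_{\Sigma_q} T^{*}(\rho,\Phi,g)(q,w)\, dA_{can}(w),
\]
with $T^{*}(\rho,\Phi,g)(q,w) = g([\Xi_q(\Phi)(w)])\, U(\Phi)(q,[\Xi_q(\Phi)(w)])\, |Jac(\Xi_q(\Phi))|(w)$. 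Once smooth tameness of $T^{*}$ is established, the same fiber-integration and multiplication argument as for $\mathcal{F}$ delivers smooth tameness of $\mathcal{F}^{*}$.

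The main obstacle I expect is the tame dependence of $\Xi(\Phi)$ on $\Phi$, since $\Xi$ is defined implicitly as the second component of the inverse $\mathcal{G}^{-1}(\Phi)$ of the generalized Gauss map. To handle this I would first check that $\Phi\mapsto \mathcal{G}(\Phi)\in C^{\infty}(T_1S^n,T_1S^n)$ is smooth tame, via the decomposition used for $\Sigma(\Phi)$ in Lemma \ref{lemFHtame} combined with the explicit formula for $N(\Phi)$ in Lemma \ref{lembasicformulae}(ii), and then invoke tameness of inversion in $\mathrm{Diff}(T_1S^n)$ (\cite{Ham}, Part II, Theorem 2.3.5 and Corollary 2.3.6) to conclude smooth tame dependence of $\mathcal{G}^{-1}(\Phi)$, and in particular of $\Upsilon(\Phi)$ and $\Xi(\Phi)$, on $\Phi$. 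With this in hand, smooth tameness of $U(\Phi)$, which by Proposition \ref{propdualfunk} is a smooth algebraic expression in $\Phi$, its gradient, the components of $\mathcal{G}^{-1}(\Phi)$, and $\textbf{N}^{*}(\Phi)$, and of $|Jac(\Xi_q(\Phi))|$, a first-order differential expression in $\Xi(\Phi)$, follows from the tame algebra and composition rules, completing the plan.
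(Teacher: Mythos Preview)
Your proposal is correct and follows essentially the same route as the paper: handle $\mathcal{F}$ directly via \eqref{eqdeffunk} as a composition of smooth tame pieces, and for $\mathcal{F}^{*}$ pull the integral back to the fixed fibers $\Sigma_q$ via $\Xi_q(\Phi)$, with the key input being that $\Phi\mapsto\mathcal{G}(\Phi)$ is a first-order nonlinear differential operator and $\Phi\mapsto\mathcal{G}^{-1}(\Phi)$ is smooth tame by Hamilton's inversion theorem. The paper carries out exactly this plan, being somewhat more explicit than your sketch about two bookkeeping points you gloss over: it shows that $\mathbf{N}^{*}(\Phi)$ is smooth tame by writing it as the unit normal map $\Xi\mapsto n(\Xi)$ (a first-order operator) composed with $\Xi(\Phi)$, and it introduces an auxiliary diffeomorphism $X(\Phi)$ of $T_1S^n$ to pass cleanly between the $(x,v)$-coordinates in which $U(\Phi)$ is given and the $(q,w)$-coordinates of the pulled-back integral.
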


\begin{proof}

	For every $g\in C^{\infty}({\mathbb{RP}}^n)$ and every $q\in S^n$,
	\begin{equation*}
		\mathcal{F}^{*}(\rho,\Phi)(g)(q) = e^{(n-1)\rho(q)}\int_{\Sigma^{*}_q(\Phi)} g(\tau)U(\Phi)(q,\tau)dA_{can}(\tau),
	\end{equation*}
	where $U(\Phi) \in C^{\infty}([F(\Phi)])$ is given by
	\begin{equation*}
		U(\Phi)(q,[v]) = \frac{\sqrt{\cos(\Phi(x,v))^{2} + |\nabla^{\Sigma_v}\Phi_v|^2(x)}}{| \eta(\Phi)(x,v,{\bf N}^*(\Phi)(q,v))|\cos(\Phi(x,v))},
	\end{equation*}
	for  $q=\Sigma_{v}(\Phi)(x)$.

The map $\tilde{g}$ that sends $\Phi$ to 
$$
(x,v)\mapsto(\Sigma(\Phi)(x,v),N(\Phi)(x,v))
$$
in $C^\infty(T_1S^n,\mathbb{R}^{2n+2})$
is a nonlinear differential operator of  degree $1$. Hence the generalized Gauss map defined in Section \ref{section.gauss.map}, regarded as a map
\begin{equation*}
	\mathcal{G}: (\rho,\Phi) \in U\subset F\mapsto \mathcal{G}(\Phi)\in \text{Diff}(T_1S^n),
\end{equation*}
is smooth tame.  The inverse map 
\begin{equation*}
	\mathcal{G}^{{-1}}: {(\rho,\Phi) \in U\subset F\mapsto \mathcal{G}^{-1}(\Phi)\in \text{Diff}(T_1S^n)},
\end{equation*}
is also smooth tame  (\cite{Ham}, Part II, Theorem 2.3.5). 

Since
\begin{equation*}
	\mathcal{G}^{-1}(\Phi) : (q,w) \in T_{1}S^n \mapsto (\Upsilon_{q}(\Phi)(w),\Xi_{q}(\Phi)(w)) \in T_{1}S^n,
\end{equation*}
the map  that sends $\Phi$ to
$$
(q,w)\in  T_1S^n \mapsto \Xi_{q}(\Phi)(w) \in S^n
$$
in $C^\infty(T_1S^n,S^n)$
is smooth  tame.  

The map $\mathcal{W}\subset C^\infty(T_1S^n,\mathbb{R}^{n+1})\rightarrow C^\infty(T_1S^n)$ that sends $\Xi$ to
$$
(q,w)\in  T_1S^n \mapsto Jac ({\Xi_q}_{|\Sigma_q})(w),
$$
where $\mathcal{W}$ is a sufficiently small neighborhood of the map $(q,w)\rightarrow w$, is a nonlinear differential operator of
degree 1. Hence it is smooth  tame. 

The map $\mathcal{W} \cap C^\infty(T_1S^n,S^n) \rightarrow C^\infty(T_1S^n,\mathbb{R}^{n+1})$ that sends $\Xi$ to
$$
(q,w) \mapsto  n({\Xi})(q,w) \in T_{\Xi(q,w)}S^n,
$$
where $n(\Xi)(q,w)$ is the  unit normal to $\Xi(q,\Sigma_q)$ at $\Xi(q,w)$ satisfying $\langle n(\Xi)(q,w), q\rangle >0$, is the composition of the inclusion map 
$$\mathcal{W} \cap C^\infty(T_1S^n,S^n)\rightarrow \mathcal{W} \cap C^\infty(T_1S^n,\mathbb{R}^{n+1})
$$ 
and a nonlinear differential operator of degree 1
$$
\mathcal{W} \cap C^\infty(T_1S^n,\mathbb{R}^{n+1}) \rightarrow C^\infty(T_1S^n,\mathbb{R}^{n+1}).
$$
 Hence it is smooth  tame. 

We have that $T_1S^n$ is a compact hypersurface of $S^n\times S^n$. For $\delta>0$ sufficiently small, consider
$V_\delta(T_1S^n)$ the tubular neighborhood of $T_1S^n$ of radius $\delta$ and $\tilde{P}:V_\delta(T_1S^n)\rightarrow T_1S^n$ the nearest point projection.  If $\Phi\in C^\infty_{*,odd}(T_1S^n)$ is in some sufficiently small neighborhood of the origin then $F(\Phi)\subset V_\delta(T_1S^n)$ and $\tilde{P}_{|F(\Phi)}:F(\Phi)\rightarrow T_1S^n$ is a diffeomorphism.  The map that sends 
$\Phi$  to 
$$
\Big((x,v) \mapsto \tilde{P}((\Sigma_v(\Phi)(x),v))\Big) \in \text{Diff}(T_1S^n)
$$
is smooth tame. The same is true for the map
$$
\Big((q,w) \mapsto \tilde{P}((q,\Xi_q(\Phi)(w)))\Big) \in \text{Diff}(T_1S^n).
$$
This implies that  the map that sends such a $\Phi$ to
$$
X(\Phi): \Big((q,w) \mapsto (x=x(\Phi)(q,w),v=v(\Phi)(q,w))\Big)\in \text{Diff}(T_1S^n)
$$
with
$$
(\Sigma_v(\Phi)(x),v))=(q,\Xi_q(\Phi)(w))
$$
is smooth tame. Similarly for $\Phi\mapsto X(\Phi)^{-1}$.

We have that for $v=\Xi_q(\Phi)(w)$
$$
{\bf N}^*(\Phi)(q,v)=n(\Xi(\Phi))(q,w).
$$ 
Hence the map that sends $\Phi$ to
$$
(x,v) \in T_1S^n \mapsto  N^*(\Phi)(\Sigma_v(\Phi)(x),v)=n(\Xi(\Phi))\circ X(\Phi)^{-1} (x,v) \in \mathbb{R}^{n+1}
$$
is smooth  tame.

Let $V$ be the pullback of the vector bundle $TS^n$ under the map $(x,v)\in T_1S^n\mapsto v\in  S^n$. Given $(x,v)\in T_1S^n$, $u\in T_vS^n$, recall \eqref{eqeta2}
\begin{align*} 
\eta(\Phi)(x,v,u) = & -\langle x,u\rangle +D\Phi_{(x,v)}\cdot  (-\langle x,u\rangle v,u)\\
& -\tan\Phi(x,v)\langle \nabla^{\Sigma_v}\Phi_v(x),u\rangle. 
\end{align*} 
Then the map
that sends $\Phi$ to $\eta(\Phi)\in C^\infty(V)$ is smooth tame.
Therefore the map that sends $\Phi$ to 
$$
(x,v) \mapsto  \eta(\Phi)(x,v,N^*(\Phi)(\Sigma_v(\Phi)(x),v))
$$
in $C^\infty(T_1S^n)$ is smooth  tame.

This proves that the map that sends $\Phi$ to
$$
U'(\Phi): (x,v) \mapsto U(\Phi)(\Sigma_v(\Phi)(x),[v])
$$
in $C^\infty(T_1S^n)$ is smooth tame. Hence the map that sends $\Phi$ to $U'(\Phi)\circ X(\Phi)\in C^\infty(T_1S^n)$ given by
$$
U'(\Phi)\circ X(\Phi)(q,w)=U(\Phi)(q,[\Xi_q(\Phi)(w)])
$$
is smooth and tame.

In conclusion, the map that sends $(\Phi,g)$ to the   function in  $C^\infty(T_1S^n)$
$$
\Gamma(q,w)=g([\Xi_q(\Phi)(w)])U(\Phi)(q,[\Xi_q(\Phi)(w)])Jac ({\Xi_q(\Phi)}_{|\Sigma_q})(w),
$$
satisfying $\Gamma(q,w)=\Gamma(q,-w)$, is smooth  tame.

Since
$$
		\mathcal{F}^{*}(\rho,\Phi)(g)(q) = e^{(n-1)\rho(q)}\int_{\Sigma^{*}_q} \tilde{\Gamma}(q,[w])dA_{can}([w]),
	$$
where $\tilde{\Gamma}(q,[w])=\Gamma(q,w)$, we get that $\mathcal{F}^{*}$ is smooth tame.

The fact that $\mathcal{F}$ is smooth tame follows from the same methods and formula (\ref{eqdeffunk}).

\end{proof}

Since $(\rho,\Phi)$ has sufficiently small $C^{3n+4}$ norm, we can apply Proposition \ref{inverse.tame} 
to $k(\rho,\Phi)$ with $k_0=k(0,0)$.

{\begin{lem} \label{lemFF*inversetame}
	The maps
	\begin{multline*}
		\mathcal{L}: ((\rho,\Phi),g) \in (U\subset F)\times C^{\infty}({\mathbb{RP}}^n) \\
		 \mapsto
		 (\mathcal{F}(\rho,\Phi)\circ\mathcal{F}^*(\rho,\Phi))(g) \in C^{\infty}(\mathbb{RP}^n)
	 \end{multline*}
	 and
	 	\begin{multline*}
		\mathcal{V}: ((\rho,\Phi),g) \in (U\subset F)\times C^{\infty}({\mathbb{RP}}^n)\\
		  \mapsto (\mathcal{F}(\rho,\Phi)\circ\mathcal{F}^*(\rho,\Phi))^{-1}(g) \in C^{\infty}(\mathbb{RP}^n)
	 \end{multline*}
	 are smooth tame.
\end{lem}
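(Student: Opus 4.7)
The plan for $\mathcal{L}$ is immediate: writing
$$\mathcal{L}(\rho,\Phi,g)= \mathcal{F}(\rho,\Phi, \mathcal{F}^*(\rho,\Phi, g)),$$
one sees that $\mathcal{L}$ is a composition of the smooth tame maps $\mathcal{F}$ and $\mathcal{F}^*$ furnished by Lemma \ref{lemFandF*tame}, so smooth tameness follows from \cite{Ham}, Part II, Theorem 2.1.6.

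For $\mathcal{V}$, the strategy is to exploit the integral-kernel representation $\mathcal{L}(\rho,\Phi,\cdot) = L(k(\rho,\Phi))$ from Proposition \ref{map.is.K}, combined with the tame invertibility estimates of Section \ref{integral.operator}. The first step is to upgrade Proposition \ref{proppropertiesK} to the statement that the kernel map
$$(\rho,\Phi) \in U \longmapsto k(\rho,\Phi) \in C^\infty(B(\mathbb{RP}^n\times\mathbb{RP}^n))$$
is itself smooth tame. This is done by inspecting the formulas \eqref{eqauxdistK}, \eqref{V.map}, \eqref{Nphi}, \eqref{lambdaphi} and observing that the ingredients $J(\Phi)$ (from Lemma \ref{lemtransversal}), $I_v(\Phi)$, and $\mathbf{N}(\Phi)$ are built out of nonlinear differential operators of finite order and compositions of smooth tame maps, exactly as in the proof of Lemma \ref{lemFandF*tame}; the essential point is that Lemma \ref{lemtransversal} re-parametrizes the moving intersection loci $\Sigma_v(\Phi)\cap\Sigma_u(\Phi)$ by the fixed spheres $S_q$, reducing everything to integrals over fixed domains.

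Given this, Proposition \ref{inverse.tame} applied with $k_0=k(0,0)$ (whose associated operator is invertible by Theorem \ref{pdo.elliptic.mainthm}) yields for every $q\in\mathbb{N}_0$ the estimate
$$\|\mathcal{V}(\rho,\Phi,g)\|_q \leq c_q(k_0)\big(\|g\|_{n-1+q} + \|k(\rho,\Phi)\|_{C^{3n+q}}\,\|g\|_{n-1}\big),$$
valid on a sufficiently small $C^{3n+4}$-neighborhood of the origin. Combined with the tame control of $\|k(\rho,\Phi)\|_{C^{3n+q}}$ from the previous step, this gives a tame estimate for $\mathcal{V}$. Smoothness of $\mathcal{V}$ is then obtained by implicit differentiation of the identity $\mathcal{L}(\rho,\Phi,\mathcal{V}(\rho,\Phi,h)) = h$: since $\mathcal{L}(\rho,\Phi,\cdot)$ is linear in its third slot, one gets
$$D_{(\rho,\Phi)}\mathcal{V}(\rho,\Phi,h)\cdot(\delta\rho,\delta\Phi) = -\mathcal{V}\big(\rho,\Phi,\, D_{(\rho,\Phi)}\mathcal{L}(\rho,\Phi,\mathcal{V}(\rho,\Phi,h))\cdot(\delta\rho,\delta\Phi)\big),$$
a formula expressing the derivative as a composition of $\mathcal{V}$ with the smooth tame map $D\mathcal{L}$. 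Higher derivatives unfold analogously, and their smooth tameness follows by induction from the tame estimate for $\mathcal{V}$ and the tameness of the derivatives of $\mathcal{L}$.

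The main obstacle I anticipate is the upgrade of Proposition \ref{proppropertiesK}: the kernel $k(\rho,\Phi)$ is defined by the singular integral \eqref{eqkernel} over a moving intersection locus, and while the smoothness-on-$B(\mathbb{RP}^n\times\mathbb{RP}^n)$ aspect is already proven, one must additionally check that each $C^l$-seminorm of $k(\rho,\Phi)$ is controlled tamely by a finite $C^{l+3}$-seminorm of $(\rho,\Phi)$. This is essentially bookkeeping on the closeness statements already embedded in Lemma \ref{lemtransversal} and Proposition \ref{proppropertiesK}, but it is the step where the proof has to be written out with care; once it is done, the invocation of Proposition \ref{inverse.tame} and the implicit-differentiation argument are standard.
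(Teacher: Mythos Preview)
Your proposal is correct and follows essentially the same route as the paper: composition for $\mathcal{L}$, then smooth tameness of the kernel map $(\rho,\Phi)\mapsto k(\rho,\Phi)$ combined with Proposition \ref{inverse.tame} for the tame estimate on $\mathcal{V}$. The only difference is that the paper packages your implicit-differentiation induction as a direct appeal to Hamilton's black-box \cite{Ham}, Part II, Theorem 3.1.1 (after an explicit continuity check for $\mathcal{V}$ via \eqref{bound.pso.inverse.continuity}), which is precisely the general result your argument would be reproving.
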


\begin{proof}
The map $((\rho,\Phi),g)  \mapsto \mathcal{F}(\rho,\Phi)(\mathcal{F}^*(\rho,\Phi)(g))$ is the composition of the map
$((\rho,\Phi),g)  \mapsto ((\rho,\Phi),\mathcal{F}^*(\rho,\Phi)(g))$ and the map $\mathcal{F}$ of Lemma \ref{lemFandF*tame}.
Hence $\mathcal{L}$ is smooth tame.

Recall that $(\mathcal{F}(\rho,\Phi)\circ\mathcal{F}^*(\rho,\Phi))^{-1}(g)=L(k(\rho,\Phi))^{-1}(g)$ for $g\in  C^{\infty}({\mathbb{RP}}^n) $.
Choose $(\rho',\Phi')$, $g'$  and  $q\in \mathbb{N}$. Then we write, using Theorem \ref{psdo.main.thm},
\begin{align*}
&  ||L(k(\rho,\Phi))^{-1}(g)-L(k(\rho',\Phi'))^{-1}(g')||_q & \\
 & \quad \quad \leq  ||L(k(\rho,\Phi))^{-1}(g)-L(k(\rho,\Phi))^{-1}(g')||_q &\\
 & \quad \quad \quad \quad + ||L(k(\rho,\Phi))^{-1}(g')-L(k(\rho',\Phi'))^{-1}(g')||_q & \\
 & \quad \quad \leq c_q (||g-g'||_{n-1+q}+||k(\rho,\Phi)-k(\rho',\Phi')||_{C^{3n+q}}||g'||_{n-1+q}) &
\end{align*}
if 
$||k-k'||_{3n+q}$ is sufficiently small. Hence, if $(\rho,\Phi)$ is $C^{3n+3+q}$ close to $(\rho',\Phi')$ and $g$ is $H^{q+n-1}$-close to
$g'$ then $\mathcal{V}((\rho,\Phi),g)$ is $H^q$ close to $\mathcal{V}((\rho',\Phi'),g')$. By the Sobolev embedding theorems, since $q$ is arbitrary, we have that the map $\mathcal{V}$ is continuous.

We will argue  that the map 
\begin{equation*}
k: (\rho,\Phi) \in U\subset F\mapsto k(\rho,\Phi)\in C^{\infty}(B(\RP^n\times\RP^n))
\end{equation*}
given by Proposition \ref{proppropertiesK} is smooth tame.  Since $D(\Phi)(y,v)$ (as in Section \ref{intersections}) is a nonlinear  function of
$(y,v), \Phi(x,v), \nabla^{\Sigma_v}\Phi_v(x)$ where $y=\exp_x(tv)$, $x\in \Sigma_v$, for $|t|\leq 2\pi/5$, and $D(\Phi)(y,v)=y$ for $|t|\geq \pi/5$,
we have that the map
$$
D: \Phi \in C^\infty_{*,odd}(T_1S^n) \rightarrow D(\Phi) \in C^\infty(S^n\times S^n,S^n)
$$
is smooth tame. Since $(y,v)\rightarrow (D(\Phi)(y,v),v)$ is a diffeomorphism of $S^n\times S^n$ with inverse
$(p,v)\rightarrow (D^{-1}(\Phi)(p,v),v)$, the map $\Phi \rightarrow D^{-1}(\Phi)$ is smooth tame and hence the map
$$
I: \Phi \in C^\infty_{*,odd}(T_1S^n) \rightarrow I(\Phi) \in C^\infty(S^n \times S^n)
$$
is smooth tame.

Recall the proof of Lemma \ref{lemtransversal}. If
$$
\lambda(\Phi)(v,\theta,t,x)=I_{\exp_v(t\theta)}(\Phi)(x)-\cos(t)I_v(\Phi)(x),
$$
then 
$$
\lambda(\Phi)(v,\theta,t,x)=t \frac{\sin(t)}{t}Q(\Phi)(v,\theta,t,x).
$$
Hence
$$
Q(\Phi)(v,\theta,t,x) = \frac{t}{\sin(t)}\int_0^1 \frac{\partial}{\partial t}\lambda(\Phi)(v,\theta,t h,x)dh.
$$
This implies that the map
$$
Z: \Phi \in C^\infty_{*,odd}(T_1S^n) \rightarrow Z(\Phi) \in C^\infty(B(\mathbb{RP}^n\times \mathbb{RP}^n) \times S^n,\mathbb{R}^{n+1})
$$
is smooth tame, by using formulas \eqref{mapa.Z} and \eqref{mapa.Z2}. This implies $\overline{Z}$ is smooth tame. Since we have that $(q,x)\mapsto  (q, \overline{Z}(\Phi)(q)(x))$ is a diffeomorphism of $B(\mathbb{RP}^n\times \mathbb{RP}^n) \times S^n$
with inverse $(q,y)\mapsto (q, \overline{Z}(\Phi)^{-1}(q)(y))$, the map $\overline{Z}^{-1}$ and
$$
J: \Phi \in C^\infty_{*,odd}(T_1S^n) \rightarrow J(\Phi) \in C^\infty(B(\mathbb{RP}^n\times \mathbb{RP}^n) \times S^n,S^n)
$$
are smooth tame.

Using formulas \eqref{eqauxdistK}, \eqref{V.map}, \eqref{Nphi}, \eqref{lambdaphi} and an argument similar to what we used for the map $Z$, we conclude that the map $k$ is smooth tame.

By Proposition \ref{inverse.tame} with $k_0=k(0,0)$,
 we have
$$||f||_q\leq c_q(||L(k(\rho,\Phi))(f)||_{n-1+q}+||k(\rho,\Phi)||_{C^{3n+q}}||L(k(\rho,\Phi))(f)||_{n-1})$$
for every $f\in C^\infty(\mathbb{RP}^n)$. Hence setting $f=\mathcal{V}((\rho,\Phi),g)$,
$$||\mathcal{V}((\rho,\Phi),g)||_q\leq c_q(||g||_{n-1+q}+||k(\rho,\Phi)||_{C^{3n+q}}||g||_{n-1})$$
for every $g\in C^\infty(\mathbb{RP}^n)$. If $||g||_{n-1}\leq C$, we get
$$||\mathcal{V}((\rho,\Phi),g)||_q\leq c_q(||g||_{n-1+q}+ C\, ||k(\rho,\Phi)||_{C^{3n+q}})$$
for every $q\geq 0$. Hence the tameness of $\mathcal{V}$ follows from the tameness of $k$. By \cite{Ham}, Part II, Theorem 3.1.1,  the map
$\mathcal{V}$ is smooth tame.

\end{proof}

\begin{lem} \label{lemRtame}
	The right-inverse of the Funk transform
	\begin{multline*}
		\mathcal{R} : ((\rho,\Phi),g) \in (U \subset F) \times C^{\infty}({\mathbb{RP}}^n) \\
		\mapsto  \mathcal{F}^{*}(\rho,\Phi)((\mathcal{F}(\rho,\Phi)\circ \mathcal{F}^*(\rho,\Phi))^{-1}(g))\in C^{\infty}(S^n)
	\end{multline*}
	is smooth tame.
\end{lem}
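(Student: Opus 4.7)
The plan is to write $\mathcal{R}$ as a composition of smooth tame maps already constructed in Lemmas \ref{lemFandF*tame} and \ref{lemFF*inversetame}, and then invoke the fact that compositions of smooth tame maps between tame Fr\'echet spaces are smooth tame (\cite{Ham}, Part II, Theorem 2.1.6).

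More explicitly, first I would introduce the auxiliary map
\begin{equation*}
\Psi : ((\rho,\Phi),g) \in (U\subset F)\times C^{\infty}(\mathbb{RP}^n) \longmapsto ((\rho,\Phi), \mathcal{V}((\rho,\Phi),g)) \in (U\subset F)\times C^{\infty}(\mathbb{RP}^n),
\end{equation*}
where $\mathcal{V}$ is as in Lemma \ref{lemFF*inversetame}. Since the identity is tame and Cartesian products of smooth tame maps are smooth tame (\cite{Ham}, Part II, Lemma 1.3.4), and since $\mathcal{V}$ is smooth tame by Lemma \ref{lemFF*inversetame}, the map $\Psi$ is smooth tame. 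Then I would observe that
\begin{equation*}
\mathcal{R}((\rho,\Phi),g) = \mathcal{F}^*\bigl(\Psi((\rho,\Phi),g)\bigr),
\end{equation*}
which by Lemma \ref{lemFandF*tame} exhibits $\mathcal{R}$ as the composition of the smooth tame map $\Psi$ with the smooth tame map $\mathcal{F}^*$. Hence $\mathcal{R}$ is smooth tame.

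There is no serious obstacle here: all the analytic content (tame estimates for $\mathcal{F}^*$, tame invertibility of $\mathcal{F}(\rho,\Phi)\circ\mathcal{F}^*(\rho,\Phi)$ via the pseudodifferential bounds of Theorem \ref{psdo.main.thm} and Proposition \ref{inverse.tame}) has already been absorbed into the previous lemmas. The only care needed is to check that the target of $\mathcal{V}$ matches the second argument of $\mathcal{F}^*$ (both are $C^{\infty}(\mathbb{RP}^n)$) and that the neighborhood $U$ on which $\mathcal{V}$ is defined is the same as the $U$ on which $\mathcal{F}^*$ is tame; both are ensured by the common assumption that $(\rho,\Phi)$ has sufficiently small $C^{3n+4}$ norm.
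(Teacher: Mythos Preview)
Your proposal is correct and takes essentially the same approach as the paper: the paper's proof simply says ``Follows from Lemma \ref{lemFandF*tame} and Lemma \ref{lemFF*inversetame},'' and your argument just unpacks this by writing $\mathcal{R}$ explicitly as the composition $\mathcal{F}^*\circ\Psi$ of smooth tame maps.
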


\begin{proof}
	Follows  from {Lemma \ref{lemFandF*tame} and Lemma \ref{lemFF*inversetame}.}
	\end{proof}
	
	\begin{lem} \label{lemStame}
	The solution map
	\begin{equation*}
		\mathcal{S} : ((\rho,\Phi),\psi) \in (U \subset F) \times C^{\infty}_{0,odd}(T_{1}S^n) \\
		\mapsto  \mathcal{S}(\rho,\Phi)(\psi) \in C^{\infty}_{0,odd}(T_{1}S^n), 
	\end{equation*}
	of \eqref{eqS} is smooth tame.
\end{lem}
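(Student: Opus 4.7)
The plan is to mirror the strategy used for Lemma \ref{lemFF*inversetame}: first show that the map $\mathcal P$ of which $\mathcal S$ is the inverse is itself smooth tame, then establish tame estimates for $\mathcal S$, then conclude smooth tameness via \cite{Ham}, Part II, Theorem 3.1.1.

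First I would check that the map $((\rho,\Phi),\phi)\mapsto \mathcal P(\rho,\Phi)(\phi)$ is smooth tame as a map $(U\subset F)\times C^{\infty}_{0,odd}(T_1S^n)\to C^{\infty}_{0,odd}(T_1S^n)$. This follows from the definition \eqref{eqP} together with Lemma \ref{lemFHtame} (smooth tameness of $\mathcal H$), Lemma \ref{Citame} (smooth tameness of $C$ and $j$), and the fact that the linearization $D_2\mathcal H(\rho,\Phi)\cdot\phi$ is given, by Proposition \ref{propDH_1}$(ii)$, fiberwise in $v$ by a second-order linear differential operator whose coefficients depend tamely on $(\rho,\Phi)$. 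Once $\mathcal P$ is smooth tame, the continuity of $\mathcal S$ at each $(\rho,\Phi)\in U$ follows from Proposition \ref{propsoljacobiprojected}: the uniform Schauder estimate \eqref{eqgard0}, applied fiberwise and combined with Sobolev embeddings, controls the $C^{k}$-norm of $\mathcal S(\rho,\Phi)(\psi)-\mathcal S(\rho',\Phi')(\psi')$ in terms of $\|\psi-\psi'\|_{C^{k+2,\alpha}}$ and the $C^{k+2}$-closeness of $(\rho,\Phi)$ to $(\rho',\Phi')$, using the identity
\begin{equation*}
\mathcal S(\rho,\Phi)-\mathcal S(\rho',\Phi')=\mathcal S(\rho,\Phi)\circ(\mathcal P(\rho',\Phi')-\mathcal P(\rho,\Phi))\circ\mathcal S(\rho',\Phi').
\end{equation*}

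Next I would prove the tame estimate for $\mathcal S$: there exists an integer $r$ and, for each $q\ge 0$, a constant $c_q$ such that
\begin{equation*}
\|\mathcal S(\rho,\Phi)(\psi)\|_{C^{q+2}}\le c_q\bigl(\|\psi\|_{C^{q}}+\|(\rho,\Phi)\|_{C^{q+r}}\|\psi\|_{C^{0}}\bigr),
\end{equation*}
uniformly for $(\rho,\Phi)$ in some fixed $C^{r}$-neighborhood of the origin. The strategy is an induction on $q$ analogous to the one used in Proposition \ref{inverse.tame}. The base case $q=0$ follows from Proposition \ref{propsoljacobiprojected} applied fiberwise in $v$, where one uses that on $\Sigma_v$ one has a uniform Schauder bound with a constant independent of $v\in S^n$ (by rotational symmetry of the equators in $(S^n,can)$). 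The inductive step differentiates the equation $\mathcal P(\rho,\Phi)(\mathcal S(\rho,\Phi)(\psi))=\psi$ along Killing vector fields $X$ on $T_1S^n$ (generated by $SO(n+1)$): one obtains
\begin{equation*}
\mathcal P(\rho,\Phi)(X\cdot \phi)=X\cdot\psi-[X,\mathcal P(\rho,\Phi)](\phi),
\end{equation*}
with $\phi=\mathcal S(\rho,\Phi)(\psi)$, and the commutator is a second-order linear operator whose coefficients involve one more derivative of $(\rho,\Phi)$; applying the inductive hypothesis and the interpolation inequalities closes the estimate. Since Killing fields span the tangent bundle of $T_1S^n$, this controls all first derivatives of $\phi$ in the combined variable $(x,v)$, and iterating controls all higher derivatives.

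Finally, having continuity and tame estimates of arbitrarily high order, \cite{Ham}, Part II, Theorem 3.1.1 upgrades $\mathcal S$ to a smooth tame map. The main obstacle I expect is the uniformity in $v\in S^n$ of the inductive tame estimates: the Jacobi operator $\mathcal J_v(\rho,\Phi)$ is a family of elliptic operators parametrized by $v$, and while their fiberwise bijectivity and Schauder bounds are uniform thanks to the homogeneity of $(S^n,can)$, carefully accounting for the mixing between derivatives in $x\in\Sigma_v$ and in $v\in S^n$ when commuting with $X\in so(n+1)$ (which moves the fibers of $T_1S^n\to S^n$) is the delicate point, and is precisely what forces the use of Killing vector fields rather than arbitrary differentiation.
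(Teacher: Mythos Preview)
Your approach is correct but differs from the paper's. The paper does not run a global Killing-field induction \`a la Propositions \ref{bounds.pdo.higher}--\ref{inverse.tame}; instead it locally trivializes $T_1S^n$ near each fiber as $S^{n-1}\times D^n$ via rotations $(x,w)\mapsto (R_w^{-1}x,w)$ with $R_w\in SO(n+1)$, $R_w(w)=v$. The point of this trivialization is that linearity in $x$ is preserved, so the $v$-dependent projection $P_v$ becomes the \emph{fixed} projection off linear functions on $S^{n-1}$. The problem then fits exactly into Hamilton's elliptic framework (\cite{Ham}, Part II, Theorem 3.3.3), applied to the operator $\tilde L(f)h+ip$ with $W$ the space of linear functions; the extra parameter $w\in D^n$ is handled by the same differentiating-the-equation scheme underlying that theorem.

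Your route---commuting with Killing fields of the $SO(n+1)$-action on $T_1S^n$---works for the same underlying reason the paper's trivialization works: the $SO(n+1)$-action maps linear functions on $\Sigma_v$ to linear functions on $\Sigma_{Rv}$, hence preserves $C^\infty_{0,odd}(T_1S^n)$, so $X\cdot\phi$ stays in the domain of $\mathcal P(\rho,\Phi)$ and your commutator identity is legitimate. You should state this invariance explicitly, since without it $\mathcal P(\rho,\Phi)(X\cdot\phi)$ is not a priori defined. Your approach has the virtue of being uniform with the treatment of $L(k)$; the paper's has the virtue of invoking Hamilton's machinery off the shelf, with no commutator computation needed.
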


\begin{proof}

We can identify with a diffeomorphism a neighborhood of $\Sigma_v \times \{v\} \subset T_1S^n$ with $S^{n-1}\times D^n$, where $D^n$ is an $n$-dimensional disk. In that way a function $\phi\in C^\infty(T_1S^n)$ can be locally represented by a function $\phi'$  in $C^\infty(S^{n-1}\times D^n)$.
We can use a diffeomorphism of the form $(x,w) \in S^{n-1}\times D^n  \mapsto (R_w^{-1}(x),w) \in T_1S^n$ where $R:D^n \rightarrow SO(n+1)$ is a smooth map satisfying $R_w(w)=v$ (and $R_v=Id$), thinking of $D$ as a neighborhood of $v\in S^n$ and identifying $S^{n-1}$ with $\Sigma_v$.
Hence the function $\phi'$ is linear in $x$ if and only if the function $\phi_w$ is linear for every $w\in D$. And
$$
\int_{S^{n-1}}\phi'(x,w)\eta'(x,w)dA_{can}(x) =\int_{\Sigma_w}\phi(y,w)\eta(y,w)dA_{can}(y).
$$

We will use \cite{Ham}, Part II, Section 3.3 on elliptic equations. It follows from \eqref{eqH} that the map $\mathscr{L}$ sending $(\rho,\Phi)$ to the linear operator 
$$
\phi \mapsto \frac{d}{dt}_{|t=0}\mathcal{H}(\rho,\Phi+t\phi)(x,v)
$$
seen, using the diffeomorphism, as a map taking values in   
$$C^\infty(S^{n-1}\times D^n, \pi_1^*(D^2(S^{n-1})))$$
is smooth tame. Here $\pi_1(x,w)=x$ and $D^2(S^{n-1})$  is the bundle of coefficients of  linear differential operators of degree 2 on $S^{n-1}$. 

We take $W$ to be the space of linear functions on $\mathbb{R}^n$, 
$j:C^\infty(S^{n-1})\rightarrow W$ to be the orthogonal projection and $i:W \rightarrow C^\infty(S^{n-1})$ to be the inclusion.
For some $C^1$ neighborhood $\tilde{U}$ of $\Delta_{S^{n-1}}+(n-1)$ in $C^\infty(S^{n-1},D^2(S^{n-1}))$, the map
$$
\tilde{L}:\big(\tilde{U} \subset C^\infty(S^{n-1},D^2(S^{n-1}))\big) \times C^\infty(S^{n-1}) \times W \rightarrow C^\infty(S^{n-1})\times W
$$
given by $\tilde{L}(f,h,p)=(k=L(f)h+ip, q=jh)$ is an isomorphism. Here $L(f)$ denotes the linear operator associated to $f$. By \cite{Ham}, Part II, Theorem 3.3.3, the solution
$$
\tilde{S}:\big(\tilde{U} \subset C^\infty(S^{n-1},D^2(S^{n-1}))\big)\times C^\infty(S^{n-1})\times W \rightarrow C^\infty(S^{n-1}) \times W,
$$
$\tilde{S}(f,k,q)=(h,p)$, is smooth tame. Notice that if $k$ is orthogonal to the linear functions and if $\tilde{S}(f,k,0)=(h,p)$, then $h$ is
orthogonal to the linear functions and 
$$
(L(f)h)^\perp = k.
$$

In our case we are dealing with functions of an extra parameter (in $C^\infty(S^{n-1}\times D^n)$). Let $\tilde{U}_D\subset C^\infty(S^{n-1}\times D^n,\pi_1^*D^2(S^{n-1}))$ be a $C^1$ neighborhood of $\Delta_{S^{n-1}}+(n-1)$ such that if
$f\in \tilde{U}_D$ then $f_w\in \tilde{U}$ for every $w\in D$. We can assume that $\mathscr{L}(\rho,\Phi)\in \tilde{U}_D$ for all $(\rho,\Phi)\in U$.

We denote by $C^\infty_\perp(S^{n-1}\times D^n)$ the tame space of $\psi'\in C^\infty(S^{n-1}\times D^n)$ such that $x\mapsto \psi'(x,w)$
is orthogonal to the linear functions for every $w\in D^n$.  The same inductive scheme of
Theorem II.3.3.1 and Theorem II.3.3.3 in \cite{Ham} based on differentiating the equation can be used to show that the map
defined by
$$
\tilde{S}_D(f,\psi')_w=\pi_2\big(\tilde{S}(f_w,\psi'_w,0)\big)
$$
for every $w\in D^n$, where $(f,\psi')\in \tilde{U}_D \times C^\infty_\perp(S^{n-1}\times D^n)$, takes values in $C^\infty_\perp(S^{n-1}\times D^n)$ and is smooth tame. Hence the map
$$
\mathcal{S}': ((\rho,\Phi),\psi') \in (U \subset F) \times C^\infty_\perp(S^{n-1}\times D^n)\rightarrow C^\infty_\perp(S^{n-1}\times D^n)
$$
defined  by 
$$
\mathcal{S}'\big(((\rho,\Phi),\psi')\big)= \tilde{S}_D\big(\mathscr{L}((\rho,\Phi)),\psi'\big)
$$
is smooth tame which proves $\mathcal{S}$ is smooth tame.

\end{proof}

\begin{cor} \label{corQVtame}
	The maps
	\begin{equation*}
		V : U\subset F \times H \rightarrow H \quad \text{and} \quad Q : (U\subset F)\times H \times H \rightarrow H
	\end{equation*}
	of Section \ref{problem.formulation} are smooth tame.
\end{cor}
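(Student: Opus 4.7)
The plan is to assemble $V$ and $Q$ as compositions of maps whose smooth tameness has already been established, using that the class of smooth tame maps between tame Fr\'echet spaces is closed under composition, linear combinations, pointwise multiplication, and passage to derivatives (\cite{Ham}, Part II, Theorem 2.1.6 and Theorem 3.1.1).

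First I would handle $V$. The first component $V_1(\rho,\Phi)(b,\psi)=\frac{1}{n-1}\mathcal{R}(\rho,\Phi)(b)$ is smooth tame by Lemma \ref{lemRtame}. For $V_2$, I would use that smooth tameness of $\mathcal{H}$ (Lemma \ref{lemFHtame}) passes to its partial derivative, so that
\[
((\rho,\Phi),f)\longmapsto D_1\mathcal{H}(\rho,\Phi)\cdot f
\]
is a smooth tame map. Composing with the tame linear projection $\Psi\mapsto \Psi-jC(\Psi)$ provided by Lemma \ref{Citame}, then subtracting this from $\psi$, produces a smooth tame map into $C^\infty_{0,odd}(T_1S^n)$; finally, composing with the solution map $\mathcal{S}$ (Lemma \ref{lemStame}) exhibits $V_2$ as a composition of smooth tame maps, hence smooth tame.

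Now for $Q$. The component $Q_2\equiv 0$ is trivially smooth tame. For $Q_1$, the integrand $\widetilde{\psi}(x,v)\,V_2(\rho,\Phi)(b,\psi)(x,v)$ is the pointwise product of two smooth functions on $T_1S^n$, and pointwise multiplication is a smooth tame bilinear operation (\cite{Ham}, Part II, 2.2.3); integrating along the fibres of the submersion $\pi_2 : T_1S^n\to S^n$ is a tame linear map into $C^{\infty}_{even}(S^n)\simeq C^\infty(\mathbb{RP}^n)$, and subtraction of the average over $\mathbb{RP}^n$ is a tame linear map into $C^{\infty}_0(\mathbb{RP}^n)$. Chaining these in the order (construct $V_2$) $\to$ (multiply by $\widetilde{\psi}$) $\to$ (fibrewise integrate) $\to$ (subtract the mean) expresses $Q_1$ as a composition of smooth tame maps that is bilinear in $((\widetilde{b},\widetilde{\psi}),(b,\psi))$, as claimed.

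The whole argument is essentially bookkeeping; the main thing I would want to check carefully is that each intermediate map lands in the correct tame subspace of its target. In particular, $V_2(\rho,\Phi)(b,\psi)$ automatically lies in $C^\infty_{0,odd}(T_1S^n)$ because this is built into $\mathcal{S}$ (Lemma \ref{lemStame}), and $Q_1(\rho,\Phi)\{\cdot,\cdot\}$ lies in $C^\infty_0(\mathbb{RP}^n)$ by the mean-subtraction step. No new analytic input is needed beyond the previously established tame estimates and Hamilton's closure properties for smooth tame maps.
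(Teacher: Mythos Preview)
Your proposal is correct and follows essentially the same approach as the paper: both arguments exhibit $V$ and $Q$ as compositions of the maps $\mathcal{R}$, $\mathcal{S}$, $D_1\mathcal{H}$, the projection $\Psi\mapsto\Psi-jC(\Psi)$, and elementary tame operations (multiplication, fibrewise integration, mean subtraction), invoking Lemmas \ref{Citame}, \ref{lemFHtame}, \ref{lemRtame}, and \ref{lemStame}. You are slightly more explicit than the paper about why $D_1\mathcal{H}$ is smooth tame and about decomposing $Q_1$ into elementary tame steps, but the logic is the same.
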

\begin{proof}
	By definition, $V=(V_1,V_2)$ where
	\begin{equation*}
		V_1(\rho,\Phi)(b,\psi) = \frac{1}{n-1}\mathcal{R}(\rho,\Phi)(b)
	\end{equation*}
	and
	\begin{equation*}
		V_2(\rho,\Phi)(b,\psi) = \mathcal{S}(\rho,\Phi)(\psi - (D_1\mathcal{H}(\rho,\Phi)\cdot f - j C(D_1\mathcal{H}(\rho,\Phi)\cdot f))	
	\end{equation*}	
	with $f=V_1(\rho,\Phi)\cdot (b,\psi)$.	
	By Lemmas  \ref{Citame}, \ref{lemFHtame}, \ref{lemRtame} and \ref{lemStame}, we conclude that $V_1$, $V_2$ and therefore $V$ are all smooth tame. \\	
	\indent The map $Q$ can be written as the map
	\begin{equation*}
		Q(\rho,\Phi)\cdot\{(\widetilde{b},\widetilde{\psi}),(b,\psi)\} = (Q_1(\rho,\Phi)\cdot \{(\widetilde{b},\widetilde{\psi}), (b,\psi)\}, 0 )
	\end{equation*}
	where
	\begin{align*}
	& Q_1(\rho,\Phi)\cdot \{(\widetilde{b},\widetilde{\psi}), (b,\psi)\}(\sigma) & \\
	& \quad \quad = \int_{\Sigma_\sigma}	\widetilde{\psi}(x,v)(V_2(\rho,\Phi)\cdot (b,\psi))(x,v)dA_{can}(x)	& \\
	& \quad \quad \quad - \dashint_{{\mathbb{RP}}^n}\left( \int_{\Sigma_\sigma}	\widetilde{\psi}(x,v)(V_2(\rho,\Phi)\cdot (b,\psi))(x,v)dA_{can}(x)\right) dA_{can}(\sigma) &
	\end{align*}
	The tameness of $Q$ follows from the tameness of $V_2$. 

\end{proof}


\section{Proofs of the Main Theorems} We prove the theorems stated in the Introduction.

\subsection{Proof of Theorem A}
The Fr\'echet spaces appearing in the definitions of the maps $\Lambda$, $V$ and $Q$ are tame Fr\'echet spaces with their standard gradings by Lemma \ref{lemFHtame}. The map $\Lambda$ defined in {Section \ref{problem.formulation}} is smooth tame by Lemma  \ref{lemFHtame}, and so are the maps $V$ and $Q$ defined in Section \ref{sectionhypotheses} due to Corollary \ref{corQVtame}. Hence, we can apply Theorem \ref{thmift} and conclude the existence of an open subset $W\subset U\subset F$ containing the origin and a smooth tame map
\begin{equation*}
	\Gamma : Ker(D\Lambda(0,0))\cap W \rightarrow \Lambda^{-1}(0)
\end{equation*}
such that $\Gamma(0)=0$ and $D\Gamma(0)v=v$ for all $v\in Ker(D\Lambda(0,0))$.

\begin{prop} \label{propkerLambda}
	The kernel of $D\Lambda(0,0)$ consists of all the pairs $(f,\phi)$ in $C^{\infty}(S^n)\times C^{\infty}_{0,odd}(T_1S^n)$ such that
	\begin{itemize}
		\item[$i)$]	$f$ is the sum of a constant function and an odd function; 
		\item[$ii)$] $\phi$ is such that
		\begin{equation*}
			\Delta_{(\Sigma_v,can)}\phi_v + (n-1)\phi_v = {(n-1)\langle \nabla f,v\rangle} \quad \text{on} \quad \Sigma_v
		\end{equation*}
		for every $v\in S^n$.
	\end{itemize}
	Moreover, every $f$ as in $i)$ uniquely determines $\phi$ as in $ii)$.
\end{prop}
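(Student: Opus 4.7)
\bigskip

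\noindent\textbf{Proof plan for Proposition \ref{propkerLambda}.}

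The plan is to linearize each of $\Lambda_1$ and $\Lambda_2$ at $(0,0)$ separately. The $\Lambda_1$-equation will pin down the allowed functions $f$, and the $\Lambda_2$-equation will then pin down $\phi$ uniquely in terms of $f$.

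\smallskip

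\emph{Step 1: Linearizing $\Lambda_1$.} From Proposition \ref{propD_1A} we have $D_1\Lambda_1(0,0)\cdot f=(n-1)\bigl(\mathcal{F}(f)-\dashint_{\mathbb{RP}^n}\mathcal{F}(f)\bigr)$. From \eqref{d1lambda1}, $D_2\Lambda_1(0,0)\cdot \phi = \int_{\Sigma_v}\mathcal{H}(0,0)(x,v)\phi(x,v)\,dA_{can}(x)-\mathrm{avg}$, which vanishes because the equators are minimal, i.e.\ $\mathcal{H}(0,0)=0$. So $D\Lambda_1(0,0)(f,\phi)=0$ is the condition that the Funk transform $\mathcal{F}(f)$ is constant on $\mathbb{RP}^n$. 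Splitting $f=f_e+f_o$ into even and odd parts, one has $\mathcal{F}(f_o)=0$ since equators are antipodally symmetric, so $\mathcal{F}(f)=\mathcal{F}(f_e)$. The injectivity of the Funk transform on even functions (Corollary A.2 / standard spherical harmonics decomposition, also to be cited) then forces $f_e$ to be constant. Conversely, constants plus odd functions clearly give constant Funk transform. This establishes (i).

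\smallskip

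\emph{Step 2: Linearizing $\Lambda_2$.} By \eqref{d1lambda2}, \eqref{d2lambda2} and Proposition \ref{propDH_1}(i) applied at $(\rho,\Phi)=(0,0)$, using $\mathcal{H}(0,0)=0$, $\Sigma(0)(x,v)=x$, $N(0)(x,v)=v$, we obtain
\[
D_1\mathcal{H}(0,0)\cdot f\,(x,v)=(n-1)\langle \nabla f(x),v\rangle.
\]
Also $D_2\Lambda_2(0,0)\cdot \phi=\mathcal{P}(0,0)\phi$. For $\phi\in C^{\infty}_{0,odd}(T_1S^n)$ one has $\mathcal{J}_v(0,0)\phi_v\in \ker(\mathcal{J}_v(0,0))^{\perp}$ by self-adjointness of $\mathcal{J}_v(0,0)$, so $jC(\mathcal{J}(0,0)\phi)=0$ and thus $\mathcal{P}(0,0)\phi=\mathcal{J}(0,0)\phi$. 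The equation $D\Lambda_2(0,0)(f,\phi)=0$ therefore reads
\[
\mathcal{J}(0,0)\phi \;=\; -(n-1)\bigl[\langle \nabla f,v\rangle - jC(\langle \nabla f,\cdot\rangle)\bigr].
\]

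\smallskip

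\emph{Step 3: The correction term $jC$ vanishes for the admissible $f$.} The solvability of the Jacobi equation on $\Sigma_v$ requires the right-hand side to be $L^2$-orthogonal to the linear functions $x\mapsto \langle x,u\rangle$, i.e.\ $\int_{\Sigma_v}\langle \nabla f(x),v\rangle\langle x,u\rangle\,dA_{can}(x)=0$ for all $u\in T_vS^n$. For $f$ constant this is automatic. For $f$ odd, a short computation (differentiating $f(-x)=-f(x)$) shows that $\nabla f$ is \emph{even} as a map $S^n\to\mathbb{R}^{n+1}$, hence $\langle\nabla f(x),v\rangle\langle x,u\rangle$ is odd in $x$, and its integral over the antipodally symmetric sphere $\Sigma_v$ vanishes. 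Thus $C(\langle\nabla f,\cdot\rangle)=0$, so the $jC$ correction drops out, and the equation reduces to
\[
-\Delta_{(\Sigma_v,can)}\phi_v-(n-1)\phi_v = -(n-1)\langle \nabla f,v\rangle\qquad\text{on }\Sigma_v,
\]
equivalent to the PDE in (ii).

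\smallskip

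\emph{Step 4: Existence, uniqueness, and regularity of $\phi$.} By Remark \ref{rmkJacobicanonico} (Fredholm alternative for $\mathcal{J}_v(0,0)$ on $\ker^{\perp}$), for each $v$ the PDE admits a unique solution $\phi_v\in\ker(\mathcal{J}_v(0,0))^{\perp}\cap C^{\infty}(\Sigma_v)$. Joint smoothness of $\phi$ in $(x,v)$ follows by elliptic regularity and differentiation of the equation in $v$, as used in the discussion before \eqref{eqP}. The oddness $\phi(x,-v)=-\phi(x,v)$ follows from the uniqueness: replacing $v$ by $-v$ leaves $\Sigma_v$ unchanged and negates the RHS, so $-\phi_v$ solves the equation for $-v$ in $\ker^{\perp}$, hence equals $\phi_{-v}$. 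Thus $\phi\in C^{\infty}_{0,odd}(T_1S^n)$ is uniquely determined by $f$, which is (ii) together with the final uniqueness assertion.

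\smallskip

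\emph{Main obstacle.} The only non-routine input is the characterization of constants as the unique even functions with constant Funk transform (Step 1); everything else is a matter of organizing linearizations and invoking the Fredholm/regularity statements already recorded in Remark \ref{rmkJacobicanonico} and Proposition \ref{propsoljacobiprojected}. The parity argument for the vanishing of $jC$ in Step 3 is the one point where the oddness hypothesis on $f$ is genuinely used.
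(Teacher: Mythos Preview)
Your proof is correct and follows essentially the same approach as the paper: linearize $\Lambda_1$ to get the Funk-transform condition (invoking Lemma~A.1), compute $D_1\mathcal{H}(0,0)\cdot f=(n-1)\langle\nabla f,v\rangle$, use parity to see that this lies in $C^\infty_{0,odd}(T_1S^n)$ so the $jC$-correction drops out, and then invoke Remark~\ref{rmkJacobicanonico} for existence and uniqueness of $\phi$. Your Steps~3--4 spell out the parity and oddness/regularity checks a bit more explicitly than the paper does, but the argument is the same.
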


\begin{proof}
{
From \eqref{d1lambda1} we have
\begin{equation*}
	D_1\Lambda_1(0,0)\cdot f = (n-1)\left( \mathcal{F}(0,0)(f) - \dashint_{{\mathbb{RP}}^n}\mathcal{F}(0,0)(f)(\sigma)dA_{can}(\sigma)\right).
\end{equation*}
Also $D_2\Lambda_1(0,0)\cdot \phi=0$ since $\mathcal{H}(0,0)=0$.
Hence, we  see from Lemma A.1 that $D\Lambda_1(0,0)\cdot (f,\phi)=0$ is equivalent to $f$ being the sum of a constant function and an odd function. 

From Proposition \ref{propDH_1}, item $i)$, 
\begin{equation*}
		(D_{1}\mathcal{H}(0,0)\cdot f)(x,v) = (n-1)\langle \nabla f (x),v\rangle
\end{equation*}
for  all $(x,v)$ in $T_1S^n$.  In particular $D_{1}\mathcal{H}(0,0)\cdot f\in C^{\infty}_{0,odd}(T_1S^n)$, because
for each $v\in S^n$ the function $x\mapsto \langle \nabla f(x),v\rangle$ is an even function on $\Sigma_v$. From \eqref{d1lambda2} and \eqref{d2lambda2} we see that $D\Lambda_2(0,0)\cdot(f,\phi)=0$ is equivalent to
\begin{equation*}
   (n-1)\langle \nabla f ,v\rangle + \mathcal P_v(0,0)(\phi_v)=0
	\end{equation*} 
	for all $v\in S^{n}$.
	The result follows from Remark \ref{rmkJacobicanonico} and the fact that $\phi \in C^{\infty}_{0,odd}(T_1S^n)$.
}
\end{proof}

\indent Given an arbitrary $f\in C^{\infty}_{odd}(S^n)$, there is a unique function $\phi\in C^{\infty}_{0,odd}(T_{1}S^n)$ such that 
\begin{equation*}
	(f,\phi) \in ker(D\Lambda(0,0))
\end{equation*}
by Proposition \ref{propkerLambda}. Then, there exists $\delta>0$ such that
\begin{equation*}
	(\rho_t,\Phi_t) = \Gamma(tf,t\phi) \in \Lambda^{-1}(0), \quad t\in (-\delta,\delta),
\end{equation*}
(as in Corollary \ref{corift}) defines a path of conformal metrics $e^{2\rho_t}can$ on the sphere $S^n$ and families $\{\Sigma_{\sigma}(\Phi_t)\}_{\sigma\in {\mathbb{RP}}^n}$ of $e^{2\rho_t}can$-minimal spheres. Notice that $\rho_0=0$ and $\dot{\rho}_0=f$. A map $\lambda:W\rightarrow \mathcal{Z}$ can be defined as
$$
\lambda(f)=e^{2\Gamma_1(f, -\mathcal{S}(0,0) (D_{1}\mathcal{H}(0,0)\cdot f))}can.
$$
 Since $\frac{d}{dt}_{|t=0} area(\Sigma_{\sigma}(\Phi_t),e^{2\rho_t}can)=0$, appropriate deformations in $\mathcal{Z}'$ can be obtained by scaling. This finishes the proof of Theorem A. 

\subsection{Proof of Theorem B}

Let  $$\mathcal{L}=\{\Sigma_{p}: \sigma=[p]\in \RP^3\}$$
 be the space of totally geodesic spheres in $S^3$.

 \begin{prop} \label{propuniqnear} For every neighborhood $\mathcal U$ of $\mathcal{L}$ in the space of smoothly embedded spheres in $S^3$,  there is a  neighborhood $V$ in the smooth topology of  the canonical metric such that if $g\in V$,  every minimal embedded sphere of $(S^3,g)$  lies in $\mathcal U$.
 \end{prop}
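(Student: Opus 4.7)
I would argue by contradiction. Suppose the conclusion fails: then there exist a neighborhood $\mathcal{U}_0$ of $\mathcal{L}$ and a sequence of metrics $g_i\to can$ in $C^\infty$, together with smoothly embedded minimal spheres $\Sigma_i\subset(S^3,g_i)$ with $\Sigma_i\notin\mathcal{U}_0$. The goal is to extract a subsequential smooth limit $\Sigma_\infty$ of $\{\Sigma_i\}$ which is an embedded minimal $2$-sphere in $(S^3,can)$; Almgren's rigidity theorem then forces $\Sigma_\infty\in\mathcal{L}$, so smooth convergence places $\Sigma_i$ in $\mathcal{U}_0$ for large $i$, a contradiction.

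For the compactness step, $can$ has $\mathrm{Ric}=2\,can$, so for $i$ large $(S^3,g_i)$ has uniformly positive Ricci curvature, uniformly bounded diameter by Myers, and uniform pointwise curvature bounds. The $\Sigma_i$ have genus zero. A smooth compactness theorem for embedded minimal surfaces of bounded genus in a smoothly convergent sequence of closed $3$-manifolds with positive Ricci curvature then yields a subsequence along which $\Sigma_i$ converges, with integer multiplicity $m\geq 1$, to a smoothly embedded closed minimal surface $\Sigma_\infty\subset(S^3,can)$ of genus zero. The nontrivial input here is an area upper bound on $\{\Sigma_i\}$, which in this regime can be extracted by combining Gauss-Bonnet with the inequality $K_\Sigma\leq K_{amb}$ valid on minimal $2$-surfaces, monotonicity, and positivity of Ricci.

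Next, I would rule out higher multiplicity. Since $S^3$ is simply connected, each $\Sigma_i$ is two-sided. A standard argument shows that a multiplicity $m\geq 2$ convergence of two-sided embedded minimal surfaces with smoothly converging ambient metrics produces a nontrivial nonnegative Jacobi field on $\Sigma_\infty$ (by taking the limit of rescaled differences of consecutive graphical sheets). On $\Sigma_\infty$, realized as a minimal $2$-sphere in round $S^3$, the Jacobi operator is $-\Delta_{(\Sigma_\infty,can)}-2$; by (the proof of) Almgren's rigidity, $\Sigma_\infty$ is an equator, and by Remark \ref{rmkJacobicanonico} the kernel of this operator on an equator consists of linear functions $\langle\cdot,u\rangle$, each of which changes sign. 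This forces $m=1$ and the convergence is smooth as embedded surfaces.

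Finally, Almgren's classical rigidity theorem states that any smooth minimal immersion $S^2\to(S^3,can)$ is totally geodesic, so $\Sigma_\infty$ is an equator, i.e.\ $\Sigma_\infty\in\mathcal{L}$; the smooth convergence $\Sigma_i\to\Sigma_\infty$ then places $\Sigma_i\in\mathcal{U}_0$ for $i$ large, the desired contradiction. The principal obstacle in this scheme is securing the area upper bound that underpins the compactness step; the multiplicity reduction via Jacobi fields and the appeal to Almgren's rigidity are comparatively routine once that bound is in hand.
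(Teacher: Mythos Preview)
Your approach is the same as the paper's: argue by contradiction, pass to a subsequential limit of the $\Sigma_i$ in $(S^3,can)$ via compactness, and invoke Almgren's rigidity to force the limit into $\mathcal{L}$. The paper simply cites the Choi--Schoen compactness theorem \cite{ChoSch} and Almgren \cite{Almgren66} in one line, whereas you unpack the compactness step.

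One correction to your sketch: the combination of Gauss--Bonnet with $K_\Sigma\leq K_{amb}$ gives a \emph{lower} area bound, not an upper one. The upper area bound that drives Choi--Schoen compactness for spheres comes instead from the Choi--Wang estimate $\lambda_1(\Sigma)\geq \tfrac{1}{2}\inf\mathrm{Ric}$ together with Hersch's inequality $\lambda_1(\Sigma)\cdot\mathrm{area}(\Sigma)\leq 8\pi$. Once that is in place, your multiplicity-one reduction (a limiting nonnegative Jacobi field would have to lie in the kernel of $-\Delta-2$ on the equator, but that kernel consists of sign-changing linear functions) is a clean alternative to the usual ``no stable minimal surface in positive Ricci'' argument, and the rest goes through as you describe.
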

 \begin{proof}
 This follows by the compactness result of \cite{ChoSch} applied to the space of minimal spheres and the fact that every minimal embedded sphere in $(S^3,can)$ is totally geodesic \cite{Almgren66}.
  \end{proof}
  
  We can choose   $V$ so that the Perturbation Theorem 3.2 of \cite{Whi} gives a  map
\begin{equation*}
\bar \psi:V\times \RP^3 \rightarrow \mathcal U
\end{equation*}
so that $\sigma\mapsto\bar\psi(g,\sigma)$ is an embedding for all $g\in V$, $\bar \psi(can,[p])=\Sigma_{p}$, and $\bar 
\psi(g,\RP^3)$ contains every minimal embedded sphere with respect to the metric $g$ that lies in $\mathcal U$. Hence by the previous lemma $\bar \psi(g,\RP^3)$ contains every minimal embedded  sphere with respect to $g\in V$.

Suppose $g=e^{2\rho}can\in  V$ is conformal to the standard metric. For $[p]\in \RP^3$,  $\bar \psi(g,[p])$ can be written as a graph over $\Sigma_p$ of a function $
\psi(\rho)_p$  as in \eqref{graph}. This defines a function $\psi(\rho)$ on $T_1S^n$ which according to Perturbation Theorem 3.2 of \cite{Whi}
will be of class $C^k$ (where $k$ can  be chosen a priori) and $C^k$ close to the origin (by adjusting the neighborhood $V$).

  We now claim the existence of $\alpha>0$ and smooth functions
   $$G_i':T_1S^3\times\R\times\R\rightarrow \R\quad i=1,\ldots,4,$$ 
   such that for all  $v\in S^3$, and $ i=1,\ldots,4$,
   \begin{equation}\label{orthogonal.condition}
\int_{\Sigma_{v}}\psi(\rho)_{v} x_idA_{can}=\int_{\Sigma_v}G_i'((x,v),\psi(\rho)_v,|\nabla^{\Sigma_v} \psi(\rho)_v|^2)dA_{can}(x)
\end{equation}
   and, for all $\omega\in T_1S^3,$ $s,t\in \R$, and $i=1,\ldots,4$, the following conditions are met:
  \begin{equation}\label{constant.quadratic}
  |G_i'(\omega,s,t)|+|D_\omega G_i'|(\omega,s,t)\leq \alpha(|s|^2+|t|), 
  \end{equation}
  \begin{equation}\label{constant.quadratic2}
  |D_sG_i'|(\omega,s,t)\leq\alpha(|s|+|t|),  \quad\mbox{and}\quad |D_t G_i'|(\omega,s,t)\leq \alpha.
  \end{equation}
  
  With  $M(4\times 4)=\R^{16}$ being the space of $4\times 4$ matrices  we  choose   $$\Omega:\mathcal U\ \rightarrow M(4\times 4),\quad \Omega(\Sigma)=\left(\int_{\Sigma}x_ix_jdA_{can}\right)_{i,j}.$$
The map $\Omega$ is an embedding of  $\mathcal L$ and let  $L=\Omega(\mathcal L)$. {Let $P$ be  the smooth map which projects a tubular neighborhood of $L$ onto $L$.} The map $\bar\psi$ (as constructed in \cite[Theorem 3.2]{Whi})  has the property that  for all $\sigma \in \RP^3$
{$$P\circ \Omega(\bar\psi(g,\sigma))=P\circ\Omega(\bar\psi(can,\sigma))$$}
and so
\begin{equation}\label{white.condition} \left(\Omega(\bar\psi(g,\sigma))-\Omega(\Sigma_{\sigma})\right)\bot T_{\Omega(\Sigma_{\sigma})} L.
\end{equation}
Notice that
\begin{multline*}
\Omega(\bar\psi(g,[v]))_{ij}-\Omega(\Sigma_{[v]})_{ij}=
\int_{\Sigma_v(\psi(\rho))}x_ix_jdA_{can}- \int_{\Sigma_v}x_ix_jdA_{can} \\
=\int_{\Sigma_v}\psi(\rho)_v(x_i\langle v,e_j\rangle+ x_j\langle v,e_i\rangle)dA_{can}\\
+\int_{\Sigma_v}G_{i,j}((x,v),\psi(\rho)_v,|\nabla^{\Sigma_v} \psi(\rho)_v|^2)dA_{can}(x)
\end{multline*}
where $G_{i,j}:T_1S^3\times\R\times\R\rightarrow \R$ is a smooth function satisfying  similar conditions to \eqref{constant.quadratic} and \eqref{constant.quadratic2}  for some constant $\alpha$. Set
$$G_i((x,v),s,t)=-\sum_{j=1}^4\langle v,e_j\rangle G_{i,j}((x,v),s,t)$$
and
$$G((x,v),s,t)=\sum_{i,j=1}^4\langle v,e_i\rangle \langle v,e_j\rangle G_{i,j}((x,v),s,t).$$
We obtain for $i=1,\ldots,4$,
\begin{align*}
w_i& = \sum_{j=1}^4(\Omega(\bar\psi(g,[v]))_{ij}-\Omega(\Sigma_{[v]})_{ij})\langle v,e_j\rangle
 \\
& =\int_{\Sigma_v}\psi(\rho)_vx_idA_{can}-\int_{\Sigma_v}G_{i}((x,v),\psi(\rho)_v,|\nabla^{\Sigma_v} \psi(\rho)_v|^2)dA_{can}(x).
\end{align*}

For each $Z$ orthogonal to $v$, we have that the vector $(Z_iv_j+Z_jv_i)_{i,j}$ is in $T_{\Omega(\Sigma_{[v]})} L$. Hence condition \eqref{white.condition} implies that
$$
\sum_{i,j} \big(\Omega(\bar\psi(g,[v]))_{ij}-\Omega(\Sigma_{[v]})_{ij}\big)(Z_iv_j+Z_jv_i)_{i,j}=0
$$
for such $Z$. Hence $\sum_i w_iZ_i=0$. This implies that 
vector $w=\sum_{i=1}^4w_ie_i\in \R^4$ is parallel to $v$ and thus $w=\langle w, v\rangle v$. 

On the other hand
$$\langle w, v\rangle=\int_{\Sigma_v}G((x,v),\psi(\rho)_v,|\nabla^{\Sigma_v} \psi(\rho)_v|^2)dA_{can}(x)$$
and so
\begin{multline*}
\int_{\Sigma_v}\psi(\rho)_vx_idA_{can}=\langle v,e_i\rangle\int_{\Sigma_v}G((x,v),\psi(\rho)_v,|\nabla^{\Sigma_v} \psi(\rho)_v|^2)dA_{can}(x)\\
+\int_{\Sigma_v}G_{i}((x,v),\psi(\rho)_v,|\nabla^{\Sigma_v} \psi(\rho)_v|^2)dA_{can}(x).
\end{multline*}
This proves the desired claim.
\medskip

  Given $\phi\in C^1(T_1S^3)$ and $u\in T_vS^3$, we consider $\partial_u\phi\in C^0(\Sigma_v)$ defined as
 $$\partial_u\phi(x)=d\phi_{(x,v)}(-\langle x, u\rangle v,u).$$
We will need the following pointwise estimate \begin{equation}\label{pointwise}
|\partial_u|\nabla^{\Sigma_v} \psi(\rho)_v|^2|\leq \alpha_1(|\nabla^{\Sigma_v} \partial_u\psi(\rho)|^2+|\nabla^{\Sigma_v} \psi(\rho)_v|^2),
\end{equation}
where $\alpha_1$ is some positive constant.

Recall that $\Sigma(\psi(\rho))(x,v)=\cos(\psi(\rho)(x,v))x+\sin(\psi(\rho)(x,v)) v.$
  We define $\beta(\rho)=\rho\circ\Sigma(\psi(\rho)).$
 
From \eqref{eqformulaA} and Lemma \ref{lembasicformulae} (iii)  we see that  there is a smooth function
$$A:T_1S^3\times\R\times\R\times\R\rightarrow \R$$ such that
\begin{multline}\label{expansion.A}
\mathcal A(\rho,\psi(\rho))(v)=4\pi+2\int_{\Sigma_v}\beta(\rho)dA_{can}\\
+\int_{\Sigma_v}A(\beta(\rho),\psi(\rho)_v,|\nabla^{\Sigma_v} \psi(\rho)_v|^2)dA_{can}(x),
\end{multline}
where  $A$ satisfies  for some other positive constant $\alpha$, 
\begin{equation}\label{constant.quadratic3}
  |A(l,s,t)|\leq \alpha(|l|^2+|s|^2+|t|), 
  \end{equation}
  and
  \begin{equation*}
 	|D_lA|(\omega,l,s,t)\leq\alpha(|l|+|s|^2+|t|), \, |D_sA|(\omega,l,s,t)\leq\alpha|s|,\end{equation*}
 \begin{equation} \label{constant.quadratic4}
 	 |D_t A|(\omega,s,t)\leq \alpha.
  \end{equation}
  
We can suppose that the  $C^{3,\alpha}$-norm of $\psi(\rho)$ and $\rho$ are uniformly bounded by some positive constant $C_0$ that is independent of $\rho\in V$.  We use the notation $f \lesssim g$ to mean an inequality of the form $f\leq Cg$ where $C$ is a constant that depends only on $C_0$ and $\alpha$. 
 
 The $k$-derivatives of $\phi\in C^k(T_1S^3)$ are denoted by $D^k\phi$. We define $$\nabla^k\beta(\rho)=\nabla^k \rho\circ \Sigma(\psi(\rho)),\quad k=1,2.$$  With this notation we have for all $(x,v)\in T_1S^3$
$$|D\beta(\rho)|(x,v)\lesssim |\nabla\beta(\rho)|(x,v)$$
and
$$|D^2\beta(\rho)|(x,v)\lesssim |\nabla^2\beta(\rho)|(x,v) +|\nabla\beta(\rho)|(x,v).$$
We use $||\phi||_{H^k(\Sigma_v)}$ to denote $||\phi_v||_{H^k(\Sigma_v)}$. We write  $||\nabla^k\beta(\rho)||_{L^2(\Sigma_v)}$ to denote the $L^2$-norm of $|\nabla^k\rho|\circ \Sigma(\psi(\rho))$ restricted to $\Sigma_v$.

\begin{lem}\label{funk.estimate} 
 We have (by adjusting $V$ if necessary)  that for all metrics $e^{2\rho}can \in V\cap \mathcal{Z}$:
  $$
  ||\psi(\rho)||_{H^2(\Sigma_v)}\lesssim||\nabla\beta(\rho)||_{L^2(\Sigma_v)}
$$
and
$$
 ||\partial_u\psi(\rho)||_{H^2(\Sigma_v)}
\lesssim  ||\nabla\beta(\rho)||_{L^2(\Sigma_v)}+||\nabla^2\beta(\rho)||_{L^2(\Sigma_v)}
$$
for all $v\in S^3$ and unit vectors $u\in T_vS^3$.
 \end{lem}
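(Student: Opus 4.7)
The plan is to convert the minimality of the hypersurfaces $\Sigma_\sigma(\psi(\rho))$ in $(S^3,e^{2\rho}can)$ into an elliptic PDE for $\psi(\rho)_v$ on each equator $\Sigma_v$, and to close the estimate via the orthogonal decomposition furnished by Proposition \ref{propsoljacobiprojected} together with the orthogonality identity \eqref{orthogonal.condition}.

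First, since $e^{2\rho}can\in V\cap\mathcal{Z}$ and $V$ is chosen small, every embedded minimal sphere of $(S^3,g)$ belongs to the Zoll family (by Proposition \ref{propuniqnear} and the construction of $\bar\psi$), so $\mathcal H(\rho,\psi(\rho))=0$. A Taylor expansion of the Euler--Lagrange operator about $(\rho,\Phi)=(0,0)$, using the explicit formula \eqref{eqH} and Lemma \ref{lembasicformulae}, yields on each $\Sigma_v$ an identity of the form
\[
\mathcal J_v(0,0)\psi(\rho)_v \;=\; -(n-1)\langle\nabla\rho,v\rangle \;+\; R_v(\rho,\psi(\rho)),
\]
where $R_v$ is a remainder pointwise bounded, using the a priori $C^{3,\alpha}$ bound, by
\[
|R_v|\;\lesssim\;|\psi(\rho)|\bigl(|\nabla^2\psi(\rho)|+|\nabla\psi(\rho)|+|\psi(\rho)|+|\nabla\rho|\bigr)+|\nabla\psi(\rho)|^2,
\]
so $\|R_v\|_{L^2(\Sigma_v)}\lesssim \|\psi(\rho)\|_{C^1}\bigl(\|\psi(\rho)_v\|_{H^2(\Sigma_v)}+\|\rho\|_{C^1}\bigr)$.

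Decompose $\psi(\rho)_v = P_v\psi(\rho)_v + Q_v\psi(\rho)_v$ relative to $\ker \mathcal J_v(0,0)$, which is spanned by the restrictions of the coordinate functions $x_1,\ldots,x_4$ to $\Sigma_v$ (Remark \ref{rmkJacobicanonico}). Applying $P_v$ to the equation and using Proposition \ref{propsoljacobiprojected},
\[
\|P_v\psi(\rho)_v\|_{H^2(\Sigma_v)}\;\lesssim\;\|\nabla\rho\|_{L^2(\Sigma_v)}+\|R_v\|_{L^2(\Sigma_v)}.
\]
For the kernel component, \eqref{orthogonal.condition} combined with \eqref{constant.quadratic} gives $|\langle \psi(\rho)_v,x_i\rangle_{L^2(\Sigma_v)}|\lesssim \|\psi(\rho)\|_{C^1}^2$, hence by finite-dimensional norm equivalence $\|Q_v\psi(\rho)_v\|_{H^2(\Sigma_v)}\lesssim \|\psi(\rho)\|_{C^1}^2$. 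Combining both bounds and absorbing $\|\psi(\rho)\|_{C^1}\|\psi(\rho)_v\|_{H^2}$ into the left-hand side (possible because $V$ can be shrunk to keep $\|\psi(\rho)\|_{C^1}$ small) yields $\|\psi(\rho)_v\|_{H^2(\Sigma_v)}\lesssim \|\nabla\rho\|_{L^2(\Sigma_v)}$. The change of variables $y=\Sigma_v(\psi(\rho))(x)$ has Jacobian close to $1$ by Lemma \ref{lembasicformulae} (iii); together with a standard trace comparison between $\Sigma_v$ and its $C^0$-close graph $\Sigma_v(\psi(\rho))$ (using the $C^{3,\alpha}$ bound on $\rho$), this gives $\|\nabla\rho\|_{L^2(\Sigma_v)}\lesssim \|\nabla\beta(\rho)\|_{L^2(\Sigma_v)}$, proving the first estimate.

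For the second estimate, differentiate the Jacobi-type PDE in the direction $u\in T_vS^3$: this produces an equation for $\partial_u\psi(\rho)$ whose source is $-(n-1)\langle\partial_u\nabla\rho,v\rangle-(n-1)\langle\nabla\rho,u\rangle$ plus a remainder controlled in the same style, and differentiating \eqref{orthogonal.condition} controls the kernel component of $\partial_u\psi(\rho)$ analogously. Applying the same decomposition-and-absorption scheme (and using the already-proven first estimate to bound the new remainders) yields $\|\partial_u\psi(\rho)\|_{H^2(\Sigma_v)}\lesssim \|\nabla\rho\|_{L^2(\Sigma_v)}+\|\nabla^2\rho\|_{L^2(\Sigma_v)}$; the chain rule applied to $\beta(\rho)=\rho\circ\Sigma(\psi(\rho))$, combined with the first estimate, converts this into the required bound by $\|\nabla\beta(\rho)\|_{L^2(\Sigma_v)}+\|\nabla^2\beta(\rho)\|_{L^2(\Sigma_v)}$. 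The main obstacle is the systematic bookkeeping of the quadratic remainders across all steps and ensuring the absorption closes uniformly in $v\in S^3$; this is where the hypothesis $e^{2\rho}can\in V$ (with $V$ adjusted to keep $\|\psi(\rho)\|_{C^1}$ small) is essential.
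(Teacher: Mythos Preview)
Your argument is correct and follows essentially the same scheme as the paper: linearize $\mathcal H(\rho,\psi(\rho))=0$ around $(0,0)$, invert $\mathcal J_v(0,0)$ on the orthogonal complement of its kernel, control the kernel component via \eqref{orthogonal.condition}, and absorb the quadratic remainders by shrinking $V$. The one organizational difference is that the paper uses the exact conformal mean-curvature identity to write $\mathcal H(0,\psi(\rho))$ directly as $-(n-1)\cos^{n-1}(\psi(\rho))\,\langle\nabla\rho\circ\Sigma(\psi(\rho)),N(\psi(\rho))\rangle$, which immediately yields the bound by $\|\nabla\beta(\rho)\|_{L^2(\Sigma_v)}$ and makes your final conversion step from $\|\nabla\rho\|_{L^2(\Sigma_v)}$ to $\|\nabla\beta(\rho)\|_{L^2(\Sigma_v)}$ unnecessary. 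Two small points: your pointwise bound on $R_v$ omits the pure-$\rho$ quadratic contribution (of size $|\rho|\,|\nabla\rho|$) coming from the expansion of $e^{(n-1)\rho}$, but this is harmlessly absorbed; and Proposition~\ref{propsoljacobiprojected} is a H\"older estimate, whereas here one needs the standard $H^2$--$L^2$ elliptic estimate for $\mathcal J_v(0,0)$ on $\ker^\perp$, which the paper simply invokes as ``energy estimates.''
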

 
  \begin{proof}
  
  Since the generalized Gauss map $\mathcal{G}(\psi(\rho))$ is a diffeomorphism, for every tangent plane at some point of $S^3$ there is a unique surface among the $\{\Sigma_\sigma(\psi(\rho))\}_{\sigma\in \mathbb{RP}^3}$ tangent to it. Therefore if
$g=e^{2\rho}can \in \mathcal{Z}$ it follows that all $\Sigma_\sigma(\psi(\rho))$ are minimal  in $(S^3,g)$.

Using \eqref{eqH} and the fact that $\mathcal H(\rho,\psi(\rho))=0$ we see that $\mathcal H(0,\psi(\rho))$ can be expressed in terms of $\psi(\rho)$,  $D\psi(\rho)$ and $\nabla \beta(\rho)$. For our purposes the important properties are that, for all $v\in S^3$,
 \begin{equation}\label{l2.norm.h}
 \int_{\Sigma_v}|\mathcal H(0,\psi(\rho))|^2(x,v)dA_{can}(x)\lesssim||\nabla \beta(\rho)||^2_{L^2(\Sigma_v)}
 \end{equation} 
 and, for every unit vector $u\in T_vS^3$,
 \begin{equation}\label{h2.norm.h}
 \int_{\Sigma_v}|\partial_u\mathcal H(0,\psi(\rho))|^2dA_{can}
 \lesssim||\nabla\beta(\rho)||^2_{L^2(\Sigma_v)}+||\nabla^2\beta(\rho)||^2_{L^2(\Sigma_v)}.
 \end{equation}
 
 From the expression for $\mathcal H(0,\psi(\rho))$ given by \eqref{eqH} we see that
 $$E(\rho)=\mathcal J_v(0,0)(\psi(\rho))- \mathcal H(0,\psi(\rho))$$
is a quadratic term:
\begin{equation}\label{error.l2}
|E(\rho)|(x,v) \lesssim \psi(\rho)^2(x,v)+|\nabla^{\Sigma_v}\psi(\rho)|^2(x,v).
\end{equation}
Differentiating both sides of the expression for  $E(\rho)$  we have that for all $u\in T_vS^3$,
$$E_u(\rho)=\mathcal J_v(0,0)(\partial_u\psi(\rho))- \partial_u\mathcal H(0,\psi(\rho))$$
is also a quadratic term:
\begin{equation}\label{uerror.l2}
|E_u(\rho)|(x,v) \lesssim \psi(\rho)^2(x,v)+|\nabla^{\Sigma_v}\psi(\rho)|^2(x,v)+|(\nabla^{\Sigma_v})^2\psi(\rho)|^2(x,v).
\end{equation}

Denote by $\phi_v^{T}$ the projection of $\phi_v$ onto the kernel of $\mathcal J_v(0,0)$. Standard energy estimates, \eqref{l2.norm.h}, and \eqref{error.l2} show that
\begin{align*}
 ||\psi(\rho)||_{H^2(\Sigma_v)}& \lesssim ||\mathcal H(0,\psi(\rho))||_{L^2(\Sigma_v)}+||E(\rho)||_{L^2(\Sigma_v)}+||\psi(\rho)_v^{T}||_{L^2(\Sigma_v)} \\
&\lesssim ||\nabla \beta(\rho)||_{L^2(\Sigma_v)}+||\psi(\rho)||_{W^{1,4}(\Sigma_v)}^2+||\psi(\rho)_v^{T}||_{L^2(\Sigma_v)}.
\end{align*}
Likewise, energy estimates, \eqref{h2.norm.h}, and \eqref{uerror.l2}, show that
\begin{multline}\label{first.order.thmb}
 ||\partial_u\psi(\rho)||_{H^2(\Sigma_v)} \lesssim ||\partial_u\mathcal H(0,\psi(\rho))||_{L^2(\Sigma_v)}+||E_u(\rho)||_{L^2(\Sigma_v)}+||\partial_u\psi(\rho)_v^{T}||_{L^2(\Sigma_v)}\\
\lesssim  ||\nabla\beta(\rho)||_{L^2(\Sigma_v)}+||\nabla^2\beta(\rho)||_{L^2(\Sigma_v)}
 \\+||\psi(\rho)||^2_{W^{2,4}(\Sigma_v)}+||\partial_u\psi(\rho)_v^{T}||_{L^2(\Sigma_v)}.
\end{multline}

From \eqref{orthogonal.condition} and \eqref{constant.quadratic}:
\begin{equation}\label{estimativa.perp}
 ||\psi(\rho)_v^{T}||_{L^2(\Sigma_v)}\lesssim ||\psi(\rho)||_{H^1(\Sigma_v)}^2
 \end{equation}
 and so, using the previous estimate for $||\psi(\rho)||_{H^2(\Sigma_v)}$ we obtain
 $$
  ||\psi(\rho)||_{H^2(\Sigma_v)}\lesssim ||\nabla \beta(\rho)||_{L^2(\Sigma_v)}+||\psi(\rho)||_{H^1(\Sigma_v)}^2+ ||\psi(\rho)||_{W^{1,4}(\Sigma_v)}^2.  $$
The continuity of the map $\psi$  implies that if we reduce the neighborhood $V$ further we can absorb the last term of the right hand side and conclude
\begin{equation}\label{estimativa.l2}
  ||\psi(\rho)||_{H^2(\Sigma_v)}\lesssim||\nabla \beta(\rho)||_{L^2(\Sigma_v)}.
 \end{equation}
 
 We now  estimate $||\partial_u\psi(\rho)_v^{T}||_{L^2(\Sigma_v)}$. Differentiating the left-hand side of \eqref{orthogonal.condition} with respect to $u\in T_vS^3$ we obtain
$$
\int_{\Sigma_v}\partial_u\psi(\rho)_vx_idA_{can}-\langle v,e_i\rangle\int_{\Sigma_v}\psi(\rho)_v \langle x,u\rangle dA_{can}.
$$
 Differentiating the right-hand side of \eqref{orthogonal.condition} with respect to the unit vector $u\in T_vS^3$ we obtain (with simplified notation)
 \begin{multline*}
\int_{\Sigma_v}D_{\omega}G_i'.(-\langle x,u \rangle v,u)dA_{can}(x)\\
+ \int_{\Sigma_v}D_sG_i'\partial_u\psi(\rho)_vdA_{can}+\int_{\Sigma_v}D_tG_i'\partial_u|\nabla^{\Sigma_v} \psi(\rho)_v|^2dA_{can}.
 \end{multline*}
 Therefore, using the  pointwise estimate  \eqref{pointwise}, we
deduce from  \eqref{constant.quadratic}, and \eqref{constant.quadratic2}, that
 $$||\partial_u\psi(\rho)_v^{T}||_{L^2(\Sigma_v)}\lesssim ||\psi(\rho)_v^{T}||_{L^2(\Sigma_v)}+||\psi(\rho)||_{H^1(\Sigma_v)}^2+||\partial_u\psi(\rho)||^2_{H^1(\Sigma_v)}.
$$
Using \eqref{estimativa.perp} this simplifies to
$$||\partial_u\psi(\rho)_v^{T}||_{L^2(\Sigma_v)}\lesssim ||\psi(\rho)||_{H^1(\Sigma_v)}^2+||\partial_u\psi(\rho)||^2_{H^1(\Sigma_v)}. $$
 Inserting this inequality in \eqref{first.order.thmb} and using \eqref{estimativa.l2} we have
\begin{multline*}
 ||\partial_u\psi(\rho)||_{H^2(\Sigma_v)}
\lesssim  ||\nabla\beta(\rho)||_{L^2(\Sigma_v)}+||\nabla^2\beta(\rho)||_{L^2(\Sigma_v)}\\
 +||\nabla\beta(\rho)||^2_{L^2(\Sigma_v)}+||\partial_u\psi(\rho)||^2_{H^1(\Sigma_v)}.
\end{multline*}
Reducing the neighborhood $V\subset C^7(S^3)$ if necessary, we can absorb  the last  terms on the right-hand side and conclude
$$
 ||\partial_u\psi(\rho)||_{H^2(\Sigma_v)}
\lesssim  ||\nabla\beta(\rho)||_{L^2(\Sigma_v)}+||\nabla^2\beta(\rho)||_{L^2(\Sigma_v)}.
$$

\end{proof}

Set $c(\rho)=2\pi-\mathcal A(\rho,\psi(\rho))/2.$
 \begin{lem}\label{funk.estimate2} 
  We have (by adjusting $V$ if necessary)  that  for all metrics $e^{2\rho}can\in V\cap \mathcal{Z}$,
$$
 |\mathcal F(\rho)+c(\rho)| 
  \lesssim ||\beta(\rho)||_{L^2(\Sigma_v)}^2+||\nabla\beta(\rho)||_{L^2(\Sigma_v)}^2+||\nabla \rho||^2_{L^2(\Sigma_v)}
$$
and
\begin{multline*}
|\nabla \mathcal F(\rho)(v)|\lesssim ||\nabla \rho||_{L^2(\Sigma_v)}^2+||\nabla^2 \rho||_{L^2(\Sigma_v)}^2\\
+||\beta(\rho)||_{L^2(\Sigma_v)}^2+||\nabla\beta(\rho)||_{L^2(\Sigma_v)}^2+||\nabla^2\beta(\rho)||_{L^2(\Sigma_v)}^2.
\end{multline*}
 \end{lem}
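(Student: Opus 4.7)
The plan is to exploit that $\mathcal{A}(\rho,\psi(\rho))$ is constant in $v$. By the Zoll hypothesis and the remark preceding Lemma \ref{funk.estimate}, the family $\{\Sigma_\sigma(\psi(\rho))\}$ consists of minimal hypersurfaces in $(S^3,e^{2\rho}can)$; by Proposition \ref{propcriterion}, their area is independent of $\sigma$. Combined with \eqref{expansion.A}, this yields the identity
\begin{equation*}
\int_{\Sigma_v}\beta(\rho)\, dA_{can} + c(\rho) = -\frac{1}{2}\int_{\Sigma_v} A(\beta(\rho),\psi(\rho)_v,|\nabla^{\Sigma_v}\psi(\rho)_v|^2)\, dA_{can}(x),
\end{equation*}
whose right-hand side is quadratic by \eqref{constant.quadratic3} and bounded by $||\beta(\rho)||_{L^2(\Sigma_v)}^2 + ||\psi(\rho)||_{H^1(\Sigma_v)}^2 \lesssim ||\beta(\rho)||_{L^2(\Sigma_v)}^2 + ||\nabla\beta(\rho)||_{L^2(\Sigma_v)}^2$ via Lemma \ref{funk.estimate}.

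For the first inequality, decompose
\begin{equation*}
\mathcal{F}(\rho)(v) + c(\rho) = \int_{\Sigma_v}(\rho - \beta(\rho))\, dA_{can} - \frac{1}{2}\int_{\Sigma_v} A\, dA_{can}.
\end{equation*}
The second integral was just estimated. For the first, Taylor expand $\rho(x) - \rho(\cos\psi\, x + \sin\psi\, v) = -\psi\langle\nabla\rho(x),v\rangle + O(\psi^2)$ (absorbing $||\rho||_{C^2}\leq C_0$ into $\lesssim$) and apply Cauchy--Schwarz together with $||\psi(\rho)||_{L^2(\Sigma_v)} \lesssim ||\nabla\beta(\rho)||_{L^2(\Sigma_v)}$ from Lemma \ref{funk.estimate}, to produce the bound $||\nabla\beta||_{L^2(\Sigma_v)}^2 + ||\nabla\rho||_{L^2(\Sigma_v)}^2$; summing yields the first estimate.

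For the second inequality, differentiate both terms of this decomposition in $v$. Using the rotational parametrization $v(t)$ with $v'(0)=u\in T_vS^3$ and the associated $\tilde{x}(t)\in \Sigma_{v(t)}$ from the proof of Theorem \ref{thmconstraint}, whose velocity $\tilde{x}'(0) = -\langle x,u\rangle v$ is purely normal to $\Sigma_v$, the domain-derivative contribution vanishes since $\Sigma_v$ is totally geodesic, leaving
\begin{equation*}
\partial_u \int_{\Sigma_v}(\rho-\beta(\rho))dA_{can} = \int_{\Sigma_v}\bigl[-\langle\nabla\rho(x),v\rangle\langle x,u\rangle - \partial_u\beta(\rho)(x,v)\bigr]\, dA_{can}(x).
\end{equation*}
Expanding $\partial_u\beta(\rho) = \langle\nabla\rho(y(0)), y'(0)\rangle$ with $y(t)=\Sigma_{v(t)}(\psi(\rho))(\tilde{x}(t))$, a direct calculation shows that the leading linear-in-$\rho$ part of $\langle\nabla\rho(y),y'(0)\rangle$ is exactly $-\langle\nabla\rho(x),v\rangle\langle x,u\rangle$, which cancels the explicit term; what survives is a sum of products of $\psi(\rho)$ and $\partial_u\psi(\rho)$ with $\nabla\rho$ and $\nabla^2\rho$. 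Cauchy--Schwarz and Lemma \ref{funk.estimate} (which also gives $||\partial_u\psi(\rho)||_{H^2(\Sigma_v)} \lesssim ||\nabla\beta||_{L^2(\Sigma_v)} + ||\nabla^2\beta||_{L^2(\Sigma_v)}$) produce all the promised terms. The derivative of the $A$ integral is handled analogously using \eqref{constant.quadratic3}--\eqref{constant.quadratic4} to bound $D_lA\,\partial_u\beta + D_sA\,\partial_u\psi + D_tA\,\partial_u|\nabla\psi|^2$.

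The main obstacle is precisely the cancellation identified above: a direct surface computation gives $\partial_u \mathcal{F}(\rho)(v) = -\int_{\Sigma_v}\langle\nabla\rho,v\rangle\langle x,u\rangle\, dA_{can}$, which is only \emph{linear} in $\rho$ and provides no quadratic bound by itself. The quadratic estimate therefore relies essentially on the identity forced by the constancy of $\mathcal{A}(\rho,\psi(\rho))$, which absorbs the linear contribution into the derivatives of the $\beta$ and $A$ integrals, where it cancels against a corresponding leading term.
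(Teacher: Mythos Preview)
Your proposal is correct and follows essentially the same route as the paper. Both arguments rest on (i) the identity $\int_{\Sigma_v}\beta(\rho)+c(\rho)=-\tfrac12\int_{\Sigma_v}A$ forced by constancy of $\mathcal A(\rho,\psi(\rho))$, (ii) the Taylor comparison of $\rho$ with $\beta(\rho)=\rho\circ\Sigma(\psi(\rho))$, and (iii) Lemma~\ref{funk.estimate} to convert $\psi$-norms into $\nabla\beta$-norms. For the gradient estimate the paper packages step (ii) via the interpolation $f(t)=\partial_u(\rho\circ\Sigma(t))$ between $f(0)=-\langle x,u\rangle\langle\nabla\rho,v\rangle$ and $f(1)=\partial_u\beta(\rho)$, bounding $f(0)-f(1)$ by Taylor's theorem; your direct expansion of $\langle\nabla\rho(y(0)),y'(0)\rangle$ and identification of the cancelling leading term is the same computation without the auxiliary parameter. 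Your decomposition $\nabla\mathcal F=\partial_u\!\int(\rho-\beta)-\tfrac12\partial_u\!\int A$ and the paper's $\nabla\mathcal F=[\nabla\mathcal F-\!\int\partial_u\beta]+\!\int\partial_u\beta$ are in fact identical once one uses $\int\partial_u\beta=-\tfrac12\partial_u\!\int A$.
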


 \begin{proof}
 From \eqref{expansion.A} and \eqref{constant.quadratic3} we have that for all $v\in S^3$
$$
\left|2\int_{\Sigma_v}\beta(\rho)dA_{can}+4\pi-\mathcal A(\rho,\psi(\rho))\right|
\lesssim  ||\beta(\rho)||_{L^2(\Sigma_v)}^2+||\psi(\rho)||_{H^1(\Sigma_v)}^2.
$$
We have that for all $(x,v)\in T_1S^3$
$$
 |\beta(\rho)(x,v)-\rho(x)|
 \lesssim |\nabla \rho(x)|^2+|\psi(\rho)(x,v)|^2 .
$$
 Thus
$$
 |\mathcal F(\rho)+c(\rho)|
  \lesssim ||\beta(\rho)||_{L^2(\Sigma_v)}^2+||\psi(\rho)||_{H^1(\Sigma_v)}^2+||\nabla \rho||^2_{L^2(\Sigma_v)},
$$
which when combined with Lemma \ref{funk.estimate}
implies
$$
 |\mathcal F(\rho)+c(\rho)| 
  \lesssim ||\beta(\rho)||_{L^2(\Sigma_v)}^2+||\nabla\beta(\rho)||_{L^2(\Sigma_v)}^2+||\nabla \rho||^2_{L^2(\Sigma_v)}.
$$
This proves the first estimate. 

To prove the second estimate we use the fact that $c(\rho)$ is  constant  to differentiate \eqref{expansion.A} in the direction $u\in T_vS^3$ (with $u$ being a unit vector) to obtain, in light of  \eqref{constant.quadratic3}, \eqref{constant.quadratic4}, and the pointwise estimate \eqref{pointwise},
\begin{multline}\label{derivada.funk1}
\left|\int_{\Sigma_v}\partial_u\beta(\rho)dA_{can}\right|\lesssim ||\beta(\rho)||_{L^2(\Sigma_v)}^2+||\partial_u\beta(\rho)||_{L^2(\Sigma_v)}^2\\
+||\partial_u\psi(\rho)||_{H^1(\Sigma_v)}^2+||\psi(\rho)||_{H^1(\Sigma_v)}^2.
\end{multline}

Set $\Sigma(t)(x,v):= \cos(t\psi(\rho))x+\sin(t\psi(\rho))v$ and 
$$f(t):=\partial_u(\rho\circ\Sigma(t))\in C^{2,\alpha}(T_1S^3).$$
Notice that $f(1)=\partial_u\beta(\rho)$, $f(0)(x,v)=-\langle x,u\rangle\langle\nabla \rho(x),v\rangle$, and
$$\langle\nabla \mathcal F(\rho)(v),u\rangle=\int_{\Sigma_v} f(0) dA_{can}(x).$$

 An explicit computation shows
\begin{align*}
f'(0) & =\partial_u(\psi(\rho)\langle \nabla \rho(x),v\rangle )\\
& =\langle\nabla \rho,v\rangle\partial_u\psi(\rho)-\langle x,u\rangle\nabla^2\rho(v,v)\psi(\rho)+\langle\nabla \rho,u\rangle\psi(\rho)
\end{align*}
and
$$|f''(t)(x,v)|\lesssim |\partial_u\psi(\rho)(x,v)|^2+|\psi(\rho)(x,v)|^2$$
for all $0\leq t\leq 1$, $(x,v)\in T_1S^3$. Hence
\begin{multline*}
\left|\langle\nabla \mathcal F(\rho)(v),u\rangle-\int_{\Sigma_v}\partial_u\beta(\rho)dA_{can}\right|\leq \int_{\Sigma_v}|f(0)-f(1)|dA_{can}\\\lesssim ||\partial_u\psi(\rho)||_{L^2(\Sigma_v)}^2+||\psi(\rho)||_{L^2(\Sigma_v)}^2+||\nabla \rho||_{L^2(\Sigma_v)}^2+||\nabla^2 \rho||_{L^2(\Sigma_v)}^2.
\end{multline*}
Combining with Lemma \ref{funk.estimate} and \eqref{derivada.funk1}  we deduce
\begin{multline*}
|\langle\nabla \mathcal F(\rho)(v),u\rangle|\lesssim ||\nabla \rho||_{L^2(\Sigma_v)}^2+||\nabla^2 \rho||_{L^2(\Sigma_v)}^2 \\
+||\beta(\rho)||_{L^2(\Sigma_v)}^2+||\nabla\beta(\rho)||_{L^2(\Sigma_v)}^2+||\nabla^2\beta(\rho)||_{L^2(\Sigma_v)}^2.
\end{multline*}
The arbitrariness of the unit vector $u\in T_vS^3$ implies the result.

\end{proof}

We need the following lemma:

\begin{lem}\label{co-area.formula.uniqueness} If  $\Phi \in C^{2}_{*,odd}(T_{1}S^n)$ has sufficiently small $C^2$ norm, then the following holds for some positive dimensional constant $c_n$.
Given $f\in C^0(S^n)$,  set $f(\Phi)=f\circ\Sigma(\Phi)\in C^0(T_1S^n)$. Then
$$\int_{S^n}||f(\Phi)||^2_{L^2(\Sigma_v)}dV_{can}(v)\leq c_n||f||^2_{L^2(S^n)}$$
and
$$
 \int_{S^n}||f(\Phi)||^4_{L^2(\Sigma_v)}dV_{can}(v)\leq c_n ||f||^4_{L^4(S^n)}.$$
\end{lem}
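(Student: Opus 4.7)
My plan is to express the left-hand side of the first inequality as an integral over $T_1S^n$ and then change variables using the generalized Gauss map. By Fubini applied to the natural fibration $\pi_2:T_1S^n\to S^n$, $(x,v)\mapsto v$, whose fiber over $v$ is $\Sigma_v$ equipped with its canonical area measure,
\[
\int_{S^n}\|f(\Phi)\|_{L^2(\Sigma_v)}^2\,dV_{can}(v) \;=\; \int_{T_1S^n}(f\circ\Sigma(\Phi))^2\,d\mu,
\]
where $d\mu$ denotes the natural measure on $T_1S^n$ coming from this fibration. A symmetric application of Fubini for the other fibration $\pi_1:T_1S^n\to S^n$, $(p,w)\mapsto p$, whose fiber over $p$ is the equator $\Sigma_p^*\cong S^{n-1}$, yields $\int_{T_1S^n}(f\circ\pi_1)^2\,d\mu=\mathrm{vol}(S^{n-1})\|f\|_{L^2(S^n)}^2$.

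The key observation is that $\Sigma(\Phi)=\pi_1\circ\mathcal{G}(\Phi)$, where $\mathcal{G}(\Phi)$ is the generalized Gauss map from Section \ref{section.gauss.map}. By Proposition \ref{propGaussmap}, when $\Phi$ has small $C^2$-norm the map $\mathcal{G}(\Phi)$ is a diffeomorphism of $T_1S^n$ which is $C^1$-close to the identity; in particular, there is a dimensional constant $K$ such that the Jacobian determinant of $\mathcal{G}(\Phi)^{-1}$ is bounded above by $K$ uniformly on $T_1S^n$. The change of variables $(p,w)=\mathcal{G}(\Phi)(x,v)$ then gives
\[
\int_{T_1S^n}(f\circ\Sigma(\Phi))^2\,d\mu \;\leq\; K\int_{T_1S^n}(f\circ\pi_1)^2\,d\mu \;=\; K\,\mathrm{vol}(S^{n-1})\,\|f\|_{L^2(S^n)}^2,
\]
proving the first inequality with $c_n=K\cdot\mathrm{vol}(S^{n-1})$.

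For the second inequality, Cauchy--Schwarz on each fiber $\Sigma_v$ gives $\|f(\Phi)\|_{L^2(\Sigma_v)}^4\leq \mathrm{vol}(S^{n-1})\,\|f(\Phi)^2\|_{L^2(\Sigma_v)}^2$. Since $f(\Phi)^2=(f^2)\circ\Sigma(\Phi)=(f^2)(\Phi)$, integrating over $v\in S^n$ and applying the first inequality with $f^2$ in place of $f$ produces the bound $c_n'\,\|f^2\|_{L^2(S^n)}^2 = c_n'\,\|f\|_{L^4(S^n)}^4$, as desired.

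I do not anticipate any real obstacle. The only delicate point is verifying that the small $C^2$-norm hypothesis on $\Phi$ is sufficient to control the Jacobian of $\mathcal{G}(\Phi)$; this holds because $N(\Phi)$ in Lemma \ref{lembasicformulae} already depends on $\Phi$ and $\nabla\Phi$, so $D\mathcal{G}(\Phi)$ involves at most two derivatives of $\Phi$, and the bound on $|\det D\mathcal{G}(\Phi)^{-1}|$ follows by continuity from the fact that $\mathcal{G}(0)$ is the identity.
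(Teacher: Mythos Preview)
Your argument is correct and rests on the same underlying idea as the paper's proof: write the left-hand side as an integral over the $(2n-1)$-dimensional total space, then use that for small $\Phi$ all relevant maps are $C^1$-close to the identity. The derivation of the second inequality from the first via Cauchy--Schwarz on each fiber and the substitution $f\mapsto f^2$ is identical in both.

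The organization differs slightly. The paper works on the incidence set $F(\Phi)\subset S^n\times S^n$ and applies the co-area formula twice, once for each projection $\pi_i:F(\Phi)\to S^n$; the bound then comes from controlling the Jacobian ratio $|Jac(\pi_2)|/|Jac(\pi_1)|$ and the areas of the dual hypersurfaces $\Sigma_p^*(\Phi)$. Your route stays on the fixed space $T_1S^n$, uses the factorization $\Sigma(\Phi)=\pi_1\circ\mathcal{G}(\Phi)$, and performs a single change of variables by the diffeomorphism $\mathcal{G}(\Phi)$. This is a bit more streamlined and avoids introducing $F(\Phi)$ altogether; the two presentations become equivalent once one notes that the parametrization $(x,v)\mapsto(\Sigma_v(\Phi)(x),v)$ identifies $T_1S^n$ with $F(\Phi)$. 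One small point you leave implicit is that the measure $d\mu$ defined via the fibration $\pi_2$ coincides with the one defined via $\pi_1$; this holds because the involution $(x,v)\mapsto(v,x)$ of $T_1S^n$ is an isometry interchanging the two projections, so both fiber-product measures agree (with the Riemannian volume up to the common factor $\sqrt{2}$).
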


\begin{proof}
{Let $\pi_1$ and $\pi_2$ denote the projections of points in $F(\Phi)$ onto the first and second coordinates. 
If $\Phi=0$, $F(0)=T_1S^n=\{(p,v)\in S^n \times S^n: \langle p, v\rangle=0\}$. 
We have that $|Jac(\pi_1)|=\sqrt{det[D\pi_1\circ (D\pi_{1})^{*}]}=\frac{1}{\sqrt{2}}$ and similarly $|Jac(\pi_2)|=\sqrt{det[D\pi_2\circ (D\pi_{2})^{*}]}=\frac{1}{\sqrt{2}}$.

For general $\Phi$,  we can assume that  both Jacobians are in $(1/2,1)$, and similarly that $|Jac(\Sigma_v(\Phi))|\geq 1/2$ for all $v\in S^n$.
Also  we have that $\Sigma_p^*(\Phi)$ is $C^1$ close to  $\Sigma_p^*(0)$ for all $p\in S^n$. Thus we can assume that $area(\Sigma_p^*(\Phi),can)\leq \omega_{n-1}$ for all $p\in S^n$, where $\omega_{n-1}=area(S^{n-1},can)$. (The area of $\Sigma_{p}^*(0)$ is $\omega_{n-1}/2$). \\
\indent 

We have that
$$
\int_{S^n}||f(\Phi)||^2_{L^2(\Sigma_v)}dV_{can}(v)
=\int_{S^n}\int_{\Sigma_v}|f\circ \Sigma(\Phi)|^2(x,v)dA_{can}(x)dV_{can}(v),
$$
hence
\begin{align*}
\int_{S^n}||f(\Phi)||^2_{L^2(\Sigma_v)}dV_{can}(v) & \leq 2\int_{S^n}\int_{\Sigma_v(\Phi)}|f|^2(p)dA_{can}(p)dV_{can}(v)\\
  & =2\int_{S^n} \int_{(\pi_2)_{|F(\Phi)}^{-1}(v)} |f|^2\circ \pi_1dA_{can}dV_{can}(v).
\end{align*}

Using the co-area formula (\cite{Chavel}, Chapter III)  for $\pi_2:F(\Phi)\rightarrow S^n$ we deduce
\begin{multline*}
\int_{S^n}||f(\Phi)||^2_{L^2(\Sigma_v)}dV_{can}(v)\leq 2\int_{F(\Phi)}|f|^2(p)|Jac(\pi_2)|(p,v)d V_{can}(p,v)\\
 = 2\int_{F(\Phi)}|f|^2(p)\frac{|Jac(\pi_2)|(p,v)}{|Jac(\pi_1)|(p,v)}|Jac(\pi_1)|(p,v)d V_{can}(p,v).
\end{multline*}

Applying the co-area formula again for $\pi_1:F(\Phi)\rightarrow S^n$
\begin{align*}
&\int_{S^n}||f(\Phi)||^2_{L^2(\Sigma_v)}dV_{can}(v) & \\
& \quad \quad \leq 2 \int_{S^n}|f|^2(p)\int_{(\pi_1)_{|F(\Phi)}^{-1}(p)}\frac{|Jac(\pi_2)|(p,v)}{|Jac(\pi_1)|(p,v)}dA_{can}(v)dV_{can}(p) &\\
& \quad \quad \leq 8 \, \omega_{n-1}\int_{S^n}|f|^2(p)dV_{can}(p).
\end{align*}

The second inequality follows from the first inequality because  we have $||f(\Phi)||^4_{L^2(\Sigma_v)}\leq c_n ||f(\Phi)^2||^2_{L^2(\Sigma_v)}$ and  $||f^2||^2_{L^2(S^n)}=||f||^4_{L^4(S^n)}$.}
\end{proof}

We have  from Lemma \ref{co-area.formula.uniqueness} that
\begin{align*}
\int_{S^3}|| \beta(\rho)||^4_{L^2(\Sigma_v)}dV_{can} & \lesssim ||\rho||^4_{L^4(S^3)}\\
\int_{S^3}|| \nabla \beta(\rho)||^4_{L^2(\Sigma_v)}dV_{can} &\lesssim ||\nabla \rho||^4_{L^4(S^3)}\\
\int_{S^3}|| \nabla^2\beta(\rho)||^4_{L^2(\Sigma_v)}dV_{can} &\lesssim ||\nabla^2 \rho||^4_{L^4(S^3)}.
\end{align*}
Similar inequalities hold for $||\nabla \rho||^4_{L^2(\Sigma_v)}$  and  $||\nabla^2 \rho||^4_{L^2(\Sigma_v)}$.
Using these  inequalities in Lemma \ref{funk.estimate2}  we have
\begin{multline}\label{norm.estimate.funk}
\int_{S^3}|\mathcal F(\rho)+c(\rho)|^2+|\nabla \mathcal F(\rho)|^2 dV_{can}\\
\lesssim ||\rho||^4_{L^4(S^3)}+||\nabla \rho||^4_{L^4(S^3)}+||\nabla^2 \rho||^4_{L^4(S^3)}.
\end{multline}

\begin{lem} 
For a metric $e^{2\rho}can \in V \cap \mathcal{Z}$  with $\int_{S^3}\rho dV_{can}=0$, we have
$$|c(\rho)|\lesssim ||\rho||^2_{L^2(S^3)}+||\nabla \rho||^2_{L^2(S^3)}.$$ 
\end{lem}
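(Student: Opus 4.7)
The plan is to average the pointwise (in $v$) estimate from Lemma \ref{funk.estimate2} over $\mathbb{RP}^3$ and then apply the co-area bound of Lemma \ref{co-area.formula.uniqueness} to convert the resulting integrals of $L^2(\Sigma_v)$-norms into $L^2(S^3)$-norms of $\rho$ and $\nabla\rho$. The key observation is that $c(\rho)$, being constant, equals its own average over $\mathbb{RP}^3$, so it can be compared against the mean of the standard Funk transform $\mathcal F(\rho)$.

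First I would establish that, for $\rho\in C^\infty_{even}(S^3)$ with $\int_{S^3}\rho\, dV_{can}=0$, the Funk average vanishes:
\[
\dashint_{\mathbb{RP}^3} \mathcal F(\rho)(\sigma)\, dV_{can}(\sigma) = 0.
\]
This follows by applying Fubini on the standard incidence set $[F(0)]\subset S^3\times\mathbb{RP}^3$, using that the dual hyperplanes $\Sigma^*_p$ all have the same area. Consequently, since $c(\rho)$ is a constant,
\[
c(\rho) \;=\; \dashint_{\mathbb{RP}^3}\bigl(\mathcal F(\rho)(\sigma)+c(\rho)\bigr)\, dV_{can}(\sigma),
\]
so $|c(\rho)|\leq \dashint_{\mathbb{RP}^3}|\mathcal F(\rho)+c(\rho)|\, dV_{can}$.

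Next I would invoke the first estimate of Lemma \ref{funk.estimate2} (pointwise in $v$ with $\sigma=[v]$) to obtain
\[
|c(\rho)|\;\lesssim\; \int_{S^3}\Bigl(\|\beta(\rho)\|_{L^2(\Sigma_v)}^2+\|\nabla\beta(\rho)\|_{L^2(\Sigma_v)}^2+\|\nabla\rho\|_{L^2(\Sigma_v)}^2\Bigr)\, dV_{can}(v).
\]
I would then apply Lemma \ref{co-area.formula.uniqueness} three times: with $\Phi=\psi(\rho)$ and $f=\rho$ it controls $\int_{S^3}\|\beta(\rho)\|_{L^2(\Sigma_v)}^2\, dV_{can}\lesssim \|\rho\|_{L^2(S^3)}^2$; with $\Phi=\psi(\rho)$ and $f=|\nabla\rho|$ it controls $\int_{S^3}\|\nabla\beta(\rho)\|_{L^2(\Sigma_v)}^2\, dV_{can}\lesssim \|\nabla\rho\|_{L^2(S^3)}^2$; and with $\Phi=0$ it controls the third integral by $\|\nabla\rho\|_{L^2(S^3)}^2$. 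Summing yields the stated inequality
\[
|c(\rho)|\lesssim \|\rho\|_{L^2(S^3)}^2+\|\nabla\rho\|_{L^2(S^3)}^2.
\]

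The only nontrivial step is the vanishing of the Funk average for even zero-mean functions; the rest is an assembly of the estimates already established in Section 10. (Note that the alternative route, using Cauchy--Schwarz and \eqref{norm.estimate.funk}, would only give a weaker $L^4$-type bound, so the $L^1$ averaging together with Lemma \ref{co-area.formula.uniqueness} is essential here.)
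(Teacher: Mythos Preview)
Your proposal is correct and follows essentially the same approach as the paper: both compute that the average of $\mathcal F(\rho)$ vanishes (via Fubini/co-area on the incidence set, using $\int_{S^3}\rho\,dV_{can}=0$), write $c(\rho)$ as the average of $\mathcal F(\rho)+c(\rho)$, apply the first estimate of Lemma \ref{funk.estimate2} pointwise in $v$, and then invoke Lemma \ref{co-area.formula.uniqueness} termwise. One minor remark: the evenness of $\rho$ you invoke is not needed for the vanishing of the Funk average---the zero-mean hypothesis alone suffices, and the paper's proof does not use evenness at this step.
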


\begin{proof}
Integrating $\mathcal F(\rho)$ over $S^3$ and using the co-area formula we have the identity
$$\int_{S^3}\mathcal F(\rho)(v)dV_{can}=4\pi\int_{S^3}\rho(x)dV_{can}(x)=0.$$
Thus from Lemma \ref{funk.estimate2} we deduce
\begin{align*}
2\pi^2 |c(\rho)| & =\left|\int_{S^3}\mathcal F(\rho)(v)dV_{can}+2\pi^2 c(\rho)\right|=\left|\int_{S^3}(\mathcal F(\rho)+c(\rho))dV_{can}\right| \\
& \lesssim \int_{S^3}|| \beta(\rho)||^2_{L^2(\Sigma_v)}+||\nabla \beta(\rho)||^2_{L^2(\Sigma_v)}+||\nabla \rho||^2_{L^2(\Sigma_v)}dV_{can}\\
& \lesssim ||\rho||^2_{L^2(S^3)}+||\nabla \rho||^2_{L^2(S^3)},
\end{align*}
where in the last inequality we used Lemma \ref{co-area.formula.uniqueness}.
\end{proof}

To prove Theorem B, we can suppose that $\rho$ is even and  $\int_{S^3}\rho dV_{can}=0$.

The previous lemma and \eqref{norm.estimate.funk} show that
\begin{multline*}
||\mathcal F(\rho)||^2_{H^1(S^3)}=\int_{S^3}|\mathcal F(\rho)|^2+|\nabla \mathcal F(\rho)|^2 dV_{can}\\
\lesssim ||\rho||^4_{L^4(S^3)}+||\nabla \rho||^4_{L^4(S^3)}+||\nabla^2 \rho||^4_{L^4(S^3)}=||\rho||_{W^{2,4}(S^3)}^4.
\end{multline*}
Because we are assuming that $\rho$ is even, we know from Lemma A.1 (ii) that
$||\rho||_{L^2(S^3)}\lesssim ||\mathcal F(\rho)||_{H^1(S^3)}.$
 Hence   we obtain 
$ ||\rho||_{L^2(S^3)}\lesssim ||\rho||_{W^{2,4}(S^3)}^2.$
 From Gagliardo--Nirenberg interpolation inequality we have that
$$||\rho||_{W^{2,4}(S^3)}^2\lesssim |\rho|_{C^4(S^3)}||\rho||_{L^2(S^3)}.$$
Hence 
$$
||\rho||_{L^2(S^3)}\lesssim |\rho|_{C^4(S^3)}||\rho||_{L^2(S^3)}.
$$
Thus by adjusting $V$  we can absorb the right-hand side on the left-hand side. This proves that $\rho =0$, and hence $e^{2\rho}can=can$ proving the Theorem. 
}

\subsection{Proof of Theorem C}

\indent Let $W\subset C_{odd}^{\infty}(S^n)$ denote the space of linear functions. Consider the set $W^{\perp}\cap C^{\infty}_{odd}(S^n)$ of functions that are $L^2$-orthogonal to $W$.  Let  $X$ be the set of all $r\in W^{\perp}\cap C^{\infty}_{odd}(S^n)$
that are Morse with critical points $\{x_1, \dots,x_k\}\subset S^n$, depending on $r$,  satisfying
$$
r(x_i)\neq r(x_j)
$$
for $i\neq j$, such that $\nabla^2r(x_i)$ has distinct eigenvalues (with respect to the round metric) for each $i$, and such that 
$\nabla^3r(x_i)\cdot (w,w,w)\neq 0$ for each $i$ and  $w$ eigenvector  of $\nabla^2r(x_i)$. Since $S^n$ can be embedded in some Euclidean space by spherical harmonics of degree three, a classic result says that any smooth function on $S^n$  can be perturbed by one of these to become Morse.  This implies the set of Morse functions is open and dense in  $W^{\perp}\cap C^{\infty}_{odd}(S^n)$, from which one can see $X$ is also open and dense.

Let $r\in X$ and $f\in C^\infty(S^n)$ be the unique solution in $W^\perp \subset C^{\infty}(S^n)$ of
$$
\Delta f + n\, f=-\frac{1}{2(n-1)}r.
$$
Uniqueness implies that $f\in C_{odd}^{\infty}(S^n)$.

We apply Theorem A to $f$ to get $\rho_t$. Let $g(t)=e^{2\rho_t}can$.  The scalar curvature $R_{g(t)}$ of $g(t)$ can be computed as
$$
R_{g(t)}=e^{-2\rho_t}\left(n(n-1)-2(n-1)\Delta\rho_t-(n-1)(n-2)|\nabla \rho_t|^2\right).
$$
Hence
$$
\frac{d}{dt}_{|t=0}R_{g(t)}=-2(n-1)(\Delta f+n\,f)=r.
$$

This implies
$$
\frac{R_{g(t)}-n(n-1)}{t}=r+O(t).
$$
Since the set of Morse functions is open in $C^2(S^n)$, for sufficiently small $t$ we have that $R_{g(t)}$ is Morse and has as many critical points as $r$. If $I_t\in Isom(S^n,g(t))$, we have that $R_{g(t)}(I_t(x))=R_{g(t)}(x)$ for every $x\in S^n$. Then $I_t$ leaves invariant the
set of critical points of $R_{g(t)}$, which in turn converges to the set of critical points of $r$ as $t\rightarrow 0$.

If $\{x_1,\dots,x_k\}$ is the  set of critical points of $r$, let $\{x_1(t),\dots,x_k(t)\}$ be the set of critical points of $R_{g(t)}$ with $x_i(t) \rightarrow x_i$ as $t\rightarrow 0$. We have 
\begin{align*}
R_{g(t)}(x_i(t)) & =n(n-1)+t r(x_i(t)) + O(t^2)\\
& =n(n-1)+t r(x_i)+o(t).
\end{align*}
The conditions satisfied by $r$ then imply that $I_t(x_i(t))=x_i(t)$ for each $i$. Hence $I_t$ is determined by any of the linear maps 
$
DI_t(x_i(t)):T_{x_i(t)}S^n\rightarrow T_{x_i(t)}S^n.
$

Now
$
\frac{d}{dt}_{|t=0}\nabla^2_{g(t)}R_{g(t)}=\nabla^2_{can}r,
$
since $R_{can}$ is constant. Therefore
$$
\frac{\nabla^2_{g(t)}R_{g(t)}(x_i(t))}{t}= \nabla^2_{can}r(x_i(t))+O(t),
$$
which implies that for each $i$ the bilinear form $\nabla^2_{g(t)}R_{g(t)}(x_i(t))$ has distinct eigenvalues with respect to $g(t)$. Since
$I_t$ is an isometry of $(S^n,g(t))$ that fixes the critical points of $R_{g(t)}$, we have that
$$
\nabla^2_{g(t)}R_{g(t)}(x_i(t))(DI_t(x_i(t))\cdot v,DI_t(x_i(t))\cdot v) = \nabla^2_{g(t)}R_{g(t)}(x_i(t))(v,v)
$$
for each $i$ and $v\in T_{x_i(t)}S^n$. Then $DI_t(x_i(t))$ will send an eigenvector of $ \nabla^2_{g(t)}R_{g(t)}(x_i(t))$ into an eigenvector of $ \nabla^2_{g(t)}R_{g(t)}(x_i(t))$ with the same eigenvalue. Since the eigenvalues are distinct, $DI_t(x_i(t))\cdot w=\pm w$ for each eigenvector $w$.

Suppose there is a sequence of positive numbers $t_j\rightarrow 0$ such that $(S^n,g(t_j))$ admits an isometry $I_{t_j}$ that is not the identity map. Then 
$I_{t_j} \rightarrow I \in O(n+1)$ (after maybe passing to a subsequence).  For each $x_i\in S^n$ there exists  $w_i\in T_{x_i}S^n$ eigenvector of $\nabla^2r(x_i)$ such that
$DI(x_i)\cdot w_i=-w_i$. Since  $r(I(x))=r(x)$ for every $x\in S^n$, we get $\nabla^3r(x_i)(w_i,w_i,w_i)=0$.   This is a contradiction
and hence for sufficiently small $t>0$ the metric $(S^n,e^{2\rho_t}can)$ has no nontrivial isometry.  

If $l\in W$, the same arguments apply to $f+l$ instead of $f$ since we also have $\Delta (f+l)+n\, (f+l)=-\frac{1}{2(n-1)}r.$ The space of all such functions $f+l$  is open and dense in $C^\infty_{odd}(S^n)$ as desired. (An alternative proof of the result can be obtained by generalizing \cite{Bes}, 4.71.)

\subsection{Proof of Theorem D}\label{proof.theorem.D}

We will start by establishing  a correspondence between the set of Riemannian metrics on $S^n$ with minimal equators and the set of positive definite Killing symmetric tensors on $(S^n,can)$. The explicit description given here is based on the method of Hangan for the Euclidean space \cite{Han1}. It can also be obtained from \cite{Han1} by use of the gnomonic projection.

\indent Let $(M^n,g)$ be a Riemannian manifold. A symmetric $p$-tensor $k$ on $M$ is called Killing when the symmetrization of its covariant derivative $\nabla^g k$ vanishes. The vector space consisting of Killing symmetric $p$-ten\-sors will be denoted by $\mathcal{K}_{p}(M^n,g)$. For instance, $k\in \mathcal{K}_2(M^n,g)$ if and only if 
\begin{equation*}
	\nabla^{g}k(X,Y,Z) + \nabla^{g}k(Y,Z,X) + \nabla^{g}k(Z,X,Y) = 0 
\end{equation*}
\noindent for all vector fields $X$, $Y$ and $Z$ on $M^n$.\\
\indent Given Killing vector fields $K_1,\ldots, K_p$ on $(M^n,g)$, the symmetric product $K_{1}\odot\ldots\odot K_{p}$ defined by
\begin{equation} \label{eqsymmetricproduct}
	(K_{1}\odot\ldots\odot K_{p})(X_1,\ldots,X_p) = \sum_{\sigma} g(K_{\sigma(1)},X_{1})\cdot\ldots \cdot g(K_{\sigma(p)},X_{p})
\end{equation}
 is an element of $\mathcal{K}_p(M^n,g)$. In the above formula, the summation is over the set of all permutations of elements of the set $\{1,\ldots,p\}$. 
It was proven in \cite{SumTan2}  that all symmetric Killing $p$-tensors in $(S^n,can)$ are linear combinations of symmetric products of $p$ Killing vector fields as in \eqref{eqsymmetricproduct}. The dimension of the space of Killing $p$-tensors of $(S^ n,can)$ has been computed (\cite{Tak}): for every integer $p\geq 1$,
\begin{equation*}
	dim\, \mathcal{K}_{p}(S^n,can) = \frac{1}{n} {n+p\choose p+1}{n+p-1\choose p}.
\end{equation*} 
\indent Given a three-tensor $t$ on $S^n$, we denote by $t^S$ its cyclic symmetrization given by
\begin{equation*}
	t^{S}(X,Y,Z) = t(X,Y,Z) + t(Y,Z,X) + t(Z,X,Y).
\end{equation*}

\begin{prop}  \label{propfundamentalequation}
Let $g$ be a Riemannian metric on an open subset $W$ of the sphere $S^n$. The following  statements are equivalent
\begin{itemize}
\item[$i)$] All $(n-1)$-equators intersecting $W$ are minimal in $(S^n,g)$.
\item[$ii)$] The metric $g$ and the smooth positive function $\psi$ on $W$ uniquely defined by $dV_{g}=\psi dV_{can}$ are such that
	\begin{equation*}
		\left(\nabla^{can}g - \frac{4}{n+1}d\log(\psi)\otimes g\right)^S = 0 \quad \text{on} \quad W.
	\end{equation*}
\end{itemize}	
\end{prop}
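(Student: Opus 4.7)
Both $(i)$ and $(ii)$ are pointwise conditions on the pair $(g, \nabla^{can} g)$ at each $p\in W$, so the claim is an algebraic equivalence at each $p$. I will derive $(i)$ and $(ii)$ from a common symmetric three-tensor identity, using two preliminary ingredients. First, differentiating $\psi^2=\det g/\det g_{can}$ in arbitrary local coordinates yields
\[
2\,d\log\psi(X)\;=\;g^{ij}\,(\nabla^{can}_X g)_{ij},\qquad X\in T_pS^n,
\]
so that the $g$-trace of $T:=\nabla^{can} g$ in its last two slots equals $2\,d\log\psi$. Second, from $dV_g=\psi\,dV_{can}$ one has the divergence relation
\[
\text{div}_g Y \;=\; \text{div}_{g_{can}} Y \;+\; Y(\log\psi),
\]
which I will apply to the $g$-unit normal of $\Sigma_v$.

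To compute the mean curvature of an equator $\Sigma_v$ at $p\in\Sigma_v\cap W$, I work in $can$-normal coordinates centered at $p$ with $v=\partial_n$, so that $\Sigma_v$ is locally $\{x^n=0\}$ and the $can$-Christoffels vanish at $p$. The $g$-unit normal is $N^g(p)=g^{in}(p)\,\partial_i/\sqrt{g^{nn}(p)}$, and the divergence identity gives
\[
H_g(p,\Sigma_v)\;=\;\text{div}_{g_{can}}(N^g)(p)\;+\;d\log\psi\bigl(N^g(p)\bigr).
\]
A direct coordinate calculation expresses the right-hand side as an algebraic function of $g(p)$, $T(p)$, $d\log\psi(p)$, and $v$ which, after multiplication by the positive factor $(g^{nn}(p))^{3/2}$, is cubic in $v$ with coefficients that are polynomial in the entries of $g(p)$ and $T(p)$. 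Demanding $H_g(p,\Sigma_v)=0$ for every unit $v\in T_pS^n$ (equivalently, for every equator through $p$) and polarizing in $v$ converts the condition into a fully symmetric three-tensor identity at $p$ of the form
\[
T^S \;=\; g\odot\beta,\qquad (g\odot\beta)(X,Y,Z):=g(X,Y)\beta(Z)+g(Y,Z)\beta(X)+g(Z,X)\beta(Y),
\]
for a one-form $\beta$ built from contractions of $T$ with $g^{-1}$ and from $d\log\psi$.

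The statement $(ii)$ is precisely $T^S=\tfrac{4}{n+1}\,g\odot d\log\psi$. Since the linear map $\beta\mapsto g\odot\beta$ from one-forms to fully symmetric three-tensors is pointwise injective — tracing its image with $g^{-1}$ in the last two slots gives $(n+2)\beta$ — the equivalence reduces to identifying the one-form $\beta$ produced in the previous step. Tracing the tensor identity $T^S=g\odot\beta$ with $g^{-1}$ in two slots and using $\mathrm{tr}_g T=2\,d\log\psi$ from the first step forces $\beta=\tfrac{4}{n+1}\,d\log\psi$, and thus pins down the coefficient uniquely. The main obstacle is executing the mean-curvature computation cleanly when $g(p)\neq g_{can}(p)$: the inverse-metric factors $g^{in}$ appearing in $N^g$ couple nontrivially to the components of $T$, and the final expression becomes manageable only after one observes that the polarized answer depends only on $T^S$ together with the two natural $g$-traces of $T$, and that the trace identity $\mathrm{tr}_g T=2\,d\log\psi$ collapses these traces to a single invariant carrying the factor $\tfrac{4}{n+1}$.
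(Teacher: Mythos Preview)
Your plan follows the same strategy as the paper: reduce the minimality condition to a cubic equation in the normal direction, polarize to a fully symmetric three-tensor identity, and identify the coefficient via a trace. The paper organizes this around the difference tensor $\mathcal{T}_g(X,Y,Z)=g(\nabla^g_XY-\nabla^{can}_XY,Z)$ rather than your divergence formula in normal coordinates, but the content is the same: your $\beta_0$ will come out as $\beta_0=2g^{ij}T_{ij\,\cdot}-2\,d\log\psi$, which in the paper's language is $2\,\mathrm{tr}_g^{12}\mathcal{T}_g$.

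There is, however, a genuine gap in your last step. You assert that tracing $T^S=g\odot\beta$ together with the single identity $g^{ij}T_{kij}=2\,\partial_k\log\psi$ ``forces $\beta=\tfrac{4}{n+1}\,d\log\psi$''. It does not: tracing the symmetrization gives
\[
(n+2)\,\beta_k \;=\; g^{ij}T^S_{ijk}\;=\;2\,g^{ij}T_{ijk}\;+\;2\,\partial_k\log\psi,
\]
and the \emph{other} trace $g^{ij}T_{ijk}$ (first slot contracted with a last slot) is not determined by $\psi$ alone. So the equation $T^S=g\odot\beta$ does not by itself pin down $\beta$ in terms of $d\log\psi$. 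What saves the argument is that your explicit $\beta_0$ already contains this second trace: combining $\beta_0=2g^{ij}T_{ij\,\cdot}-2\,d\log\psi$ with $(n+2)\beta_0=2g^{ij}T_{ij\,\cdot}+2\,d\log\psi$ yields $\beta_0=\tfrac{4}{n+1}d\log\psi$ \emph{under assumption} $(i)$, giving $(i)\Rightarrow(ii)$. For $(ii)\Rightarrow(i)$ you must run this in reverse: assuming $T^S=g\odot\bigl(\tfrac{4}{n+1}d\log\psi\bigr)$, trace to deduce $g^{ij}T_{ij\,\cdot}=\tfrac{n+3}{n+1}d\log\psi$, and then check directly that this makes $\beta_0=\tfrac{4}{n+1}d\log\psi$, so that $T^S=g\odot\beta_0$ holds. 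The paper packages exactly this two-way bootstrap into a single self-referential identity
\[
\mathcal{S}_g^S=\tfrac{1}{2}\Bigl(\nabla^{can}g-\tfrac{4}{n+1}g\otimes d\log\psi+\tfrac{2}{n+1}g\otimes\mathrm{tr}_g^{12}\mathcal{S}_g^S\Bigr)^S,
\]
from which both implications follow at once by tracing. Your approach is fine, but you should make the role of the second trace explicit and spell out both directions rather than claiming the coefficient drops out of a single contraction.
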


\begin{proof}
	A point $p$ in $W$ belongs to the $(n-1)$-equator $\Sigma_{v}$, $v\neq 0$, if and only if $v$ is orthogonal to $p$, that is if and only if $v$ belongs to $T_{p}S^n$. 
	Let $v\in \mathbb{R}^{n+1}\setminus \{0\}$. Define $\hat{V} : S^n \rightarrow \mathbb{R}$ by $\hat{V}(p)=\langle p,v\rangle$. The formula
$
	N^g = \frac{\nabla^{g} \hat{V}}{|\nabla^{g} \hat{V}|_g}
$
defines a unit normal vector field on $\Sigma_v$ in $(S^n,g)$. The second fundamental form of $\Sigma_v$ in $(S^n,g)$ at a point $p$ in $\Sigma_v$ is then given by
\begin{equation*}
	A(X,Y) = g(\nabla^{g}_{X}{N^g},Y) = \frac{1}{|\nabla^{g} \hat{V}|_g} Hess_{g} \hat{V}(X,Y) \,\, \text{for all} \,\, X,Y\in T_{p}\Sigma_v.
\end{equation*}

Notice that $Hess_{can}\hat{V}=-\hat{V}can$, hence $Hess_{can}\hat{V}=0$ on $\Sigma_v$. Therefore
$$
Hess_{g} \hat{V}(X,Y)=XY\hat{V}-(\nabla^g_XY)\hat{V}=(\nabla^{can}_XY-\nabla^g_XY)\hat{V}
$$
for every $p\in \Sigma_v$, $X,Y\in T_pS^n$.
We define the tensor 
\begin{equation*}
	\mathcal{T}_{g}(X,Y,Z) = g(\nabla^{g}_{X}Y - \nabla^{can}_{X}Y,Z).
\end{equation*} 
Since the linear map $v \in T_{p}S^n \mapsto \nabla^{g}\hat{V}(p) \in T_{p}S^{n}$ is an isomorphism,  the condition that all equators passing by $p$ are minimal at $p$ is equivalent to
$$
tr_g^{12}\mathcal{T}_{g}(Z)=\frac{\mathcal{T}_g(Z,Z,Z)}{|Z|_g^2}
$$
for every $Z\in T_pS^n$. This is equivalent to requiring that $\mathcal{S}_g(Z,Z,Z)=0$ for every $Z$, with $\mathcal{S}_g=\mathcal{T}_g-g \otimes tr_g^{12}\mathcal{T}_{g}$. Since $\mathcal{S}_g$ is symmetric in the first two variables, this is equivalent to vanishing of the cyclic symmetrization of $\mathcal{S}_g$.

We have
	$$
	X g(Y,Z) = \nabla^{can}g(Y,Z,X) + g(\nabla^{can}_XY,Z)+ g(\nabla^{can}_XZ,Y)
	$$
	and
	$$
	X g(Y,Z) =g(\nabla^{g}_XY,Z)+ g(\nabla^{g}_XZ,Y).
$$
	By subtracting,
	$$
	\nabla^{can}g(Y,Z,X) =\mathcal{T}_g(X,Y,Z)+\mathcal{T}_g(X,Z,Y)=\mathcal{T}_g(X,Y,Z)+\mathcal{T}_g(Z,X,Y).
	$$
	Hence $(\mathcal{T}_g)^S=\frac12 (\nabla^{can}g)^S$.
	
	Notice that $tr_g^{12}\mathcal{S}_g=(1-n)tr_g^{12}\mathcal{T}_g$, so 
	$$tr_g^{12}(\mathcal{S}_g^S)=(1-n)tr_g^{12}\mathcal{T}_g+2tr_g^{13}(\mathcal{S}_g)=2tr_g^{13}(\mathcal{T}_g)-(n+1)tr_g^{12}(\mathcal{T}_g).$$
	If $\{e_i\}$ is a local $g$-orthonormal positive frame, we have 
	$$\psi^{-1}=vol_{can}(e_1,\dots,e_n).
	$$
	Thus
	$-\psi^{-2}X\psi(p)=-\sum_i \mathcal{T}_g(X,e_i,e_i)\psi^{-1},$
	which implies $d\log \psi=tr_g^{13}(\mathcal{T}_g).$ Hence the proposition follows from the identity
	$$
	\mathcal{S}_g^S=\frac{1}{2}\left(\nabla^{can}g-\frac{4}{n+1}g\otimes d\log\psi+\frac{2}{n+1}g\otimes tr_g^{12}(\mathcal{S}_g^S) \right)^S.
	$$

\end{proof}

We have all the elements  to describe the correspondence:
\begin{thm} \label{thmcorrespondenceAfull}
	Let $W$ be an open subset of the sphere $S^n$, $n\geq 2$. Let $g$ be a Riemannian metric on $W$ and denote by $F_g$ the positive function on $W$ so that
	$
		dV_g = F_g^{(n+1)/4} dV_{can}.
	$
	The map $$g \mapsto k_g = \frac{1}{F_g} g$$ is  a bijection between the set of metrics $g$ with minimal $(n-1)$-equators and the set of positive definite  Killing symmetric two-tensors $k$ of $(W,can)$. The metric $g_k$ corresponding to $k$  is given by $g_k = \frac{1}{D_k} k$ where $dV_k = D_k^{(n-1)/4} dV_{can}$.
	
\end{thm}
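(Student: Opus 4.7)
The backbone of the argument is Proposition \ref{propfundamentalequation}: a metric $g$ on $W$ has minimal $(n-1)$-equators if and only if
\begin{equation*}
	\Big(\nabla^{can}g - \tfrac{4}{n+1}\, d\log\psi \otimes g\Big)^S = 0,
\end{equation*}
where $dV_g = \psi\, dV_{can}$. Since $dV_g = F_g^{(n+1)/4}\,dV_{can}$, we have $\psi = F_g^{(n+1)/4}$ and thus $\tfrac{4}{n+1}d\log\psi = d\log F_g$. So the condition becomes
\begin{equation*}
	\big(\nabla^{can}g - d\log F_g \otimes g\big)^S = 0.
\end{equation*}
The whole argument reduces to a short algebraic manipulation of this identity together with a bookkeeping of the volume-density normalizations.

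First I will prove the equivalence of conditions. Write $g = F_g\, k_g$ with $k_g = g/F_g$. A direct computation gives
\begin{equation*}
	\nabla^{can} g = dF_g \otimes k_g + F_g\, \nabla^{can} k_g \quad \text{and} \quad d\log F_g \otimes g = dF_g \otimes k_g,
\end{equation*}
so that $\nabla^{can} g - d\log F_g\otimes g = F_g\, \nabla^{can} k_g$. Since $F_g>0$, the minimality condition becomes exactly $(\nabla^{can} k_g)^S = 0$, i.e.\ $k_g$ is a Killing symmetric two-tensor on $(W,can)$; positive definiteness of $k_g$ is immediate from that of $g$. Conversely, any positive definite Killing $k$ on $(W,can)$ yields (via the formulas below) a metric whose minimality equation reduces to $(\nabla^{can}k)^S=0$.

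Next I will verify that $g\mapsto k_g$ and $k\mapsto g_k = k/D_k$ are inverse. If $g = F_g k_g$, then in a $can$-orthonormal frame the determinant identity gives $dV_g = F_g^{n/2}\, dV_{k_g}$; combined with the defining relations for $F_g$ and $D_{k_g}$ this yields $F_g^{(n+1)/4} = F_g^{n/2}\, D_{k_g}^{(n-1)/4}$, i.e.\ $D_{k_g} = F_g^{-1}$ (using $n\geq 2$). Hence
\begin{equation*}
	g_{k_g} = \frac{k_g}{D_{k_g}} = F_g\cdot\frac{g}{F_g} = g.
\end{equation*}
Symmetrically, for any positive definite Killing $k$ one finds $F_{g_k} = D_k^{-1}$ and therefore $k_{g_k} = g_k \cdot F_{g_k}^{-1} = k$. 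This establishes the bijection and, via the first paragraph applied to $g_k$, shows that each $g_k$ has minimal equators. No serious obstacle is anticipated; the entire argument is a substitution followed by a volume-density calculation, and the hypothesis $n\geq 2$ is exactly what makes the exponent $(n-1)/4$ nonzero so that $D_{k_g}$ is unambiguously determined.
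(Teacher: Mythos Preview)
Your proof is correct and follows essentially the same route as the paper: rewrite the minimality criterion of Proposition~\ref{propfundamentalequation} using $\psi=F_g^{(n+1)/4}$, observe that $\nabla^{can}g-d\log F_g\otimes g=F_g\,\nabla^{can}k_g$ so that minimality of the equators is equivalent to $k_g$ being Killing, and then verify the volume-density identities showing $D_{k_g}=F_g^{-1}$ and $F_{g_k}=D_k^{-1}$. The paper's proof is terser---it leaves the check that $k_{g_k}=k$ (and implicitly $g_{k_g}=g$) to the reader---so your version simply spells out what the paper compresses.
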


\begin{proof}
\noindent Using the notation of Proposition \ref{propfundamentalequation}, we have $\psi = F_{g}^{(n+1)/4}$. Hence, the symmetric two-tensor $k_g = (1/F_g)g$, which is clearly positive definite, satisfies
\begin{equation*}
	\nabla^{can}k_g = \frac{1}{F_g}\nabla^{can}g -\frac{1}{F_{g}^2}(g\otimes dF_{g}) =\frac{1}{F_g}\left( \nabla^{can}g - \frac{4}{n+1}g\otimes d\log(\psi)\right).
\end{equation*}
\noindent By Proposition \ref{propfundamentalequation}, $g$ has minimal $(n-1)$-equators if and only if $k_g$ is  Killing symmetric for the canonical metric. The result follows by checking that $k_{g_k}=k$. 
\end{proof}

We observe that the aforementioned classification  of  Killing symmetric two-tensors on $(S^n,can)$ in terms of Killing vector fields implies, using Theorem \ref{thmcorrespondenceAfull},  that 
all metrics on $S^n$ with minimal $(n-1)$-equators are invariant under the antipodal map.

We will now discuss some examples. We  justify that not all metrics on $S^n$ with minimal equators have constant sectional curvature if $n\geq 3$. Let $K_1,K_2$ be nontrivial Killing vector fields on $(S^n,can)$ that are orthogonal to each other everywhere. This choice is possible if $n\geq 3$.
Let $k(t)=can +t K_1 \odot K_2$. Hence $k(t)$ is a symmetric Killing two-tensor on $(S^n,can)$ which is positive definite for
sufficiently small $t$. Then $k(0)=can$ and $k'(0)=K_1 \odot K_2$. It is not difficult to check that 
$tr_{can} k'(0) =2can(K_1,K_2)=0$ and that $div_{can}k'(0)=-\frac12 d(tr_{can}k'(0))=0$. If $g(t)=g_{k(t)}$ is as in Theorem \ref{thmcorrespondenceAfull}, then $g(0)=can$ and $g'(0)=k'(0)$. Since $g'(0)=k'(0)$ is divergence-free, it is $L^2$-orthogonal in the canonical metric to the Lie derivative $\mathcal{L}_Xcan$ for any vector field $X$ on $S^n$. Hence, for sufficiently small positive $t$, $g_{k(t)}$ has minimal equators and does not have constant sectional curvature.

In dimension $n=3$, using the structure of $S^3$ as the Lie group of unit quaternions, we exhibit positive definite symmetric Killing two-tensors $\overline{k}$ that correspond  to Riemannian metrics $g_{\overline{k}}$ on $S^3$ with minimal equators that are moreover arbitrarily close to $can$ in the smooth topology and whose isometry group is discrete. 
These metrics are invariant by the antipodal map hence this would prove Theorem D.

\indent Let $i$, $j$, $k$ be the basic quaternions. Using quaternionic multiplication, define the left-invariant vector fields 
\begin{equation*}
	X_i(p) = p\cdot i, \quad X_j(p) = p\cdot j, \quad X_k(p) = p\cdot k \quad \text{for all} \quad p\in S^3,
\end{equation*}
\noindent and the right-invariant vector fields
\begin{equation*} 
	Y_i(p) = i\cdot p, \quad Y_j(p) = j\cdot p, \quad Y_k(p) = k\cdot p \quad \text{for all} \quad p\in S^3.
\end{equation*}
Note that these are Killing vector fields of $(S^3,can)$, and also
\begin{equation}\label{can.formula}
 	can = \frac{1}{2}(X_i\odot X_i + X_j\odot X_j  + X_k\odot X_k)  = \frac{1}{2}(Y_i\odot Y_i + Y_j\odot Y_j  + Y_k\odot Y_k).
\end{equation}
The vector fields $X_i$, $X_j$, $X_k$, $Y_i$, $Y_j$, $Y_k$ form a basis of the space of Killing vector fields of $(S^3,can)$.

\indent Choose real numbers $\alpha_1>\alpha_2>\alpha_3>\beta_1>\beta_2>\beta_3>0$ with $\alpha_1$, $\alpha_2$, $\alpha_3$ sufficiently close to $1$ and $\beta_1$, $\beta_2$, $\beta_3$ sufficiently close to $0$ to guarantee that 
\begin{equation} \label{eqdiagonal}
	\overline{k} = \frac{1}{2}\sum_{i=1}^{3} \alpha_i X_i\odot X_i+\frac{1}{2}\sum_{i=1}^{3} \beta_i Y_i\odot Y_i
\end{equation}  
is positive definite by virtue of being sufficiently close to $can$. Apply Theorem \ref{thmcorrespondenceAfull} so to obtain an antipodally-invariant Riemannian metric $g_{\overline{k}}$ on $S^3$ with minimal equators. We claim that the isometry group of  $g_{\overline{k}}$ is discrete. 

\begin{prop} \label{propnaturalsymmetries}
	Let $\mathcal{G}$ be the group consisting of diffeomorphisms of the sphere $S^n$ that map $(n-1)$-equators into $(n-1)$-equators. Every element of $\mathcal{G}$ is of the form $\phi(T)(x)=Tx/|Tx|$ for some $T\in GL(n+1,\mathbb{R})$.
\end{prop}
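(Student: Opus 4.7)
\bigskip

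\noindent\textbf{Proof proposal.} The plan is to reduce the statement to the fundamental theorem of projective geometry by descending $\phi$ to a diffeomorphism of $\mathbb{RP}^n$ that maps projective hyperplanes to projective hyperplanes. There are three main stages: showing $\phi$ commutes with the antipodal map $A$; identifying the descended map as a projective collineation; and lifting the resulting projective linear transformation back to $S^n$. Throughout I use that a diffeomorphism of $S^n$ sending each equator into an equator must in fact send each equator \emph{onto} an equator (by compactness and dimension), so that $\phi$ induces a bijection of the set of equators.

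The key observation for the first stage is that for any $p\in S^n$ with $n\geq 2$,
\[
\bigcap_{\Sigma\ni p}\Sigma=\{p,-p\},
\]
where the intersection runs over all $(n-1)$-equators through $p$; this is elementary, since a point lies in $\Sigma_v$ for every $v\perp p$ if and only if it lies in $\mathbb{R}p$. Because the induced bijection on the set of equators sends equators through $p$ bijectively onto equators through $\phi(p)$, applying $\phi$ to the identity above yields $\phi(-p)\in\{\phi(p),-\phi(p)\}$. Injectivity of $\phi$ forces $\phi(-p)=-\phi(p)$, so $\phi$ descends to a diffeomorphism $\bar\phi:\mathbb{RP}^n\to\mathbb{RP}^n$ that sends projective hyperplanes bijectively to projective hyperplanes. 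Since every projective subspace of codimension $k$ is an intersection of $k$ projective hyperplanes in general position, $\bar\phi$ preserves projective subspaces of every dimension, and in particular is a collineation (it sends projective lines to projective lines).

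The second stage then invokes the fundamental theorem of projective geometry: for $n\geq 2$, any collineation of $\mathbb{RP}^n$ is induced by a semilinear automorphism of $\mathbb{R}^{n+1}$, and since the only field automorphism of $\mathbb{R}$ is the identity (any field automorphism preserves squares, hence the order, hence fixes $\mathbb{Q}$ and is continuous, hence is the identity), the semilinear map is in fact linear. Therefore $\bar\phi=[T]$ for some $T\in GL(n+1,\mathbb{R})$. Lifting through the antipodal quotient, for each $x\in S^n$ one has $\phi(x)=\varepsilon(x)\,Tx/|Tx|$ with $\varepsilon(x)\in\{\pm1\}$; continuity of $\phi$ and connectedness of $S^n$ make $\varepsilon$ globally constant, and absorbing a possible sign into $T$ gives $\phi(x)=Tx/|Tx|$, as required.

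The main obstacle is the first stage, where one must be sure that the descent to $\mathbb{RP}^n$ is legitimate. The rest is essentially bookkeeping: the passage from ``preserves hyperplanes'' to ``preserves lines'' is immediate in dimension at least two, and the fundamental theorem of projective geometry together with the triviality of $\operatorname{Aut}(\mathbb{R})$ does the remaining work. The converse inclusion --- that every such $\phi(T)$ is indeed in $\mathcal{G}$ --- is immediate from the fact that a linear isomorphism of $\mathbb{R}^{n+1}$ carries linear hyperplanes to linear hyperplanes.
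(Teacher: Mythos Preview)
Your proof is correct and follows essentially the same approach as the paper: reduce to the Fundamental Theorem of Projective Geometry by showing that $\phi$ induces a collineation, then lift the resulting linear map back to $S^n$ via a continuity/sign argument. The only cosmetic difference is that you first establish antipodal symmetry and descend to $\mathbb{RP}^n$, whereas the paper works directly with the lattice of $k$-equators (equivalently, proper subspaces of $\mathbb{R}^{n+1}$) and handles the $\pm$ ambiguity at the end.
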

\begin{proof}
 Since the intersection of distinct $k$-equators is a $(k-1)$-equator, a diffeomorphim $\phi$ in $\mathcal{G}$ maps $k$-equators into $k$-equators for every $k=0,\ldots,n-1$. 
Therefore the map that assigns to each proper vector subspace $V$ of $\mathbb{R}^{n+1}$ the proper vector subspace generated by the $(dim(V)-1)$-equator $\phi(V\cap S^n)$ is a collineation, in the sense that it permutes proper vector subspaces of $\mathbb{R}^{n+1}$ while preserving the partial order induced by inclusion. By the Fundamental Theorem of Projective Geometry, there exists $T\in GL(n+1,\mathbb{R})$ such that, for every proper vector subspace $V$ of $\mathbb{R}^{n+1}$, the subspace $T(V)$ is precisely the subspace generated by $\phi(V\cap S^n)$. We conclude that for every $x$ in $S^n$ the point $\phi(x)$ in $S^n$ must be either equal to $T(x)/|T(x)|$ or to $-T(x)/|T(x)|$. Since $T$ is a linear isomorphism, 
 either $\phi=\phi(T)$ or $\phi=\phi(-T)$.
\end{proof}

As in the proof of Theorem B, every minimal sphere of $(S^3,g_{\overline{k}})$ is an equator (we could also have used the uniqueness result of \cite{GalMir}). An isometry $\psi$ of $(S^3,g_{\overline{k}})$ will send a minimal surface into a minimal surface, hence it permutes equators. Therefore by Proposition \ref{propnaturalsymmetries} every isometry of $(S^3,g_{\overline{k}})$ is of the form $\phi(T)$ for some
$T\in GL(4,\mathbb{R})$.

The correspondence from Theorem  \ref{thmcorrespondenceAfull} implies that $\phi(T)^*g_k = g_k$ if and only if $k\cdot T = k$, where the action of $T\in GL(n+1,\mathbb{R})$ on Killing symmetric two-tensors of $(S^n,can)$ is given by
$$
(k\cdot T)_x =\frac{|Tx|^4}{(\det T)^\frac{4}{n+1}}(\phi(T)^*k)_x.
$$
Notice that $(K_1\odot K_2)\cdot T=(\det T)^{-\frac{4}{n+1}} (T^*K_1T)\odot (T^*K_2T)$.

If $\{T_i\} \subset GL(n+1,\mathbb{R})$ is such that $\phi(T_i)$ converges to the identity map as diffeomorphisms of $S^n$, we have $\phi(T_i)^*dV_{can} \rightarrow dV_{can}$ which implies $|T_ix|^{-(n+1)}\det T_i\rightarrow 1$ for every $x\in S^n$. Hence $\det T_i>0$ and 
$
(\det T_i)^{-\frac{1}{n+1}}T_i\rightarrow Id.
$
Since we can suppose by scaling that $\det T_i=1$, we have that the group of isometries of $(S^n,g_k)$ of the form $\phi(T)$ is discrete if and only if the equation
$$
k \cdot \mathfrak{t}=0,
$$
$\mathfrak{t}\in \mathfrak{sl}(4,\mathbb{R})$ (or equivalently $tr \, \mathfrak{t}=0$), admits only the trivial solution $\mathfrak{t}=0$.
Here $k \rightarrow k \cdot \mathfrak{t}$ is the linearization of the action $k \rightarrow k \cdot T$, $T\in SL(n+1,\mathbb{R})$, at the identity. We are going to check that this property holds with $k=\overline{k}$.

Notice that 
$(K_1\odot K_2) \cdot \mathfrak{t} = ( K_1 \cdot  \mathfrak{t})\odot K_2 + K_1\odot ( K_2 \cdot  \mathfrak{t}),
$
where $K \cdot \mathfrak{t}=\mathfrak{t}^* K + K \mathfrak{t}$.
If we set $W_1,\dots,W_6$ equal to $X_1,X_2,X_3,Y_1,Y_2,Y_3$ in that order, as matrices the basis $\{\frac12W_i\}$ of $\mathfrak{so}(4,\mathbb{R})$ is orthonormal for the scalar product $\langle A,B\rangle =tr (A^*B)$. For every $\mathfrak{t}\in \mathfrak{sl}(4,\mathbb{R})$ we have
	\begin{align*}
		W_p\cdot \mathfrak{t} & = \frac{1}{4}\sum_{q=1}^6 tr((W_p\cdot \mathfrak{t})^{*}W_q)W_q = -\frac{1}{4}\sum_{q=1}^6 tr(\mathfrak{t}^* W_pW_q+W_p\mathfrak{t}W_q)W_q \\
		 & = -\frac{1}{4}\sum_{q=1}^6 tr(W_q^* W_p^* \mathfrak{t}+W_qW_p\mathfrak{t})W_q = -\frac{1}{2}\sum_{q=1}^{6}  tr(W_qW_p\mathfrak{t})W_q.
	\end{align*}

	\indent Hence, for every $\mathfrak{t}\in \mathfrak{sl}(4,\mathbb{R})$ and every $\overline{k}$ as in \eqref{eqdiagonal},  by setting $w_1,\dots, w_6$ equal to $\alpha_1,\alpha_2,\alpha_3,\beta_1,\beta_2,\beta_3$ in that order, 
\begin{align*}
	\overline{k}\cdot \mathfrak{t} & = \sum_{p=1}^6 w_p (W_p\cdot \mathfrak{t})\odot W_p = -\frac{1}{2}\sum_{p,q=1}^6 w_ptr(W_qW_p\mathfrak{t})W_q\odot W_p \\ & = -\frac{1}{2}\sum_{1\leq p< q\leq 6} (w_p tr(W_qW_p\mathfrak{t})+ w_q tr(W_pW_q\mathfrak{t}))W_p\odot W_q,
\end{align*}
where we used the symmetry $K_1\odot K_2= K_2\odot K_1$ and the fact that $W_pW_p=-Id$ for every  $p$.

\indent Since $dim \,\mathcal{K}_2(S^3)=20$ and $\mathcal{K}_2(S^3)$ is generated by products of Killing vector fields,
the symmetry $K_1\odot K_2= K_2\odot K_1$ and the identity \eqref{can.formula} imply that  $\{W_p\odot W_q : \, 1\leq p< q\leq 6\}$ is a linearly independent set. Hence
\begin{equation*}
	\overline{k}\cdot \mathfrak{t} =0 \, \Leftrightarrow \, w_p tr(W_qW_p\mathfrak{t})+ w_q tr(W_pW_q\mathfrak{t}) = 0 \,\, \text{for all} \,\, 1\leq p<q\leq 6.
\end{equation*}
The quaternionic algebra gives that if $1\leq p<q\leq 3$ or $4\leq p<q\leq 6$ we have $W_pW_q=-W_qW_p$, and if $1\leq p\leq 3$, $4\leq q\leq 6$, we have
$W_pW_q=W_qW_p$. In any case by the choice of $\{w_p\}$ we have $tr(W_qW_p\mathfrak{t})=0$ for every $1\leq p<q\leq 6$. Hence $\mathfrak{t}$ is orthogonal to $X_i$, $Y_j$ and $X_iY_j$ for every $1\leq i,j\leq 3$. Since the set $\{X_iY_j\}_{1\leq i,j\leq 3}$ is a basis of the space of symmetric matrices with zero trace, it follows that $\mathfrak{t}=0$. This finishes the proof.

\subsection{Proof of Theorem E}\label{proof.thmE}

Let $Sym_2(S^n)$ denote the space of smooth symmetric two-tensors on $S^n$.  We replace $\mathcal{A}(\rho,\Phi)$ and $\mathcal{H}(\rho,\Phi)$ by $\mathcal{A}(g,\Phi)$ and $\mathcal{H}(g,\Phi)$ to indicate the dependence on a general Riemannian metric, rather than on a conformal factor. Similarly as before we have the  identity
\begin{equation*}
	\frac{d}{dt}_{\vert t=0}\mathcal{A}(g,\Phi+t\phi)(v) = \int_{\Sigma_v}\mathcal{H}(g,\Phi)(x,v)\phi(x,v)dA_{can}(x),
\end{equation*}
 for all $\phi \in C^{\infty}_{*,odd}(T_1S^n)$ (see \cite{Whi}, Theorem 1.1).
 
 The derivative $D_2\mathcal{H}(g,\Phi)$ induces (\cite{Whi}, Theorem 1.1) symmetric second order elliptic operators on each equator $\Sigma_v$ as before, with
	\begin{equation*}
		D_2\mathcal{H}(can,0)(\phi)_v = -\Delta_{can}\phi_v-(n-1)\phi_v  
	\end{equation*}
	for every $\phi\in C^{\infty}_{*,odd}(T_1S^n)$. Therefore we also have 
	$$
\mathcal{P}(g,\Phi):C^{\infty}_{0,odd}(T_{1}S^n) \rightarrow C^{\infty}_{0,odd}(T_{1}S^n)$$
as in \eqref{eqP}, and a solution map
\begin{equation*}
	\mathcal{S}(g,\Phi) : C^{\infty}_{0,odd}(T_1S^n) \rightarrow C^{\infty}_{0,odd}(T_1S^n)
\end{equation*}
as in \eqref{eqS}. \\

\indent The constraint equation still holds true:
\begin{equation*}
	\mathcal{K}(\Phi,\mathcal{H}(g,\Phi)) = d\mathcal{A}(g,\Phi),
\end{equation*}
and $\mathcal{H}(g,\Phi)=0$ if and only if
\begin{equation*}
	d\mathcal{A}(g,\Phi)=0 \quad \text{and} \quad \mathcal{H}(g,\Phi)=j \omega \quad \text{for some} \quad \omega\in \Omega^{1}_{even}(S^n).
\end{equation*}

Thus we can define a map $\Lambda$ as in Section \ref{problem.formulation}. We let	$\Lambda=(\Lambda_1,\Lambda_2)$ where 
\begin{equation*}
	\Lambda_1(g,\Phi) = \mathcal{A}(g,\Phi) - \dashint_{{\mathbb{RP}}^n} \mathcal{A}(g,\Phi)(\sigma)dA_{can}(\sigma)
\end{equation*}
and
\begin{equation*}
	\Lambda_{2}(g,\Phi) = \mathcal{H}(g,\Phi) - j C(\mathcal{H}(g,\Phi)).
\end{equation*}
 Hence
\begin{equation*}
	\Lambda(g,\Phi)=0 \quad \Leftrightarrow \quad \mathcal{H}(g,\Phi)=0.
\end{equation*}
 Moreover, $\Lambda(can,0)=(0,0)$ since the  equators are minimal hypersurfaces in $(S^n,can)$. 
 
 We need to find an approximate right-inverse for $D\Lambda$ to apply Theorem \ref{thmift}. We have
 \begin{equation*}
	D_1\Lambda_1(g,\Phi)\cdot h= \mathcal{F}(g,\Phi)(h)-\dashint_{{\mathbb{RP}}^n}\mathcal{F}(g,\Phi)(h)(\sigma)dV_{can}(\sigma),\nonumber 
	\end{equation*}
	where $ \mathcal{F}(g,\Phi)$ is the generalized Funk transform
	\begin{equation*}
	\mathcal{F}(g,\Phi)(h)(\sigma) = \frac{1}{2}\int_{\Sigma_{\sigma}(\Phi)} tr_{(\Sigma_\sigma(\Phi),g)} h (x) dA_{g}(x)
	\end{equation*}
	for a smooth symmetric two-tensor $h$ on $S^n$. Notice that
	$$
	\mathcal{F}(g,\Phi):Sym_2(S^n) \rightarrow C^\infty(\mathbb{RP}^n).
	$$

Then
\begin{equation*}
	D_2\Lambda_1(g,\Phi)\cdot \phi = D_{2}\mathcal{A}(g,\Phi)\cdot \phi-\dashint_{\RP^n}(D_{2}\mathcal{A}(g,\Phi)\cdot \phi)(\sigma)dV_{can}(\sigma),
\end{equation*}
where, by the definition of the Euler-Lagrange operator $\mathcal{H}(g,\Phi)$,
\begin{equation*}
	(D_{2}\mathcal{A}(g,\Phi)\cdot \phi)([v]) = \int_{\Sigma_v} \mathcal{H}(g,\Phi)(x,v)\phi(x,v) dA_{can}(x).
\end{equation*}
For all $\phi\in  C^{\infty}_{0,odd}(T_{1}S^n)$ and $v\in S^n$ we have
\begin{equation*}
	(D_{2}\mathcal{A}(g,\Phi)\cdot \phi)([v]) = \int_{\Sigma_v} \Lambda_2(g,\Phi)(x,v)\phi(x,v) dA_{can}(x).
\end{equation*}
Likewise, we have 
\begin{equation*}
	D_1\Lambda_2(g,\Phi)\cdot h
	= D_1\mathcal{H}(g,\Phi)\cdot h - j C(D_1\mathcal{H}(g,\Phi)\cdot h)
	\end{equation*}
and
\begin{equation*}
	D_2\Lambda_2(g,\Phi)\cdot \phi = \mathcal P(g,\Phi)(\phi).
\end{equation*}

If we can construct a right-inverse 
$$
\mathcal{R}(g,\Phi): C^\infty(\mathbb{RP}^n)\rightarrow Sym_2(S^n)
$$
 for the generalized Funk transform $ \mathcal{F}(g,\Phi)$,  we can use it to define $V$ and $Q$  as in Section \ref{sectionhypotheses}.

Notice that for $\zeta \in C^{\infty}(S^n)$, we have
\begin{equation*}
	\mathcal{F}(g,\Phi)(\zeta g)(\sigma) = \frac{(n-1)}{2}\int_{\Sigma_\sigma(\Phi)} \zeta(x) dA_{g}(x).
\end{equation*}

\indent We define
\begin{equation*}
	\mathcal{F}^\circ(g,\Phi) : C^{\infty}(\mathbb{RP}^n) \rightarrow  Sym_2(S^n)
\end{equation*}
by setting
\begin{equation*}
	\mathcal{F}^\circ(g,\Phi)(f)_x = \frac{2}{n-1} \left(\int_{\Sigma^*_{x}(\Phi)} f(\tau)U(g,\Phi)(x,\tau)dA_{can}(\tau)\right) g_x
\end{equation*}
for all $x\in S^n$, where 
\begin{equation*}
	U(g,\Phi) = \frac{|Jac(\pi_2)|}{|Jac(\pi_1)|} \in C^{\infty}([F(\Phi)])
\end{equation*}
is the quotient of the Jacobians of the projections of $[F(\Phi)]$ onto the factors $(S^n,g)$ and $(\mathbb{RP}^n,can)$. 

Hence
\begin{align*}
& \mathcal{F}(g,\Phi)\left(\mathcal{F}^\circ(g,\Phi)(f)\right)(\sigma) & \\
& \quad \quad \quad =\int_{\Sigma_\sigma(\Phi)} \left(\int_{\Sigma^*_{x}(\Phi)} f(\tau)U(g,\Phi)(x,\tau)dA_{can}(\tau)\right)  dA_{g}(x) &\\
& \quad \quad \quad = \int_{[B_\sigma(\Phi)]} f(\tau)U(g,\Phi)(x,\tau)|Jac(Q_1)|(\sigma,x,\tau)dV_{g \times can}(x,\tau) &
\end{align*}
by the co-area formula, where $Q_1:[B_\sigma(\Phi)] \rightarrow \Sigma_\sigma(\Phi)$ is the projection onto the first coordinate as in the proof of Proposition \ref{map.is.K}. Using the co-area formula again with $Q_2:[B_\sigma(\Phi)] \rightarrow \mathbb{RP}^n$ we get
\begin{multline*}
\mathcal{F}(g,\Phi)\left(\mathcal{F}^\circ(g,\Phi)(f)\right)(\sigma)\\
=\int_{\mathbb{RP}^n}\Big(\int_{\Sigma_\sigma(\Phi)\cap \Sigma_\tau(\Phi)}f(\tau)U(g,\Phi)(x,\tau)\\
 \cdot \frac{|Jac(Q_1)|(\sigma,x,\tau)}{|Jac(Q_2)|(\sigma,x,\tau)}dA_g(x)\Big)dV_{can}(\tau).
\end{multline*}

The proofs of Propositions \ref{propdualfunk} and \ref{map.is.K} give that
$$
\frac{|Jac(\pi_2)|}{|Jac(\pi_1)|}\frac{|Jac(Q_1)|}{|Jac(Q_2)|}=\frac{1}{\sqrt{1-g({\bf N}(g,\Phi)(x,v),{\bf N}(g,\Phi)(x,w))^2}}
$$
for $[v]=\sigma$, $[w]=\tau$, where ${\bf N}(g,\Phi)(x,v),{\bf N}(g,\Phi)(x,w)$ denote unit normals in the $g$-metric to
$\Sigma_\sigma(\Phi), \Sigma_\tau(\Phi)$, respectively, at $x$.

Therefore
\begin{equation*}
\mathcal{F}(g,\Phi)\left(\mathcal{F}^\circ(g,\Phi)(f)\right)(\sigma)
 =\int_{\mathbb{RP}^n}K(g,\Phi)(\sigma,\tau)f(\tau)dV_{can}(\tau),
\end{equation*}
where
\begin{multline*}
K(g,\Phi)(\sigma,\tau)\\
=\int_{\Sigma_\sigma(\Phi)\cap \Sigma_\tau(\Phi)}\frac{1}{\sqrt{1-g({\bf N}(g,\Phi)(x,v),{\bf N}(g,\Phi)(x,w))^2}}dA_g(x).
\end{multline*}

The analysis of this integral operator is the same as before: the kernel is of the form $K(g,\Phi)=k(g,\Phi)/(\eta \circ d_{can})$ where the map $k(g,\Phi)$ extends smoothly to the blow-up $B(\mathbb{RP}^n\times \mathbb{RP}^n)$. The operator
$$
\mathcal{F}(g,\Phi)\circ \mathcal{F}^\circ(g,\Phi)=L(k(g,\Phi)):C^\infty(\mathbb{RP}^n)\rightarrow C^\infty(\mathbb{RP}^n)
$$ 
is a pseudodifferential operator of order $1-n$ that is elliptic and invertible if $(g,\Phi)$ is sufficiently close to $(can,0)$ in the smooth topology. Hence we can define
$
\mathcal{R}(g,\Phi)=\mathcal{F}^\circ(g,\Phi) \circ \left(\mathcal{F}(g,\Phi)\circ \mathcal{F}^\circ(g,\Phi)\right)^{-1}
$
so
$$
\mathcal{F}(g,\Phi) \circ \mathcal{R}(g,\Phi) =id.
$$

As before we can check that the maps $\Lambda, V$ and $Q$ are smooth tame and hence Theorem \ref{thmift} can be applied.
This gives a smooth tame map
\begin{equation*}
	\Gamma : W \subset Ker{D\Lambda(can,0)} \rightarrow \Lambda^{-1}(0,0),
\end{equation*}
defined on an open subset $W$ of $Ker{D\Lambda(can,0)}$, with $\Gamma(0)=0$ and $D\Gamma(0)\cdot v=v$ for every $v\in Ker{D\Lambda(can,0)}$. Theorem E follows as Theorem A once we establish that for each smooth symmetric two-tensor $h$ as in \eqref{eqsomeelementsofthekernel} we can find $\phi \in C^{\infty}_{0,odd}(T_1S^n)$ such that $(h,\phi)\in Ker D\Lambda(can,0)$, and then set $(g,\Phi)=\Gamma(h,t\phi)$. 

This is a consequence of the next proposition.\\

\begin{prop} \label{propkerLambda2}
	The kernel of $D\Lambda(can,0)$ consists of all the pairs $(h,\phi)$ in $Sym_2(S^n)\times C^{\infty}_{0,odd}(T_1S^n)$ such that:
	\begin{itemize}
		\item[$i)$] There exists a constant $c$, a smooth odd function $f$ on $S^n$, a smooth vector field $X$ on $S^n$ and a symmetric transverse-traceless two-tensor tensor $\overline{h}$ on $(S^n,can)$ such that
		\begin{equation*}
			h = (c+f)can + \mathcal{L}_{X}can + \overline{h};
		\end{equation*}
		and 
		\item[$ii)$] The function $\phi$ is such that
		\begin{equation*}
			\Delta_{(\Sigma_v,can)}\phi_v + (n-1)\phi_v = (D_1\mathcal{H}(can,0)\cdot h)_v \quad  \text{on} \quad \Sigma_v
		\end{equation*}
		for every $v\in S^n$.
	\end{itemize}
	Moreover, every $h$ as in $i)$ uniquely determines $\phi$ as in $ii)$.
\end{prop}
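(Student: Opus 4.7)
The plan is to proceed in three stages: first, linearize $\Lambda$ at $(can,0)$ exploiting that $\mathcal{H}(can,0)=0$; second, use the Variational Constraint (Theorem \ref{thmconstraint}) to eliminate the $jC(\cdot)$ term from the equation for $\phi$; and third, characterize the tensors $h$ for which $\mathcal{F}(can,0)(h)$ is constant by means of a Berger-Ebin decomposition and Lemma A.1.

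For the first stage, since $\mathcal{H}(can,0)=0$, the derivative $D_2\Lambda_1(can,0)\cdot\phi$ integrates $\phi$ against zero and hence vanishes, so $D\Lambda_1(can,0)\cdot(h,\phi)=0$ reduces to the condition
\begin{equation*}
\text{(A)}\quad\mathcal{F}(can,0)(h)\text{ is constant on }\mathbb{RP}^n.
\end{equation*}
Moreover, for $\phi\in C^{\infty}_{0,odd}(T_1S^n)$ one has $\phi_v\in\ker(\mathcal{J}_v(0,0))^\perp$, so by Remark \ref{rmkJacobicanonico} $D_2\Lambda_2(can,0)\cdot\phi=\mathcal{J}_v(0,0)\phi_v$, and the equation $D\Lambda_2(can,0)\cdot(h,\phi)=0$ becomes
\begin{equation*}
\mathcal{J}_v(0,0)\phi_v+(D_1\mathcal{H}(can,0)\cdot h)_v = jC(D_1\mathcal{H}(can,0)\cdot h)_v.
\end{equation*}

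For the second stage, I differentiate the identity $\mathcal{K}(\Phi,\mathcal{H}(g,\Phi))=d\mathcal{A}(g,\Phi)$ of Theorem \ref{thmconstraint} at $(can,0)$ in the direction $(h,\phi)$. Using $\mathcal{K}(0,\Psi)=-C(\Psi)$, the vanishing $D_1\mathcal{K}(0,0)=0$ and $D_2\mathcal{A}(can,0)\cdot\phi=0$ (both because $\mathcal{H}(can,0)=0$), and $C(\mathcal{J}_v(0,0)\phi_v)=0$ (since $\mathcal{J}_v$ preserves $\ker^\perp$ while the linear functions $x\mapsto\langle x,u\rangle$ defining $C$ lie in $\ker\mathcal{J}_v$), I obtain the key identity
\begin{equation*}
C(D_1\mathcal{H}(can,0)\cdot h)=-d\mathcal{F}(can,0)(h).
\end{equation*}
Under (A) the right-hand side vanishes, so the displayed equation above reduces exactly to condition (ii). The Fredholm alternative and elliptic regularity (Remark \ref{rmkJacobicanonico}) then produce a unique smooth $\phi_v\in\ker(\mathcal{J}_v(0,0))^\perp$ for each $v$, depending smoothly and oddly on $v$ since $(D_1\mathcal{H}(can,0)\cdot h)_v$ is smooth and odd in $v$ and $\mathcal{J}_{-v}=\mathcal{J}_v$.

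For the third stage I use the Berger-Ebin decomposition to write $h=u\cdot can+\mathcal{L}_X can+\overline{h}$ with $u\in C^{\infty}(S^n)$, $X$ a smooth vector field, and $\overline{h}$ TT, and evaluate $\mathcal{F}(can,0)$ on each summand. On the conformal piece, $\mathcal{F}(can,0)(u\cdot can)([v])=\frac{n-1}{2}\int_{\Sigma_v}u\, dA_{can}$ is (up to a constant) the classical Funk transform of $u$. For the Lie derivative piece, if $\psi_t$ is the flow of $X$ then $\mathrm{area}(\Sigma_v,\psi_t^*can)=\mathrm{area}(\psi_{-t}(\Sigma_v),can)$, whose $t$-derivative at $0$ vanishes by minimality of $\Sigma_v$ in $(S^n,can)$, giving $\mathcal{F}(can,0)(\mathcal{L}_X can)=0$. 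For the TT piece I observe that the unit normal to $\Sigma_v$ at $x\in\Sigma_v$ is precisely $v$, so $tr_{\Sigma_v}\overline{h}=tr_{can}\overline{h}-\overline{h}(v,v)=-\overline{h}(v,v)$ on $\Sigma_v$, whence $\mathcal{F}(can,0)(\overline{h})([v])=-\tfrac{1}{2}\int_{\Sigma_v}\overline{h}(v,v)dA_{can}$. Setting $V(x)=v-\langle x,v\rangle x$ (the tangential projection of $v$), a direct computation gives $\nabla_X V=-\langle x,v\rangle X$, so $\mathcal{L}_V can=-2\langle x,v\rangle can$; thus the vector field $Y^i=\overline{h}^{ij}V_j$ satisfies
\begin{equation*}
\mathrm{div}\,Y=(\mathrm{div}\,\overline{h})(V)+\tfrac{1}{2}\langle\overline{h},\mathcal{L}_V can\rangle=0-\langle x,v\rangle\cdot tr_{can}\overline{h}=0.
\end{equation*}
Applying Stokes' on $\{x\in S^n:\langle x,v\rangle>0\}$, whose outward unit normal along $\Sigma_v$ is $-v$ and on which $V=v$, yields $\int_{\Sigma_v}\overline{h}(v,v)dA_{can}=0$. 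Combining these three computations gives $\mathcal{F}(can,0)(h)=\frac{n-1}{2}\mathcal{F}(u)$, and by Lemma A.1 this is constant precisely when $u=c+f$ for a constant $c$ and an odd function $f$, exactly the form (i). The main obstacle is this last TT computation: the trick of testing against $Y=\overline{h}(V,\cdot)^\sharp$ to convert the trace-free and divergence-free properties of $\overline{h}$ into a Stokes' identity on the hemisphere is the crucial step, while the remaining ingredients follow from the Variational Constraint and standard Funk/elliptic theory.
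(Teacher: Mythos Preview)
Your proof is correct and follows essentially the same approach as the paper's. In particular, your Stokes argument for the transverse-traceless piece is exactly the paper's: your vector field $Y=\overline{h}(V,\cdot)^\sharp$ is the dual of the paper's one-form $\omega=\overline{h}(\nabla^{can}f,\cdot)$ with $f(x)=\langle x,v\rangle$ (so $\nabla^{can}f=V$), and your identity $\tfrac{1}{2}\langle\overline{h},\mathcal{L}_V can\rangle=-\langle x,v\rangle\,tr_{can}\overline{h}$ is just the paper's $\langle\overline{h},\mathrm{Hess}\,f\rangle=-f\,tr_{can}\overline{h}$ rewritten via $\mathcal{L}_{\nabla f}can=2\,\mathrm{Hess}\,f$.
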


A calculation yields
\begin{equation*}
		D_1\mathcal{H}(can,0)(h)(x,v) = -(div_{can} h)_x(v) + \frac{1}{2}dtr_{can}(h)_x(v) + \frac{1}{2} \nabla^{can}_v h_x(v,v)
	\end{equation*}
	for all $(x,v)\in T_1S^n$ and $h\in Sym_2(S^n)$. 
	
	\begin{proof}
	By differentiating the constraint equation, we have
 \begin{align*}
	d(\mathcal{F}(can,0)(h)) & = d(D_{1}\mathcal{A}(can,0)\cdot h) \\ & = \mathcal{K}(0,D_{1}\mathcal{H}(can,0)\cdot h) = - C(D_{1}\mathcal{H}(can,0)\cdot h).
  \end{align*}
Recall also that 
	\begin{equation*}
		D_2\mathcal{H}(can,0)(\phi)_v = -\Delta_{can}\phi_v-(n-1)\phi_v  
	\end{equation*}
is $L^2$-orthogonal to the linear functions on $(\Sigma_v,can)$. This implies that $C(D_2\mathcal{H}(can,0)(\phi))=0$.

\indent From the explicit formulas for $D\Lambda$, we  deduce that 
$$(h,\phi)\in ker D\Lambda(can,0)
$$ 
if and only if $\mathcal{F}(can,0)(h)$ is a constant function on $S^n$ and $\phi_v$ is the unique solution of
 \begin{equation*}
 	\Delta_{(\Sigma_v,can)}\phi_v + (n-1)\phi_v = (D_{1}\mathcal{H}(can,0)\cdot h)_v
 \end{equation*}
 that is $L^2(\Sigma_v,can)$-orthogonal to the linear functions. This solution exists because in this case we have
 $C(D_{1}\mathcal{H}(can,0)\cdot h)=0$. \\
 
 It remains to determine for which symmetric two-tensors $h$ on $S^n$ the function $\mathcal{F}(can,0)(h)$ is constant. Recall that any $h\in Sym_2(S^n)$ can be decomposed as a sum
 \begin{equation*}
 	h = (fcan + \mathcal{L}_{X}can) + \overline{h},
 \end{equation*}
 where $f\in C^{\infty}(S^n)$, $\mathcal{L}_X$ denotes the Lie derivative in the direction of the smooth vector field $X$ on $S^n$, and $\overline{h}$ is transverse-traceless tensor on $(S^n,can)$, that is, a tensor such that $div_{can} \overline{h} = tr_{can} \overline{h} =0$ (\cite{Bes2}, Lemma 4.57). \\
 
 \noindent \textbf{Claim 1}: The Lie derivatives of $can$ are in the kernel of $\mathcal{F}(can,0)$. 
 
\indent If $\psi_t$ denotes the one-parameter family of diffeomorphisms generated by $X\in \mathcal{X}(S^n)$, then 
 $
 	\frac{d}{dt}_{\vert t=0}(\psi_t)^{*}can = \mathcal{L}_Xcan
$
 and, for every $\sigma\in \mathbb{RP}^n$,
 \begin{multline*}
	 	\mathcal{F}(can,0)(\mathcal{L}_Xcan)(\sigma) = D_1\mathcal{A}(can,0)(\mathcal{L}_Xcan)(\sigma) \\
	 	= \frac{d}{dt}_{\vert t=0}area(\Sigma_\sigma,(\psi_{t})^{*} can) = \frac{d}{dt}_{\vert t=0}area(\psi_{-t}(\Sigma_\sigma),(\psi_{t})^{*} can) =0,
\end{multline*}
where we used the minimality of $\Sigma_\sigma$ in $(S^n,can)$. \\
 
\noindent \textbf{Claim 2}: Any transverse-traceless symmetric two-tensor $\overline{h}$ is in the kernel of $\mathcal{F}(can,0)$. 
 
 For $v\in S^n$, the function $f(x)=\langle x,v\rangle$ satisfies $Hess f=-f can$. If $\omega=\overline{h}(\nabla^{can}f,\cdot)$, we get
 $$
 div_{can}\omega=(div_{can}\overline{h})(\nabla^{can}f)+\langle \overline{h},Hess f\rangle=0
 $$
since $\overline{h}$ is divergence-free and traceless. If $\Omega_v$ denotes the hemisphere containing $v$  with $\partial \Omega_v=\Sigma_v$, we have
$$
0=\int_{\Omega_v}(div_{can}\omega) dV_{can}=-\int_{\Sigma_v}\omega_x(v)dA_{can}(x)=-\int_{\Sigma_v}\overline{h}_x(v,v)dA_{can}(x).
$$
But, with $\sigma=[v]$,
\begin{align*}
2\mathcal{F}(can,0)(\overline{h})(\sigma)&=\int_{\Sigma_v}tr_{\Sigma_v,can}\overline{h}(x)dA_{can}(x)\\
&= -\int_{\Sigma_v}\overline{h}_x(v,v)dA_{can}(x)=0
\end{align*}
since $tr_{S^n}\overline{h}=0$.\\

\noindent \textbf{Claim 3}: If $f\in C^{\infty}(S^n)$, then $\mathcal{F}(can,0)(fcan)$ is constant if and only if $f$ is the sum of a constant function and an odd function. 

\indent In fact, since
\begin{equation*}
	\frac{2}{n-1}\mathcal{F}(can,0)(fcan) = \int_{\Sigma_v} f(x)dA_{can}(x)
\end{equation*}
is the standard Funk transform on $(S^n,can)$, the result follows from Lemma A.1. \\
 
\end{proof}


%

\begin{appendix}

\section*{Appendix. The standard Funk transform}\label{appendice}

\indent The following lemma summarizes the most relevant properties of the standard Funk transform $\mathcal{F}$. Its content and method of proof are well-known (see \cite{Gui}, Appendix A and \cite{Str}, \S 4, Lemma 4.3).

\begin{lemA1} \hfill{} \label{Aplemgardingestimate}
	\begin{itemize}
	\item[$i)$] The kernel of $\mathcal{F}$ is the set of odd functions:
	\begin{equation*}
		C^{\infty}_{odd}(S^n)= \{f\in C^{\infty}(S^n):\, \mathcal{F}(f)=0\}.
	\end{equation*}
	\item[$ii)$] For every non-negative integer $k$, $\mathcal{F}$ restricted to $C^{\infty}_{even}(S^n)$ extends to a linear isomorphism
	\begin{equation*}
		\mathcal{F}: H^{k}_{even}(S^n) \mapsto H^{k+\frac{n-1}{2}}_{even}(S^n).
	\end{equation*}
	and for a positive constant $c$,
	\begin{equation*}
		\frac{1}{c}\norm{f}_{k} \leq  \norm{\mathcal{F}(f)}_{k+\frac{n-1}{2}} \leq c\norm{f}_{k} 
	\end{equation*}
			for all  $f\in H^{k}_{even}(S^n)$.

	\end{itemize}
\end{lemA1}

\begin{corA2}\label{Apcorollary}
The standard Funk transform $\mathcal{F}$ of $(S^n,can)$ is such that $\mathcal{F}\circ \mathcal{F}^{*}$ is elliptic,
	{\begin{equation*}
		\mathcal{F}\circ \mathcal{F}^{*} : H^k(\mathbb{RP}^n)\rightarrow H^{k+n-1}(\mathbb{RP}^n)
	\end{equation*}
	is  invertible for all $k\in \N$} and there is a positive constant $c$ such that
	\begin{equation*}
		\frac{1}{c} \norm{g}_{k} \leq \norm{\mathcal{F}\circ\mathcal{F}^*(g)}_{k+n-1} \leq c\norm{g}_{k} 
	\end{equation*}
	 {for all $g\in H^k(\mathbb{RP}^n)$.}
\end{corA2}

\begin{proof}
Since $\mathcal{F}\circ \mathcal{F}^{*}=L(k(0,0))$, it follows from  \eqref{eqkernel} that
$$
(\mathcal{F}\circ \mathcal{F}^{*})(f)(\sigma)=\omega_{n-2}\int_{\mathbb{RP}^n}\frac{1}{\sin d(\sigma,\tau)}f(\tau)dV_{can}(\tau),
$$
where $\omega_{n-2}=area(S^{n-2})$. The ellipticity follows as in \cite{Pal} (Lemma 6.2). Seen as a map from $C^\infty_{even}(S^n)$ to $C^\infty_{even}(S^n)$ we have $\mathcal{F}^*=\frac12 \mathcal{F}$. Hence the invertibility of $\mathcal{F}\circ\mathcal{F}^*$ follows from Lemma  A.1.
\end{proof}

\end{appendix}
 
\vspace{0.5cm}

\end{document}